\newtheorem{theorem}{Theorem}[section]
\newtheorem{lemma}[theorem]{Lemma}
\newtheorem{corollary}[theorem]{Corollary}
\newtheorem{proposition}[theorem]{Proposition}
\theoremstyle{definition}
\newtheorem{definition}[theorem]{Definition}
\theoremstyle{remark}
\newtheorem{remark}[theorem]{Remark}
\numberwithin{equation}{section}
\numberwithin{figure}{section}
\def\bc{\mathbb C}
\def\bbf{\mathbb{F}}
\def\mct{\mathcal{T}}
\def\mcm{\mathcal{M}}
\def\bq{\mathbb Q}
\def\mcb{\mathcal{B}}
\def\mand{\quad\mbox{and}\quad}
\def\bp{\begin{pmatrix}}
\def\ep{\end{pmatrix}}
\def\sdi{\,\mbox{$\rhd\kern-.55em<$}\,}
\def\edots{\mathinner{\mkern1mu\raise1pt\hbox{.}\mkern2mu\raise4pt\hbox{.}\mkern2mu\raise7pt\vbox{\kern7pt\hbox{.}}\mkern1mu}}
\begin{document}

\title[Orthogonal multiple flag varieties]{Orthogonal multiple flag varieties of finite type II : even degree case}

\author{Toshihiko MATSUKI}

\thanks{Supported by JSPS Grant-in-Aid for Scientific Research (C) \# 16K05084.}

\address{Faculty of Letters\\
        Ryukoku University\\
        Kyoto 612-8577, Japan}
\email{matsuki@let.ryukoku.ac.jp}
\date{}

\begin{abstract}
Let $G$ be the split orthogonal group of degree $2n$ over an arbitrary infinite field $\bbf$ of ${\rm char}\,\bbf\ne 2$. In this paper, we classify multiple flag varieties $G/P_1\times\cdots\times G/P_k$ of finite type. Here a multiple flag variety is said to be of finite type if it has a finite number of $G$-orbits with respect to the diagonal action of $G$.
\end{abstract}

\maketitle

\section{Introduction}

Let $\bbf$ be an arbitrary commutative infinite field of ${\rm char}\,\bbf\ne 2$. Let $(\ ,\ )$ denote the symmetric bilinear form on $\bbf^{2n}$ defined by
$$(e_i,e_j)=\delta_{i,2n+1-j}$$
for $i,j=1,\ldots,2n$ where $e_1,\ldots,e_{2n}$ is the canonical basis of $\bbf^{2n}$. Define the split orthogonal group
$$G=\{g\in {\rm GL}_{2n}(\bbf) \mid (gu,gv)=(u,v)\mbox{ for all }u,v\in\bbf^{2n}\}$$
with respect to this form. Let us write $G={\rm O}_{2n}(\bbf)$ in this paper. We can also define the split special orthogonal group
$$G_0=\{g\in G\mid \det g=1\}\ (={\rm SO}_{2n}(\bbf)).$$
Note that
$$G=G_0\sqcup \bp I_{n-1} &&& 0 \\ & 0 & 1 & \\ & 1 & 0 & \\ 0 &&& I_{n-1} \ep G_0.$$
A subspace $V$ of $\bbf^{2n}$ is said to be isotropic if $(V,V)=\{0\}$.

For a sequence ${\bf a}=(\alpha_1,\ldots,\alpha_p)$ of positive integers such that $\alpha_1+\cdots+\alpha_p\le n$, we define the flag variety $M_{\bf a}$ of $G$ by
$$M_{\bf a}=\{V_1\subset\cdots\subset V_p \mid V_j\mbox{ are subspaces of }\bbf^{2n},\ \dim V_j=\alpha_1+\cdots+\alpha_j,\ (V_p,V_p)=\{0\}\}.$$

Consider a multiple flag variety $\mcm=\mcm_{{\bf a}_1,\ldots,{\bf a}_k}=M_{{\bf a}_1}\times\cdots\times M_{{\bf a}_k}$ with the diagonal $G$-action
$$g(m_1,\ldots,m_k)=(gm_1,\ldots,gm_k)$$
for $g\in G$ and $m_j\in M_{{\bf a}_j}$. We can consider the following problem.

\bigskip
\noindent{\bf Problem.} What is the condition on ${\bf a}_1,\ldots,{\bf a}_k$ for $|G\backslash \mcm|<\infty$? (A multiple flag variety $\mcm$ is said to be of finite type if  $|G\backslash \mcm|<\infty$.)

\begin{remark} (i) For the split orthogonal groups of odd degree, this problem was solved in \cite{M3}.

(ii) Magyar, Weyman and Zelevinsky solved this problem for the general linear groups in \cite{MWZ1}.

(iii) They also solved this problem for the symplectic groups in \cite{MWZ2} when $\bbf$ is algebraically closed.
\end{remark}

In this paper we solve this problem. First we prove:

\begin{proposition} Suppose $n\ge 2$ and $k\ge 4$. Then $|G\backslash \mcm|=\infty$.
\label{prop1.2}
\end{proposition}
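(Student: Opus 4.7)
The plan is to reduce the statement to a simpler situation via $G$-equivariant projections, and then to conclude by a dimension-of-generic-orbit count.

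First I would use the elementary observation that a $G$-equivariant surjection $X\to Y$ forces $|G\backslash Y|\le|G\backslash X|$. Two such reductions are at hand. Forgetting all but the first four factors gives a $G$-equivariant surjection $\mcm_{{\bf a}_1,\ldots,{\bf a}_k}\to\mcm_{{\bf a}_1,{\bf a}_2,{\bf a}_3,{\bf a}_4}$, and for each ${\bf a}_j=(\alpha_{j,1},\ldots,\alpha_{j,p_j})$ the assignment $(V_{j,1}\subset\cdots\subset V_{j,p_j})\mapsto V_{j,1}$ gives a $G$-equivariant surjection $M_{{\bf a}_j}\to M_{(\alpha_{j,1})}$ (any isotropic subspace of dimension $\alpha_{j,1}\le n$ extends to a flag of type ${\bf a}_j$ by iteratively enlarging inside a maximal isotropic subspace). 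Composing, it suffices to prove $|G\backslash\mcm_{(\beta_1),(\beta_2),(\beta_3),(\beta_4)}|=\infty$ for arbitrary integers $\beta_j\ge 1$.

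For four isotropic subspaces $V_j\subset\bbf^{2n}$ of dimensions $\beta_j$ in sufficiently general position, I would establish that $W=V_1+V_2+V_3+V_4$ is a direct sum of dimension $d=\sum_j\beta_j$, that $(\,,\,)|_W$ is non-degenerate, and that the subgroup of $O(W)$ stabilizing each $V_j$ individually is finite. In the most delicate case $\beta_j=1$ for all $j$, an element of this stabilizer scales representatives $v_j$ by $\lambda_j\in\bbf^*$ with $\lambda_i\lambda_j=1$ for $i\neq j$, which forces $\lambda_1=\cdots=\lambda_4=\pm 1$. These genericity conditions are Zariski-open and hence realizable over our infinite field $\bbf$. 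The $G$-stabilizer of the $4$-tuple then equals, up to a finite factor, $O(W^\perp)\cong O_{2n-d}$, of dimension $(2n-d)(2n-d-1)/2$.

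Comparing $\dim\mcm_{(\beta_1),\ldots,(\beta_4)}=\sum_{j=1}^{4}[\beta_j(2n-\beta_j)-\beta_j(\beta_j+1)/2]$ with the generic orbit dimension $\dim G-\dim O_{2n-d}$ (where $\dim G=n(2n-1)$), a direct algebraic manipulation yields a strictly positive difference: in the uniform case $\beta_1=\cdots=\beta_4=\beta$ the gap equals $2\beta^2$, and in the baseline case $\beta_j=1$ for every $j$ the gap is exactly $2$. Since the maximum orbit dimension is attained on a Zariski-open locus and equals this generic value, no orbit is dense in $\mcm$, which is incompatible with finitely many $G$-orbits.

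The main obstacle will be handling the boundary regimes of the generic-stabilizer computation uniformly in the $\beta_j$: when $d>2n$ the $V_j$'s must overlap, so the direct-sum assumption fails and a modified stabilizer analysis is required; moreover, in the baseline $\beta_j=1$ case the dimension margin is only $2$, so the non-degeneracy of $(\,,\,)|_W$ and the existence of such a ``generic'' $4$-tuple must be verified carefully over an arbitrary infinite field of characteristic $\ne 2$.
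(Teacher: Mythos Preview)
Your reduction to $k=4$ and single-step flags ${\bf a}_j=(\beta_j)$ is the same first move the paper makes. After that the approaches diverge: the paper constructs an explicit one-parameter family of quadruples living in a fixed $4$-dimensional nondegenerate subspace $\varphi(\bbf^4)\subset\bbf^{2n}$, padding each $V_j$ by $U_{[\beta_j-2]}$ when $\beta_j\ge2$, and then invokes a restriction lemma (Lemma~\ref{lem2.7}) to show that any $g\in G$ carrying one such quadruple to another must preserve $\varphi(\bbf^4)$ and act orthogonally there. The problem thereby reduces to ${\rm O}_4(\bbf)$, where one checks directly (Lemma~\ref{lem3.1}) that the parameter $\lambda$ is an invariant. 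This is explicit, uniform in the $\beta_j$, and works over any infinite field of characteristic $\ne2$.

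Your dimension-count route is genuinely different but has a real gap, namely the one you yourself flag. Your surjection $M_{{\bf a}_j}\to M_{(\alpha_{j,1})}$ forces $\beta_j=\alpha_{j,1}$, which can be as large as $n$; there is no $G$-equivariant surjection $M_{(\beta)}\to M_{(1)}$ letting you reduce further. With $\beta_j=n$ for all $j$ one has $d=4n$ and in fact
\[
\dim\mcm_{(n),(n),(n),(n)}=2n(n-1)<n(2n-1)=\dim G,
\]
so the crude inequality $\dim\mcm>\dim G$ fails and you would need an actual lower bound $\dim\mathrm{Stab}>n$ on the generic stabilizer, which is a nontrivial computation you have not outlined. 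Even in the regime $d\le 2n$, verifying over an arbitrary infinite $\bbf$ that some quadruple of isotropic lines has all pairwise pairings nonzero (hence finite stabilizer in $O(W)$), and that finitely many $G(\bbf)$-orbits would force a Zariski-dense orbit, takes more care than you have supplied. The paper's padding-and-restriction trick bypasses all of this, since the nondegenerate $4$-dimensional slice is fixed once and for all, independent of the $\beta_j$.
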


So we have only to consider triple flag varieties $\mct=\mct_{{\bf a},{\bf b},{\bf c}}=M_{\bf a}\times M_{\bf b}\times M_{\bf c}$ with
$${\bf a}=(\alpha_1,\ldots,\alpha_p),\quad {\bf b}=(\beta_1,\ldots,\beta_q)\mand {\bf c}=(\gamma_1,\ldots,\gamma_r).$$
(When $k=2$, the decomposition $G\backslash \mcm$ is reduced to the Bruhat decomposition of $G$. So we have $|G\backslash \mcm|<\infty$.) Next we prove:

\begin{proposition} If $n\ge 3$ and $|G\backslash \mct|<\infty$, then one of ${\bf a},{\bf b},{\bf c}$ is $(1)$ or $(n)$.
\label{prop1.3}
\end{proposition}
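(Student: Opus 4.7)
The plan is to prove the contrapositive: assuming none of $\mathbf{a},\mathbf{b},\mathbf{c}$ is $(1)$ or $(n)$, show $|G\backslash\mathcal{T}|=\infty$. The first move is a coarsening reduction. Writing the subspace dimensions of a flag $\mathbf{a}$ as $d_1<\cdots<d_p$ inside $\{1,\ldots,n\}$, the forgetful map $M_{\mathbf{a}}\to M_{\mathbf{a}'}$ to any flag variety indexed by a subset of the $d_j$'s is a $G$-equivariant surjection, and hence pushes $G$-orbits on $\mathcal{T}_{\mathbf{a},\mathbf{b},\mathbf{c}}$ onto $G$-orbits on $\mathcal{T}_{\mathbf{a}',\mathbf{b}',\mathbf{c}'}$. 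It therefore suffices to verify $|G\backslash\mathcal{T}|=\infty$ after replacing each of $\mathbf{a},\mathbf{b},\mathbf{c}$ by a coarsest bad sub-flag. A flag with dimensions $d_1<\cdots<d_p$ that is neither $(1)$ nor $(n)$ either has some $d_j\in\{2,\ldots,n-1\}$, in which case it coarsens to the bad Grassmannian $(d_j)$, or it has $\{d_i\}\subseteq\{1,n\}$ with $p\ge 2$, which forces $\mathbf{a}=(1,n-1)$. Hence each of $\mathbf{a},\mathbf{b},\mathbf{c}$ may be assumed to be either an isotropic Grassmannian $(k)$ with $2\le k\le n-1$, or the exceptional two-step flag $(1,n-1)$.

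The heart of the argument is then to show $|G\backslash\mathcal{T}_{(k_1),(k_2),(k_3)}|=\infty$ for all $k_1,k_2,k_3\in\{2,\ldots,n-1\}$. To do this I plan to construct a continuous $G$-invariant on a one-parameter subfamily of triples $(V_1,V_2,V_3)$ of isotropic subspaces. Working in a Witt decomposition $\bbf^{2n}=U\oplus U^{*}$ with $U,U^{*}$ dual maximal isotropics, I fix a base pair $(V_1,V_2)$ in generic position (so that the bilinear form induces a nondegenerate pairing between appropriate subspaces) and let $V_3$ vary in a one-parameter family chosen to preserve the discrete incidence data $(\dim V_i\cap V_j,\ \dim V_i\cap V_j^{\perp},\ \dim V_1\cap V_2\cap V_3)$. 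From the composed pairings of $V_1,V_2,V_3$ induced by $(\,,\,)$ I extract a scalar invariant---for instance an eigenvalue of the self-map $V_1\to V_1$ obtained by iterating pairings with $V_3$ and $V_2$---whose nonconstant variation along the one-parameter family forces infinitely many $G$-orbits.

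The residual cases in which some flag equals $(1,n-1)$ will be handled by an analogous construction that exploits the extra $\mathbb{P}^{n-1}$ of line choices inside each maximal isotropic, paired with the continuous data of the remaining factors, to produce a one-parameter moduli of $G$-orbits. The main obstacle is the explicit construction of the continuous invariant in the Grassmannian case: naive dimension counting does \emph{not} close the argument in the tight corner cases, since for example with $n=3$ and $k_1=k_2=k_3=2$ one has $3\dim\mathrm{OG}(2,6)=15=\dim G$, so the absence of finite type must be detected by a genuine invariant rather than by a count. Producing such an invariant uniformly across all admissible $(k_1,k_2,k_3)$ and all $n\ge 3$, and verifying it remains nonconstant on a one-parameter deformation while the discrete incidence type is held fixed---together with the separate treatment of the $(1,n-1)$ cases which do not coarsen to any bad Grassmannian---is where the real work lies.
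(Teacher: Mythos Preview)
Your reduction step is correct and matches the paper exactly: coarsening shows it suffices to treat each of $\mathbf{a},\mathbf{b},\mathbf{c}$ equal to $(\alpha)$ with $2\le\alpha\le n-1$ or to $(1,n-1)$, giving four cases according to how many factors are of the exceptional type $(1,n-1)$.

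However, the remainder of your proposal is a plan, not a proof. You describe the \emph{shape} of a continuous invariant (``an eigenvalue of the self-map $V_1\to V_1$ obtained by iterating pairings'') but do not construct it, and you explicitly concede that producing such an invariant uniformly and verifying its nonconstancy ``is where the real work lies.'' That is precisely the content of the proposition, so as written there is a genuine gap: nothing in the proposal distinguishes two members of any explicit one-parameter family.

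The paper closes this gap by a different and more concrete route than the abstract invariant you sketch. Rather than building a scalar invariant directly on $\bbf^{2n}$, it embeds a nondegenerate $6$-dimensional quadratic space into $\bbf^{2n}$ via $\varphi:\bbf^6\hookrightarrow\bbf^{2n}$, $f_i\mapsto e_{i+n-3}$, and manufactures the one-parameter family $m_\lambda$ entirely inside $\varphi(\bbf^6)$, padding each subspace with a fixed isotropic complement $U_{[\ell]}=\bbf e_1\oplus\cdots\oplus\bbf e_\ell$ to reach the required dimensions. A technical lemma (Lemma~\ref{lem2.7}) then shows that any $g\in G$ sending $m_\lambda$ to $m_\mu$ must preserve $\varphi(\bbf^6)$ and the padding, hence restricts to an element of $\mathrm{O}_6(\bbf)$ acting on the underlying small-rank family. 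The problem is thereby reduced to four explicit calculations in $\mathrm{O}_6(\bbf)$ (Lemmas~\ref{lem3.2'} and~\ref{lem3.1'}), one for each of the cases (i)--(iv), where the parameter $\lambda$ is recovered by direct matrix computation. This ``reduce to the smallest bad case and compute there'' strategy sidesteps the need for a uniform invariant over all $(k_1,k_2,k_3)$ and all $n$; you should either carry out your eigenvalue construction in full or adopt the paper's embedding method.
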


So we may assume
\begin{equation}
{\bf a}=(1)\mbox{ or }(n)\mand q\le r \label{eq1.1}
\end{equation}
exchanging the roles of ${\bf a},{\bf b}$ and ${\bf c}$. We also prove:

\begin{proposition} If $n\ge 3,\ {\bf a}=(1),\ q\ge 2$ and $|G\backslash \mct|<\infty$, then ${\bf b}$ or ${\bf c}$ is $(k,n-k)$ with some $k$.
\label{prop1.4}
\end{proposition}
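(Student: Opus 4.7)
The approach is to argue by contradiction: assume $n\ge 3$, $\mathbf{a}=(1)$, $q\ge 2$, $|G\backslash \mct|<\infty$, and that neither $\mathbf{b}$ nor $\mathbf{c}$ is of the form $(k,n-k)$; then I shall exhibit a one-parameter family of $G$-orbits in $\mct$.

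As a preliminary reduction, since $\mathbf{a}=(1)$ consists of a single isotropic line, fix the representative $V_1=\bbf e_1\in M_{(1)}$. Then $G$-orbits on $\mct$ are in bijection with orbits of $P=G_{[e_1]}$ on $M_{\mathbf{b}}\times M_{\mathbf{c}}$, where $P$ is the maximal parabolic stabilizing an isotropic line. Its Levi is $L\cong \mathrm{GL}_1\times \mathrm{O}_{2n-2}$, acting on $\bbf e_1$ and on the non-degenerate quadratic space $e_1^\perp/\bbf e_1$. Each $P$-orbit on $M_{\mathbf{b}}\times M_{\mathbf{c}}$ is therefore parametrised by (a) finitely many combinatorial data recording the relative position of each $W_j$ and $U_k$ with respect to the pair $(\bbf e_1,e_1^\perp)$, and (b) an $\mathrm{O}_{2n-2}$-orbit of the induced pair of isotropic flags in $e_1^\perp/\bbf e_1$. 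Since a double flag variety always has finitely many orbits by the Bruhat decomposition, continuous invariants must be hunted for precisely among the interaction of the flags with the distinguished line $\bbf e_1$.

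The failure hypotheses on $\mathbf{b}$ (given $q\ge 2$) are: (i) $q\ge 3$, or (ii) $q=2$ with $\beta_1+\beta_2<n$; for $\mathbf{c}$ the symmetric failure modes (i), (ii), together with (iii) $r\le 1$. I would treat each combination case by case, and in each case produce an invariant of cross-ratio type. The idea is to single out a projective line $\ell$ cut out by suitable intersections among $W_j\cap e_1^\perp$, $U_k\cap e_1^\perp$, and related subspaces, on which three $P$-canonical points can be identified (typically the image of $e_1$ and two points coming from the $\mathbf{b}$- and $\mathbf{c}$-flags), and let the fourth point vary; the cross-ratio is then a continuous $P$-invariant taking arbitrary values in $\bbf$. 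For instance in Case (i) for $\mathbf{b}$, varying the component along $\bbf e_1$ of a vector in $W_2\setminus W_1$ and reading its projection against an appropriate reference from $\mathbf{c}$ produces such a family.

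The main obstacle will be the minimal failure cases, where the flags themselves are too short to supply four independent reference points --- for example $\mathbf{b}=(1,1)$ with $\mathbf{c}=(n)$, or both $\mathbf{b},\mathbf{c}$ of small total dimension. In such cases I would either enlarge the cross-ratio construction by letting the second flag furnish the missing reference (using two distinct components of $\mathbf{c}$ to produce the fourth line of $\ell$), or else replace the invariant argument by a sharpened dimension count: compare $\dim M_{\mathbf{b}}+\dim M_{\mathbf{c}}+(2n-2)$ against $\dim G=n(2n-1)$, after restricting to the generic combinatorial type, to show that the generic $P$-stabiliser of $(W_\bullet,U_\bullet)$ is too small to act with an open orbit on the $(2n-2)$-dimensional isotropic quadric $M_{(1)}$. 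Combining these two tools across all the failure modes should complete the proof.
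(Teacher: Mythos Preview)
Your proposal is a plan, not a proof: the cross-ratio invariant is never actually written down, and you yourself flag the ``minimal failure cases'' as an unresolved obstacle. More seriously, you miss the reduction that eliminates all the case analysis, and your case~(iii) reveals a misreading of the hypotheses.

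Under the standing convention $q\le r$ (assumed immediately before the proposition), $r\ge 2$ always; your case~(iii) $r\le 1$ does not occur, and without that convention the statement is actually false (take ${\bf a}=(1),\ {\bf b}=(1,1),\ {\bf c}=(\gamma)$ with $n\ge 3$, which is of finite type by (I-1) after swapping ${\bf b}$ and ${\bf c}$). Next, by the obvious monotonicity --- infinite type for a sub-flag variety forces infinite type for the full one --- it suffices to treat the single case ${\bf b}=(\beta_1,\beta_2)$ and ${\bf c}=(\gamma_1,\gamma_2)$ with $\beta_1+\beta_2<n$ and $\gamma_1+\gamma_2<n$: for $q\ge 3$ keep only the first two steps of ${\bf b}$, and likewise for ${\bf c}$. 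The paper then disposes of this one case with an explicit family. Inside ${\rm O}_5(\bbf)$ one checks directly (Lemma~\ref{lem3.3'}) that the triples
\[
m_\lambda=\Bigl(\bbf(f_1+f_3-\tfrac12 f_5),\ \bbf(f_1+f_2)\subset\bbf f_1\oplus\bbf f_2,\ \bbf(f_4+\lambda f_5)\subset\bbf f_4\oplus\bbf f_5\Bigr)
\]
lie in pairwise distinct orbits; one embeds $\bbf^5$ isometrically into $\bbf^{2n}$, pads each component with $U_{[\beta_1-1]},\ U_{[\beta_1+\beta_2-2]},\ U_{[\gamma_1-1]},\ U_{[\gamma_1+\gamma_2-2]}$ to reach the prescribed dimensions, and Lemma~\ref{lem2.7} transports the orbit-distinctness to ${\rm O}_{2n}(\bbf)$. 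No cross-ratio hunting, no dimension counts, and the ``minimal'' case ${\bf b}={\bf c}=(1,1)$ is the base of the construction rather than an obstacle.
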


So we may assume
\begin{equation}
{\bf a}=(1)\mbox{ and }q\ge 2 \Longrightarrow {\bf b}=(k,n-k) \label{eq1.2}
\end{equation}
with some $k$ exchanging the roles of ${\bf b}$ and ${\bf c}$.

\begin{proposition} If $n\ge 4,\ {\bf a}=(n),\ q\ge 2$ and $|G\backslash \mct|<\infty$, then ${\bf b}$ or ${\bf c}$ is
$$(1,1),\quad (1,n-1)\quad\mbox{or}\quad (n-1,1).$$
\label{prop1.5}
\end{proposition}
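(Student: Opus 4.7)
I argue by contrapositive: assume $n \ge 4$, ${\bf a} = (n)$, $q \ge 2$, and that neither ${\bf b}$ nor ${\bf c}$ equals $(1,1)$, $(1, n-1)$, or $(n-1, 1)$; the goal is to show $|G \backslash \mct| = \infty$. A dimension count alone does not settle the question---for example at $n=4$ with ${\bf b} = {\bf c} = (1,2)$ one checks $\dim\mct = 28 = \dim G$---so in the borderline cases I must produce explicit one-parameter families of orbits. Fix a maximal isotropic subspace $W \in M_{(n)}$ and let $P = \mathrm{Stab}_G(W) = L \ltimes U$, where $L = GL(W) \cong GL_n$ and the abelian unipotent radical satisfies $U \cong \Lambda^2 W^*$. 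Fixing the first factor of $\mct$ to be $W$ identifies $|G \backslash \mct| = |P \backslash (M_{\bf b} \times M_{\bf c})|$, so from now on I count $P$-orbits on $M_{\bf b} \times M_{\bf c}$.

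For the invariants on a single factor, let $V_\bullet = (V_1 \subset \cdots \subset V_q) \in M_{\bf b}$ and put $A_i = V_i \cap W$, $B_i = V_i^\perp \cap W$. Using $W = W^\perp$ together with isotropy $V_i \subset V_i^\perp$ one obtains $A_i \subset B_i$, and the nesting of the $V_i$ yields the flag
$$A_1 \subset \cdots \subset A_q \subset B_q \subset \cdots \subset B_1 \subset W$$
in $W$. Witt's theorem shows that the $P$-orbit of $V_\bullet$ is determined by the type of this flag, so $M_{\bf b}$ has only finitely many $P$-orbits. Applied to both factors, a pair $(V_\bullet, \tilde V_\bullet) \in M_{\bf b} \times M_{\bf c}$ yields a pair of flags in $W$, whose joint $L$-orbit is an invariant of the $P$-orbit but not in general a complete invariant: after using $U$ to normalize the lift of $V_\bullet$ to $\bbf^{2n}$, a residual affine $U$-torsor parameterizes the admissible lifts of $\tilde V_\bullet$, on which the stabilizer in $L$ of the pair of flags acts. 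Thus the $P$-orbits fibre over a finite set of pair-of-flag types, with each fibre the orbit space of a linear action of a reductive subgroup of $L$ on an affine space.

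For finite type, each such fibre must be finite. For the three allowed shapes $(1,1)$, $(1, n-1)$, $(n-1, 1)$, the induced flag in $W$ has at most two non-trivial steps and the corresponding $L$-stabilizer is large enough to sweep out the residual $U$-torsor. To rule out every other shape I enumerate three regimes: (i) $q \ge 3$ or $r \ge 3$; (ii) $q = r = 2$ with $\beta_1 + \beta_2 < n$ or $\gamma_1 + \gamma_2 < n$; and (iii) $q = r = 2$ with ${\bf b} = (k, n-k)$, ${\bf c} = (\ell, n-\ell)$ and $2 \le k,\ell \le n-2$. In (i) one forgets the shortest step of one of the flags, embedding $\mct$ into a four-factor multiple flag variety to which Proposition~\ref{prop1.2} applies. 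In (ii) and (iii) I construct $(V_\bullet^t, \tilde V_\bullet^t)_{t \in \bbf}$ with a common pair of flags in $W$ but inequivalent $U$-lifts, distinguished by a cross-ratio-type invariant read off from the induced embeddings into $\bbf^{2n}/W \cong W^*$. The main obstacle is the uniform verification that this invariant is genuinely non-constant in every forbidden subcase; the $L$-stabilizer and its action on the relevant slice of $\Lambda^2 W^*$ must be computed case by case, in the spirit of but more delicately than the odd-degree analysis of \cite{M3}.
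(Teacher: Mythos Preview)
Your reduction in regime (i) does not work as stated. Forgetting a step of the flag in ${\bf b}$ (or ${\bf c}$) gives a $G$-equivariant \emph{surjection} from $\mct$ onto another \emph{triple} flag variety, not a quadruple one; if instead you split a $3$-step flag $V_1\subset V_2\subset V_3$ into an unconstrained pair $(V_1\subset V_2,\ V_3)$ you obtain an \emph{injection} of $\mct$ into a four-factor variety, and an injection goes the wrong direction---infiniteness of $G$-orbits on the target says nothing about the source. Proposition~\ref{prop1.2} is therefore not applicable here. The paper handles $q\ge3$ or $r\ge3$ by running the surjection the useful way: one combines adjacent parts of ${\bf c}$ (resp.\ ${\bf b}$) so as to land in a $q=r=2$ triple flag variety $\mct_{(n),{\bf b}',{\bf c}'}$ with ${\bf b}',{\bf c}'$ \emph{still forbidden}, and then invokes the already-established $q=r=2$ case. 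A short case check is needed to verify that such a combination always exists (e.g.\ for ${\bf c}=(1,n-2,1)$ with $n\ge4$ one drops the last step to get ${\bf c}'=(1,n-2)$, which is none of $(1,1),(1,n-1),(n-1,1)$).

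For regimes (ii) and (iii) your $P$-orbit framework and the identification $P=L\ltimes U$ with $U\cong\Lambda^2 W^*$ are correct, and your instinct that a cross-ratio-type quantity is the invariant is in the right direction. However the proposal does not identify the structural reason such an invariant survives. The paper pins down the base case inside ${\rm O}_8(\bbf)$: for a specific triple of maximally isotropic subspaces $U_+,U_-,V$ the joint stabilizer is isomorphic to ${\rm Sp}_4(\bbf)$, via the alternating form $\langle u,v\rangle=(\kappa(u),v)$ on a $4$-plane $W_+\subset U_+$. The one-parameter family is then built so that the invariant is the determinant of a linear map between two Lagrangian planes, which the two ${\rm SL}_2$-blocks inside ${\rm Sp}_4$ cannot rescale (Lemma~\ref{lem3.2}). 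This hidden symplectic constraint is the heart of the matter and is what your sketch has not reached. The six $q=r=2$ subcases are handled uniformly by this lemma, and the general $n$ is deduced by embedding $\bbf^8\hookrightarrow\bbf^{2n}$, padding with $U_{[\ell]}$, and transferring via Lemma~\ref{lem2.7}.
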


So we may assume
\begin{equation}
{\bf a}=(n)\mbox{ and }q\ge 2 \Longrightarrow {\bf b}=(1,1),\ (1,n-1)\mbox{ or }(n-1,1) \label{eq1.3} 
\end{equation}
exchanging the roles of ${\bf b}$ and ${\bf c}$.

We also have conditions related with a property on $\bbf$:

\begin{proposition} Suppose $|\bbf^\times/(\bbf^\times)^2|=\infty$. Then we have $|G\backslash \mct|=\infty$ for the following three cases.

{\rm (i)} $\max(\alpha_1,\beta_1,\gamma_1)<n$ $($Proposition 1.4 in \cite{M3}$)$.

{\rm (ii)} ${\bf a}=(n),\ q,r\ge 2$ and $\max(\beta_1+\beta_2,\gamma_1+\gamma_2)<n$.

{\rm (iii)} ${\bf a}=(n),\ {\bf b}=(\beta)$ with $3\le \beta\le n-2,\ r\ge 4$ and $\gamma_1+\gamma_2+\gamma_3+\gamma_4<n$.
\label{prop1.6}
\end{proposition}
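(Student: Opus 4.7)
Part (i) is already proved in \cite[Proposition 1.4]{M3}, so the remaining work is (ii) and (iii). The unifying strategy is to construct, in each of the two cases, a $G$-invariant map
$$\delta \colon G\backslash \mct \longrightarrow \bbf^\times/(\bbf^\times)^2 \sqcup \{\infty\}$$
whose image contains all of $\bbf^\times/(\bbf^\times)^2$; the assumption $|\bbf^\times/(\bbf^\times)^2|=\infty$ then forces $|G\backslash \mct|=\infty$.

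The invariant $\delta$ is built by extracting, from a triple $(V,\mathbf{W},\mathbf{W}')$ belonging to a suitably generic orbit stratum, an intrinsically defined one-dimensional subquotient $L$ of $\bbf^{2n}$ on which the restriction of $(\,,\,)$ is non-zero, and declaring $\delta$ to be the discriminant class of that quadratic form. Because $\mathbf{a}=(n)$ forces $V$ to be Lagrangian ($V=V^\perp$, $\dim V=n$), and because the dimension hypotheses in (ii) and (iii) provide strict slack, $L$ can be cut out as a subquotient of the shape $(W_i\cap(W_j'+V)^\perp)/(\text{canonical smaller subspace})$ for appropriate $i,j$. Its $G$-equivariance is automatic as soon as all the dimensions of the intersections used in the definition of $L$ attain their generic minima.

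In case (ii), it is the two-step structure of both $\mathbf{b}$ and $\mathbf{c}$ (i.e.\ $q,r\ge 2$), together with $\beta_1+\beta_2<n$ and $\gamma_1+\gamma_2<n$, that supplies the incidences among $W_1\subsetneq W_2$, $W_1'\subsetneq W_2'$ and $V$ needed to isolate $L$. In case (iii), the four-step nested isotropic $W_1'\subsetneq\cdots\subsetneq W_4'$ of total dimension less than $n$ plays the same role, with the single isotropic $W$ of dimension $\beta$ acting as an extra reference against which to take intersections and orthogonals. For surjectivity of $\delta$ onto $\bbf^\times/(\bbf^\times)^2$, I would exhibit, for each $t\in\bbf^\times$, an explicit triple written in the standard basis $e_1,\ldots,e_{2n}$ of the introduction, with $t$ appearing as a single coordinate of one vector; a direct computation then shows that the induced form on $L$ is a non-zero scalar multiple of $t\cdot x^2$, so $\delta$ sends the $t$-configuration to $[t]$.

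The main obstacle is determining, in each of (ii) and (iii), the precise generic intersection pattern that makes the construction of $L$ canonical and $G$-equivariant. This requires a case analysis of how $V$, $\mathbf{W}$ and $\mathbf{W}'$ can meet inside $\bbf^{2n}$, identifying which of the intersections $W_i\cap W_j'\cap V$ and their orthogonal analogues have the expected minimal dimension on a non-empty Zariski-open subset of $\mct$, and checking that the chosen $L$ lives in the stratum so defined. Once this combinatorial data is correctly pinned down, the remaining verifications (isotropy of the constructed flags, non-degeneracy of the form on $L$, and the computation $\delta=[t]$ for the explicit family) are routine linear-algebra checks.
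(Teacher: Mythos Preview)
Your proposal is a sketch rather than a proof, and the central construction has a gap. You want the invariant $\delta$ to be the discriminant of the restriction of $(\,,\,)$ to a one-dimensional subquotient $L$, and you describe $L$ as having the shape $(W_i\cap(W_j'+V)^\perp)/(\text{smaller subspace})$. But $W_i$ is isotropic (it is a member of one of the flags), so every subspace of $W_i$ is isotropic, and hence the bilinear form vanishes identically on the numerator and on any quotient of it. There is no non-zero quadratic form on such an $L$ and no discriminant to extract. A workable invariant of this type would require either a line sitting inside a subquotient of the form $X^\perp/X$ for some isotropic $X$ built from the data, or a canonical identification of two distinct one-dimensional isotropic subquotients that pair non-trivially; you have not produced either, and you yourself flag ``determining the precise generic intersection pattern'' as the main unresolved obstacle. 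That pattern is the entire content of the argument.

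The paper does something more direct and avoids defining any invariant on $\mct$. For (ii) it works in a six-dimensional model: it writes down an explicit one-parameter family $m_\lambda\in\mct_{(3),(1,1),(1,1)}$ for ${\rm O}_6(\bbf)$ and shows by a short stabiliser computation (Lemma 3.22) that $G'm_\lambda\ni m_\mu$ forces $\lambda/\mu\in(\bbf^\times)^2$. It then transplants this family into $\bbf^{2n}$ via the obvious isometric inclusion $\varphi:\bbf^6\hookrightarrow\bbf^{2n}$, padding each flag by $U_{[\,\cdot\,]}$ to reach the required dimensions, and invokes Lemma~2.7 (a reduction lemma) to pull the stabiliser computation back to the six-dimensional model. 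Part (iii) is identical in structure, using a ten-dimensional model and Lemma 3.23. So the paper never analyses the generic stratum of $\mct$; it only needs one explicit family and a small linear-algebra computation. If you want to salvage your approach, you would have to either repair the definition of $L$ so that the form is genuinely non-zero on it, or abandon the abstract-invariant strategy and follow the paper's explicit-family route.
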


So we may assume the following conditions.
\begin{align}
& \quad\max(\alpha_1,\beta_1,\gamma_1)<n \Longrightarrow |\bbf^\times/(\bbf^\times)^2|<\infty. \label{eq1.4} \\
& \quad {\bf a}=(n),\ q,r\ge 2\mbox{ and }\max(\beta_1+\beta_2,\gamma_1+\gamma_2)<n \Longrightarrow |\bbf^\times/(\bbf^\times)^2|<\infty. \label{eq1.5} \\
& \quad {\bf a}=(n),\ {\bf b}=(\beta)\mbox{ with }3\le \beta\le n-2,\ r\ge 4 \ \mbox{and}\  \gamma_1+\gamma_2+\gamma_3+\gamma_4<n \label{eq1.6} \\
& \qquad\qquad\Longrightarrow |\bbf^\times/(\bbf^\times)^2|<\infty. \notag
\end{align}

Now we can state the theorem classifying triple flag varieties of finite type as follows.

\begin{theorem} Assume the conditions {\rm (\ref{eq1.1}), $\ldots$\ , (\ref{eq1.6})}. Then a triple flag variety $\mct=\mct_{{\bf a},{\bf b},{\bf c}}$ is of finite type if and only if $({\bf a},{\bf b},{\bf c})$ satisfies one of the following seven conditions.

{\rm (I-1)} ${\bf a}=(1)$ and $q=1$.

{\rm (I-2)} ${\bf a}=(1)$ and ${\bf b}=(k,n-k)$.

{\rm (II)} ${\bf a}=(n)$ and ${\bf b}=(1),\ (2),\ (3),\ (n-1),\ (n),\ (1,1),\ (1,n-1)$ or $(n-1,1)$.

{\rm (III-1)} ${\bf a}=(n),\ {\bf b}=(\beta)$ with $4\le \beta\le n-2$ and $r=1$.

{\rm (III-2)} ${\bf a}=(n),\ {\bf b}=(\beta)$ with $4\le \beta\le n-2$ and $r=2$.

{\rm (III-3)} ${\bf a}=(n),\ {\bf b}=(\beta)$ with $4\le \beta\le n-2$ and ${\bf c}$ is one of
\begin{align*}
& (1,k,n-k-1),\ (k,1,n-k-1),\ (k,n-k-1,1), \\
& (1,1,k),\ (1,k,1)\mbox{ or }(k,1,1).
\end{align*}

{\rm (III-4)} ${\bf a}=(n),\ {\bf b}=(\beta)$ with $4\le \beta\le n-2$ and ${\bf c}$ is one of
$$(1,1,1,n-3),\ (1,1,n-3,1),\ (1,n-3,1,1),\ (n-3,1,1,1)\mbox{ or }(1,1,1,1).$$
\label{th1.7}
\end{theorem}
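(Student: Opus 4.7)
\medskip
\noindent\textbf{Proof plan.} The plan is to prove the two directions of the equivalence separately: \emph{necessity}, that any $(\mathbf{a},\mathbf{b},\mathbf{c})$ satisfying (\ref{eq1.1})--(\ref{eq1.6}) but not on the list (I-1)--(III-4) has $|G\backslash\mathcal{T}|=\infty$; and \emph{sufficiency}, that each of the seven listed cases admits a finite parametrisation of $G$-orbits. The backbone of both directions is the standard fibration argument: fix a representative of $M_{\mathbf{a}}$ and study the action of the maximal parabolic $P_{\mathbf{a}}$ on $M_{\mathbf{b}}\times M_{\mathbf{c}}$, reducing orthogonal problems to general linear ones on the quotient of $V_p$ or on its complement.

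\emph{Necessity.} I would first enumerate the cases surviving (\ref{eq1.1})--(\ref{eq1.6}). With $\mathbf{a}=(1)$, either $q=1$ (case (I-1)) or $q\ge 2$ with $\mathbf{b}=(k,n-k)$ by (\ref{eq1.2}) (case (I-2)). With $\mathbf{a}=(n)$ and $q\ge 2$, (\ref{eq1.3}) places us in (II). The only remaining branch is therefore $\mathbf{a}=(n)$, $q=1$, $\mathbf{b}=(\beta)$. If $\beta\in\{1,2,3,n-1,n\}$ this is (II); otherwise $4\le\beta\le n-2$ and the task is to characterise admissible $\mathbf{c}$ according to its length $r$. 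What needs to be shown here, beyond (\ref{eq1.4})--(\ref{eq1.6}), is: (a) $r\ge 5$ always gives infinitely many orbits; (b) for $r=3$, $\mathbf{c}$ must be one of the six patterns in (III-3); (c) for $r=4$, $\mathbf{c}$ must be one of the five patterns in (III-4). In each sub-case I would exhibit a one-parameter family of non-conjugate triples by perturbing a single subspace inside a generic flag and extracting a continuous invariant built from the pairing $(\,,\,)$ restricted to intersections $V_j\cap V_p$; this is the template of Proposition~\ref{prop1.6} and its analogue in~\cite{M3}.

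\emph{Sufficiency.} For (I-1) and (I-2), fix an isotropic line $\ell\in M_{(1)}$; its stabiliser $P$ is a maximal parabolic, and quotienting by $\ell$ inside $\ell^{\perp}$ produces an orthogonal space of rank $2n-2$ on which the Levi of $P$ acts. The resulting $P$-action on $M_{\mathbf{b}}\times M_{\mathbf{c}}$ is a Bruhat-type double-coset problem, finite in (I-1) because only a single flag survives, and finite in (I-2) because $\mathbf{b}=(k,n-k)$ makes $V_2$ a maximal isotropic that is fixed up to a ${\rm GL}$-Bruhat decomposition, reducing to the finite-type classification of~\cite{MWZ1}. For (II) and (III-1)--(III-4), fix a maximal isotropic $V$ representing $\mathbf{a}=(n)$ and a complementary isotropic $W\cong V^{*}$; the Levi of $P_{(n)}$ is ${\rm GL}(V)$, and $M_{\mathbf{b}}\times M_{\mathbf{c}}$ is stratified by the dimensions of $V_{j}\cap V$ and of the projections to $V^{*}$. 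In (II) this reduces in each stratum to a ${\rm GL}(V)$-triple-flag problem solved in~\cite{MWZ1}. In (III-1) only a single $\mathbf{c}$-flag is present so finiteness is immediate; in (III-2) the specific shape of $\mathbf{c}$ keeps the residual ${\rm GL}(V)$-action a finite-type double-flag case. In (III-3) and (III-4) the listed patterns are exactly those in which, after the $\mathbf{a}$- and $\mathbf{b}$-reduction, the remaining $\mathbf{c}$-flag either has length $\le 2$ in the Grassmannian direction or contains an $(n-3)$-block forcing the residual action to factor through a well-known finite-orbit ${\rm GL}$-configuration.

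\emph{Main obstacle.} The hardest part is step (b)--(c) of necessity: constructing explicit continuous $G$-invariants for all ``forbidden'' shapes of $\mathbf{c}$ when $4\le\beta\le n-2$ and $r\in\{3,4\}$. The usual ${\rm GL}$ cross-ratios are not quite enough, because two flags that agree in their intersection dimensions with $V$ can still be distinguished by the scalar value of $(\,,\,)$ on a distinguished pair of vectors cut out by the two flags together. A systematic production of such invariants, uniform in $r$ and in the pattern of $\mathbf{c}$, is the main technical load; once these invariants are in hand, the sufficiency proof becomes a bookkeeping exercise using the fibration of the preceding paragraph, together with the ${\rm GL}$-results of~\cite{MWZ1} and the odd-degree analogues from~\cite{M3}.
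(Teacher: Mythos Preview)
Your necessity sketch is broadly aligned with the paper: Section~3.5 does exactly what you describe for $r\in\{3,4,5\}$ when $4\le\beta\le n-2$, by building explicit one-parameter families $m_\lambda$ and proving $Gm_\lambda\ni m_\mu\Rightarrow\lambda=\mu$. The mechanism, however, is not a cross-ratio or a scalar value of $(\,,\,)$ extracted directly; it is a reduction via Lemma~\ref{lem2.7} to a fixed small orthogonal group (${\rm O}_{12}$ in Lemmas~\ref{lem3.10}--\ref{lem3.11}), where the stabiliser of $V$ becomes ${\rm Sp}_4\times{\rm GL}_2$ and the invariant is read off from an alternating form. Your ``systematic production of invariants'' would need to reproduce that symplectic reduction; it is not a pairing computation on the original flags.

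The real gap is on the sufficiency side. Your plan is to fix a maximal isotropic $V$ for $\mathbf{a}=(n)$, pass to the Levi ${\rm GL}(V)$, and invoke \cite{MWZ1}. But after you also normalise $U_-$ (the $\mathbf{b}$-flag), the group acting on flags in $U_+$ is not a parabolic of ${\rm GL}(U_+)$: it is the restriction of $R_V=\{g\in G:gU_\pm=U_\pm,\ gV=V\}$, whose shape is governed by the fifteen invariants $b_1,\dots,b_{15}$ of Section~\ref{sec5.2}. Concretely (see Figure~\ref{fig6.1} and Lemma~\ref{lem6.2}), this restriction contains blocks in ${\rm Sp}'_{b_{15}}(\mathbb{F})$ and Borels $\mathcal{B}_{b_8}$, coupled through the partial order of Figure~\ref{fig5.1}; it is \emph{not} of the form ``parabolic $\times$ parabolic'' on a pair of flags, so \cite{MWZ1} says nothing about its orbit structure. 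The paper instead proves finiteness via a bespoke toolkit: the reduction of Section~\ref{sec6.1}, the induction lemma Corollary~\ref{cor8.7}, and a battery of ad hoc orbit-finiteness lemmas for subgroups of ${\rm GL}_n$ with mixed ${\rm GL}/{\rm Sp}'/\mathcal{B}$ blocks (Lemmas~\ref{lem9.8}, \ref{lem9.12}, \ref{lem12.13}--\ref{lem11.15}). Cases (III-3) and (III-4) in particular (Sections~6--7) are long case analyses inside this framework, not reductions to a ``well-known finite-orbit ${\rm GL}$-configuration''. Likewise (I-2) is handled in Section~11 by computing $Q_V$ on $M(V)$ for each normal form of $(U_+,U_-,V)$ and again using the appendix lemmas together with $|\mathbb{F}^\times/(\mathbb{F}^\times)^2|<\infty$; it does not drop out of a Bruhat decomposition. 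Until you replace the appeal to \cite{MWZ1} by an argument that actually controls these $R_V$-type subgroups, the sufficiency direction is not proved.
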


\begin{remark} Suppose $\bbf=\bc$ and the pair $({\bf a},{\bf b})$ satisfies one of the conditions (I-1), (I-2) or (II) in Theorem \ref{th1.7}. 
Then it is shown by Stembridge in \cite{S} that the double flag variety $\mathcal{D}=M_{\bf a}\times M_{\bf b}$ has an open $B$-orbit where $B$ is a Borel subgroup of $G$. It follows from Brion and Vinberg's theorem in \cite{B} and \cite{V} that $\mathcal{D}$ has a finite number of $B$-orbits. So the triple flag variety $\mct_{{\bf a},{\bf b},(1^n)}$ is of finite type in these cases. (Note that $(\bc^\times)^2=\bc^\times$ because $\bc$ is algebraically closed.)
\label{rem1.8}
\end{remark}

This paper is organized as follows.

In Section 2, we prepare elementary results on the flag varieties of ${\rm O}_{2n}(\bbf)$.

In Section 3, we first prove Proposition \ref{prop1.2}. Next we prove Propositions \ref{prop1.3}, \ref{prop1.4}, \ref{prop1.5} and \ref{prop1.6} excluding triple flag varieties of infinite type. In Section \ref{sec3.5}, we show that the triple flag varieties $\mct_{(n),(\beta),{\bf c}}$ with $4\le \beta\le n-2$ and $r\ge 3$ are of infinite type except the ones in Theorem \ref{th1.7} (III-3) and (III-4) (Corollaries \ref{cor3.14}, \ref{cor3.16}, \ref{cor3.18} and \ref{cor3.20}).

In Section 4, we formulate propositions to show finiteness of triple flag varieties listed in Theorem \ref{th1.7}.

In Section 5, we review technical formulations and results given in \cite{M3} to show finiteness of triple flag varieties. In particular, we see that the triple flag varieties in Theorem \ref{th1.7} (III-1) and (III-2) are of finite type (Theorem \ref{th5.9} and Corollary \ref{cor5.19}).

As we remarked in Remark \ref{rem1.8}, it is known that the triple flag varieties in Theorem \ref{th1.7} (I-1), (I-2) and (II) are of finite type when $\bbf=\bc$. So we first prove finiteness of the ``new classes'' (III-3) and (III-4) in Section 6 and 7.

In Sections 8, 9, 10 and 11, we prove finiteness of the cases (I-1), (I-2) and (II). These results are not covered by \cite{S} since we consider an arbitrary infinite field $\bbf$ of ${\rm char}\,\bbf\ne 2$. Moreover we should note that we have $|\bbf^\times/(\bbf^\times)^2|=\infty$ for some infinite fields (for example $\bbf=\bq$).

In Section 12, we prepare general results on flag varieties of general linear groups for the sake of Sections 6, 7, 8, 9, 10 and 11.

\section{Preliminaries}

\begin{definition} An isotropic subspace $V$ of $\bbf^{2n}$ is said to be ``maximally isotropic'' if it is maximal with respect to the inclusion relation. Let $M$ denote the set of maximally isotropic subspaces of $\bbf^{2n}$.
\end{definition}

Clearly $U_0=\bbf e_1\oplus\cdots\oplus \bbf e_n$ is a maximally isotropic subspace of $\bbf^{2n}$. For any $g\in G$, the spaces $gU_0$ are maximally isotropic. Let $P$ denote the subgroup of $G$ defined by
$$P=\{g\in G\mid gU_0=U_0\}.$$
For $d=0,\ldots,n$, write
$$U_d=\bbf e_1\oplus\cdots\oplus \bbf e_{n-d}\oplus \bbf e_{n+1}\oplus\cdots\oplus \bbf e_{n+d}=w_dU_0$$
where $w_d$ is an element of $G$ defined by
$$w_de_i=\begin{cases} e_i & \text{if $i\le n-d$ or $i\ge n+d+1$,} \\
e_{2n+1-i} & \text{if $n-d+1\le i\le n+d$.}
\end{cases}$$
Note that $w_d\in G_0$ if and only if $d$ is even.

Define subgroups $L$ and $N$ of $P$ by
\begin{align*}
L & =\{g\in G\mid gU_0=U_0,\ gU_n=U_n\}=\{\ell(A)\mid A\in {\rm GL}_n(\bbf)\} \\
\mand N & = \{g\in G\mid ge_i=e_i\mbox{ for }i=1,\ldots,n\}=\left\{\bp I_n & * \\ 0 & I_n \ep \in G\right\}
\end{align*}
where
$$\ell(A)=\bp A & 0 \\ 0 & J_n{}^tA^{-1}J_n \ep.$$
Then we can write $P=LN$ as a semidirect product (Levi decomposition).

\begin{proposition} Let $V$ be a maximally isotropic subspace of $\bbf^{2n}$. If $\dim (V\cap U_0)=n-d$, then $V\in PU_d$. $($In particular, $V$ is maximally isotropic if and only if $V$ is isotropic and $\dim V=n$.$)$
\label{prop2.2}
\end{proposition}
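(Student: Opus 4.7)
The plan is to exploit the Levi decomposition $P = LN$ in two stages: first normalize $V \cap U_0$ using the $L$-action, then push $V$ onto $U_d$ using an element of $N$.

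First, since $L$ acts on $U_0$ as the full $\mathrm{GL}(U_0)$, a suitable $\ell(A) \in L$ moves $V \cap U_0$ to $\bbf e_1 \oplus \cdots \oplus \bbf e_{n-d}$, so after replacing $V$ by $\ell(A)^{-1}V$ I may assume this equality. Because $V$ is isotropic, $V$ lies in $(V \cap U_0)^\perp = U_0 + \bbf e_{n+1} + \cdots + \bbf e_{n+d}$, on which the form has radical $\bbf e_1 + \cdots + \bbf e_{n-d}$; the quotient is a split $2d$-dimensional quadratic space of Witt index $d$. Hence $V/(V \cap U_0)$ is isotropic there, forcing $\dim V \le n$, with equality when $V$ is maximal. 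The converse in the ``in particular'' statement is immediate: if $\dim V = n$ and $V$ is isotropic then $V = V^\perp$, so any isotropic extension of $V$ lies in $V^\perp = V$.

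Now with $\dim V = n$, the image of $V$ in the quotient above is the isotropic $d$-plane spanned by the classes of $e_{n+1}, \ldots, e_{n+d}$, so I may choose a basis $v_{n-d+j} = u_j + e_{n+j}$ ($j = 1, \ldots, d$) of a complement of $V \cap U_0$ in $V$, with $u_j \in U_0$. A short pairing calculation shows that the isotropy condition $(v_{n-d+i}, v_{n-d+j}) = 0$ is equivalent to $(u_i)_{n+1-j} + (u_j)_{n+1-i} = 0$ for $1 \le i, j \le d$.

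Finally, assemble $B \in M_n(\bbf)$ by taking $u_1, \ldots, u_d$ as its first $d$ columns and completing the remaining $n-d$ columns so that $J_n B$ is skew-symmetric: the upper-left $d \times d$ block of $J_n B$ is already skew by the previous step, the first $d$ rows of the last $n-d$ columns are then forced by skew-symmetry, and the lower-right $(n-d) \times (n-d)$ block can simply be taken to be zero. Then $g := \bp I_n & B \\ 0 & I_n \ep$ lies in $N$, sends $e_{n+j}$ to $e_{n+j} + u_j = v_{n-d+j}$, and fixes $V \cap U_0$, so $g U_d = V$ and $V \in PU_d$ as required. The main obstacle is the matching between the skew-symmetry condition $(J_n B)^T = -J_n B$ defining $N$ and the isotropy of $V$; once this translation is pinned down, everything else is essentially linear algebra bookkeeping.
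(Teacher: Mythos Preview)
Your proof is correct and follows the same two-stage strategy as the paper (normalize $V\cap U_0$ via $L$, then push $V$ onto $U_d$ via an element of $N$). One sentence needs adjustment: the image of $V$ in the $2d$-dimensional quotient is \emph{not} in general the span of $\bar e_{n+1},\ldots,\bar e_{n+d}$ (indeed, if it were, you would already have $V=U_d$); what is true, and what you actually need, is that this image is a maximal isotropic \emph{complementary} to the image of $U_0$, so the projection of $V$ onto $\bbf e_{n+1}\oplus\cdots\oplus\bbf e_{n+d}$ along $U_0$ is surjective and your vectors $v_{n-d+j}=u_j+e_{n+j}$ with $u_j\in U_0$ exist. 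The paper streamlines your last step by first subtracting from each $v_{n-d+j}$ its $(V\cap U_0)$-component, so that $u_j\in\bbf e_{n-d+1}\oplus\cdots\oplus\bbf e_n$; then the last $n-d$ columns of $B$ can simply be taken to be zero and the skewness of $J_nB$ is automatic, avoiding your column-completion bookkeeping.
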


\begin{proof} Suppose that $\dim (V\cap U_0)=n-d$. By the action of $L$, we may assume
$$V\cap U_0=\bbf e_1\oplus\cdots\oplus \bbf e_{n-d}.$$
Since $V$ is isotropic, we have $V\subset \bbf e_1\oplus\cdots\oplus \bbf e_{n+d}$. Let $\pi$ denote the canonical projection
$$\pi: \bbf e_1\oplus\cdots\oplus \bbf e_{n+d}\to \bbf e_{n+1}\oplus\cdots\oplus \bbf e_{n+d}.$$

If $\pi|_V$ is not surjective, then we can take a nonzero element $v$ of $\bbf e_{n-d+1}\oplus\cdots\oplus \bbf e_n$ such that $(v,\pi(V))=\{0\}$. This implies $(v,V)=\{0\}$, a contradiction to the maximality of $V$. Hence  $\pi|_V$ is surjective.

On the other hand, the kernel of  $\pi|_V$ is $V\cap U_0$. Hence we have
$$\dim V=\dim {\rm ker} \pi|_V+\dim {\rm Im} \pi|_V=(n-d)+d=n.$$

We can write
$$V=(V\cap U_0)\oplus \bbf v_1\oplus\cdots\oplus \bbf v_d$$
with some vectors $v_i\in e_{n+i}+(\bbf e_{n-d+1}\oplus\cdots\oplus \bbf e_n)$ for $i=1,\ldots,d$. Define an element $n\in N$ by
$$ne_i=\begin{cases} v_{i-n} & \text{if $n+1\le i\le n+d$,} \\
e_i & \text{otherwise.}
\end{cases}$$
Then $n^{-1}V=U_d$. Thus we have proved $V\in PU_d$.
\end{proof}

Proposition \ref{prop2.2} implies:

\begin{corollary} $($Bruhat decomposition$)$ $M=\bigsqcup_{d=0}^n PU_d$
\end{corollary}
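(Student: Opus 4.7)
The covering $M=\bigcup_{d=0}^{n}PU_d$ is already established by Proposition~\ref{prop2.2}: any maximally isotropic $V\subset\bbf^{2n}$ has some well-defined value $n-d:=\dim(V\cap U_0)$ with $0\le d\le n$, and the proposition places $V$ in the corresponding piece $PU_d$. So the only thing left to check is that the union is disjoint, i.e.\ that the indexing parameter $d$ is an invariant of the $P$-orbit.

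The plan is to use the dimension of the intersection with $U_0$ as the invariant. First I would compute directly from the definition of $U_d$ that
$$U_d\cap U_0=\bbf e_1\oplus\cdots\oplus\bbf e_{n-d},$$
so $\dim(U_d\cap U_0)=n-d$. Next, since every $p\in P$ satisfies $pU_0=U_0$ by definition of $P$, for any subspace $W$ we have $pW\cap U_0=pW\cap pU_0=p(W\cap U_0)$, hence $\dim(pW\cap U_0)=\dim(W\cap U_0)$. Applying this with $W=U_d$ gives $\dim(V\cap U_0)=n-d$ for every $V\in PU_d$.

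Consequently, if $V\in PU_d\cap PU_{d'}$ then $n-d=\dim(V\cap U_0)=n-d'$, forcing $d=d'$. Combined with the covering from Proposition~\ref{prop2.2}, this yields the disjoint decomposition $M=\bigsqcup_{d=0}^{n}PU_d$. There is no real obstacle here: the one thing to be careful about is that the argument genuinely uses $pU_0=U_0$ (rather than just $p\in G$), which is exactly the defining property of the parabolic $P$, and which is what makes $d$ an invariant distinguishing the Bruhat cells.
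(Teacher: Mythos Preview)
Your argument is correct and is exactly the reasoning the paper has in mind: the corollary is stated there as an immediate consequence of Proposition~\ref{prop2.2} with no further proof, and your disjointness check via the $P$-invariance of $\dim(V\cap U_0)$ is precisely the implicit step.
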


On the other hand, since $P\subset G_0$ we have:

\begin{corollary} {\rm (i)} $M=GU_0=G_0U_0\sqcup G_0U_1$.

{\rm (ii)} $G_0U_0=\bigsqcup_{d\mbox{ is even}} PU_d,\ G_0U_1=\bigsqcup_{d\mbox{ is odd}} PU_d$
\label{cor2.4}
\end{corollary}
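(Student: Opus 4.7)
The plan is to derive Corollary \ref{cor2.4} almost immediately from the preceding Bruhat decomposition $M=\bigsqcup_{d=0}^n PU_d$ together with the displayed decomposition $G=G_0\sqcup sG_0$, where $s$ is the explicit non-identity component representative written in the introduction. The key observation is that $s$ acts by swapping $e_n$ and $e_{n+1}$ while fixing all other basis vectors, so a direct computation gives $sU_0=U_1$. Since $G_0$ is an index-two, hence normal, subgroup of $G$, we have $sG_0=G_0 s$.

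For part (i), I would first note $M=GU_0$: the inclusion $GU_0\subset M$ is automatic because $G$ preserves the form, and the reverse inclusion follows from Proposition \ref{prop2.2} (each $U_d$ is of the form $w_dU_0$ with $w_d\in G$). Then
\[
GU_0=G_0U_0\cup sG_0U_0=G_0U_0\cup G_0sU_0=G_0U_0\cup G_0U_1.
\]
Disjointness: if $G_0U_0\cap G_0U_1\neq\emptyset$, then $gU_0=U_1=w_1U_0$ for some $g\in G_0$, so $w_1^{-1}g\in P\subset G_0$, forcing $w_1\in G_0$; but the paper has already noted $w_d\in G_0$ iff $d$ is even, a contradiction with $d=1$.

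For part (ii), I would use $U_d=w_dU_0$ to write $PU_d=Pw_dU_0$. Because $P\subset G_0$, the set $PU_d$ sits inside $G_0U_0$ when $w_d\in G_0$ (i.e.\ $d$ even), and inside $G_0sU_0=G_0U_1$ when $w_d\in sG_0$ (i.e.\ $d$ odd). Intersecting the Bruhat decomposition $M=\bigsqcup_{d=0}^n PU_d$ with each of the two disjoint pieces $G_0U_0$ and $G_0U_1$ supplied by (i) then yields the two disjoint unions asserted in (ii).

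I do not anticipate a real obstacle here; the only points requiring verification are the small computation $sU_0=U_1$ and the recollection that $G_0\triangleleft G$, both of which are essentially immediate from the data already in the paper. The argument is in effect a bookkeeping exercise separating the Bruhat cells by the parity of $d$, using the fact that the two connected components of $G$ are distinguished precisely by that parity on the $w_d$.
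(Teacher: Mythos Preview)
Your proposal is correct and follows exactly the approach the paper intends: the paper offers no explicit proof beyond the single hint ``since $P\subset G_0$,'' and your argument unpacks precisely that hint by combining the Bruhat decomposition $M=\bigsqcup_d PU_d$ with the parity statement $w_d\in G_0\Leftrightarrow d$ even and the observation $sU_0=U_1$. There is nothing to add or correct.
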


Considering $L$-action, we have:

\begin{corollary} For $k=1,\ldots,n-1$, let $V$ be a $k$-dimensional isotropic subspace of $\bbf^{2n}$. Then there exists a $g\in G_0$ such that $gV=\bbf e_1\oplus\cdots\oplus \bbf e_k$.
\label{cor2.5}
\end{corollary}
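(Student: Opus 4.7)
My plan is to extend $V$ to a maximally isotropic subspace $\tilde V \supseteq V$, apply Corollary \ref{cor2.4} to bring $\tilde V$ to $U_0$ or $U_1$ by an element of $G_0$, and then adjust inside the resulting $U_d$ using a Levi-type subgroup of $G_0$.

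Since $\dim V = k < n$, we can iteratively adjoin isotropic vectors from $V^\perp \setminus V$ to $V$ until maximality is reached; by Proposition \ref{prop2.2} the resulting $\tilde V$ has dimension $n$. Corollary \ref{cor2.4}(i) then produces $g \in G_0$ with $g\tilde V = U_d$ for some $d \in \{0,1\}$, so $gV$ is a $k$-dimensional subspace of $U_d$. Note that since $k \le n-1$, the target $\bbf e_1 \oplus \cdots \oplus \bbf e_k$ lies in $U_0 \cap U_1 = \bbf e_1 \oplus \cdots \oplus \bbf e_{n-1}$, hence in $U_d$ for either value of $d$.

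If $d = 0$, the Levi factor $L \subset P \subset G_0$ acts on $U_0$ as ${\rm GL}_n(\bbf)$ via $\ell(A)\mapsto A$, so some $\ell \in L$ sends $gV$ to $\bbf e_1 \oplus \cdots \oplus \bbf e_k$ and $\ell g \in G_0$ is the required element.

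The case $d = 1$ is the main subtlety, since $L$ does not stabilize $U_1$ and $w_1 \notin G_0$. My remedy is to use the conjugate subgroup $L_1 := w_1 L w_1^{-1}$: from $\det w_1 = -1$ and $\det \ell(A) = 1$ one gets $L_1 \subset G_0$, while $L_1$ stabilizes $U_1 = w_1 U_0$ and acts on it as ${\rm GL}_n(\bbf)$. Picking $\ell' \in L_1$ with $\ell'(gV) = \bbf e_1 \oplus \cdots \oplus \bbf e_k$ then finishes the argument; the hypothesis $k \le n-1$ is exactly what guarantees that the target sits inside $U_1$.
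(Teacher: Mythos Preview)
Your proof is correct and follows essentially the approach the paper intends: the paper merely writes ``Considering $L$-action, we have:'' before the corollary, leaving the details implicit, and you have supplied them. Your handling of the $d=1$ case via the conjugate Levi $L_1 = w_1 L w_1^{-1} \subset G_0$ is exactly the kind of adjustment needed, and your observation that $k \le n-1$ guarantees the target lies in $U_0 \cap U_1$ is the right way to close the argument.
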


\begin{proposition} Let $V$ be an $n-1$-dimensional isotropic subspace of $\bbf^{2n}$. Then there are only two maximally isotropic subspaces $V_1$ and $V_2$ of $\bbf^{2n}$ containing $V$. Moreover if $gV=V$ for some $g\in G_0$, then $gV_1=V_1$ and $gV_2=V_2$.
\label{prop2.6}
\end{proposition}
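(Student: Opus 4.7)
The plan is to reduce to a standard position for $V$, compute the possibilities for $V_1, V_2$ by passing to the quotient $V^\perp/V$, and then use the $G_0$-orbit decomposition of $M$ (Corollary \ref{cor2.4}) to show the two containing subspaces are not swapped by any element of $G_0$.

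\textbf{Reduction and enumeration.} By Corollary \ref{cor2.5}, there is an $h\in G_0$ with $hV=\bbf e_1\oplus\cdots\oplus \bbf e_{n-1}$; after conjugating, I may assume $V$ is this standard subspace. A direct calculation from $(e_i,e_j)=\delta_{i,2n+1-j}$ shows
\[V^\perp=\bbf e_1\oplus\cdots\oplus \bbf e_{n+1},\]
which is $(n+1)$-dimensional. Since any maximally isotropic $W\supset V$ is isotropic, it satisfies $(W,V)=\{0\}$, hence $W\subset V^\perp$. By Proposition \ref{prop2.2}, $\dim W=n$, so $W/V$ is a line in the $2$-dimensional quotient $V^\perp/V$.

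\textbf{Counting isotropic lines in the quotient.} The form $(\,,\,)$ descends to a well-defined bilinear form on $V^\perp/V$ (since $V$ lies in the radical of its restriction to $V^\perp$). In the basis $\bar e_n,\bar e_{n+1}$ the Gram matrix is $\bigl(\begin{smallmatrix}0&1\\1&0\end{smallmatrix}\bigr)$, a hyperbolic plane. An isotropic vector $a\bar e_n+b\bar e_{n+1}$ satisfies $2ab=0$, and since ${\rm char}\,\bbf\ne 2$ this forces $a=0$ or $b=0$. So there are exactly two isotropic lines, namely $\bbf\bar e_n$ and $\bbf\bar e_{n+1}$, yielding exactly two maximally isotropic subspaces containing $V$:
\[V_1=V\oplus\bbf e_n=U_0\mand V_2=V\oplus\bbf e_{n+1}=U_1.\]

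\textbf{Stability under $g\in G_0$.} For the second assertion, suppose $gV=V$ with $g\in G_0$. Since $g$ preserves the bilinear form, $gV^\perp=V^\perp$, so $g$ permutes the two isotropic lines in $V^\perp/V$, i.e.\ $g$ either fixes $\{V_1,V_2\}$ pointwise or swaps them. But by Corollary \ref{cor2.4}, $V_1=U_0\in G_0U_0$ and $V_2=U_1\in G_0U_1$ lie in distinct $G_0$-orbits on $M$, hence no element of $G_0$ sends $V_1$ to $V_2$. This rules out the swap, and so $gV_1=V_1$ and $gV_2=V_2$. For a general (non-standard) $V$, conjugating back by $h\in G_0$ gives the same conclusion, since $hG_0h^{-1}=G_0$ and the $G_0$-orbit decomposition is preserved.

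The only real subtlety is the last point: without Corollary \ref{cor2.4} (the parity distinction via $\det w_d$) one could only conclude that the two subspaces are permuted; invoking that $V_1$ and $V_2$ sit in different $G_0$-orbits is what pins them down individually.
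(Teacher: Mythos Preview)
Your proof is correct and follows the same route as the paper: reduce to $V=\bbf e_1\oplus\cdots\oplus\bbf e_{n-1}$ via Corollary~\ref{cor2.5}, identify the two maximally isotropic extensions as $U_0$ and $U_1$, and invoke Corollary~\ref{cor2.4} to rule out a swap. The paper merely asserts the enumeration step without justification, whereas you spell it out via the hyperbolic plane $V^\perp/V$; otherwise the arguments coincide.
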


\begin{proof} By Corollary \ref{cor2.5}, we may assume $V=\bbf e_1\oplus\cdots\oplus \bbf e_{n-1}$. Then there are only two maximally isotropic subspaces
$$V_1=U_0\mand V_2=U_1$$
containing $V$. The remaining assertion follows from Corollary \ref{cor2.4}.
\end{proof}

We use the following technical lemma given in \cite{M3}.

\begin{lemma} $($Lemma 2.1 in \cite{M3}$)$ Let $W$ be a nondegenerate subspace of $\bbf^{2n}$. Let $W_1,\ldots,W_k,W'_1,\ldots,W'_k$ be subspaces of $W$ such that
$$W=W_1+\cdots+ W_k=W'_1+\cdots+ W'_k.$$
Let $U_1$ be an isotropic subspace of $W^\perp$ and $U_2,\ldots,U_k$ be subspaces of $U_1$. Let $g$ be an element of $G$ such that
$$g(W_i\oplus U_i)=W'_i\oplus U_i\quad \mbox{for }i=1,\ldots,k.$$
Then we have$:$

{\rm (i)} $g(W\oplus U_1)=W\oplus U_1$.\qquad {\rm (ii)} $gU_1=U_1$.

\noindent$($ By {\rm (i)} and {\rm (ii)}, $g$ induces a linear automorphism on  $(W\oplus U_1)/U_1\cong W$ preserving the bilinear form $(\ ,\ )$.$)$
\label{lem2.7}
\end{lemma}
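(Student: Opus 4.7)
The plan is to derive (i) by summing the given relations over $i$, and then derive (ii) by characterizing $U_1$ intrinsically as the radical of the bilinear form restricted to $W\oplus U_1$.

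For (i), I first note that $W\cap U_1=\{0\}$: any $x\in W\cap U_1$ lies in $W\cap W^\perp$, and this intersection is trivial because $W$ is nondegenerate. Hence the notation $W\oplus U_1$ is justified, and more generally the sums $W_i\oplus U_i\subset W\oplus U_1$ make sense. Now I sum the hypothesis over $i=1,\ldots,k$:
\begin{equation*}
g\Bigl(\sum_{i=1}^k(W_i\oplus U_i)\Bigr)=\sum_{i=1}^k g(W_i\oplus U_i)=\sum_{i=1}^k(W'_i\oplus U_i).
\end{equation*}
Since each $W_i\subset W$ and each $U_i\subset U_1$, and since $U_1$ itself appears (as the case $i=1$), I get $\sum_i(W_i\oplus U_i)=\bigl(\sum_i W_i\bigr)+\bigl(\sum_i U_i\bigr)=W+U_1=W\oplus U_1$, and likewise on the right $\sum_i(W'_i\oplus U_i)=W\oplus U_1$. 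This yields $g(W\oplus U_1)=W\oplus U_1$.

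For (ii), the idea is to recognize $U_1$ as the radical of the form $(\ ,\ )$ restricted to the space $W\oplus U_1$. If $x=w+u$ with $w\in W$ and $u\in U_1$ satisfies $(x,W\oplus U_1)=\{0\}$, then since $U_1\subset W^\perp$ we have $(u,W)=\{0\}$, so $(w,W)=(x,W)=\{0\}$, which forces $w=0$ by nondegeneracy of $W$; conversely every $u\in U_1$ annihilates $W$ (as $U_1\subset W^\perp$) and annihilates $U_1$ (as $U_1$ is isotropic). Hence the radical equals $U_1$. Since $g\in G$ preserves $(\ ,\ )$ and, by (i), preserves $W\oplus U_1$, it must preserve this radical, giving $gU_1=U_1$.

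The argument is essentially bookkeeping; the only conceptually nontrivial point is noticing that $U_1$ admits the intrinsic description as the radical of the form on $W\oplus U_1$, which then automatically upgrades invariance of $W\oplus U_1$ to invariance of $U_1$. I do not foresee a genuine obstacle.
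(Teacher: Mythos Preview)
Your proof is correct. The paper does not actually supply its own proof of this lemma; it merely cites Lemma~2.1 of \cite{M3}. Your argument---summing the hypotheses to obtain (i), then identifying $U_1$ as the radical of the restricted form to deduce (ii)---is the natural one and almost certainly coincides with the proof in the cited reference.
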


\section{Exclusion of multiple flag varieties of infinite type}

\subsection{Proof of Proposition \ref{prop1.2}}

Consider $\bbf^4$ with the canonical basis $f_1,\ldots,f_4$ and the symmetric bilinear form $(\ ,\ )$ such that $(f_i,f_j)=\delta_{i,5-j}$. Write $G'={\rm O}_4(\bbf)$ and $G'_0={\rm SO}_4(\bbf)$. Take maximally isotropic subspaces
\begin{align*}
U_1 & =\bbf f_1\oplus\bbf f_2,\quad U_2=\bbf f_3\oplus\bbf f_4,\quad U_3=\bbf(f_1+f_3)\oplus \bbf(f_2-f_4) \\
\mand U_{4,\lambda} & =\bbf(f_1+\lambda f_3)\oplus \bbf(f_2-\lambda f_4)
\end{align*}
with $\lambda\in\bbf$. Take one-dimensional subspaces $U'_1=\bbf f_1,\ U'_2=\bbf f_3,\ U'_3=\bbf(f_2-f_4)$ and $U'_{4,\lambda}=\bbf(f_2-\lambda f_4)$ of $U_1,\ U_2,\ U_3$ and $U_{4,\lambda}$, respectively. Define multiple flags
\begin{align*}
m^0_\lambda & =(U'_1,U'_2,U'_3,U'_{4,\lambda}), \quad 
m^1_\lambda =(U_1,U'_2,U'_3,U'_{4,\lambda}), \quad
m^2_\lambda =(U_1,U_2,U'_3,U'_{4,\lambda}), \\
m^3_\lambda & =(U_1,U_2,U_3,U'_{4,\lambda})
\mand m^4_\lambda =(U_1,U_2,U_3,U_{4,\lambda}).
\end{align*}

\begin{lemma} Suppose $G'm^i_\lambda\ni m^i_\mu$ for some $i=0,\ldots,4$. Then we have $\lambda=\mu$.
\label{lem3.1}
\end{lemma}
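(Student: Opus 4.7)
The plan is to argue case by case over $i = 0, 1, 2, 3, 4$, using in every case the preservation of the bilinear form, $(gu, gv) = (u, v)$ for $g \in G'$, combined with the conditions that $g$ sends each component of $m^i_\lambda$ to the corresponding component of $m^i_\mu$.

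For cases $i = 2, 3, 4$, the conditions $gU_1 = U_1$ and $gU_2 = U_2$ force $g$ to preserve the Lagrangian decomposition $\bbf^4 = U_1 \oplus U_2$; setting $A = g|_{U_1} \in \mathrm{GL}_2(\bbf)$ in the basis $(f_1, f_2)$, form preservation then forces $g|_{U_2} = A^{-T}$ in the basis $(f_4, f_3)$, since the pairing between $U_1$ and $U_2$ in these bases is the identity matrix. In case $i = 4$, $gU_3 = U_3$ fixes $\det A = 1$; expanding $g(f_1 + \lambda f_3)$ in the basis $\{f_i\}$ and requiring the image to lie in $U_{4, \mu}$ yields $(\mu - \lambda)$ times each entry of the first column of $A$ equals zero, whence $\lambda = \mu$ by invertibility of $A$. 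In case $i = 3$, $gU_3 = U_3$ again fixes $\det A = 1$, and then $gU'_{4, \lambda} = U'_{4, \mu}$ applied to $g(f_2 - \lambda f_4)$ forces the off-diagonal entry $A_{12} = 0$ and, in turn, $\lambda = \mu$. In case $i = 2$, $gU'_3 = U'_3$ applied to $g(f_2 - f_4)$ directly forces $A_{12} = 0$ and $\det A = 1$; the same computation as in case 3 then yields $\lambda = \mu$.

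Case $i = 1$ requires more care, since only the single Lagrangian $U_1$ is preserved. From $gU'_2 = \bbf f_3$ one has $gf_3 = \beta f_3$; using $(f_3, f_j) = \delta_{j, 2}$ for all $j$ then forces $gf_1 \in \bbf f_1$ and $gf_4$ to have no $f_2$-component. Writing $gf_2 = sf_1 + tf_2$ (from $gU_1 = U_1$) and determining $gf_4$ from $gU'_3 = U'_3$, the relation $(f_2, f_4) = 0$ collapses $s = 0$. Then $g(f_2 - \lambda f_4) \in \bbf(f_2 - \mu f_4)$ gives $\lambda = \mu$; the degenerate subcase $\lambda = 1$, in which the third and fourth components of $m^1_1$ coincide, is immediate.

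For $i = 0$, the cleanest argument is a cross-ratio: for four isotropic lines $\bbf v_1, \bbf v_2, \bbf v_3, \bbf v_4 \subset \bbf^4$ with $(v_1, v_4)(v_2, v_3) \ne 0$, the scalar $(v_1, v_3)(v_2, v_4) / [(v_1, v_4)(v_2, v_3)]$ is independent of the choice of nonzero representatives $v_i$ and is invariant under $G'$. Direct evaluation on $m^0_\lambda$ gives $1/\lambda$, forcing $\lambda = \mu$ (the degenerate values of $\lambda$ are handled by direct inspection). The main obstacle throughout is bookkeeping: one must use exactly enough of the form-preservation relations to pin down the action of $g$ on each generator $f_i$, and then compare coefficients against the defining equations of each subspace in $m^i_\mu$.
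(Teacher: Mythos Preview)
Your proof is correct. The paper, however, takes a much shorter route: rather than treating the five cases separately, it first observes that in every case $g$ lies in $G'_0={\rm SO}_4(\bbf)$ (because $g$ fixes some maximal isotropic subspace --- either $U_1$ for $i\ge 1$, or $U'_1\oplus U'_2=\bbf f_1\oplus\bbf f_3$ for $i=0$ --- and Corollary~2.4 says the stabilizer of such a subspace in $G'$ lies in $G'_0$). Then Proposition~2.6 (together with the two-orbit structure of maximal isotropics under $G'_0$) lets one upgrade each one-dimensional $U'_j$ to the two-dimensional $U_j$ containing it: since $gU'_j=U'_j$ for $j=1,2,3$ and $g\in G'_0$, one gets $gU_j=U_j$; and since $U_{4,\lambda}$ and $U_{4,\mu}$ lie in the same $G'_0$-orbit while the other maximal isotropic through $U'_{4,\lambda}$ (namely $\bbf f_2\oplus\bbf f_4$) lies in the other, $gU'_{4,\lambda}=U'_{4,\mu}$ forces $gU_{4,\lambda}=U_{4,\mu}$. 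This collapses everything to the single case $i=4$.

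Your approach trades this structural reduction for explicit coordinate computations and, in case $i=0$, the cross-ratio invariant $(v_1,v_3)(v_2,v_4)/[(v_1,v_4)(v_2,v_3)]$. The cross-ratio is a nice self-contained device that avoids any appeal to the $G'_0$-orbit structure, and your case-by-case arguments are entirely elementary; the paper's method is shorter and illustrates the technique (upgrading via Proposition~2.6) that it uses repeatedly in later sections.
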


\begin{proof} Suppose $gm^i_\lambda=m^i_\mu$ for some $g\in G'$. When $i=1,2,3,4$, we have $gU_1=U_1$ and hence $g\in G'_0$ by Corollary \ref{cor2.4}. On the other hand, when $i=0$, we have $g(U'_1\oplus U'_2)=U'_1\oplus U'_2$ and hence $g\in G'_0$ by Corollary \ref{cor2.4}.

So we have only to consider the case of $i=4$ by Proposition \ref{prop2.6}.

Since $gU_1=U_1$ and $gU_2=U_2$,
$$g=\ell(A)=\bp A & 0 \\ 0 & J_2{}^tA^{-1}J_2 \ep\quad(J_2=\bp 0 & 1 \\ 1 & 0 \ep)$$
with some $A\in{\rm GL}_2(\bbf)$. Since $gU_3=U_3$, we have $g=\ell(A)$ with $A\in{\rm SL}_2(\bbf)$. Write
$$A=\bp a & b \\ c & d \ep.$$
Then we have
$$J_2{}^tA^{-1}J_2=J_2\bp d & -c \\ -b & a \ep J_2=\bp a & -b \\ -c & d \ep.$$
So we have
$$g(f_1+\lambda f_3)= a(f_1+\lambda f_3)+c(f_2-\lambda f_4)\mand g(f_2-\lambda f_4)=b(f_1+\lambda f_3)+d(f_2-\lambda f_4).$$
This implies $gU_{4,\lambda}=U_{4,\lambda}$ and therefore $\lambda=\mu$. 
\end{proof}

\noindent {\it Proof of Proposition \ref{prop1.2}}: We may assume $k=4$ and ${\bf a}_j=(\alpha_j)$ for $j=1,2,3,4$. We may moreover assume $\alpha_1\ge \alpha_2\ge \alpha_3\ge \alpha_4$. Define
$$j_0=\begin{cases} 0 & \text{if $\alpha_1=1$,} \\ \max\{j\mid \alpha_j\ge 2\} & \text{if $\alpha_1\ge 2$.} \end{cases}$$
Let $\varphi: \bbf^4\to \bbf^{2n}$ be the linear inclusion defined by $\varphi(f_i)=e_{i+n-2}$ for $i=1,2,3,4$. Then $\varphi$ preserves the bilinear form $(\ ,\ )$. Define multiple flags $n_\lambda=(V_1,V_2,V_3,V_{4,\lambda})\in \mcm_{(\alpha_1),(\alpha_2),(\alpha_3),(\alpha_4)}$ by
$$V_j=\begin{cases} \varphi(U'_j) & \text{if $j>j_0$,} \\ U_{[\alpha_j-2]}\oplus \varphi(U_j) & \text{if $j\le j_0$} \end{cases}$$
for $j=1,2,3$ and
$$V_{4,\lambda}=\begin{cases} \varphi(U'_{4,\lambda}) & \text{if $j_0<4$,} \\ U_{[\alpha_4-2]}\oplus \varphi(U_{4,\lambda}) & \text{if $j_0=4$}\end{cases}$$
where $U_{[\ell]}=\bbf e_1\oplus\cdots\oplus\bbf e_\ell$. Suppose $\lambda,\mu\ne 1$ and $gn_\lambda=n_\mu$ for some $g\in G$. Then we have only to show that $\lambda=\mu$. Write
$$V_0=\begin{cases} \{0\} & \text{if $\alpha_1=1$,} \\ U_{[\alpha_1-2]} & \text{if $\alpha_1\ge 2$.} \end{cases}$$
Since $\lambda,\mu\ne 1$, we have
$$U'_1+U'_2+U'_3+U'_{4,\lambda}=U'_1+U'_2+U'_3+U'_{4,\mu}=\bbf^4.$$
So we have
$$gV_0=V_0\mand g(V_0\oplus \varphi(\bbf^4))=V_0\oplus \varphi(\bbf^4)$$
by Lemma \ref{lem2.7}. Hence $g$ induces a linear automorphism on $(V_0\oplus \varphi(\bbf^4))/V_0\cong \bbf^4$ which preserves the bilinear form $(\ ,\ )$. Since $gm^{j_0}_\lambda=m^{j_0}_\mu$, we have $\lambda=\mu$ by Lemma \ref{lem3.1}. \hfill$\square$

\subsection{Proof of Proposition \ref{prop1.3}}

Consider $\bbf^6$ with the canonical basis $f_1,\ldots,f_6$ and the symmetric bilinear form $(\ ,\ )$ such that $(f_i,f_j)=\delta_{i,7-j}$. We prepare following lemmas for $G'={\rm O}_6(\bbf)$. (Lemma \ref{lem3.2'} and Lemma \ref{lem3.1'} (i) are given in \cite{M3}.)

\begin{lemma} $($Lemma 2.9 in \cite{M3}$)$ Let $m_\lambda$ be the triple flag in $\mct_{(2),(2),(2)}$ defined by
$$m_\lambda=(\bbf f_1\oplus\bbf f_2,\ \bbf f_5\oplus\bbf f_6,\ \bbf(f_1+f_3+f_5)\oplus \bbf(\lambda f_2-f_4+(1-\lambda)f_6)).$$
Then we have
$$G'm_\lambda\ni m_\mu\Longrightarrow \lambda=\mu\mbox{ or }1-\mu.$$
\label{lem3.2'}
\end{lemma}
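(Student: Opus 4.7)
The plan is to exploit the fact that $V_1:=\bbf f_1\oplus\bbf f_2$ and $V_2:=\bbf f_5\oplus\bbf f_6$ are two disjoint $2$-dimensional isotropic subspaces of $\bbf^6$ that, together with $W:=(V_1\oplus V_2)^\perp=\bbf f_3\oplus\bbf f_4$, give a direct sum decomposition of $\bbf^6$. Any $g\in G'$ satisfying $gm_\lambda=m_\mu$ must send $V_1$ to $V_1$ and $V_2$ to $V_2$ componentwise, hence preserves $W$ as well and lies in the Levi subgroup $L'\cong {\rm GL}_2(\bbf)\times {\rm O}_2(\bbf)$. The ${\rm GL}_2$-factor records the action $A$ of $g$ on $V_1$, which then forces the action on $V_2$ to be the dual of $A$ with respect to the nondegenerate pairing $V_1\times V_2\to \bbf$ induced by $(\ ,\ )$; the ${\rm O}_2$-factor acts on the hyperbolic plane $W$ with form $(f_3,f_4)=1$.

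The second step is to split the analysis according to the ${\rm O}_2$-component of $g$, which has two cosets: the identity component (where $gf_3\in\bbf f_3$ and $gf_4\in\bbf f_4$) and the coset swapping the two isotropic lines (where $gf_3\in\bbf f_4$ and $gf_4\in\bbf f_3$). In the first case, write $gf_3=tf_3$ and $gf_4=t^{-1}f_4$ for some $t\in\bbf^\times$; in the second, $gf_3=tf_4$ and $gf_4=t^{-1}f_3$.

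The third and main step is an explicit computation. Writing $A=\bp a&b\\ c&d\ep$ on $V_1$ in the basis $f_1,f_2$, the forced dual action on $V_2$ in the basis $f_5,f_6$ becomes $(\det A)^{-1}\bp a&-b\\ -c&d\ep$. Compute $gu_\lambda$ and $gv_\lambda$, where $u_\lambda=f_1+f_3+f_5$ and $v_\lambda=\lambda f_2-f_4+(1-\lambda)f_6$ span $V_3(\lambda)$, and impose membership of both vectors in $V_3(\mu)=\bbf u_\mu\oplus \bbf v_\mu$. Matching coefficients of $f_1,\ldots,f_6$ in each ${\rm O}_2$-case produces an explicit linear system whose solutions force $b=c=0$ together with $\mu=\lambda$ in the first case, and $a=d=0$ together with $\mu=1-\lambda$ in the second, giving the desired dichotomy.

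The main obstacle is the careful bookkeeping of the coefficient equations in step three, and in particular ensuring that the degenerate values $\lambda\in\{0,1\}$, where the projection of $V_3(\lambda)$ into $V_1$ or $V_2$ drops rank, are handled consistently; in these cases one checks directly that the asserted conclusion $\mu\in\{\lambda,1-\lambda\}$ still holds. Apart from this, the argument reduces to routine linear algebra once the Levi-decomposition reduction of step one is in hand.
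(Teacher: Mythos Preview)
The paper does not prove this lemma; it is quoted from \cite{M3} (Lemma~2.9 there) without argument, so there is no in-paper proof to compare against.

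That said, your approach is correct and is essentially the natural one. The reduction to the Levi subgroup via $gV_1=V_1,\ gV_2=V_2\Rightarrow gW=W$ with $W=(V_1\oplus V_2)^\perp=\bbf f_3\oplus\bbf f_4$ is exactly right, and your identification of the induced action on $V_2$ as $(\det A)^{-1}\bp a&-b\\-c&d\ep$ is the correct dual under the pairing $(f_1,f_6)=(f_2,f_5)=1$. The split into the two ${\rm O}_2$-cosets on the hyperbolic plane $W$ cleanly produces the dichotomy $\mu=\lambda$ versus $\mu=1-\lambda$. One small remark on your caveat about $\lambda\in\{0,1\}$: in fact these cases are \emph{easier}, not harder. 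In the swap case (your Case~2) with $\lambda=0$ or $\lambda=1$, the $f_3$-coefficient of $g v_\lambda$ forces $\alpha'=-t^{-1}\ne0$ while the $f_1$- or $f_5$-coefficient forces $\alpha'=0$, an immediate contradiction; so only Case~1 survives and $\mu=\lambda$, which already lies in $\{\lambda,1-\lambda\}$. No separate rank-drop analysis is needed.
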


\begin{lemma}  For the following triple flags $m_\lambda\ (\lambda\in\bbf)$ in $\mct_{(2),(2),(1,2)},\ \mct_{(2),(1,2),(1,2)}$ and $\mct_{(1,2),(1,2),(1,2)}$, respectively, we have
$$gm_\lambda=m_\mu\mbox{ for some }g\in G'\Longrightarrow \lambda=\mu.$$

{\rm (i)} $(\bbf f_1\oplus\bbf f_2, \ \bbf f_5\oplus\bbf f_6,\  \bbf(\lambda f_1-f_3+(1-\lambda) f_5)\subset$

\qquad $\bbf(f_1-f_5)\oplus \bbf(f_1-f_3)\oplus \bbf(f_2+f_4+f_6))$

{\rm (ii)} $(\bbf(f_1+f_5)\oplus \bbf(f_2-f_6),\ \bbf(f_1+f_3)\subset \bbf f_1\oplus\bbf f_2\oplus\bbf f_3,$

\qquad $\bbf(f_4+\lambda f_6)\subset \bbf f_4\oplus\bbf f_5\oplus\bbf f_6)$

{\rm (iii)} $(\bbf(f_1+f_3)\subset \bbf f_1\oplus\bbf f_2\oplus\bbf f_3,\ \bbf(f_1+f_5)\subset \bbf f_1\oplus\bbf f_4\oplus\bbf f_5,$

\qquad$\bbf(f_3+\lambda f_5)\subset \bbf f_3\oplus\bbf f_5\oplus\bbf f_6)$
\label{lem3.1'}
\end{lemma}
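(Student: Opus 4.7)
The plan is to exploit preservation of the large isotropic subspaces appearing in $m_\lambda$ to restrict $g$ sharply, and then read off $\lambda=\mu$ from the image of the $\lambda$-dependent one-dimensional component. Since the parenthetical remark in the text attributes part~(i) to \cite{M3}, only (ii) and (iii) require fresh arguments here. Part~(iii) is essentially immediate: the three $3$-dimensional components appearing in $m_\lambda$ are the maximal isotropics $U_0=\bbf f_1\oplus\bbf f_2\oplus\bbf f_3$, $W_2=\bbf f_1\oplus\bbf f_4\oplus\bbf f_5$ and $W_3=\bbf f_3\oplus\bbf f_5\oplus\bbf f_6$, so $g$ stabilizes each of them. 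Their pairwise intersections yield $g\bbf f_1=\bbf f_1$, $g\bbf f_3=\bbf f_3$ and $g\bbf f_5=\bbf f_5$; equivalently $gf_i=c_if_i$ for $i\in\{1,3,5\}$ with $c_i\in\bbf^\times$. Preservation of the $1$-dimensional components $\bbf(f_1+f_3)$ and $\bbf(f_1+f_5)$ then forces $c_1=c_3=c_5$, so $g(f_3+\lambda f_5)=c_1(f_3+\lambda f_5)$ and hence $\mu=\lambda$.

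For part~(ii), the $3$-dimensional components of $F_1$ and $F_2$ are the opposite maximal isotropics $U_0$ and $U_3=\bbf f_4\oplus\bbf f_5\oplus\bbf f_6$; thus $g$ lies in the Levi subgroup $L$, so $g=\ell(A)$ for some $A\in{\rm GL}_3(\bbf)$, and the action on $U_3$ in the basis $f_4,f_5,f_6$ is given by $B=J_3\,{}^tA^{-1}J_3$. Expanding $g(f_1+f_5)$ and $g(f_2-f_6)$ in coordinates and matching against $x(f_1+f_5)+y(f_2-f_6)\in V$, then using the explicit entries of $B$ in terms of $A$, forces $A$ to be block-diagonal with invertible $2\times 2$ upper-left block $M$ of determinant~$1$ and lower-right scalar $A_{33}$. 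The condition $g\bbf(f_1+f_3)=\bbf(f_1+f_3)$ further yields $A_{21}=0$ and $A_{33}=A_{11}$. A short computation then gives $gf_4=A_{11}^{-1}f_4$ and $gf_6=-A_{12}f_5+A_{11}^{-1}f_6$, so the image $g(f_4+\lambda f_6)$ has $f_5$-coefficient $-\lambda A_{12}$; this must vanish, and in either of the two remaining cases ($\lambda=0$ or $A_{12}=0$) the image is a scalar multiple of $f_4+\lambda f_6$, so $\mu=\lambda$.

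The main obstacle is the linear-algebraic bookkeeping in~(ii): because orthogonality couples the entries of $B$ to those of $A$ via the anti-transpose formula $B=J_3\,{}^tA^{-1}J_3$, the constraints coming from $gV=V$ must be expanded and cross-checked carefully in order to deduce vanishing of the off-diagonal blocks and $\det M=1$. By contrast, part~(iii) illustrates how cleanly the conclusion emerges once one can show that $g$ stabilizes enough large subspaces to pin down the coordinate lines $\bbf f_i$ themselves.
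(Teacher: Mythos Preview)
Your argument is correct. For part~(iii) you match the paper's proof essentially verbatim. For part~(ii), however, you take a noticeably different and more laborious route than the paper does.

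The paper avoids the coordinate computation with $B=J_3\,{}^tA^{-1}J_3$ entirely by exploiting the bilinear form geometrically: one observes that $\bbf f_3$ is precisely the orthogonal complement of $V=\bbf(f_1+f_5)\oplus\bbf(f_2-f_6)$ inside $U_0=\bbf f_1\oplus\bbf f_2\oplus\bbf f_3$, and likewise $\bbf f_4=V^\perp\cap U_3$. Since $g$ preserves $V$, $U_0$ and $U_3$, it must stabilize these two lines, so $gf_3=kf_3$ and $gf_4=k^{-1}f_4$ for some $k\in\bbf^\times$. The rest is then a three-line pairing computation: writing $g(f_1+f_3)=a(f_1+f_3)$ and $g(f_4+\lambda f_6)=b(f_4+\mu f_6)$, one evaluates $(f_1+f_3,f_4)$, $(f_3,f_4+\lambda f_6)$ and $(f_1+f_3,f_4+\lambda f_6)$ before and after applying $g$ to obtain $a=k$, $b=k^{-1}$ and $1+\lambda=1+\mu$.

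Your approach, by contrast, parametrizes $g=\ell(A)$ and grinds through the entries of $A$ and $A^{-1}$ to deduce block-diagonality, $\det M=1$, $A_{21}=0$ and $A_{33}=A_{11}$ one step at a time. This is valid, and it has the minor advantage of giving a complete description of the stabilizer rather than just the information needed for $\lambda=\mu$. But the paper's orthogonal-complement trick is both shorter and more conceptual, and it is worth internalizing: whenever a flag component is a $2$-plane inside a maximal isotropic $3$-space, its orthogonal complement within that $3$-space is a canonically $g$-stable line, and you can often read off invariants directly from pairings with that line.
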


\begin{proof} (i) is proved in Lemma 2.12 in \cite{M3}.

(ii) The element $g$ stabilizes $\bbf f_3$ and $\bbf f_4$ since they are the orthogonal spaces of $\bbf(f_1+f_5)\oplus \bbf(f_2-f_6)$ in $\bbf f_1\oplus\bbf f_2\oplus\bbf f_3$ and  $\bbf f_4\oplus\bbf f_5\oplus\bbf f_6$, respectively. Hence we have
$$gf_3=kf_3\mand gf_4=k^{-1}f_4$$
with some $k\in\bbf^\times$. Write $g(f_1+f_3)=a(f_1+f_3)$ and $g(f_4+\lambda f_6)=b(f_4+\mu f_6)$ with $a,b\in\bbf^\times$. We have
$$1=(f_1+f_3,f_4)=(g(f_1+f_3),gf_4)=(a(f_1+f_3),k^{-1}f_4)=ak^{-1}$$
and
$$1=(f_3,f_4+\lambda f_6)=(gf_3,g(f_4+\lambda f_6))=(kf_3,b(f_4+\mu f_6))=kb.$$
Hence $a=k$ and $b=k^{-1}$. On the other hand,
$$1+\lambda=(f_1+f_3,f_4+\lambda f_6)=(g(f_1+f_3),g(f_4+\lambda f_6))=(k(f_1+f_3),k^{-1}(f_4+\mu f_6))=1+\mu.$$
Hence we have $\lambda=\mu$.

(iii) Taking the intersections of two of the three spaces
$$\bbf f_1\oplus\bbf f_2\oplus\bbf f_3,\quad \bbf f_1\oplus\bbf f_4\oplus\bbf f_5\mand \bbf f_3\oplus\bbf f_5\oplus\bbf f_6,$$
we have
$$g\bbf f_1=\bbf f_1,\quad g\bbf f_3=\bbf f_3,\quad g\bbf f_5=\bbf f_5.$$
Write $gf_1=af_1,\ gf_3=bf_3$ and $gf_5=cf_5$ with $a,b,c\in\bbf^\times$. Since $g\bbf(f_1+f_3)=\bbf(f_1+f_3)$ and since $g\bbf(f_1+f_5)=\bbf(f_1+f_5)$, we have
$$a=b=c.$$
Hence $g(f_3+\lambda f_5)=a(f_3+\lambda f_5)$. But $g(f_3+\lambda f_5)\subset \bbf(f_3+\mu f_5)$ by the assumption. Thus we have $\lambda=\mu$.
\end{proof}

\begin{corollary} Following four triple flag varieties of ${\rm O}_6(\bbf)$ are of infinite type.
$$\mct_{(2),(2),(2)},\qquad \mct_{(2),(2),(1,2)},\qquad \mct_{(2),(1,2),(1,2)},\qquad \mct_{(1,2),(1,2),(1,2)}.$$
\end{corollary}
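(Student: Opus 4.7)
The plan is to use the one-parameter families $\{m_\lambda\}_{\lambda\in\bbf}$ constructed in Lemma \ref{lem3.2'} and in the three parts of Lemma \ref{lem3.1'} as a source of infinitely many pairwise distinct $G'$-orbits, one family per triple flag variety. Concretely, for $\mct_{(2),(2),(2)}$ I take the family from Lemma \ref{lem3.2'}; for $\mct_{(2),(2),(1,2)}$, $\mct_{(2),(1,2),(1,2)}$, and $\mct_{(1,2),(1,2),(1,2)}$ I take the families from Lemma \ref{lem3.1'}(i), (ii), (iii) respectively. A brief inspection of the dimensions and the isotropy relations (for example, checking that $\bbf(f_1-f_5)\oplus\bbf(f_1-f_3)\oplus\bbf(f_2+f_4+f_6)$ really is a maximally isotropic $3$-space in $\bbf^6$, and that the one-dimensional subspace $\bbf(\lambda f_1-f_3+(1-\lambda)f_5)$ lies inside it) confirms that each $m_\lambda$ really does land in the advertised triple flag variety.

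For the orbit count, the cited lemmas do the essential work. In the first case, Lemma \ref{lem3.2'} forces $\mu\in\{\lambda,1-\lambda\}$ whenever $G'm_\lambda\ni m_\mu$, so each $G'$-orbit meets the family $\{m_\lambda\}_{\lambda\in\bbf}$ in at most two points; in the three remaining cases, Lemma \ref{lem3.1'} gives the stronger conclusion $\mu=\lambda$, so each orbit meets the family in at most one point. Since $\bbf$ is an infinite field, in every one of the four cases the family produces infinitely many distinct $G'$-orbits, which gives $|G'\backslash\mct|=\infty$ and concludes the argument.

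The main (and essentially only) obstacle, namely proving the non-equivalence of $m_\lambda$ and $m_\mu$ for generic $\lambda,\mu\in\bbf$, has already been handled inside the proofs of Lemmas \ref{lem3.2'} and \ref{lem3.1'}. The corollary is therefore a direct consequence of those two lemmas together with the observation that $|\bbf|=\infty$; no further ingredient is needed, and no serious calculation remains.
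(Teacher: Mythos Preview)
Your proposal is correct and matches the paper's approach exactly: the corollary is an immediate consequence of Lemmas \ref{lem3.2'} and \ref{lem3.1'}, using the one-parameter families $\{m_\lambda\}$ therein together with $|\bbf|=\infty$ to produce infinitely many distinct $G'$-orbits in each of the four triple flag varieties. The paper in fact gives no separate proof of the corollary, treating it as self-evident from the two lemmas, which is precisely what you do.
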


\bigskip
\noindent {\it Proof of Proposition \ref{prop1.3}}: We have only to show that
$${\bf a},\ {\bf b}\mbox{ and }{\bf c}\mbox{ are not equal to }(1)\mbox{ or }(n) \Longrightarrow |G\backslash \mct_{{\bf a},{\bf b},{\bf c}}|=\infty.$$
We may assume
\begin{align*}
{\bf a} & =(\alpha)\mbox{ with }2\le \alpha\le n-1\mbox{ or } {\bf a}=(1,n-1), \\
{\bf b} & =(\beta)\mbox{ with }2\le \beta\le n-1\mbox{ or } {\bf b}=(1,n-1), \\
\mand {\bf c} & =(\gamma)\mbox{ with }2\le \gamma\le n-1\mbox{ or } {\bf c}=(1,n-1).
\end{align*}
Exchanging the roles of ${\bf a},\ {\bf b}$ and ${\bf c}$ if necessary, we have only to consider  the following four cases of  $({\bf a},{\bf b},{\bf c})$:

(i) $((\alpha),(\beta),(\gamma))$ with $\alpha,\beta,\gamma\in\{2,\ldots,n-1\}$,

(ii) $((\alpha),(\beta),(1,n-1))$ with $\alpha,\beta\in\{2,\ldots,n-1\}$,

(iii) $((\alpha),(1,n-1),(1,n-1))$ with $\alpha\in\{2,\ldots,n-1\}$,

(iv) $((1,n-1),(1,n-1),(1,n-1))$.

\bigskip
Let $\varphi: \bbf^6\to\bbf^{2n}$ denote the linear inclusion defined by $\varphi(f_i)=e_{i+n-3}$ for $i=1,\ldots,6$. ($\varphi$ preserves the bilinear form $(\ ,\ )$.)

(i) When $({\bf a},{\bf b},{\bf c})=((\alpha),(\beta),(\gamma))$ with $\alpha,\beta,\gamma\in\{2,\ldots,n-1\}$, define $m_\lambda=(W_1, W_2, W_3^\lambda)\in\mct_{{\bf a},{\bf b},{\bf c}}$ for $\lambda\in\bbf^\times$ by
\begin{align*}
W_1 & =\varphi(\bbf f_1\oplus\bbf f_2)\oplus U_{[\alpha-2]},\ W_2=\varphi(\bbf f_5\oplus\bbf f_6)\oplus U_{[\beta-2]} \\
\mand W_3^\lambda & =\varphi(\bbf(f_1+f_3+f_5)\oplus\bbf(\lambda f_2-f_4+(1-\lambda)f_6))\oplus U_{[\gamma-2]}.
\end{align*}
Then we have $Gm_\lambda\ni m_\mu\Longrightarrow \lambda=\mu$ or $1-\mu$ by Lemma \ref{lem2.7} and Lemma \ref{lem3.2'}.

(ii) When $({\bf a},{\bf b},{\bf c})=((\alpha),(\beta),(1,n-1))$ with $\alpha,\beta\in\{2,\ldots,n-1\}$, define $m_\lambda=(W_1, W_2, W_3^\lambda\subset W_4)\in\mct_{{\bf a},{\bf b},{\bf c}}$ for $\lambda\in\bbf^\times$ by
\begin{align*}
W_1 & =\varphi(\bbf f_1\oplus\bbf f_2)\oplus U_{[\alpha-2]},\ W_2=\varphi(\bbf f_5\oplus\bbf f_6)\oplus U_{[\beta-2]}, \\
W_3^\lambda & =\varphi(\bbf(\lambda f_1-f_3+(1-\lambda) f_5) \\
\mand W_4 & =\varphi(\bbf(f_1-f_5)\oplus \bbf(f_1-f_3)\oplus \bbf(f_2+f_4+f_6))\oplus U_{[n-3]}.
\end{align*}
Then we have $Gm_\lambda\ni m_\mu\Longrightarrow \lambda=\mu$ by Lemma \ref{lem2.7} and Lemma \ref{lem3.1'} (i).

(iii) When $({\bf a},{\bf b},{\bf c})=((\alpha),(1,n-1),(1,n-1))$ with $\alpha\in\{2,\ldots,n-1\}$, define $m_\lambda=(W_1, W_2\subset W_3, W_4^\lambda\subset W_5)\in\mct_{{\bf a},{\bf b},{\bf c}}$ for $\lambda\in\bbf^\times$ by
\begin{align*}
W_1 & =\varphi(\bbf (f_1+f_5)\oplus\bbf (f_2-f_6))\oplus U_{[\alpha-2]}, \\
W_2 & =\varphi(\bbf (f_1+f_3)),\ W_3=\varphi(\bbf f_1\oplus\bbf f_2\oplus\bbf f_3)\oplus U_{[n-3]}, \\
W_4^\lambda & =\varphi(\bbf(f_4+\lambda f_6))\mand W_5=\varphi(\bbf f_4\oplus \bbf f_5\oplus\bbf f_6)\oplus U_{[n-3]}.
\end{align*}
Then we have $Gm_\lambda\ni m_\mu\Longrightarrow \lambda=\mu$ by Lemma \ref{lem2.7} and Lemma \ref{lem3.1'} (ii).

(iv) When $({\bf a},{\bf b},{\bf c})=((1,n-1),(1,n-1),(1,n-1))$, define $m_\lambda=(W_1\subset  W_2, W_3\subset W_4, W_5^\lambda\subset W_6)\in\mct_{{\bf a},{\bf b},{\bf c}}$ for $\lambda\in\bbf^\times$ by
\begin{align*}
W_1 & =\varphi(\bbf (f_1+f_3)), \ W_2=\varphi(\bbf f_1\oplus\bbf f_2\oplus\bbf f_3)\oplus U_{[n-3]}, \\
W_3 & =\varphi(\bbf (f_1+f_5)),\ W_4=\varphi(\bbf f_1\oplus\bbf f_4\oplus\bbf f_5)\oplus U_{[n-3]}, \\
W_5^\lambda & =\varphi(\bbf(f_3+\lambda f_5))\mand W_6=\varphi(\bbf f_3\oplus \bbf f_5\oplus\bbf f_6)\oplus U_{[n-3]}.
\end{align*}
Then we have $Gm_\lambda\ni m_\mu\Longrightarrow \lambda=\mu$ by Lemma \ref{lem2.7} and Lemma \ref{lem3.1'} (iii). \hfill$\square$

\subsection{Proof of Proposition \ref{prop1.4}}

Consider $\bbf^5$ with the canonical basis $f_1,\ldots,f_5$ and the symmetric bilinear form $(\ ,\ )$ such that $(f_i,f_j)=\delta_{i,6-j}$. 
We prepare a lemma for ${\rm O}_5(\bbf)$ which follows easily from Lemma 2.5 in \cite{M3}. (We give a proof for the sake of convenience.)

\begin{lemma} Let $m_\lambda\ (\lambda\in\bbf^\times)$ be the triple flag in $\mct_{(1),(1,1),(1,1)}$ for $G'={\rm O}_5(\bbf)$ given by
$$m_\lambda=(\bbf(f_1+f_3-\frac{1}{2}f_5),\ \bbf(f_1+f_2)\subset \bbf f_1\oplus\bbf f_2,\ \bbf(f_4+\lambda f_5)\subset \bbf f_4\oplus\bbf f_5).$$
Then
$$G'm_\lambda\ni m_\mu\Longrightarrow \lambda=\mu.$$
\label{lem3.3'}
\end{lemma}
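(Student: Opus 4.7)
The plan is to show that any $g\in G'$ satisfying $gm_\lambda=m_\mu$ must act as a scalar $c=\pm 1$ on all of $\bbf^5$, after which $g\bbf(f_4+\lambda f_5)=\bbf(f_4+\lambda f_5)$ immediately forces $\lambda=\mu$. The argument exploits the rigidity of the two maximally isotropic planes $V_3:=\bbf f_1\oplus\bbf f_2$ and $V_5:=\bbf f_4\oplus\bbf f_5$, which $g$ must preserve, together with the constraints from the three distinguished lines.

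First I would observe that $gV_3=V_3$ and $gV_5=V_5$, so $g$ preserves the nondegenerate $4$-space $V_3\oplus V_5$. Its orthogonal complement is $\bbf f_3$, which is therefore $g$-stable, and since $(f_3,f_3)=1$ the scaling factor $c$ in $gf_3=cf_3$ satisfies $c^2=1$.

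Next I would use the first flag. Writing $gf_1=\alpha f_1+\beta f_2\in V_3$ and $gf_5=\gamma f_4+\delta f_5\in V_5$, the expansion
$$g(f_1+f_3-\tfrac{1}{2}f_5)=\alpha f_1+\beta f_2+cf_3-\tfrac{1}{2}\gamma f_4-\tfrac{1}{2}\delta f_5$$
must be a scalar multiple of $f_1+f_3-\tfrac{1}{2}f_5$. Matching the $f_2$ and $f_4$ coefficients to zero forces $\beta=\gamma=0$, and matching the ratios of the $f_1,f_3,f_5$ coefficients forces $\alpha=c=\delta$. Hence $gf_1=cf_1$ and $gf_5=cf_5$. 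Then writing $gf_2=\xi f_1+\eta f_2$ and using $g\bbf(f_1+f_2)=\bbf(f_1+f_2)$, I obtain $\xi=k-c$, $\eta=k$ for some $k\in\bbf^\times$. The orthogonality relation $(gf_2,gf_5)=(f_2,f_5)=0$ becomes $c(k-c)=0$, so $k=c$ and $gf_2=cf_2$. A symmetric computation on $V_5$, starting from $(gf_1,gf_4)=0$ (which shows $gf_4\in\bbf f_4$, say $gf_4=\epsilon f_4$) and $(gf_2,gf_4)=1$ (which yields $c\epsilon=1$, hence $\epsilon=c$), gives $gf_4=cf_4$.

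At this point $g=c\cdot\mathrm{id}_{\bbf^5}$ with $c=\pm 1$, and $g(f_4+\lambda f_5)=c(f_4+\lambda f_5)$ lies in $\bbf(f_4+\mu f_5)$ precisely when $\lambda=\mu$. I do not anticipate any genuine obstacle; the argument is routine linear algebra once one notices that the peculiar line $\bbf(f_1+f_3-\tfrac{1}{2}f_5)$ is engineered so that its stabilizer condition decouples the three blocks $V_3$, $\bbf f_3$, $V_5$ and synchronizes their diagonal scalars. The only point requiring care is keeping track of the orthogonality relations so as not to lose the sign ambiguity $c=\pm 1$ (both branches end up producing genuine symmetries fixing every $m_\lambda$, so nothing is lost).
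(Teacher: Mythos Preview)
Your argument is correct and follows essentially the same route as the paper: both show that any $g\in G'$ sending $m_\lambda$ to $m_\mu$ must be $\pm I_5$, whence $\lambda=\mu$. The only cosmetic difference is that the paper writes $g$ in block form $\mathrm{diag}(A,\varepsilon,J_2{}^tA^{-1}J_2)$ from the outset and observes that $\bbf f_2$ and $\bbf f_4$ are the orthogonals of $\bbf(f_1+f_3-\tfrac{1}{2}f_5)$ inside $V_3$ and $V_5$ (forcing $A$ diagonal immediately), whereas you reach the same conclusions by direct coefficient matching and pairwise orthogonality checks.
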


\begin{proof} Suppose $gm_\lambda=m_\mu$ for some $g\in G'$. Since $g$ stabilizes $\bbf f_1\oplus\bbf f_2$ and $\bbf f_4\oplus\bbf f_5$, we can write
$$g=\bp A & 0 & 0 \\ 0 & \varepsilon & 0 \\ 0 & 0 & J_2{}^tA^{-1}J_2 \ep$$
with some $A\in{\rm GL}_2(\bbf)$ and $\varepsilon\in\{\pm 1\}$. Since $\bbf f_2$ and $\bbf f_4$ are the orthogonal subspaces of $\bbf(f_1+f_3-\frac{1}{2}f_5)$ in $\bbf f_1\oplus\bbf f_2$ and $\bbf f_4\oplus\bbf f_5$, respectively, we have $g\bbf f_2=\bbf f_2$ and $g\bbf f_4=\bbf f_4$. Hence we can write
$$A=\bp a & 0 \\ 0 & b \ep$$
with some $a,b\in\bbf^\times$. Since $g$ stabilizes $\bbf(f_1+f_2)$ and $\bbf(f_1+f_3-\frac{1}{2}f_5)$, we have $a=b=\varepsilon$ and therefore $g=\varepsilon I_5$. Since $g\bbf(f_4+\lambda f_5)=\bbf(f_4+\mu f_5)$, we have $\lambda=\mu$.
\end{proof}

\noindent {\it Proof of Proposition \ref{prop1.4}}: Suppose ${\bf a}=(1),\ {\bf b}=(\beta_1,\beta_2)$ and ${\bf c}=(\gamma_1,\gamma_2)$ with $\beta_1+\beta_2<n,\ \gamma_1+\gamma_2<n$. Then we have only to show $|G\backslash \mct_{{\bf a},{\bf b},{\bf c}}|=\infty$.

Let $\varphi: \bbf^5\to\bbf^{2n}$ denote the linear inclusion defined by
$$\varphi(f_1)=e_{n-2},\ \varphi(f_2)=e_{n-1},\ \varphi(f_3)=e_n+\frac{1}{2}e_{n+1},\ \varphi(f_4)=e_{n+2}\mand \varphi(f_5)=e_{n+3}.$$
Then we have $(\varphi(u),\varphi(v))=(u,v)$ for $u,v\in\bbf^5$.

Define triple flags $m_\lambda\in\mct_{{\bf a},{\bf b},{\bf c}}$ for $\lambda\in\bbf^\times$ by
\begin{align*}
m_\lambda & =(\varphi(\bbf(f_1+f_3-\frac{1}{2}f_5)),\ \varphi(\bbf(f_1+f_2))\oplus U_{[\beta_1-1]}\subset \varphi(\bbf f_1\oplus\bbf f_2)\oplus U_{[\beta_1+\beta_2-2]}, \\
& \qquad \varphi(\bbf (f_4+\lambda f_5))\oplus U_{[\gamma_1-1]}\subset \varphi(\bbf f_4\oplus\bbf f_5)\oplus U_{[\gamma_1+\gamma_2-2]}).
\end{align*}
Then we have $Gm_\lambda\ni m_\mu\Longrightarrow \lambda=\mu$ by Lemma \ref{lem2.7} and Lemma \ref{lem3.3'}.
\hfill$\square$

\subsection{Proof of Proposition \ref{prop1.5}} \label{sec3.1}

Consider $\bbf^8$ with the canonical basis $f_1,\ldots,f_8$ and the symmetric bilinear form $(\ ,\ )$ such that $(f_i,f_j)=\delta_{i,9-j}$. Take maximal isotropic subspaces
\begin{align*}
U_+ & =\bbf f_1\oplus \bbf f_2\oplus \bbf f_3\oplus \bbf f_4, \quad
U_-=\bbf f_5\oplus \bbf f_6\oplus \bbf f_7\oplus \bbf f_8 \\
 \mand
V & =\bbf(f_1+f_7)\oplus \bbf(f_2-f_8)\oplus \bbf(f_3+f_5)\oplus \bbf(f_4-f_6)
\end{align*}
of $\bbf^8$. Note that the space $V$ is written as
$$V=\{u+\kappa(u)\mid u\in U_+\}$$
where $\kappa: U_+\to U_-$ is the linear bijection defined by
$$\kappa(f_1)=f_7,\quad \kappa(f_2)=-f_8,\quad \kappa(f_3)=f_5,\mand \kappa(f_4)=-f_6.$$
Consider the bilinear form $\langle\ ,\ \rangle$ on $U_+$ defined by
$$\langle u,v\rangle =(\kappa(u),v)$$
for $u,v\in U_+$. Then $\langle\ ,\ \rangle$ is an alternating bilinear form satisfying
\begin{align*}
\langle f_1,f_2\rangle & =\langle f_3,f_4\rangle=1 
\mand \langle f_1,f_3\rangle =\langle f_1,f_4\rangle=\langle f_2,f_3\rangle=\langle f_2,f_4\rangle=0.
\end{align*}

Let $K$ denote the subgroup of $G'={\rm O}_8(\bbf)$ defined by
$$K=\{\ell(A)\mid A\in{\rm GL}_4(\bbf),\ \langle Au,Av \rangle=\langle u,v \rangle\mbox{ for }u,v\in U_+\}\cong {\rm Sp}_4(\bbf)$$
where
$$\ell(A)=\bp A & 0 \\ 0 & J_4{}^tA^{-1}J_4 \ep$$
for $A\in{\rm GL}_4(\bbf)$.

\begin{lemma} $K=\{g\in G'\mid gU_+=U_+,\ gU_-=U_-,\ gV=V\}$.
\end{lemma}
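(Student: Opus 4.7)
The plan is to prove both inclusions of the asserted equality separately. The containment $K \subseteq \{g \in G' \mid gU_+ = U_+,\ gU_- = U_-,\ gV = V\}$ is the substantive part; the reverse inclusion will follow once we identify the shape of $g$ using the analogue for $G'$ of the Levi decomposition from Section 2.

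For the reverse inclusion, I would observe that $U_+ = U_0$ and $U_- = U_n$ in the notation of the preliminaries (for $n=4$ and $G'={\rm O}_8(\bbf)$). Hence any $g \in G'$ with $gU_+ = U_+$ and $gU_- = U_-$ lies in the Levi subgroup $L'$, so $g = \ell(A)$ for some $A \in \mathrm{GL}_4(\bbf)$. It then remains to translate the hypothesis $gV = V$ into the symplectic condition $\langle Au, Av\rangle = \langle u, v\rangle$. For the forward inclusion, I need to check the converse: given $\ell(A)$ with $A$ preserving $\langle\,,\,\rangle$, verify that $\ell(A)V = V$ (preservation of $U_\pm$ being immediate from the block-diagonal shape of $\ell(A)$).

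Both of these reduce to the same key computation. Using the parametrization $V = \{u + \kappa(u) \mid u \in U_+\}$, I would compute directly
\[
\ell(A)(u + \kappa(u)) = Au + (J_4 {}^tA^{-1}J_4)\kappa(u),
\]
so $\ell(A)V = V$ if and only if $(J_4 {}^tA^{-1}J_4)\kappa(u) = \kappa(Au)$ for every $u \in U_+$. Call this identity $(*)$. The plan is then to show $(*)$ is equivalent to $A \in \mathrm{Sp}(\langle\,,\,\rangle)$. For this I would use two inputs: (a) the defining relation $\langle u, v\rangle = (\kappa(u), v)$ for $u,v \in U_+$, and (b) the invariance $(\ell(A)x, \ell(A)y) = (x,y)$ which holds automatically since $\ell(A) \in G'$. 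Writing $B = J_4 {}^tA^{-1}J_4$, I would compute, for arbitrary $u, v \in U_+$,
\[
\langle Au, Av\rangle = (\kappa(Au), Av), \qquad \langle u, v\rangle = (\kappa(u), v) = (B\kappa(u), Av),
\]
so $A$ preserves $\langle\,,\,\rangle$ precisely when $(\kappa(Au) - B\kappa(u),\, Av) = 0$ for all $v \in U_+$. Since $A$ is invertible and the pairing $(\,,\,)$ restricts to a nondegenerate pairing $U_- \times U_+ \to \bbf$, this is equivalent to $\kappa(Au) = B\kappa(u)$, which is $(*)$.

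The main obstacle is essentially bookkeeping: keeping straight how the symplectic form $\langle\,,\,\rangle$ on $U_+$ is encoded inside the symmetric form $(\,,\,)$ on $\bbf^8$ via the bijection $\kappa$, and using nondegeneracy of the $U_- \times U_+$ pairing at the right moment to pass from equality of inner products to the pointwise identity $(*)$. Once that link is in hand, both inclusions fall out of the same identity.
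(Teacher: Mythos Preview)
Your proposal is correct and follows essentially the same approach as the paper: reduce to $g=\ell(A)$, translate $gV=V$ into the commutation $\kappa(Au)=(J_4{}^tA^{-1}J_4)\kappa(u)$, and link this to the symplectic condition via $(\kappa(u),v)=(\ell(A)\kappa(u),\ell(A)v)$. The paper carries out only the inclusion $\{g:gU_\pm=U_\pm,\ gV=V\}\subseteq K$ explicitly and declares the converse clear; your write-up is simply more symmetric, establishing the full equivalence $(*)\Leftrightarrow A\in\mathrm{Sp}$ and thereby handling both inclusions at once.
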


\begin{proof} Let $g$ be an element of $G'$ such that
$$gU_+=U_+\quad\mbox{and that}\quad gU_-=U_-.$$
Then $g=\ell(A)$ with some $A\in {\rm GL}_4(\bbf)$.

Suppose moreover that $gV=V$. Then we have
$$g\kappa(u)=\kappa(gu)\quad\mbox{for all }u\in U_+.$$
Hence
$$\langle gu,gv \rangle=(\kappa(gu),gv)=(g\kappa(u),gv)=(\kappa(u),v)=\langle u,v \rangle$$
for $u,v\in U_+$ which implies $g\in K$.

Converse assertion is clear.
\end{proof}

\begin{lemma} Let $(U_+^{1,\lambda}\subset U_+^2,\ U_-^1\subset U_-^2)$ be one of the following six kinds of pairs of flags $(\lambda\in\bbf^\times)$ where $U'_+=\bbf(f_1+f_3)\oplus \bbf f_2\oplus \bbf f_4$ and $U'_-=\bbf f_5\oplus \bbf f_6\oplus \bbf f_7$.

{\rm (i)} $(\bbf(f_1+f_3)\oplus \bbf(f_2+\lambda f_4)\subset U_+,\ \bbf f_5\oplus \bbf f_6\subset U_-)$

{\rm (ii)} $(\bbf(f_1+f_3)\oplus \bbf(f_2+\lambda f_4)\subset U_+,\ \bbf f_5\oplus \bbf f_6\subset U'_-)$

{\rm (iii)} $(\bbf(f_1+f_3)\oplus \bbf(f_2+\lambda f_4)\subset U'_+,\ \bbf f_5\oplus \bbf f_6\subset U'_-)$

{\rm (iv)} $(\bbf(f_1+f_3)\oplus \bbf(f_2+\lambda f_4)\subset U_+,\ \bbf f_5\subset U'_-)$

{\rm (v)} $(\bbf(f_1+f_3)\oplus \bbf(f_2+\lambda f_4)\subset U'_+,\ \bbf f_5\subset U'_-)$

{\rm (vi)} $(\bbf(f_2+\lambda f_4)\subset U'_+,\ \bbf f_5\subset U'_-)$

Suppose $gV=V,\ gU_+^{1,\lambda}=U_+^{1,\mu},\ gU_+^2=U_+^2,\ gU_-^1=U_-^1$ and $gU_-^2=U_-^2$ for some $g\in G'$. Then $\lambda=\mu$.
\label{lem3.2}
\end{lemma}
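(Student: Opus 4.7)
The plan is to show in all six cases that the hypotheses first force $g\in K$, and that the induced symplectic calculation on $U_+$ then forces $\lambda=\mu$.

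For the first reduction, I would observe that $V$ is maximally isotropic with $V\cap U_+=\{0\}$, so by Corollary \ref{cor2.4} (applied with $d=4$) we have $V\in G'_0 U_0$; hence $gV=V$ forces $g\in G'_0$. In cases (i), (ii), (iv) the hypothesis gives $gU_+=U_+$ outright, while in cases (iii), (v), (vi) the hypothesis is $gU'_+=U'_+$. In the latter, $U_+$ is one of the two maximally isotropic subspaces containing the $3$-dimensional isotropic $U'_+$, so Proposition \ref{prop2.6} combined with $g\in G'_0$ yields $gU_+=U_+$ in those cases as well. A parallel argument applied to $U_-^2$ (which is either $U_-$ itself or $U'_-$) gives $gU_-=U_-$ using $g\in G'_0$ and Proposition \ref{prop2.6}. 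The preceding lemma then produces $g=\ell(A)$ with $A\in\mathrm{Sp}_4(\bbf)$ acting on $U_+$.

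Next I would translate the remaining flag conditions into conditions on $A$. Using the perfect pairing between $U_+$ and $U_-$ induced by $(\,,\,)$, preservation of $\bbf f_5\oplus\bbf f_6$ by $g$ is equivalent to $A(\bbf f_1\oplus\bbf f_2)=\bbf f_1\oplus\bbf f_2$, preservation of $\bbf f_5$ is equivalent to $A(\bbf f_1\oplus\bbf f_2\oplus\bbf f_3)=\bbf f_1\oplus\bbf f_2\oplus\bbf f_3$, and preservation of $U'_-$ is equivalent to $A\bbf f_1=\bbf f_1$. The condition $gU_+^2=U_+^2$ either is automatic (when $U_+^2=U_+$) or forces $A$ to preserve $U'_+=\bbf(f_1+f_3)\oplus\bbf f_2\oplus\bbf f_4$.

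Finally I would run the symplectic calculation. The relations $\langle Af_i,Af_j\rangle=\langle f_i,f_j\rangle$ combined with the stability of the subspaces above pin down enough entries of $A$ to compute $A(f_2+\lambda f_4)$ and compare with an element of $U_+^{1,\mu}$. In case (i) the conditions force $A$ block-diagonal, $A=\mathrm{diag}(A_1,A_2)$ with $A_1,A_2\in\mathrm{SL}_2(\bbf)$; the flag condition then expresses $A_2$ in terms of $A_1$ and the ratio $\mu/\lambda$, and $\det A_2=1$ forces $\mu=\lambda$. In the remaining cases the analogous calculation reduces to the identity of the $f_2$-coefficient of $Af_2$ and the $f_4$-coefficient of $Af_4$ (both equal to $1/a$, where $Af_1=af_1$), which, combined with the flag equation, gives $\lambda=\mu$. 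I expect case (vi) to be the main obstacle: $U_+^{1,\lambda}=\bbf(f_2+\lambda f_4)$ is only one-dimensional, so the flag condition alone is weak, and one must exploit the full set of symplectic relations plus $A\bbf f_1=\bbf f_1$ and $AU'_+=U'_+$ to show that the off-diagonal contributions to $A(f_2+\lambda f_4)$ are annihilated before reading off $\lambda=\mu$ from the diagonal relation.
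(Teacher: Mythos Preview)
Your proposal is correct and follows essentially the same route as the paper: reduce to $g\in K\cong{\rm Sp}_4(\bbf)$ via Corollary~\ref{cor2.4} and Proposition~\ref{prop2.6}, translate the $U_-$-flag conditions into stabilized subspaces of $U_+$ by orthogonality, then run a symplectic computation. The paper explicitly collapses (ii),(iii) to (i) and (v) to (iv) once $g\in K$, and for (i) uses exactly your determinant argument via the graph map $h_\lambda$.

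One small correction: case (vi) is easier than you anticipate, not harder. The flag hypothesis there already reads $A(f_2+\lambda f_4)\in\bbf(f_2+\mu f_4)$, so there are no $f_1,f_3$ contributions to annihilate. What you do need (and what you correctly list) is $A\bbf(f_1+f_3)=\bbf(f_1+f_3)$, which in (vi) comes from $U'_+\cap(\bbf f_1\oplus\bbf f_3)=\bbf(f_1+f_3)$ rather than from $U_+^{1,\lambda}$ as in (iv). Once $Af_1=kf_1$ and $Af_3=kf_3$ with the same $k$, pairing $A(f_2+\lambda f_4)=b(f_2+\mu f_4)$ against $f_1$ gives $b=k^{-1}$ and against $f_3$ gives $\lambda=\mu$.
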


\begin{proof} Since $gV=V$, we have $g\in G'_0$ by Corollary \ref{cor2.4}. It follows from Proposition \ref{prop2.6} that $gU_+=U_+$ and that $gU_-=U_-$ for all the six cases (i) through (vi) because 
$U'_+$ and $U'_-$
are three dimensional subspaces of $U_+$ and $U_-$, respectively. Hence $g\in K\ (\cong {\rm Sp}_4(\bbf))$. So the cases (ii) and (iii) are reduced to (i). Also the case (v) is reduced to (iv).

(i) Since $g(\bbf f_5\oplus \bbf f_6)=\bbf f_5\oplus \bbf f_6$ and since $\bbf f_1\oplus\bbf f_2$ is the orthogonal space of $\bbf f_5\oplus \bbf f_6$ in $U_+$, we have
$$g(\bbf f_1\oplus\bbf f_2)=\bbf f_1\oplus\bbf f_2.$$
Moreover we have
$$g(\bbf f_3\oplus\bbf f_4)=\bbf f_3\oplus\bbf f_4$$
since $\bbf f_3\oplus\bbf f_4$ is the orthogonal space of $\bbf f_1\oplus\bbf f_2$ with respect to the alternating form $\langle\ ,\ \rangle$. Hence we can write
$$g=\ell\left(\bp B & 0 \\ 0 & C \ep\right)$$
with some $B,C\in {\rm SL}_2(\bbf)$.

Consider the linear map $h_\lambda: \bbf f_1\oplus\bbf f_2\to \bbf f_3\oplus\bbf f_4$ defined by
$$h_\lambda f_1=f_3\mand h_\lambda f_2=\lambda f_4.$$
Then we can write
$$U_+^{1,\lambda}=\bbf(f_1+f_3)\oplus \bbf(f_2+\lambda f_4) =\{u+h_\lambda(u)\mid u\in \bbf f_1\oplus\bbf f_2\}.$$
Since $gU_+^{1,\lambda}=U_+^{1,\mu}$, we have
$$Ch_\lambda B^{-1}=h_\mu.$$
Taking the determinant, we have
$$\det h_\lambda=\det h_\mu$$
since $\det B=\det C=1$. Since $\det h_\lambda =\lambda$ for $\lambda\in\bbf^\times$, we have $\lambda=\mu$.

(iv) The element $g\in K$ stabilizes $\bbf f_1$ and $\bbf f_1\oplus\bbf f_2\oplus\bbf f_3$ since they are the orthogonal spaces of $\bbf f_5\oplus \bbf f_6\oplus \bbf f_7$ and $\bbf f_5$ in $U_+$, respectively. Since $\bbf f_3=\{v\in U_+\mid \langle v, \bbf f_1\oplus\bbf f_2\oplus\bbf f_3\rangle=\{0\}\}$, we also have $g\bbf f_3=\bbf f_3$.

Since $gU_+^{1,\lambda}=U_+^{1,\mu}$ and since $\bbf(f_1+f_3)=U_+^{1,\lambda} \cap(\bbf f_1\oplus\bbf f_3)=U_+^{1,\mu} \cap(\bbf f_1\oplus\bbf f_3)$, we have
$$g\bbf(f_1+f_3)=\bbf(f_1+f_3).$$
Hence we can write
$$gf_1=kf_1\mand gf_3=kf_3$$
with some $k\in\bbf^\times$.

We can write
$$g(f_2+\lambda f_4)=a(f_1+f_3)+b(f_2+\mu f_4)$$
with some $a,b\in \bbf$. Since
$$1=\langle f_1,f_2+\lambda f_4\rangle=\langle gf_1,g(f_2+\lambda f_4)\rangle =\langle kf_1,a(f_1+f_3)+b(f_2+\mu f_4)\rangle=kb,$$
we have $b=k^{-1}$. On the other hand, we have
$$\lambda=\langle f_3,f_2+\lambda f_4\rangle=\langle gf_3,g(f_2+\lambda f_4)\rangle =\langle kf_3,a(f_1+f_3)+k^{-1}(f_2+\mu f_4) \rangle=\mu.$$

(vi) As in (iv), we have
$gf_1=kf_1$ and $gf_3=kf_3$
with some $k\in\bbf^\times$. We can write
$$g(f_2+\lambda f_4)=b(f_2+\mu f_4)$$
with some $b\in \bbf$. Since
$$1=\langle f_1,f_2+\lambda f_4\rangle=\langle gf_1,g(f_2+\lambda f_4)\rangle =\langle kf_1,b(f_2+\mu f_4)\rangle=kb,$$
we have $b=k^{-1}$. On the other hand, we have
$$\lambda=\langle f_3,f_2+\lambda f_4\rangle=\langle gf_3,g(f_2+\lambda f_4)\rangle =\langle kf_3,k^{-1}(f_2+\mu f_4) \rangle=\mu.$$
\end{proof}

\begin{corollary} Following six triple flag varieties of ${\rm O}_8(\bbf)$ are of infinite type.
$$\mct_{(4),(2,2),(2,2)},\ \mct_{(4),(2,2),(2,1)},\ \mct_{(4),(2,1),(2,1)},\
\mct_{(4),(2,2),(1,2)},\ \mct_{(4),(2,1),(1,2)},\ \mct_{(4),(1,2),(1,2)}.$$
\end{corollary}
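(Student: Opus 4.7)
The plan is to parlay each of the six configurations in Lemma \ref{lem3.2} into a one-parameter family of triple flags in the appropriate variety, distinguished by the invariant $\lambda$. For each $\lambda\in\bbf^\times$ I would set
$$m_\lambda = (V,\ U_+^{1,\lambda}\subset U_+^2,\ U_-^1\subset U_-^2),$$
where the pair $(U_+^{1,\lambda}\subset U_+^2,\ U_-^1\subset U_-^2)$ is taken from case (i) of Lemma \ref{lem3.2} for $\mct_{(4),(2,2),(2,2)}$, case (ii) for $\mct_{(4),(2,2),(2,1)}$, case (iii) for $\mct_{(4),(2,1),(2,1)}$, case (iv) for $\mct_{(4),(2,2),(1,2)}$, case (v) for $\mct_{(4),(2,1),(1,2)}$, and case (vi) for $\mct_{(4),(1,2),(1,2)}$.

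The next step is bookkeeping: verifying that $m_\lambda$ is a valid element of the claimed variety. The first component $V$ is maximally isotropic of dimension $n=4$, accounting for ${\bf a}=(4)$. In each of the six cases, $U_+^2\in\{U_+,U'_+\}$ is an isotropic subspace of dimension $4$ or $3$, and $U_+^{1,\lambda}$ is its $1$- or $2$-dimensional subflag (automatically isotropic since it sits inside an isotropic space); matching these dimensions against the dimension sequences $(2,2),(2,1),(1,2)$ gives the correct ${\bf b}$ in each case. The analogous check for $(U_-^1\subset U_-^2)$ reproduces the correct ${\bf c}$.

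Finally, if $g\in G={\rm O}_8(\bbf)$ satisfies $gm_\lambda=m_\mu$, then the diagonal action forces $gV=V$, $gU_+^{1,\lambda}=U_+^{1,\mu}$, $gU_+^2=U_+^2$, $gU_-^1=U_-^1$ and $gU_-^2=U_-^2$, which are precisely the hypotheses of Lemma \ref{lem3.2}. The lemma concludes $\lambda=\mu$, so the map $\lambda\mapsto Gm_\lambda$ is injective on $\bbf^\times$, and since $\bbf$ is infinite so is $\bbf^\times$, yielding infinitely many $G$-orbits on each of the six varieties. There is no real obstacle: the entire corollary is a direct packaging of Lemma \ref{lem3.2}, and the only care required is to align the six configurations of the lemma with the six varieties of the statement in the correct order.
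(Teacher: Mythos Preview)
Your proposal is correct and is exactly the argument the paper has in mind: the corollary is stated without proof precisely because it follows immediately from Lemma~\ref{lem3.2} by packaging the six configurations as the one-parameter families $m_\lambda=(V,\,U_+^{1,\lambda}\subset U_+^2,\,U_-^1\subset U_-^2)$, just as you describe. Your dimension bookkeeping matching cases (i)--(vi) to the six varieties is accurate.
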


\bigskip
\noindent {\it Proof of Proposition \ref{prop1.5}}: First suppose $q=r=2$. Suppose that
\begin{equation}
{\bf b}\mbox{ and }{\bf c}\mbox{ are not equal to }(1,1),\ (1,n-1),\ (n-1,1). \label{eq3.1}
\end{equation}
Then we will prove that $\mct$ is of infinite type. Put
$$\beta'_1=\min(\beta_1,2),\ \beta'_2=\min(\beta_2,2),\ \gamma'_1=\min(\gamma_1,2)\mand \gamma'_2=\min(\gamma_2,2).$$
Then the condition (\ref{eq3.1}) is equivalent to
$$(\beta'_1,\beta'_2),\ (\gamma'_1,\gamma'_2)\ne (1,1),\  \beta_1+\beta_2-\beta'_1-\beta'_2\le n-4,\ \gamma_1+\gamma_2-\gamma'_1-\gamma'_2\le n-4.$$
Exchanging the roles of ${\bf b}$ and ${\bf c}$ if necessary, we have only to consider the following six cases of $((\beta'_1,\beta'_2),(\gamma'_1,\gamma'_2))$:

(i)\ $((2,2),(2,2))$,\quad (ii)\ $((2,2),(2,1))$,\quad (iii)\ $((2,1),(2,1))$,

(iv)\ $((2,2),(1,2))$,\quad (v)\ $((2,1),(1,2))$,\quad (vi)\ $((1,2),(1,2))$.

Let $\varphi: \bbf^8\to\bbf^{2n}$ denote the linear inclusion defined by $\varphi(f_i)=e_{i+n-4}$ for $i=1,\ldots,8$. Define subspaces $W_0,\ W_1^\lambda\ (\lambda\in\bbf^\times),\ W_2,\ W_3$ and $W_4$ of $\bbf^{2n}$ by
\begin{align*}
W_0 & =\varphi(V)\oplus U_{[n-4]},\ W_1^\lambda=\varphi(U_+^{1,\lambda})\oplus U_{[\beta_1-\beta'_1]},\ W_2=\varphi(U_+^2)\oplus U_{[\beta_1+\beta_2-\beta'_1-\beta'_2]}, \\
W_3 & =\varphi(U_-^1)\oplus U_{[\gamma_1-\gamma'_1]} \mand W_4=\varphi(U_-^2)\oplus U_{[\gamma_1+\gamma_2-\gamma'_1-\gamma'_2]}
\end{align*}
where $U_+^{1,\lambda},U_+^2,U_-^1$ and $U_-^2$ are defined in Lemma \ref{lem3.2} for each cases (i) through (vi). Then
$$m_\lambda=(W_0,\ W_1^\lambda\subset W_2,\ W_3\subset W_4)$$
is an element of $\mct_{(n),(\beta_1,\beta_2),(\gamma_1,\gamma_2)}$. By lemma \ref{lem2.7} and Lemma \ref{lem3.2}, we see that $Gm_\lambda\ni m_\mu\Longrightarrow \lambda=\mu$.

Next consider the case of $q=2$ and $r\ge 3$. Suppose that
$${\bf b}\mbox{ is not equal to }(1,1),\ (1,n-1),\ (n-1,1).$$
Then we will prove that $\mct=\mct_{(n),{\bf b},{\bf c}}$ is of infinite type.

If $\gamma_1+\gamma_2+\gamma_3<n$ or $\gamma_3>1$, then ${\bf c}'=(\gamma_1+\gamma_2,\gamma_3)$ is not equal to $(1,1),\ (1,n-1),\ (n-1,1)$. So the assertion is reduced to the case of $q=r=2$. On the other hand if $\gamma_1>1$, then ${\bf c}'=(\gamma_1,\gamma_2+\gamma_3)$ is not equal to $(1,1),\ (1,n-1),\ (n-1,1)$. So the assertion is also reduced to the case of $q=r=2$. Thus we have only to consider the remaining case that $\gamma_1+\gamma_2+\gamma_3=n$ with $\gamma_1=\gamma_3=1$. In this case, ${\bf c}'=(\gamma_1,\gamma_2)$ is not equal to $(1,1),\ (1,n-1),\ (n-1,1)$ because $n\ge 4$. So the assertion is also reduced to the case of $q=r=2$.

The cases of $3\le q\le r$ are similar. \hfill$\square$

\subsection{Case of ${\bf a}=(n),\ {\bf b}=(\beta)$ with $4\le \beta\le n-2$} \label{sec3.5}

First we prepare lemmas for $G'={\rm O}_{12}(\bbf)$. 
Consider $\bbf^{12}$ with the canonical basis $f_1,\ldots,f_{12}$ and the symmetric bilinear form $(\ ,\ )$ such that $(f_i,f_j)=\delta_{i,13-j}$. Take isotropic subspaces
\begin{align*}
U_+ & =\bbf f_1\oplus \bbf f_2\oplus \bbf f_3\oplus \bbf f_4\oplus \bbf f_5\oplus \bbf f_6, \\
U_- & =\bbf f_7\oplus \bbf f_8\oplus \bbf f_9\oplus \bbf f_{10}\oplus \bbf f_{11}\oplus \bbf f_{12} \\
 \mand
V & =\bbf(f_1+f_{11})\oplus \bbf(f_2-f_{12})\oplus \bbf(f_3+f_9)\oplus \bbf(f_4-f_{10})
\end{align*}
of $\bbf^{12}$. Note that the space $V$ is written as
$$V=\{u+\kappa(u)\mid u\in W_+\}$$
where $W_+=\bbf f_1\oplus \bbf f_2\oplus \bbf f_3\oplus \bbf f_4,\ W_-=\bbf f_9\oplus \bbf f_{10}\oplus \bbf f_{11}\oplus \bbf f_{12}$ and $\kappa: W_+\to W_-$ is the linear bijection defined by
$$\kappa(f_1)=f_{11},\quad \kappa(f_2)=-f_{12},\quad \kappa(f_3)=f_9,\mand \kappa(f_4)=-f_{10}.$$
Define the alternating bilinear form $\langle\ ,\ \rangle$ on $W_+$ as in Section \ref{sec3.1} and write
$$K_0=\{k\in {\rm GL}(W_+)\mid \langle gu,gv\rangle =\langle u,v\rangle\mbox{ for }u,v\in W_+\}\ (\cong {\rm Sp}_4(\bbf)).$$
Define a subgroup $K$ of $G'$ by
$$K=\left\{\ell\left(\bp A & 0 \\ 0 & B \ep\right)\Bigm| A\in K_0,\ B\in {\rm GL}_2(\bbf)\right\}$$
where
$$\ell(C)=\bp C & 0 \\ 0 & J_6{}^tC^{-1}J_6 \ep$$
for $C\in {\rm GL}_6(\bbf)$.

\begin{lemma} $K=\{g\in G'\mid gU_+=U_+,\ gU_-=U_-,\ gV=V\}$
\end{lemma}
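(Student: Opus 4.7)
The plan is to prove the nontrivial inclusion $\{g \in G' \mid gU_+ = U_+,\ gU_- = U_-,\ gV = V\} \subseteq K$ by extracting the block form of $g$ from the three stability conditions; the reverse inclusion is a routine verification analogous to the ${\rm O}_8$ case in Section \ref{sec3.1}. So suppose $g \in G'$ stabilizes $U_+$, $U_-$, and $V$. Since $\bbf^{12} = U_+ \oplus U_-$ and $g$ preserves each summand, $g = \ell(C)$ for some $C \in {\rm GL}_6(\bbf)$, and $g$ commutes with the projections onto $U_\pm$ along $U_\mp$. Because $V = \{u + \kappa(u) \mid u \in W_+\}$ projects onto $W_+$ inside $U_+$ and onto $W_-$ inside $U_-$, the condition $gV = V$ forces $gW_+ = W_+$ and $gW_- = W_-$.

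Next, since $g$ preserves $(\ ,\ )$, we have $gW_+^\perp = W_+^\perp$, whence
\[
g(\bbf f_7 \oplus \bbf f_8) = g(U_- \cap W_+^\perp) = U_- \cap W_+^\perp = \bbf f_7 \oplus \bbf f_8,
\]
and analogously $g(\bbf f_5 \oplus \bbf f_6) = \bbf f_5 \oplus \bbf f_6$. Hence $C$ is block diagonal in the decomposition $U_+ = W_+ \oplus (\bbf f_5 \oplus \bbf f_6)$, say $C = {\rm diag}(A,B)$ with $A \in {\rm GL}_4(\bbf)$ and $B \in {\rm GL}_2(\bbf)$. It remains to verify $A \in K_0$: for $u \in W_+$, applying $g$ to $u + \kappa(u) \in V$ produces $Au + g|_{W_-}\kappa(u) \in V$, which forces $g|_{W_-}\kappa(u) = \kappa(Au)$. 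Combined with the orthogonality identity $(g|_{W_-}\kappa(u), Av) = (\kappa(u), v)$ for $v \in W_+$, this gives
\[
\langle Au, Av\rangle = (\kappa(Au), Av) = (g|_{W_-}\kappa(u), Av) = (\kappa(u), v) = \langle u, v\rangle,
\]
so $A \in K_0$ and therefore $g \in K$.

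For the reverse inclusion, any $g = \ell({\rm diag}(A,B))$ with $A \in K_0$ and $B \in {\rm GL}_2(\bbf)$ visibly stabilizes $U_\pm$, since the matrix is block triangular in that decomposition. The restriction $g|_{W_-}$ is determined by $A$ via orthogonality of $g$, and reversing the calculation above shows $g|_{W_-}\kappa(u) = \kappa(Au)$, so that $g(u + \kappa(u)) = Au + \kappa(Au) \in V$ and hence $gV = V$. The main technical step is the passage from ``$g$ preserves $W_+$ and $W_-$'' to the refined block structure ``$g$ preserves the two-dimensional complements $\bbf f_5 \oplus \bbf f_6$ and $\bbf f_7 \oplus \bbf f_8$ as well,'' which is where the ambient symmetric form $(\ ,\ )$ enters essentially through the identification of these complements as $U_\pm \cap W_\mp^\perp$.
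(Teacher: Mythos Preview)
Your proof is correct and follows essentially the same approach as the paper's. The paper argues the nontrivial inclusion by the same chain: from $gU_\pm=U_\pm$ deduce $g=\ell(C)$; from $gV=V$ deduce $gW_\pm=W_\pm$; then identify $\bbf f_5\oplus\bbf f_6=\{v\in U_+\mid (v,W_-)=\{0\}\}$ (your $U_+\cap W_-^\perp$) to obtain the block form $C={\rm diag}(A,B)$; finally observe $A\in K_0$ since it preserves $\langle\ ,\ \rangle$. Your version is somewhat more explicit about the converse inclusion and about why $gW_\pm=W_\pm$ holds, but the argument is the same.
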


\begin{proof} Suppose that $g\in G'$ satisfies $gU_+=U_+$ and $gU_-=U_-$. Then $g=\ell(C)$ with some $C\in {\rm GL}_6(\bbf)$. Moreover suppose $gV=V$. Then $g$ stabilizes $W_+$ and $W_-$. It also stabilizes $\bbf f_5\oplus\bbf f_6=\{v\in U_+\mid (v,W_-)=\{0\}\}$. Hence we can write
$$g=\ell \left(\bp A & 0 \\ 0 & B \ep\right)$$
with some $A\in {\rm GL}_4(\bbf)$ and $B\in {\rm GL}_2(\bbf)$. Since the map $A: W_+\to W_+$ preserves the alternating form $\langle\ ,\ \rangle$, we have $A\in K_0$.
\end{proof}

\begin{lemma} Let $m_\lambda: U_+^{1,\lambda}\subset U_+^2\subset U_+^3$ be one of the following four kinds of flags $(\lambda\in\bbf^\times)$ with $U'_+=\bbf f_1\oplus \bbf f_2\oplus \bbf f_3\oplus \bbf f_5\oplus \bbf (f_4+f_6)$.

{\rm (i)} $\bbf (\lambda f_1+f_3+f_5)\oplus \bbf(f_2+f_4+f_6)\subset \bbf f_1\oplus\bbf f_2\oplus \bbf(f_3+f_5)\oplus \bbf(f_4+f_6)\subset U_+$

{\rm (ii)} $\bbf (\lambda f_1+f_3+f_5)\oplus \bbf(f_2+f_4+f_6)\subset \bbf f_1\oplus\bbf f_2\oplus \bbf(f_3+f_5)\oplus \bbf(f_4+f_6)\subset U'_+$

{\rm (iii)} $\bbf (\lambda f_1+f_3+f_5)\oplus \bbf(f_2+f_4+f_6)\subset \bbf f_1\oplus \bbf(f_3+f_5)\oplus \bbf(f_2+f_4+f_6)\subset U'_+$

{\rm (iv)} $\bbf (\lambda f_1+f_3+f_5)\subset \bbf f_1\oplus \bbf(f_3+f_5)\oplus \bbf(f_2+f_4+f_6)\subset U'_+$

Suppose $gU_-=U_-,\ gV=V$ and $gm_\lambda=m_\mu$ for some $g\in G'$. Then we have $\lambda=\mu$.
\label{lem3.10}
\end{lemma}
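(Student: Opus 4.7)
The plan is to first force $g\in K$ and then analyze its block decomposition to extract $\lambda=\mu$.

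\textbf{Reduction to $g\in K$.} Since $\mathrm{Stab}_{G'}(U_0)=P\subset G'_0$, the stabilizer of every maximal isotropic subspace lies in $G'_0$ by conjugation and normality of $G'_0$; hence $gV=V$ implies $g\in G'_0$. Next I would show $gU_+=U_+$: in case (i) this is immediate since $U_+^3=U_+$, while in cases (ii)--(iv), $U_+^3=U'_+$ is a $5$-dimensional (i.e.\ $(n-1)$-dimensional, with $n=6$) isotropic subspace of $\bbf^{12}$, so Proposition~\ref{prop2.6} forces $g$ to preserve each of the two maximal isotropic subspaces containing $U'_+$, one of which is $U_+$. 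Combined with $gU_-=U_-$, the previous lemma yields $g\in K$, so I may write $g=\ell(\mathrm{diag}(A,B))$ with $A\in K_0\cong\mathrm{Sp}_4(\bbf)$ acting on $W_+=\bbf f_1\oplus\cdots\oplus\bbf f_4$ and $B\in\mathrm{GL}_2(\bbf)$ acting on $\bbf f_5\oplus\bbf f_6$.

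\textbf{Constraining $A$ and $B$.} In cases (i)--(ii) the space $U_+^2=\bbf f_1\oplus\bbf f_2\oplus\bbf(f_3+f_5)\oplus\bbf(f_4+f_6)$ is a ``graph'' over $\bbf f_1\oplus\bbf f_2$ of the identification $\phi:f_3\mapsto f_5$, $f_4\mapsto f_6$. Intersecting $gU_+^2=U_+^2$ with $W_+$ forces $A(\bbf f_1\oplus\bbf f_2)=\bbf f_1\oplus\bbf f_2$, and matching components forces the $(\bbf f_3\oplus\bbf f_4)$-block of $A$ to coincide with $B$ via $\phi$. The symplectic condition $A\in K_0$ then makes $A$ block-diagonal on $W_+=(\bbf f_1\oplus\bbf f_2)\oplus(\bbf f_3\oplus\bbf f_4)$ with both diagonal blocks $A_{11},A_{22}$ in $\mathrm{SL}_2(\bbf)$, and $B=A_{22}$ under $\phi$. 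In cases (iii)--(iv), where $U_+^2=\bbf f_1\oplus\bbf(f_3+f_5)\oplus\bbf(f_2+f_4+f_6)$, the analogous analysis combined with $gU'_+=U'_+$ and the symplectic pairings $\langle Af_1,Af_4\rangle=0$, $\langle Af_3,Af_2\rangle=0$ forces $Af_1=d_1f_1$, $Af_3=a_3f_3$, $Bf_5=a_3f_5$, together with the reciprocal identity $d_1=a_3$ obtained from combining $\langle Af_1,Af_2\rangle=1=\langle Af_3,Af_4\rangle$ with the graph relations tying $Af_2,Af_4,Bf_6$.

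\textbf{Concluding $\lambda=\mu$.} In cases (iii)--(iv) one computes $g(\lambda f_1+f_3+f_5)=\lambda d_1f_1+a_3f_3+a_3f_5=a_3(\lambda f_1+f_3+f_5)$; any element of $U_+^{1,\mu}$ with vanishing $f_2,f_4,f_6$-components must be a scalar multiple of $\mu f_1+f_3+f_5$, and comparing $f_1$-coefficients gives $\lambda=\mu$. In cases (i)--(ii), writing $A_{11}=\bp a&b\\ c&d\ep$ and applying $gU_+^{1,\lambda}=U_+^{1,\mu}$ to the two generators $\lambda f_1+f_3+f_5$ and $f_2+f_4+f_6$, reading off coordinates identifies $A_{22}=\bp \lambda a/\mu & b/\mu\\ \lambda c & d\ep$, so $\det A_{22}=\lambda(ad-bc)/\mu=\lambda/\mu$; the constraint $A_{22}\in\mathrm{SL}_2(\bbf)$ then forces $\lambda=\mu$. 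The main obstacle will be the careful bookkeeping translating each subspace equality into block constraints on $A,B$, especially keeping track of how $U'_+$ and $U_+^2$ interact in cases (ii)--(iii); but in each case the conclusion ultimately rests on a single $\mathrm{SL}_2$-determinant identity.
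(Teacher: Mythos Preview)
Your proof is correct and follows essentially the same strategy as the paper: reduce to $g\in K$ via Corollary~\ref{cor2.4} and Proposition~\ref{prop2.6}, extract the block structure on $W_+\oplus(\bbf f_5\oplus\bbf f_6)$ from the intersections $W_+\cap U_+^j$, and then conclude via an $\mathrm{SL}_2$-determinant identity in (i)--(ii) and via the scalar identity $d_1=a_3$ in (iii)--(iv). The only cosmetic differences are that the paper invokes Lemma~\ref{lem3.2}(i) rather than writing out the matrix $A_{22}$ explicitly, and in (iii)--(iv) it packages your ``graph relation'' as preservation of $\pi(U_+^2)=\bbf f_1\oplus\bbf f_3\oplus\bbf(f_2+f_4)$ under the projection $\pi:U_+\to W_+$, which makes the step $k=\ell$ (your $d_1=a_3$) slightly cleaner to read off.
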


\begin{proof} Since $gV=V$, we have $g\in G'_0$ by Corollary \ref{cor2.4}. Since $U'_+$ is five-dimensional, the condition $gU'_+=U'_+$ implies $gU_+=U_+$ by Proposition \ref{prop2.6}. So we may assume $g\in K$.

(i) Since
$$W_+\cap U_+^2=W_+\cap (\bbf f_1\oplus\bbf f_2\oplus \bbf(f_3+f_5)\oplus \bbf(f_4+f_6))=\bbf f_1\oplus\bbf f_2,$$
we have $g(\bbf f_1\oplus\bbf f_2)=\bbf f_1\oplus\bbf f_2$. We also have $g(\bbf f_3\oplus\bbf f_4)=\bbf f_3\oplus\bbf f_4$ since $\bbf f_3\oplus\bbf f_4$ is the orthogonal space of $\bbf f_1\oplus\bbf f_2$ with respect to the alternating form $\langle\ ,\ \rangle$. Hence we can write
$$g=\ell\left(\bp A & 0 & 0 \\ 0 & B & 0 \\ 0 & 0 & C \ep\right)$$
with some $A,B\in {\rm SL}_2(\bbf)$ and $C\in {\rm GL}_2(\bbf)$. (Since $g(\bbf(f_3+f_5)\oplus \bbf(f_4+f_6))=\bbf(f_3+f_5)\oplus \bbf(f_4+f_6)$, we also have $B=C$.) By the same argument as in the proof of Lemma \ref{lem3.2} (i), we can show $\lambda=\mu$.

The assertion (ii) follows from (i).

(iii) The element $g\in K$ stabilizes $\bbf f_1$ and $\bbf f_1\oplus\bbf f_2\oplus\bbf f_3$ since
$$W_+\cap U_+^2=W_+\cap (\bbf f_1\oplus \bbf(f_3+f_5)\oplus \bbf(f_2+f_4+f_6))=\bbf f_1$$
and since $W_+\cap U_+^3=\bbf f_1\oplus\bbf f_2\oplus\bbf f_3$. Since $\bbf f_3=\{v\in U_+\mid \langle v, \bbf f_1\oplus\bbf f_2\oplus\bbf f_3\rangle=\{0\}\}$, we also have $g\bbf f_3=\bbf f_3$. Hence we have
$$gf_1=kf_1\mand gf_3=\ell f_3$$
with some $k,\ell\in\bbf^\times$.

Let $\pi:U_+=W_+\oplus(\bbf f_5\oplus\bbf f_6)\to W_+$ denote the canonical projection. Since $g$ stabilizes $\pi(U_+^2)=\bbf f_1\oplus\bbf f_3\oplus \bbf(f_2+f_4)$, we can write
$$g(f_2+f_4)=af_1+bf_3+c(f_2+f_4)$$
with some $a,b,c\in\bbf$. We have
$$1=\langle f_1,f_2+f_4\rangle=\langle gf_1,g(f_2+f_4)\rangle=\langle kf_1,af_1+bf_3+c(f_2+f_4)\rangle =kc$$
and
$$1=\langle f_3,f_2+f_4\rangle=\langle gf_3,g(f_2+f_4)\rangle=\langle \ell f_3,af_1+bf_3+c(f_2+f_4)\rangle =\ell c.$$
So we have $k=\ell=c^{-1}$.

Since $g\pi(U_+^{1,\lambda})=\pi(U_+^{1,\mu})$, we have
$$g(\lambda f_1+f_3)\subset \bbf(\mu f_1+f_3)\oplus\bbf(f_2+f_4).$$
Since $g(\lambda f_1+f_3)=k(\lambda f_1+f_3)$, we have $\lambda=\mu$.

(iv) As is shown in (iii), we have $gf_1=kf_1$ and $gf_3=kf_3$ with some $k\in\bbf^\times$. Since $g\pi(U_+^{1,\lambda})=\pi(U_+^{1,\mu})$, we have
$$g(\lambda f_1+f_3)\subset \bbf(\mu f_1+f_3).$$
Since $g(\lambda f_1+f_3)=k(\lambda f_1+f_3)$, we have $\lambda=\mu$.
\end{proof}

\begin{lemma} Let $m_\lambda: U_+^{1,\lambda}\subset U_+^2\subset U_+^3\subset U_+^4$ be the flag defined by
\begin{align*}
U_+^{1,\lambda} & =\bbf (\lambda f_1+f_3+f_5), \ 
U_+^2 =\bbf f_1\oplus \bbf(f_3+f_5)\oplus \bbf(f_2+f_4+f_6), \\
U_+^3 & =\bbf f_1\oplus\bbf f_2\oplus \bbf(f_3+f_5)\oplus \bbf(f_4+f_6) 
\mand U_+^4 =U_+.
\end{align*}
Suppose $gU_-=U_-,\ gV=V$ and $gm_\lambda=m_\mu$ for some $g\in G'$. Then we have $\lambda=\mu$.
\label{lem3.11}
\end{lemma}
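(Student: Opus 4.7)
The strategy mirrors Lemma \ref{lem3.10}(iii)--(iv), with the advantage that the top piece $U_+^4=U_+$ places $g$ in $K$ directly (without recourse to Proposition \ref{prop2.6}), while the intermediate layer $U_+^3$ fixes a $\langle\,,\,\rangle$-symplectic splitting of $W_+$ from which the fixed lines $\bbf f_1$ and $\bbf f_3$ can be extracted.

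First I would note that $gV=V$ gives $g\in G_0'$ by Corollary \ref{cor2.4}, and combined with $gU_+^4=U_+$ and $gU_-=U_-$ the lemma preceding Lemma \ref{lem3.10} places $g$ in $K$. So I may write
$$g=\ell\left(\bp A & 0 \\ 0 & B \ep\right),\qquad A\in K_0,\ B\in{\rm GL}_2(\bbf),$$
acting on $W_+\oplus(\bbf f_5\oplus\bbf f_6)=U_+$. Intersecting $gU_+^3=U_+^3$ with $W_+$ yields $g(\bbf f_1\oplus\bbf f_2)=\bbf f_1\oplus\bbf f_2$, and the $\langle\,,\,\rangle$-orthogonal complement in $W_+$ is $\bbf f_3\oplus\bbf f_4$, so that plane is preserved as well. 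Pushing $U_+^2$ through the projection $\pi:U_+\to W_+$ along $\bbf f_5\oplus\bbf f_6$ (which commutes with $g$) yields $g\pi(U_+^2)=\pi(U_+^2)=\bbf f_1\oplus\bbf f_3\oplus\bbf(f_2+f_4)$; intersecting this with each of the two preserved planes isolates $g\bbf f_1=\bbf f_1$ and $g\bbf f_3=\bbf f_3$. Hence $gf_1=kf_1$ and $gf_3=\ell f_3$ for some $k,\ell\in\bbf^\times$, and the symplectic normalizations $\langle gf_1,gf_2\rangle=\langle gf_3,gf_4\rangle=1$ force $gf_2=af_1+k^{-1}f_2$ and $gf_4=pf_3+\ell^{-1}f_4$ for some $a,p\in\bbf$.

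The key step will be to prove $k=\ell$. Writing $gf_6=\gamma f_5+\delta f_6$ and expanding
$$g(f_2+f_4+f_6)=af_1+k^{-1}f_2+pf_3+\ell^{-1}f_4+\gamma f_5+\delta f_6,$$
the requirement that this vector lie in $U_+^2$ (whose elements have equal $f_2,f_4,f_6$-coefficients) yields $k^{-1}=\ell^{-1}=\delta$, so $k=\ell$. Finally, writing $gf_5=\alpha f_5+\beta f_6$, the flag condition $g(\lambda f_1+f_3+f_5)\in\bbf(\mu f_1+f_3+f_5)$ reads
$$\lambda k f_1+\ell f_3+\alpha f_5+\beta f_6\in\bbf(\mu f_1+f_3+f_5),$$
forcing $\beta=0$, $\alpha=\ell=k$, and $\lambda k=k\mu$; hence $\lambda=\mu$. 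I expect the only real difficulty to be book-keeping: one must pick precisely the two intersections ($U_+^3\cap W_+$, then $\pi(U_+^2)$ meeting each of the preserved $\langle\,,\,\rangle$-orthogonal planes) needed to pin down $\bbf f_1$ and $\bbf f_3$, and then combine the symplectic normalization with the single ``balanced'' vector $f_2+f_4+f_6\in U_+^2$ to equalize $k$ and $\ell$; after that the concluding calculation is routine.
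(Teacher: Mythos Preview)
Your argument is correct and follows essentially the same route as the paper: reduce $g$ to $K$ via $U_+^4=U_+$, use $U_+^3$ to split $W_+$ into the $\langle\,,\,\rangle$-orthogonal planes $\bbf f_1\oplus\bbf f_2$ and $\bbf f_3\oplus\bbf f_4$, use $U_+^2$ to isolate the lines $\bbf f_1$, $\bbf f_3$ and to equalize the scalars via the balanced vector $f_2+f_4+f_6$, and then read off $\lambda=\mu$ from $U_+^{1,\lambda}$.

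The only organizational difference is that the paper, quoting the proof of Lemma~\ref{lem3.10}(i), extracts from $U_+^3$ the stronger block form $g=\ell\!\left(\begin{smallmatrix}A&0&0\\0&B&0\\0&0&B\end{smallmatrix}\right)$ with $A,B\in{\rm SL}_2(\bbf)$ (the $(5,6)$-block equal to the $(3,4)$-block), then uses $U_+^2\cap(\bbf f_1\oplus\bbf f_2)=\bbf f_1$ and $U_+^2\cap(\bbf f_3\oplus\cdots\oplus\bbf f_6)=\bbf(f_3+f_5)$ to make $A,B$ upper triangular, so that $g(\lambda f_1+f_3+f_5)=a(\lambda f_1+f_3+f_5)$ falls out immediately. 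You instead keep the $(5,6)$-block generic and recover $\beta=0$, $\alpha=\ell$ only at the last step from the membership $g(\lambda f_1+f_3+f_5)\in U_+^{1,\mu}$. Both routes are valid; the paper's packaging is marginally shorter, while yours avoids the back-reference to Lemma~\ref{lem3.10}(i).
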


\begin{proof} Since $gU_+^3=U_+^3$, we have
$$g=\ell\left(\bp A & 0 & 0 \\ 0 & B & 0 \\ 0 & 0 & B \ep\right)$$
with some $A,B\in {\rm SL}_2(\bbf)$ as in the proof of Lemma \ref{lem3.10} (i). Since $gU_+^2=U_+^2$, we have $g\bbf f_1=\bbf f_1$ and $g\bbf(f_3+f_5)=\bbf(f_3+f_5)$. Hence
$$A=\bp a & * \\ 0 & a^{-1} \ep \mand B=\bp b & * \\ 0 & b^{-1} \ep$$
with some $a,b\in\bbf^\times$. On the other hand, since $g\bbf(f_2+f_4+f_6)\subset U_+^2$, we have $a^{-1}=b^{-1}$ and hence $a=b$. Since $U_+^{1,\mu}=gU_+^{1,\lambda}=\bbf(a\lambda f_1+af_3+af_5)=U_+^{1,\lambda}$, we have $\lambda=\mu$.
\end{proof}

\begin{corollary} Following five triple flag varieties of ${\rm O}_{12}(\bbf)$ are of infinite type.
$$\mct_{(6),(4),(2,2,2)},\quad \mct_{(6),(4),(2,2,1)},\quad \mct_{(6),(4),(2,1,2)},\quad \mct_{(6),(4),(1,2,2)},\quad \mct_{(6),(4),(1,2,1,2)}.$$
\end{corollary}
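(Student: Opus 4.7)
The plan is to mirror the strategy of all the earlier infinite-type exclusions in Section 3: for each of the five triple flag varieties listed, exhibit a family $\{m_\lambda\}_{\lambda\in\bbf^\times}$ of triples whose $G$-orbits are pairwise distinct. Since $\bbf$ is infinite, $\bbf^\times$ is infinite, so this produces infinitely many $G$-orbits and proves the corollary.

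Concretely, for each of the five cases I would set
$$m_\lambda=(U_-,\ V,\ F_\lambda)$$
where $U_-$ is the $6$-dimensional maximally isotropic subspace and $V$ is the $4$-dimensional isotropic subspace fixed at the start of Section \ref{sec3.5}, and $F_\lambda$ is the flag in $\bbf^{12}$ supplied by the appropriate preceding lemma: Lemma \ref{lem3.10}(i) gives a flag of type $(2,2,2)$ (dimension jumps $2,4,6$), Lemma \ref{lem3.10}(ii) gives type $(2,2,1)$ (dimensions $2,4,5$), Lemma \ref{lem3.10}(iii) gives type $(2,1,2)$ (dimensions $2,3,5$), Lemma \ref{lem3.10}(iv) gives type $(1,2,2)$ (dimensions $1,3,5$), and Lemma \ref{lem3.11} gives type $(1,2,1,2)$ (dimensions $1,3,4,6$). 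In every case the maximal subspace of $F_\lambda$ sits inside $U_+$ or inside $U'_+\subset U_+$, so it is isotropic, and hence $m_\lambda$ is a genuine point of the appropriate $\mct_{(6),(4),{\bf c}}$.

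Suppose $g\in G={\rm O}_{12}(\bbf)$ satisfies $gm_\lambda=m_\mu$. Reading off the three coordinates gives $gU_-=U_-$, $gV=V$ and $gF_\lambda=F_\mu$ --- which are exactly the hypotheses of Lemma \ref{lem3.10} (cases (i)--(iv)) or Lemma \ref{lem3.11}. Those lemmas then force $\lambda=\mu$, so $\lambda\mapsto Gm_\lambda$ gives an injection $\bbf^\times\hookrightarrow G\backslash\mct$, and each of the five triple flag varieties has infinitely many $G$-orbits. There is no substantive obstacle: the only thing that needs care is the bookkeeping match between each ${\bf c}\in\{(2,2,2),(2,2,1),(2,1,2),(1,2,2),(1,2,1,2)\}$ and the corresponding flag in Lemma \ref{lem3.10} or \ref{lem3.11}, together with the isotropy check for each top subspace. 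The genuine technical work has already been absorbed into those two lemmas.
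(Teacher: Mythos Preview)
Your proposal is correct and is exactly the intended derivation: the paper states the corollary without proof because it is immediate from Lemmas \ref{lem3.10} and \ref{lem3.11}, and your argument spells this out precisely---the triple $(U_-,V,F_\lambda)$ lies in the relevant $\mct_{(6),(4),{\bf c}}$, and the hypotheses of those lemmas are exactly the conditions $gU_-=U_-,\ gV=V,\ gF_\lambda=F_\mu$, forcing $\lambda=\mu$.
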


Suppose $4\le \beta\le n-2$ (hence $n\ge 6$). Let $\varphi: \bbf^{12}\to \bbf^{2n}$ denote the linear inclusion defined by $\varphi(f_i)=e_{i+n-6}$ for $i=1,\ldots,12$. Write $W_0=U_{[n-6]}\oplus\varphi(V)$ and $W_1= U_{[\beta-4]}\oplus\varphi(U_-)$.

\subsubsection{Case of $r=3$ and $\gamma_1+\gamma_2+\gamma_3=n$}

\begin{proposition} Suppose $\gamma_1,\gamma_2,\gamma_3\ge 2$. Then $\mct_{(n),(\beta),(\gamma_1,\gamma_2,\gamma_3)}$ is of infinite type.
\label{prop3.13}
\end{proposition}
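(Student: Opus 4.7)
The plan is to produce a 1-parameter family $\{m_\lambda\}_{\lambda\in\bbf^\times}\subset\mct_{(n),(\beta),(\gamma_1,\gamma_2,\gamma_3)}$ of triple flags with the separation property $Gm_\lambda\ni m_\mu\Longrightarrow\lambda=\mu$, following the template established in Sections 3.1--3.4. The parameter $\lambda$ will be imported from Lemma \ref{lem3.10}(i), whose flag $U_+^{1,\lambda}\subset U_+^2\subset U_+$ in $\bbf^{12}$ already separates values of $\lambda$ under the action of ${\rm O}_{12}(\bbf)$; we transport it to $\bbf^{2n}$ via the isometric inclusion $\varphi$ and reduce equivalence in $\bbf^{2n}$ to equivalence in $\bbf^{12}$ via Lemma \ref{lem2.7}.

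Concretely, I would define $m_\lambda=(V_0,V_1,V_2^\lambda\subset V_3\subset V_4)$ by padding the flag from Lemma \ref{lem3.10}(i) on the $\gamma$-side: $V_2^\lambda=U_{[\gamma_1-2]}\oplus\varphi(U_+^{1,\lambda})$, $V_3=U_{[\gamma_1+\gamma_2-4]}\oplus\varphi(U_+^2)$ and $V_4=U_{[n-6]}\oplus\varphi(U_+)$; the assumption $\gamma_2,\gamma_3\ge 2$ guarantees that these paddings nest. For the maximally isotropic component I would extend $\varphi(V)$ by $\varphi(\bbf f_5\oplus\bbf f_6)$ to a 6-dimensional isotropic subspace of $\varphi(\bbf^{12})$ and then pad with $U_{[n-6]}$ to reach dimension $n$. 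For $V_1$, take $V_1=U_{[\beta-6]}\oplus\varphi(U_-)$ when $\beta\ge 6$; for $\beta\in\{4,5\}$, a small modification (replacing $\varphi(U_-)$ by a codimension-$1$ or $-2$ isotropic subspace with adjusted padding, chosen so that the hypotheses below still yield $g'U_-=U_-$) handles the remaining cases.

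Suppose $gm_\lambda=m_\mu$. I would apply Lemma \ref{lem2.7} with the nondegenerate subspace $W=\varphi(\bbf^{12})$ and isotropic padding $U_1=U_{[n-6]}\subset W^\perp$; each flag component then has the form $\tilde W_i\oplus U_i$ with $\tilde W_i\subset W$ and $U_i\subset U_1$. The key span condition holds because already in $\bbf^{12}$ one has $V+(\bbf f_5\oplus\bbf f_6)+U_-=\bbf^{12}$ (using $V+U_-\supset\bbf f_1\oplus\cdots\oplus\bbf f_4\oplus U_-$, which together with $\bbf f_5\oplus\bbf f_6$ fills the remaining summands). Lemma \ref{lem2.7} then yields $g\varphi(\bbf^{12})=\varphi(\bbf^{12})$ and $gU_{[n-6]}=U_{[n-6]}$, so $g$ induces $g'\in{\rm O}_{12}(\bbf)$ on $\varphi(\bbf^{12})\cong\bbf^{12}$ which preserves $V$, $U_-$, $U_+^2$, $U_+$, and sends $U_+^{1,\lambda}$ to $U_+^{1,\mu}$. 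Lemma \ref{lem3.10}(i) then forces $\lambda=\mu$.

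The main obstacle is the boundary range $\beta\in\{4,5\}$, where $V_1$ cannot contain all of $\varphi(U_-)$ and so the direct reading ``$g'$ preserves $\varphi(U_-)$'' does not come for free from $gV_1=V_1$. To handle this, the construction of $V_1$ must be arranged so that the joint data of all five components (together with Proposition \ref{prop2.6} and the span condition of Lemma \ref{lem2.7}) still pins down $\varphi(U_-)$ as the unique maximal isotropic subspace compatible with the given structure on the ``negative'' side of $\bbf^{12}$. Verifying isotropy, inclusion, and dimension counts for all five components is routine under the standing hypotheses $n\ge 6$, $4\le\beta\le n-2$, $\gamma_1+\gamma_2+\gamma_3=n$, and $\gamma_i\ge 2$.
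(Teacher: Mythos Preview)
Your template is exactly right---pad the flag from Lemma~\ref{lem3.10}(i) by $U_{[\cdot]}$'s and reduce via Lemma~\ref{lem2.7}---and the third component $V_2^\lambda\subset V_3\subset V_4$ matches the paper verbatim. But you have assigned $V$ and $U_-$ to the wrong slots. In Section~\ref{sec3.5} the $4$-dimensional space $V$ is attached to the $\beta$-component and the $6$-dimensional space $U_-$ to the $n$-component:
\[
W_0=U_{[n-6]}\oplus\varphi(U_-),\qquad W_1=U_{[\beta-4]}\oplus\varphi(V).
\]
(The displayed definitions of $W_0,W_1$ just before Section~3.5.1 have these swapped by a typo; the dimension count forces the assignment above.) With this choice $\dim W_1=(\beta-4)+4=\beta$ for every $\beta\ge 4$, so your ``main obstacle'' at $\beta\in\{4,5\}$ simply does not exist; no modification is needed, and the vague ``small modification'' you leave open is unnecessary.

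Your reversed assignment creates a second, subtler problem even when $\beta\ge 6$: you would only obtain $g'\widetilde V=\widetilde V$ for $\widetilde V=V\oplus\bbf f_5\oplus\bbf f_6$, whereas Lemma~\ref{lem3.10}(i) requires $g'V=V$. This can in fact be recovered (from $g'U_+=U_+$ and $g'U_-=U_-$ one deduces $g'(\bbf f_5\oplus\bbf f_6)=\widetilde V\cap U_+$ is preserved, hence so is $\bbf f_7\oplus\bbf f_8$ and thus $W_+\oplus W_-$, whence $V=\widetilde V\cap(W_+\oplus W_-)$ is preserved), but you do not address it. With the correct role assignment this step is immediate: Lemma~\ref{lem2.7} gives directly that the induced $g'$ on $\bbf^{12}$ satisfies $g'U_-=U_-$, $g'V=V$, and $g'm_\lambda=m_\mu$, so Lemma~\ref{lem3.10}(i) applies without further work.
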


\begin{proof} Define subspaces $W_{2,\lambda},\ W_3$ and $W_4$ of $\bbf^{2n}$ by
\begin{align*}
W_{2,\lambda} & =U_{[\gamma_1-2]}\oplus \varphi(\bbf (\lambda f_1+f_3+f_5)\oplus \bbf(f_2+f_4+f_6)), \\
W_3 & =U_{[\gamma_1+\gamma_2-4]}\oplus \varphi(\bbf f_1\oplus\bbf f_2\oplus \bbf(f_3+f_5)\oplus \bbf(f_4+f_6)) \\
\mand W_4 & =U_{[\gamma_1+\gamma_2+\gamma_3-6]}\oplus \varphi(U_+).
\end{align*}
Then
$m_\lambda=(W_0,\ W_1,\ W_{2,\lambda}\subset W_3\subset W_4)$
is an element of $\mct_{(n),(\beta),(\gamma_1,\gamma_2,\gamma_3)}$. By Lemma \ref{lem2.7} and Lemma \ref{lem3.10} (i), we have $Gm_\lambda\ni m_\mu \Longrightarrow \lambda=\mu$.
\end{proof}

\begin{corollary} Suppose ${\bf c}=(\gamma_1,\gamma_2,\gamma_3)$ with $\gamma_1+\gamma_2+\gamma_3=n$. Then $|G\backslash \mct_{(n),(\beta),{\bf c}}|<\infty \Longrightarrow$
$${\bf c}=(1,k,n-k-1),\quad (k,1,n-k-1)\quad\mbox{or}\quad (k,n-k-1,1)$$
with some $k$.
\label{cor3.14}
\end{corollary}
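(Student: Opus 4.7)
My plan is to prove Corollary \ref{cor3.14} by contrapositive, using only Proposition \ref{prop3.13} together with a short combinatorial observation about compositions of $n$.

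First I would suppose ${\bf c}=(\gamma_1,\gamma_2,\gamma_3)$ is a composition of $n$ (so each $\gamma_i$ is a positive integer) and that ${\bf c}$ is not of any of the three listed forms $(1,k,n-k-1)$, $(k,1,n-k-1)$, $(k,n-k-1,1)$ for any $k$. My claim is that this assumption forces $\gamma_i \ge 2$ for every $i \in \{1,2,3\}$. Indeed, if $\gamma_1=1$, then setting $k=\gamma_2$ yields $\gamma_3=n-1-k=n-k-1$, exhibiting ${\bf c}=(1,k,n-k-1)$; the remaining two cases $\gamma_2=1$ and $\gamma_3=1$ are handled symmetrically by reading off $k$ from the appropriate coordinate. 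Thus the three listed families precisely exhaust those compositions of $n$ in which at least one entry equals $1$, and the complement of their union among all such compositions is exactly $\{(\gamma_1,\gamma_2,\gamma_3) : \gamma_1,\gamma_2,\gamma_3\ge 2\}$.

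With that reduction in hand, Proposition \ref{prop3.13} applies directly to give $|G\backslash\mct_{(n),(\beta),(\gamma_1,\gamma_2,\gamma_3)}|=\infty$, which is exactly the contrapositive of the desired implication. So the corollary follows with essentially no further work.

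There is no real obstacle, as the entire geometric substance sits inside Proposition \ref{prop3.13}: the $\bbf^{12}$-embedding $\varphi$, the one-parameter family $m_\lambda=(W_0,W_1,W_{2,\lambda}\subset W_3\subset W_4)$, and the invocation of Lemma \ref{lem2.7} together with Lemma \ref{lem3.10}(i) to conclude $Gm_\lambda\ni m_\mu \Longrightarrow \lambda=\mu$. Corollary \ref{cor3.14} is then simply a translation of the hypothesis ``each $\gamma_i\ge 2$'' back into the language of the permitted shapes of ${\bf c}$, noting that $\gamma_1+\gamma_2+\gamma_3=n$ together with $\gamma_i\ge 2$ automatically requires $n\ge 6$, matching the standing assumption $4\le \beta\le n-2$ underlying Section \ref{sec3.5}.
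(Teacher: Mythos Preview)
Your proposal is correct and follows exactly the paper's implicit reasoning: Corollary \ref{cor3.14} is stated without proof in the paper because it is an immediate contrapositive of Proposition \ref{prop3.13}, and your combinatorial observation that the three listed shapes are precisely the compositions of $n$ into three parts with at least one entry equal to $1$ is the only thing needed to make the deduction.
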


\subsubsection{Case of $r=3$ and $\gamma_1+\gamma_2+\gamma_3<n$}

\begin{proposition} Suppose $\gamma_1+\gamma_2+\gamma_3<n$ and
$$\min(\gamma_1,\gamma_2)\ge 2,\quad \min(\gamma_1,\gamma_3)\ge 2 \quad\mbox{or}\quad \min(\gamma_2,\gamma_3)\ge 2.$$
Then $\mct_{(n),(\beta),(\gamma_1,\gamma_2,\gamma_3)}$ is of infinite type.
\label{prop3.15}
\end{proposition}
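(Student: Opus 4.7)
The strategy mirrors that of the proof of Proposition~\ref{prop3.13}: I would exhibit a family $\{m_\lambda\}_{\lambda\in\bbf^\times}$ of triple flags in $\mct_{(n),(\beta),(\gamma_1,\gamma_2,\gamma_3)}$ such that $Gm_\lambda\ni m_\mu$ implies $\lambda=\mu$. I would re-use the first two flags $W_0$ and $W_1$ exactly as in Proposition~\ref{prop3.13}, and replace the third flag by a variant built from Lemma~\ref{lem3.10} (ii), (iii), or (iv) in place of (i), according to which of the three hypothesized $\min$-conditions holds. Since $\gamma_1+\gamma_2+\gamma_3<n$, the top of the third flag $W_4$ need no longer be maximally isotropic, so the $6$-dimensional top $U_+$ used in Proposition~\ref{prop3.13} may be shrunk to the $5$-dimensional $U'_+$ appearing in Lemma~\ref{lem3.10}.

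Concretely, pick $(d_1,d_2,d_3)\in\{(2,4,5),(2,3,5),(1,3,5)\}$ according to whether $\min(\gamma_1,\gamma_2)$, $\min(\gamma_1,\gamma_3)$, or $\min(\gamma_2,\gamma_3)$ is at least $2$, and let $U_+^{1,\lambda}\subset U_+^2\subset U_+^3$ be the corresponding flag from Lemma~\ref{lem3.10}. Then define
\begin{align*}
W_{2,\lambda} &= U_{[\gamma_1-d_1]}\oplus\varphi(U_+^{1,\lambda}),\\
W_3 &= U_{[\gamma_1+\gamma_2-d_2]}\oplus\varphi(U_+^2),\\
W_4 &= U_{[\gamma_1+\gamma_2+\gamma_3-d_3]}\oplus\varphi(U_+^3).
\end{align*}
In each of the three cases the chosen $\min$-condition forces the padding dimensions to be non-negative and non-decreasing, so $W_{2,\lambda}\subset W_3\subset W_4$; and $\gamma_1+\gamma_2+\gamma_3<n$ gives $\gamma_1+\gamma_2+\gamma_3-d_3\le n-6$, which keeps the paddings disjoint from $\varphi(\bbf^{12})$ and ensures $W_4$ is isotropic of the correct dimension. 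Thus $m_\lambda=(W_0,\,W_1,\,W_{2,\lambda}\subset W_3\subset W_4)$ lies in $\mct_{(n),(\beta),(\gamma_1,\gamma_2,\gamma_3)}$.

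For $g\in G$ with $gm_\lambda=m_\mu$, I would apply Lemma~\ref{lem2.7} layer by layer with the nondegenerate subspace $W=\varphi(\bbf^{12})$ and the various $U_{[k]}$ regarded as isotropic subspaces of $W^\perp$. This forces $g$ to preserve $W$ (and each padding), and to induce on $W\cong\bbf^{12}$ an element of $G'={\rm O}_{12}(\bbf)$ which stabilizes $V$, $U_-$, and the nested flag $U_+^{1,\lambda}\subset U_+^2\subset U_+^3$, carrying $U_+^{1,\lambda}$ to $U_+^{1,\mu}$. The appropriate case of Lemma~\ref{lem3.10} then yields $\lambda=\mu$, giving infinitely many $G$-orbits parametrized by $\lambda\in\bbf^\times$.

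The main obstacle is the bookkeeping around Lemma~\ref{lem2.7}: one must verify that, for every layer of all three flags, the decomposition as $W_i\oplus U_i$ with $W_i\subset W$ and $U_i\subset W^\perp$ can be chosen compatibly, so that the hypotheses of the lemma are satisfied simultaneously and the induced action on $W$ preserves every subspace needed to trigger Lemma~\ref{lem3.10}. Once this accounting is done, the conclusion is immediate. The case distinction on $(d_1,d_2,d_3)$ is purely combinatorial, and no new technical input beyond Lemma~\ref{lem3.10} (ii)--(iv) is required.
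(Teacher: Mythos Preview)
Your proposal is correct and matches the paper's proof essentially line for line: the paper also splits into three cases using Lemma~\ref{lem3.10} (ii), (iii), (iv) with exactly your padding dimensions $(d_1,d_2,d_3)=(2,4,5),(2,3,5),(1,3,5)$, and invokes Lemma~\ref{lem2.7} to reduce to ${\rm O}_{12}(\bbf)$. Your remark about the bookkeeping around Lemma~\ref{lem2.7} is apt but routine here, since all subspaces involved decompose as a $\varphi$-part in $W=\varphi(\bbf^{12})$ plus a $U_{[k]}$-part in $W^\perp$, and these parts sum to all of $W$ (via $W_0$).
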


\begin{proof} (i) Case of $\min(\gamma_1,\gamma_2)\ge 2$. Define subspaces $W_{2,\lambda},\ W_3$ and $W_4$ of $\bbf^{2n}$ by
\begin{align*}
W_{2,\lambda} & =U_{[\gamma_1-2]}\oplus \varphi(\bbf (\lambda f_1+f_3+f_5)\oplus \bbf(f_2+f_4+f_6)), \\
W_3 & =U_{[\gamma_1+\gamma_2-4]}\oplus \varphi(\bbf f_1\oplus\bbf f_2\oplus \bbf(f_3+f_5)\oplus \bbf(f_4+f_6)) \\
\mand W_4 & =U_{[\gamma_1+\gamma_2+\gamma_3-5]}\oplus \varphi(U'_+).
\end{align*}
Then
$m_\lambda=(W_0,\ W_1,\ W_{2,\lambda}\subset W_3\subset W_4)$
is an element of $\mct_{(n),(\beta),(\gamma_1,\gamma_2,\gamma_3)}$. By Lemma \ref{lem2.7} and Lemma \ref{lem3.10} (ii), we have $Gm_\lambda\ni m_\mu \Longrightarrow \lambda=\mu$.

(ii) Case of $\min(\gamma_1,\gamma_3)\ge 2$. Define subspaces $W_{2,\lambda},\ W_3$ and $W_4$ of $\bbf^{2n}$ by
\begin{align*}
W_{2,\lambda} & =U_{[\gamma_1-2]}\oplus \varphi(\bbf (\lambda f_1+f_3+f_5)\oplus \bbf(f_2+f_4+f_6)), \\
W_3 & =U_{[\gamma_1+\gamma_2-3]}\oplus \varphi(\bbf f_1\oplus \bbf(f_3+f_5)\oplus \bbf(f_2+f_4+f_6)) \\
\mand W_4 & =U_{[\gamma_1+\gamma_2+\gamma_3-5]}\oplus \varphi(U'_+).
\end{align*}
Then
$m_\lambda=(W_0,\ W_1,\ W_{2,\lambda}\subset W_3\subset W_4)$
is an element of $\mct_{(n),(\beta),(\gamma_1,\gamma_2,\gamma_3)}$. By Lemma \ref{lem2.7} and Lemma \ref{lem3.10} (iii), we have $Gm_\lambda\ni m_\mu \Longrightarrow \lambda=\mu$.

(iii) Case of $\min(\gamma_2,\gamma_3)\ge 2$. Define subspaces $W_{2,\lambda},\ W_3$ and $W_4$ of $\bbf^{2n}$ by
\begin{align*}
W_{2,\lambda} & =U_{[\gamma_1-1]}\oplus \varphi(\bbf (\lambda f_1+f_3+f_5)), \\
W_3 & =U_{[\gamma_1+\gamma_2-3]}\oplus \varphi(\bbf f_1\oplus \bbf(f_3+f_5)\oplus \bbf(f_2+f_4+f_6)) \\
\mand W_4 & =U_{[\gamma_1+\gamma_2+\gamma_3-5]}\oplus \varphi(U'_+).
\end{align*}
Then
$m_\lambda=(W_0,\ W_1,\ W_{2,\lambda}\subset W_3\subset W_4)$
is an element of $\mct_{(n),(\beta),(\gamma_1,\gamma_2,\gamma_3)}$. By Lemma \ref{lem2.7} and Lemma \ref{lem3.10} (iv), we have $Gm_\lambda\ni m_\mu \Longrightarrow \lambda=\mu$.
\end{proof}

\begin{corollary} Suppose ${\bf c}=(\gamma_1,\gamma_2,\gamma_3)$ with $\gamma_1+\gamma_2+\gamma_3<n$. Then $|G\backslash \mct_{(n),(\beta),{\bf c}}|<\infty \Longrightarrow$
$${\bf c}=(1,1,k),\quad (1,k,1)\quad\mbox{or}\quad (k,1,1)$$
with some $k$.
\label{cor3.16}
\end{corollary}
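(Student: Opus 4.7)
The plan is to derive this corollary directly from Proposition \ref{prop3.15} by a simple combinatorial observation on the tuple ${\bf c}=(\gamma_1,\gamma_2,\gamma_3)$, taking the contrapositive. I would argue as follows: assume $\gamma_1+\gamma_2+\gamma_3<n$ and that ${\bf c}$ is not of any of the three listed shapes $(1,1,k)$, $(1,k,1)$, $(k,1,1)$. Then I need to deduce that $\mct_{(n),(\beta),{\bf c}}$ has infinitely many $G$-orbits, for which it suffices to verify the hypothesis of Proposition \ref{prop3.15}.

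The key combinatorial step is to check that excluding the three shapes $(1,1,k)$, $(1,k,1)$, $(k,1,1)$ forces at least two of the entries $\gamma_1,\gamma_2,\gamma_3$ to be $\ge 2$. Indeed, the three excluded shapes are precisely the tuples having at least two entries equal to $1$; so their negation says at most one $\gamma_i$ equals $1$, i.e.\ at least two of the three entries are $\ge 2$. Depending on which pair of indices is $\ge 2$, exactly one of the three alternatives
\[
\min(\gamma_1,\gamma_2)\ge 2,\qquad \min(\gamma_1,\gamma_3)\ge 2,\qquad \min(\gamma_2,\gamma_3)\ge 2
\]
must hold. Applying Proposition \ref{prop3.15} in that case yields $|G\backslash \mct_{(n),(\beta),{\bf c}}|=\infty$, completing the contrapositive.

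There is essentially no technical obstacle here: all of the heavy lifting (construction of the families $m_\lambda$ via the embedding $\varphi$, and the invariant-theoretic arguments ensuring distinct $G$-orbits) is already packaged into Proposition \ref{prop3.15}. The only thing to do in the write-up is this short case-analysis on the shape of ${\bf c}$, so the corollary is immediate.
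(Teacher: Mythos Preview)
Your proposal is correct and matches the paper's approach exactly: the corollary is stated immediately after Proposition~\ref{prop3.15} without further proof, and the only content is precisely the contrapositive combinatorial observation you give (that excluding the three listed shapes forces at least two of the $\gamma_i$ to be $\ge 2$, placing us in one of the cases of Proposition~\ref{prop3.15}).
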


\subsubsection{Case of $r=4$ and $\gamma_1+\gamma_2+\gamma_3+\gamma_4=n$}

\begin{proposition} Suppose $\gamma_1+\gamma_2+\gamma_3+\gamma_4=n$ and $\min(\gamma_i,\gamma_j)\ge 2$ with some $1\le i<j\le 4$. Then $\mct_{(n),(\beta),(\gamma_1,\gamma_2,\gamma_3,\gamma_4)}$ is of infinite type.
\label{prop3.17}
\end{proposition}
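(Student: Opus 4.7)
The plan is to mimic the proofs of Propositions 3.13 and 3.15: fix a pair $(i,j)$ with $\gamma_i,\gamma_j\ge 2$, build a one-parameter family $m_\lambda=(W_0,W_1,W_{2,\lambda}\subset W_3\subset W_4\subset W_5)\in\mct_{(n),(\beta),{\bf c}}$ with $W_0=U_{[n-6]}\oplus\varphi(V)$ and $W_1=U_{[\beta-4]}\oplus\varphi(U_-)$, and deduce $Gm_\lambda\ni m_\mu\Rightarrow\lambda=\mu$ by combining Lemma \ref{lem2.7} with a rigidity lemma from Section \ref{sec3.5}.

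In the case $\gamma_2\ge 2$ and $\gamma_4\ge 2$ I would directly pad the 4-step flag of Lemma \ref{lem3.11} by taking
\begin{align*}
W_{2,\lambda} &= U_{[\gamma_1-1]}\oplus\varphi(U_+^{1,\lambda}), & W_3 &= U_{[\gamma_1+\gamma_2-3]}\oplus\varphi(U_+^2), \\
W_4 &= U_{[\gamma_1+\gamma_2+\gamma_3-4]}\oplus\varphi(U_+^3), & W_5 &= U_{[n-6]}\oplus\varphi(U_+^4),
\end{align*}
so that the dimensions are respectively $\gamma_1$, $\gamma_1+\gamma_2$, $\gamma_1+\gamma_2+\gamma_3$, and $n$; the hypotheses $\gamma_2\ge 2$, $\gamma_4\ge 2$ are exactly what makes the $U_{[\cdot]}$-padding have non-negative parameters. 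Lemma \ref{lem2.7} together with Lemma \ref{lem3.11} then forces $\lambda=\mu$.

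For the remaining pairs $(i,j)$ I would exploit the $G$-equivariant surjective forgetful map from $\mct_{(n),(\beta),(\gamma_1,\gamma_2,\gamma_3,\gamma_4)}$ to the 3-step variety $\mct_{(n),(\beta),{\bf c}'}$ obtained by merging two adjacent components of ${\bf c}$. If one of the adjacent pairs $(\gamma_1,\gamma_2)$, $(\gamma_2,\gamma_3)$, $(\gamma_3,\gamma_4)$ can be collapsed so that the resulting 3-tuple has all parts $\ge 2$ and sum $n$, then Proposition \ref{prop3.13} applies to ${\bf c}'$ and the infinite-type conclusion lifts to $\mct_{(n),(\beta),{\bf c}}$. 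A short combinatorial check shows that the configurations not captured by this reduction or by the previous paragraph are exactly $(\gamma_1,1,\gamma_3,1)$ with $\gamma_1,\gamma_3\ge 2$ and $(1,\gamma_2,\gamma_3,1)$ with $\gamma_2,\gamma_3\ge 2$. For these I would build analogues of the flag in Lemma \ref{lem3.11} inside $U_+$ with increment sequences $(2,1,2,1)$ and $(1,2,2,1)$ respectively, and prove the corresponding rigidity by the same stabilizer analysis: use $gV=V$ to land $g$ in $K\cong{\rm Sp}_4(\bbf)$, then track how the successive stabilizations $gU_+^i=U_+^i$ force the scalars by which $g$ acts on $\bbf f_1$, $\bbf f_3$, $\bbf f_5$ to coincide, and finally read off $\lambda=\mu$ via the alternating form $\langle\,,\,\rangle$.

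The main obstacle is precisely the construction and verification of these two residual rigidity lemmas: they are not covered by Lemmas \ref{lem3.10} or \ref{lem3.11}, and each requires a careful choice of intermediate subspace along with a stabilizer calculation. However, the conceptual framework — splitting $U_+=W_+\oplus(\bbf f_5\oplus\bbf f_6)$, using Proposition \ref{prop2.6} to promote $gU_+^3=U_+^3$ to $gU_+=U_+$, and exploiting the alternating form — is identical to that in the proof of Lemma \ref{lem3.11}, so the difficulty is technical rather than conceptual.
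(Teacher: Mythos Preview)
Your treatment of the case $(i,j)=(2,4)$ via Lemma~\ref{lem3.11} is exactly what the paper does, and your reductions for $(i,j)=(1,4)$ and $(3,4)$ via merging and Proposition~\ref{prop3.13} also match. The difference lies in how you handle $(i,j)\in\{(1,2),(1,3),(2,3)\}$.

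You restrict yourself to the forgetful map that \emph{merges} two adjacent $\gamma_i$'s, which keeps the sum equal to $n$ and forces you to land in Proposition~\ref{prop3.13}. That is why the configurations $(\gamma_1,1,\gamma_3,1)$ and $(1,\gamma_2,\gamma_3,1)$ slip through and you are left proposing two new rigidity lemmas. The paper instead uses the forgetful map that \emph{drops} the top step $V_4$ entirely: the image lies in $\mct_{(n),(\beta),(\gamma_1,\gamma_2,\gamma_3)}$ with $\gamma_1+\gamma_2+\gamma_3<n$, and Proposition~\ref{prop3.15} applies directly whenever some pair among $\gamma_1,\gamma_2,\gamma_3$ is $\ge 2$. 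This disposes of all three cases $(1,2),(1,3),(2,3)$ at once, including your two ``residual'' configurations, with no new lemma needed. Your approach is not wrong---the extra rigidity lemmas you sketch are plausible and would follow the same template---but it is unnecessary work; the paper's proof reduces everything except $(2,4)$ to results already established.
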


\begin{proof} The cases of $(i,j)=(1,2),\ (1,3)$ and $(2,3)$ are reduced to Proposition \ref{prop3.15}. The case of $(i,j)=(1,4)$ is reduced to Proposition \ref{prop3.13} if we consider ${\bf c}'=(\gamma_1,\gamma_2+\gamma_3,\gamma_4)$. The case of $(i,j)=(3,4)$ is also reduced to Proposition \ref{prop3.13} if we consider ${\bf c}'=(\gamma_1+\gamma_2,\gamma_3,\gamma_4)$.

So we have only to consider the case of $(i,j)=(2,4)$. Define subspaces $W_{2,\lambda},\ W_3,\ W_4$ and $W_5$ of $\bbf^{2n}$ by
\begin{align*}
W_{2,\lambda} & =U_{[\gamma_1-1]}\oplus \varphi(\bbf (\lambda f_1+f_3+f_5)), \\
W_3 & =U_{[\gamma_1+\gamma_2-3]}\oplus \varphi(\bbf f_1\oplus \bbf(f_3+f_5)\oplus \bbf(f_2+f_4+f_6)) \\
W_4 & =U_{[\gamma_1+\gamma_2-4]}\oplus \varphi(\bbf f_1\oplus\bbf f_2\oplus \bbf(f_3+f_5)\oplus \bbf(f_4+f_6)) \\
\mand W_5 & =U_{[\gamma_1+\gamma_2+\gamma_3+\gamma_4-6]}\oplus \varphi(U_+).
\end{align*}
Then
$m_\lambda=(W_0,\ W_1,\ W_{2,\lambda}\subset W_3\subset W_4\subset W_5)$
is an element of $\mct_{(n),(\beta),(\gamma_1,\gamma_2,\gamma_3,\gamma_4)}$. By Lemma \ref{lem2.7} and Lemma \ref{lem3.11}, we have $Gm_\lambda\ni m_\mu \Longrightarrow \lambda=\mu$.
\end{proof}

\begin{corollary} Suppose ${\bf c}=(\gamma_1,\gamma_2,\gamma_3,\gamma_4)$ with $\gamma_1+\gamma_2+\gamma_3+\gamma_4=n$. Then $|G\backslash \mct_{(n),(\beta),{\bf c}}|<\infty \Longrightarrow$
$${\bf c}=(1,1,1,n-3),\quad (1,1,n-3,1),\quad (1,n-3,1,1)\quad\mbox{or}\quad (n-3,1,1,1).$$
\label{cor3.18}
\end{corollary}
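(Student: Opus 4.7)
The plan is to prove the contrapositive: assume ${\bf c}=(\gamma_1,\gamma_2,\gamma_3,\gamma_4)$ with $\gamma_1+\gamma_2+\gamma_3+\gamma_4=n$ is not one of the four listed tuples, and deduce that $\mct_{(n),(\beta),{\bf c}}$ is of infinite type. The observation that drives everything is purely combinatorial: the four listed tuples are precisely the compositions of $n$ into four parts in which exactly three of the parts equal $1$ (forcing the remaining part to equal $n-3$ by the sum constraint $\gamma_1+\gamma_2+\gamma_3+\gamma_4=n$).

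First I would verify this combinatorial characterization. If at most one of the $\gamma_i$ is $\ge 2$, then at least three of them equal $1$, so the remaining one equals $n-3$; depending on which index carries the large part, ${\bf c}$ is one of $(n-3,1,1,1)$, $(1,n-3,1,1)$, $(1,1,n-3,1)$ or $(1,1,1,n-3)$. (Note that we implicitly use $\beta \le n-2$, which forces $n \ge 6$, so $n-3 \ge 3$ and there is no overlap ambiguity.) Consequently, if ${\bf c}$ is \emph{not} among the four tuples in the statement, then at least two of the parts are $\ge 2$, i.e.\ there exist indices $1\le i<j\le 4$ with $\min(\gamma_i,\gamma_j)\ge 2$.

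Second, I would invoke Proposition \ref{prop3.17} directly: it asserts exactly that under the hypothesis $\gamma_1+\gamma_2+\gamma_3+\gamma_4=n$, the existence of such a pair $(i,j)$ already forces $\mct_{(n),(\beta),{\bf c}}$ to be of infinite type. This closes the argument.

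There is no real obstacle here; the content of Corollary \ref{cor3.18} is entirely the combinatorial unpacking of Proposition \ref{prop3.17}. The only point requiring a moment of care is making sure one has exhausted the positions $(i,j)$ when enumerating the forbidden tuples, but this is immediate once one notes that having at least two parts $\ge 2$ is equivalent to having some pair $\min(\gamma_i,\gamma_j)\ge 2$ with $i<j$, which is precisely the hypothesis of Proposition \ref{prop3.17}.
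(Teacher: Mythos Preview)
Your proposal is correct and is exactly the argument the paper intends: Corollary \ref{cor3.18} is stated without proof immediately after Proposition \ref{prop3.17}, and your contrapositive reading (if ${\bf c}$ is not one of the four listed tuples then two parts are $\ge 2$, so Proposition \ref{prop3.17} applies) is precisely the intended deduction.
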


\subsubsection{Case of $r=4$ and $\gamma_1+\gamma_2+\gamma_3+\gamma_4<n$}

\begin{proposition} Suppose $\gamma_1+\gamma_2+\gamma_3+\gamma_4<n$ and $\gamma_i\ge 2$ with some $i=1,2,3,4$. Then $\mct_{(n),(\beta),(\gamma_1,\gamma_2,\gamma_3,\gamma_4)}$ is of infinite type.
\label{prop3.19}
\end{proposition}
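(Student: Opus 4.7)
The plan is to reduce the statement to Proposition \ref{prop3.15} by merging adjacent entries of ${\bf c}$, exactly as Proposition \ref{prop3.17} was reduced to Propositions \ref{prop3.13} and \ref{prop3.15}.

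The first step is the general observation that whenever $\tilde{\bf c}$ is obtained from ${\bf c}=(\gamma_1,\gamma_2,\gamma_3,\gamma_4)$ by dropping one intermediate subspace of the flag (equivalently, by merging two adjacent entries $\gamma_i,\gamma_{i+1}$), the forgetful map $\mct_{(n),(\beta),{\bf c}}\to\mct_{(n),(\beta),\tilde{\bf c}}$ is $G$-equivariant and surjective. Consequently, if the target has infinitely many $G$-orbits, then so does the source.

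The second step is to choose the merge according to which $\gamma_i\ge 2$, producing in each case a three-part composition $\tilde{\bf c}=(\gamma_1',\gamma_2',\gamma_3')$ that falls within the hypothesis of Proposition \ref{prop3.15}. Explicitly, I will take $\tilde{\bf c}=(\gamma_1,\gamma_2+\gamma_3,\gamma_4)$ when $\gamma_1\ge 2$ or $\gamma_4\ge 2$; $\tilde{\bf c}=(\gamma_1,\gamma_2,\gamma_3+\gamma_4)$ when $\gamma_2\ge 2$; and $\tilde{\bf c}=(\gamma_1+\gamma_2,\gamma_3,\gamma_4)$ when $\gamma_3\ge 2$. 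In every case the merged entry is a sum of two positive integers, hence $\ge 2$, and it sits adjacent to another entry of $\tilde{\bf c}$ that is $\ge 2$ by hypothesis, so $\min(\gamma_i',\gamma_j')\ge 2$ for some pair $1\le i<j\le 3$. The sum is preserved, $\gamma_1'+\gamma_2'+\gamma_3'=\gamma_1+\gamma_2+\gamma_3+\gamma_4<n$, so Proposition \ref{prop3.15} applies and yields $|G\backslash\mct_{(n),(\beta),\tilde{\bf c}}|=\infty$, whence the source $\mct_{(n),(\beta),{\bf c}}$ is also of infinite type.

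There is no significant obstacle here beyond checking that the case analysis exhausts the hypothesis and that the resulting three-part composition lands in Proposition \ref{prop3.15}; no new construction on $\bbf^{12}$ is required, since all of the analytic work is already packaged in Lemma \ref{lem3.10} and invoked through Proposition \ref{prop3.15}.
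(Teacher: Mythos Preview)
Your proof is correct and follows essentially the same approach as the paper: reduce to Proposition~\ref{prop3.15} by merging two adjacent entries of ${\bf c}$, using the $G$-equivariant surjection $\mct_{(n),(\beta),{\bf c}}\to\mct_{(n),(\beta),\tilde{\bf c}}$ to transfer infiniteness. The only difference is cosmetic: the paper uses just two merges---$(\gamma_1,\gamma_2,\gamma_3+\gamma_4)$ when $\gamma_1\ge 2$ or $\gamma_2\ge 2$, and $(\gamma_1+\gamma_2,\gamma_3,\gamma_4)$ when $\gamma_3\ge 2$ or $\gamma_4\ge 2$---while you also use the middle merge $(\gamma_1,\gamma_2+\gamma_3,\gamma_4)$, but the verification that Proposition~\ref{prop3.15} applies goes through in all your cases.
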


\begin{proof} If $\gamma_i\ge 2$ for $i=1$ or $2$, then $|G\backslash \mct_{(n),(\beta),(\gamma_1,\gamma_2,\gamma_3+\gamma_4)}|=\infty$ by Proposition \ref{prop3.15} (ii) or (iii), respectively. If $\gamma_i\ge 2$ for $i=3$ or $4$, then $|G\backslash \mct_{(n),(\beta),(\gamma_1+\gamma_2,\gamma_3,\gamma_4)}|=\infty$ by Proposition \ref{prop3.15} (i) or (ii), respectively.
\end{proof}

\begin{corollary} Suppose ${\bf c}=(\gamma_1,\gamma_2,\gamma_3,\gamma_4)$ with $\gamma_1+\gamma_2+\gamma_3+\gamma_4<n$. Then
$$|G\backslash \mct_{(n),(\beta),{\bf c}}|<\infty \Longrightarrow {\bf c}=(1,1,1,1).$$
\label{cor3.20}
\end{corollary}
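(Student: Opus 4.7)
The plan is to argue by contrapositive, invoking Proposition \ref{prop3.19} as a black box. Suppose ${\bf c}=(\gamma_1,\gamma_2,\gamma_3,\gamma_4)\ne(1,1,1,1)$. Because the entries of a flag signature are by definition positive integers, the failure of the tuple to be $(1,1,1,1)$ forces $\gamma_i\ge 2$ for some $i\in\{1,2,3,4\}$. Together with the standing hypothesis $\gamma_1+\gamma_2+\gamma_3+\gamma_4<n$, we are then exactly in the situation handled by Proposition \ref{prop3.19}, which gives $|G\backslash \mct_{(n),(\beta),{\bf c}}|=\infty$. Taking the contrapositive yields the implication claimed in the corollary.

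There is no substantive obstacle, since the statement is a purely logical repackaging of Proposition \ref{prop3.19}: the only observation required is that, among length-$4$ tuples of positive integers, ``some entry is $\ge 2$'' is precisely the negation of ``the tuple equals $(1,1,1,1)$''. The nontrivial content, namely the construction of a one-parameter family of multiple flags via the inclusion $\varphi:\bbf^{12}\to\bbf^{2n}$ together with Lemmas \ref{lem3.10} and \ref{lem3.11}, has already been carried out in the preceding propositions, so no further computation is needed here.
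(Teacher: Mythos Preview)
Your proof is correct and matches the paper's approach: the corollary is stated without explicit proof precisely because it is the immediate contrapositive of Proposition~\ref{prop3.19}, using that a length-$4$ tuple of positive integers differs from $(1,1,1,1)$ exactly when some entry is at least $2$.
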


\subsubsection{Case of $r\ge 5$}

\begin{proposition} Suppose $r\ge 5$. Then $\mct_{(n),(\beta),{\bf c}}$ is of infinite type.
\end{proposition}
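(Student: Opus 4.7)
My plan is a coarsening reduction. If ${\bf c}'$ is obtained from ${\bf c}$ by combining consecutive parts, then the forgetful map $M_{\bf c}\to M_{{\bf c}'}$ (dropping intermediate subspaces) is $G$-equivariant and surjective, inducing a surjection on $G$-orbit sets; hence $|G\backslash \mct_{(n),(\beta),{\bf c}'}|=\infty$ forces $|G\backslash \mct_{(n),(\beta),{\bf c}}|=\infty$. For $r\ge 6$ I first coarsen the last $r-4$ parts to reduce to $r=5$: $(\gamma_1,\gamma_2,\gamma_3,\gamma_4,\gamma_5+\cdots+\gamma_r)$ has the same total sum as ${\bf c}$. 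So it suffices to treat $r=5$, and we may use $n\ge\beta+2\ge 6$ throughout.

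For $r=5$, I split on the total sum. If $\sum\gamma_i<n$, I coarsen further to the 4-tuple $(\gamma_1,\gamma_2,\gamma_3,\gamma_4+\gamma_5)$; its last entry is $\ge 2$, so the 4-tuple is not $(1,1,1,1)$, and Corollary \ref{cor3.20} immediately gives infinite type. If $\sum\gamma_i=n$, I exhibit an adjacent-pair 4-tuple coarsening $A_k=(\gamma_1,\ldots,\gamma_k+\gamma_{k+1},\ldots,\gamma_5)$ having at least two entries $\ge 2$; Proposition \ref{prop3.17} then yields infinite type. The three sub-cases are: (a) two distinct indices $i,j$ satisfy $\gamma_i,\gamma_j\ge 2$, in which case any $k\in\{1,2,3,4\}$ with $\{k,k+1\}\ne\{i,j\}$ separates $\gamma_i,\gamma_j$ into different entries of $A_k$ (at most one $k$ is forbidden); (b) exactly one $\gamma_i\ge 2$, which forces $\gamma_i=n-4\ge 2$ and the other four $\gamma_j$ equal $1$, whence any $k\in\{1,2,3,4\}$ with $k\notin\{i-1,i\}$ (at least two such $k$ exist) keeps $\gamma_i=n-4$ as a singleton in $A_k$ while the combined entry equals $1+1=2$; (c) all $\gamma_i=1$, which would give $\sum\gamma_i=5<n$, a contradiction ruling the case out.

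The main obstacle, such as it is, lies in the book-keeping for sub-case (b): one must confirm an admissible $k$ exists for each possible position of $\gamma_i\in\{1,\ldots,5\}$, but this is immediate since only $k=i-1$ and $k=i$ are forbidden, always leaving at least two valid $k$ in $\{1,2,3,4\}$. No new geometric construction is needed; the argument piggybacks entirely on the machinery already assembled in Propositions \ref{prop3.13}--\ref{prop3.19} and Corollaries \ref{cor3.14}--\ref{cor3.20}.
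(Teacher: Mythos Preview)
Your proof is correct and uses the same underlying idea as the paper: reduce to a 4-tuple by coarsening and invoke the results already established for $r=4$ (Propositions \ref{prop3.17}, \ref{prop3.19} / Corollaries \ref{cor3.18}, \ref{cor3.20}).

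The organization differs slightly. The paper does not first reduce to $r=5$; instead it works directly with arbitrary $r\ge 5$ and splits into three cases: (A) some $\gamma_i\ge 2$ with $i\le 4$, where it simply \emph{truncates} to $(\gamma_1,\gamma_2,\gamma_3,\gamma_4)$ (sum automatically $<n$ since $\gamma_5\ge 1$) and applies Proposition \ref{prop3.19}; (B) $\gamma_5\ge 2$, where it coarsens to $(\gamma_1+\gamma_2,\gamma_3,\gamma_4,\gamma_5)$ and applies Proposition \ref{prop3.17}; (C) $\gamma_1=\cdots=\gamma_5=1$, where it coarsens to $(2,1,1,1)$ and applies Proposition \ref{prop3.19}. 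Your route via an initial reduction to $r=5$ followed by a split on $\sum\gamma_i<n$ versus $\sum\gamma_i=n$ is a bit more systematic and makes the hypothesis-checking for Proposition \ref{prop3.17} (which requires the sum to equal $n$) more transparent; the paper's case (B) tacitly relies on Proposition \ref{prop3.19} as well when the sum of the coarsened 4-tuple falls short of $n$. Both arguments are short and rest on the same machinery.
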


\begin{proof} (A) Case of $\gamma_i\ge 2$ for some $i=1,2,3,4$. We have $|G\backslash \mct_{(n),(\beta),(\gamma_1,\gamma_2,\gamma_3,\gamma_4)}|=\infty$ by Proposition \ref{prop3.19}. So we have $|G\backslash \mct_{(n),(\beta),{\bf c}}|=\infty$.

(B) Case of $\gamma_5\ge 2$. We have $|G\backslash \mct_{(n),(\beta),(\gamma_1+\gamma_2,\gamma_3,\gamma_4,\gamma_5)}|=\infty$ by Proposition \ref{prop3.17}. So we have $|G\backslash \mct_{(n),(\beta),{\bf c}}|=\infty$.

(C) Case of $\gamma_1=\gamma_2=\gamma_3=\gamma_4=\gamma_5=1$. Since $|G\backslash \mct_{(n),(\beta),(2,1,1,1)}|=\infty$ by Proposition \ref{prop3.19}, we have $|G\backslash \mct_{(n),(\beta),(1^5)}|=\infty$.
\end{proof}

\subsection{Proof of Proposition \ref{prop1.6} (ii)}

Consider $\bbf^6$ with the canonical basis $f_1,\ldots,f_6$ and the symmetric bilinear form $(\ ,\ )$ such that $(f_i,f_j)=\delta_{i,7-j}$. Write $G'={\rm O}_6(\bbf)$ and $G'_0={\rm SO}_6(\bbf)$. Define triple flags $m_\lambda=(V,\ U_-^1\subset U_-^2,\ U_+^1\subset U_+^{2,\lambda})\in \mct_{(3),(1,1),(1,1)}$ for $\lambda\in\bbf^\times$ by
\begin{align*}
U_-^1 & =\bbf f_4,\ U_-^2=\bbf f_4\oplus\bbf f_5,\ V=\bbf(f_2+f_4)\oplus \bbf(f_3-f_5)\oplus\bbf f_6, \\
U_+^1 & =\bbf (f_1+f_3)\mand U_+^{2,\lambda}=\bbf (f_1+f_3)\oplus \bbf (\lambda f_2+f_3).
\end{align*}

\begin{lemma} If $G'm_\lambda\ni m_\mu$, then $\lambda/\mu\in (\bbf^\times)^2$.
\label{lem3.22}
\end{lemma}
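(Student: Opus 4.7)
The strategy is to use the five preservation conditions contained in $gm_\lambda=m_\mu$ to pin down $g\in G'$ almost completely in the basis $f_1,\ldots,f_6$, and then read off the relation between $\lambda$ and $\mu$ from the last condition. Since $V$ is a $3$-dimensional maximal isotropic subspace, $gV=V$ forces $g\in G'_0$ by Corollary \ref{cor2.4}. From $gU_-^1=U_-^1$ and $gU_-^2=U_-^2$ I get $gf_4=kf_4$ and $gf_5=mf_4+k'f_5$ with $k,k'\in\bbf^\times$, and the line $V\cap(U_-^2)^\perp=\bbf f_6$ is $g$-stable, giving $gf_6=lf_6$ for some $l\in\bbf^\times$.

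Next, the orthogonality $(gf_i,gf_j)=(f_i,f_j)=\delta_{i,7-j}$ determines $gf_1$, $gf_2$, $gf_3$ coefficient by coefficient, leaving only a handful of free parameters (call them $a_4,a_5,b_4$). Imposing $g(f_1+f_3)\in\bbf(f_1+f_3)$ (from $gU_+^1=U_+^1$) and matching the six coefficients then forces $l=k$, $m=0$, $a_4=0$ and $a_5=k'b_4/k$, so that
\begin{equation*}
gf_1=k^{-1}f_1+\tfrac{k'b_4}{k}f_5,\quad gf_2=\tfrac{1}{k'}f_2+b_4 f_4-b_4 f_6,\quad gf_3=\tfrac{1}{k}f_3-\tfrac{k'b_4}{k}f_5
\end{equation*}
and $gf_4=kf_4$, $gf_5=k'f_5$, $gf_6=kf_6$. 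The remaining condition $gV=V$ then reduces to the single scalar equation $b_4+k=1/k'$.

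Finally, $gU_+^{2,\lambda}=U_+^{2,\mu}$ means $g(\lambda f_2+f_3)\in\bbf(f_1+f_3)\oplus\bbf(\mu f_2+f_3)$. Matching the $f_4$, $f_5$ and $f_6$ coefficients of $g(\lambda f_2+f_3)$ forces $b_4=0$, which combined with $b_4+k=1/k'$ gives $k'=1/k$; matching then the $f_2$ and $f_3$ coefficients yields $\lambda/k'=\mu/k$, i.e.\ $\mu=\lambda k^2$, so $\lambda/\mu=(1/k)^2\in(\bbf^\times)^2$. The only real obstacle is coefficient bookkeeping, and the reason it goes through cleanly is that the pairing matrix is antidiagonal, so each orthogonality relation constrains a single coefficient at a time and the system solves uniquely at every step.
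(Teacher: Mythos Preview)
Your argument is correct. The route, however, is more computational than the paper's, and it is worth seeing why the paper's shortcut works.

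The paper first invokes Proposition~\ref{prop2.6}: since $U_+^{2,\lambda}$ and $U_-^2$ are $2$-dimensional isotropic subspaces of $\bbf^6$, each lies in exactly two maximal isotropics, and because $g\in G'_0$ (from $gV=V$), $g$ must fix those maximal isotropics individually. Thus $gU_+=U_+$ and $gU_-=U_-$ with $U_\pm=\bbf f_1\oplus\bbf f_2\oplus\bbf f_3$ and $\bbf f_4\oplus\bbf f_5\oplus\bbf f_6$, so $g=\ell(A)$ for some $A\in{\rm GL}_3(\bbf)$ from the outset. After that, the conditions $g\bbf f_1=\bbf f_1$ (orthogonal of $U_-^2$ in $U_+$), $g(\bbf f_2\oplus\bbf f_3)=\bbf f_2\oplus\bbf f_3$ (intersection $U_+\cap(V+U_-)$), $\det A=1$ (from $gV=V$), $gU_-^1=U_-^1$, and $gU_+^1=U_+^1$ force $A=\mathrm{diag}(c,c^{-1},c)$ in two lines, with no free parameters like your $a_4,a_5,b_4$ ever appearing.

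Your direct approach recovers the same diagonal $g$ but only after the coefficient chase you describe; in particular, you discover $gU_+\subset U_+$ only at the very end (via $b_4=0$), whereas the paper has it for free at the start. What your approach buys is self-containment: you never need Proposition~\ref{prop2.6}. What the paper's approach buys is that the entire computation fits in half a dozen lines and makes transparent \emph{why} only one scalar $c$ survives.
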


\begin{proof} Suppose $gm_\lambda=m_\mu$ for some $g\in G'$. Since $gV=V$, we have $g\in G'_0$ by Corollary \ref{cor2.4}. Since $U_+=\bbf f_1\oplus\bbf f_2\oplus\bbf f_3$ and $U_-=\bbf f_4\oplus\bbf f_5\oplus\bbf f_6$ are maximal isotropic subspaces of $\bbf^6$ containing $U_+^{2,\lambda}$ and $U_-^2$, respectively, we have
$$gU_+=U_+\mand gU_-=U_-$$
by Proposition \ref{prop2.6}. Since $\bbf f_1$ is the orthogonal space of $U_-^2$ in $U_+$, we have $g\bbf f_1=\bbf f_1$. On the other hand, $g$ preserves $U_+\cap (V+U_-)=\bbf f_2\oplus\bbf f_3$. So we can write
$$g=\ell\left(\bp c & 0 \\ 0 & A \ep\right)$$
with some $c\in\bbf^\times$ and $A\in{\rm GL}_2(\bbf)$. Since $gV=V$, we have $A\in {\rm SL}_2(\bbf)$. Since $gU_-^1=U_-^1$, we have
$$A=\bp a & x \\ 0 & a^{-1} \ep$$
with some $a\in\bbf^\times$ and $x\in\bbf$. Since $gU_+^1=U_+^1$, we have $c=a^{-1}$ and $x=0$. Hence we have
$$g=\ell\left(\bp c & 0 & 0 \\ 0 & c^{-1} & 0 \\ 0 & 0 & c \ep\right).$$
Since $gU_+^{2,\lambda}=U_+^{2,\mu}$, we have
$$g(\lambda f_2+f_3)=\lambda c^{-1}f_2+cf_3\in U_+^{2,\mu}.$$
Hence we have $\lambda/\mu=c^2\in (\bbf^\times)^2$.
\end{proof}

Now we prove Proposition \ref{prop1.6} (ii): Define a linear inclusion $\varphi: \bbf^6\to \bbf^{2n}$ by $\varphi(f_i)=e_{i+n-3}$. Define triple flags $n_\lambda=(W_0,\ W_1\subset W_2,\ W_3\subset W_{4,\lambda})\in \mct_{(n),(\beta_1,\beta_2),(\gamma_1,\gamma_2)}$ by
\begin{align*}
W_0 & =U_{[n-3]}\oplus\varphi(V),\ W_1=U_{[\beta_1-1]}\oplus\varphi(U_-^1),\ W_{2,\lambda}=U_{[\beta_1+\beta_2-2]} \oplus\varphi(U_-^2) \\
W_3 & =U_{[\gamma_1-1]}\oplus\varphi(U_+^1)\mand W_{4,\lambda}=U_{[\gamma_1+\gamma_2-2]}\oplus \varphi(U_+^{2,\lambda}).
\end{align*}
Then we have $Gn_\lambda\ni n_\mu\Longrightarrow \lambda/\mu\in (\bbf^\times)^2$ by Lemma \ref{lem2.7} and Lemma \ref{lem3.22}. \hfill$\square$

\subsection{Proof of Proposition \ref{prop1.6} (iii)}

Consider $\bbf^{10}$ with the canonical basis $f_1,\ldots,f_{10}$ and the symmetric bilinear form $(\ ,\ )$ such that $(f_i,f_j)=\delta_{i,11-j}$. Write $G'={\rm O}_{10}(\bbf)$ and $G'_0={\rm SO}_{10}(\bbf)$. Define triple flags $m_\lambda=(V,\ U_-,\ U_+^1\subset U_+^{2,\lambda} \subset U_+^3\subset U_+^4)\in \mct_{(5),(3),(1,1,1,1)}$ for $\lambda\in\bbf^\times$ by
\begin{align*}
V & =\bbf f_3\oplus\bbf(f_4+f_6)\oplus\bbf(f_5-f_7)\oplus \bbf f_9\oplus \bbf f_{10}, \quad 
U_- =\bbf f_6\oplus \bbf f_7\oplus \bbf f_8, \\
U_+^1 & =\bbf(f_1+f_2+f_3+f_5), \quad 
U_+^{2,\lambda} =\bbf(f_1+f_2+f_3+f_5)\oplus \bbf(\lambda f_4+f_1+f_5), \\
U_+^3 & =\bbf f_4\oplus \bbf(f_1+f_5)\oplus\bbf (f_2+f_3) \mand
U_+^4 =\bbf f_4\oplus \bbf f_1\oplus\bbf f_5\oplus\bbf (f_2+f_3).
\end{align*}

\begin{lemma} If $G'm_\lambda\ni m_\mu$, then $\lambda/\mu\in (\bbf^\times)^2$.
\label{lem3.23}
\end{lemma}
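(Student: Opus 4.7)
The plan is to mirror the approach of Lemma \ref{lem3.22}: first reduce $g$ to an element $\ell(A)$ of the Levi factor with $A\in{\rm GL}_5(\bbf)$ of a very restricted form, and then read off $\lambda/\mu$ from the single condition $gU_+^{2,\lambda}=U_+^{2,\mu}$.

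Suppose $gm_\lambda=m_\mu$ for some $g\in G'$. Since $gV=V$ and $V$ is maximally isotropic, Corollary \ref{cor2.4} gives $g\in G'_0$. Since $U_+^4$ is $(n-1)$-dimensional isotropic (with $n=5$), Proposition \ref{prop2.6} implies that $g$ preserves each of its two maximally isotropic extensions. One such extension is $U_+:=\bbf f_1\oplus\cdots\oplus\bbf f_5$, so $gU_+=U_+$. To produce a complementary maximally isotropic subspace preserved by $g$, I would combine $V$ with $U_-$: a direct check shows $V\cap U_-^\perp=\bbf f_9\oplus\bbf f_{10}$, so $g$ preserves this intersection, and hence also preserves $U_-':=U_-\oplus(V\cap U_-^\perp)=\bbf f_6\oplus\cdots\oplus\bbf f_{10}$. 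Since $U_+$ and $U_-'$ are complementary maximally isotropic, $g=\ell(A)$ for some $A\in{\rm GL}_5(\bbf)$ acting on $U_+$ in the basis $f_1,\ldots,f_5$.

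Next I would pin down $A$ via the remaining conditions. The identity $V\cap U_+=\bbf f_3$ forces $Af_3=df_3$ for some $d\in\bbf^\times$. The condition $gU_-=U_-$, combined with $gU_+=U_+$ and form-invariance, forces $A$ to stabilize $\{v\in U_+:(v,U_-)=0\}=\bbf f_1\oplus\bbf f_2$, making $A$ block upper-triangular with respect to $U_+=(\bbf f_1\oplus\bbf f_2)\oplus(\bbf f_3\oplus\bbf f_4\oplus\bbf f_5)$. Successively imposing $AU_+^3=U_+^3$, $AU_+^4=U_+^4$, and $AU_+^1=U_+^1$ (matching coefficients in the bases $f_4,f_1+f_5,f_2+f_3$, then $f_1,f_4,f_5,f_2+f_3$, then $f_1+f_2+f_3+f_5$) collapses many entries. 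The remaining off-diagonal entries, together with the key relation $cd=1$ (where $c$ is the coefficient of $f_4$ in $Af_4$), are extracted from the conditions $g(f_4+f_6)\in V$ and $g(f_5-f_7)\in V$, which force one to use the explicit action of $g$ on $U_-'$ given by $J_5A^{-T}J_5$ in the basis $f_6,\ldots,f_{10}$. The outcome is $A={\rm diag}(d,d,d,c,d)$ with $cd=1$.

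The final step is then immediate: $gU_+^{2,\lambda}=U_+^{2,\mu}$ gives $A(\lambda f_4+f_1+f_5)\in U_+^{2,\mu}$, i.e., $\lambda c f_4+d(f_1+f_5)=\alpha(f_1+f_2+f_3+f_5)+\beta(\mu f_4+f_1+f_5)$. Comparing the $f_2$-coordinate forces $\alpha=0$, and then $\beta=d$ and $\lambda c=d\mu$. Combined with $cd=1$, this yields $\lambda/\mu=d/c=1/c^2\in(\bbf^\times)^2$. The main obstacle is the linear-algebra bookkeeping in the previous paragraph—systematically narrowing down all $25$ entries of $A$ from the interplay of the flag-, $V$-, and $U_-$-conditions—but each individual constraint is routine and closely parallels the arguments in Lemmas \ref{lem3.22}, \ref{lem3.10}, and \ref{lem3.11}.
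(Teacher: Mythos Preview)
Your proof is correct and follows essentially the same route as the paper's. The paper likewise shows $g\in G'_0$, deduces $gU_+=U_+$ from Proposition~\ref{prop2.6}, then uses the three intersections $(\bbf f_1\oplus\bbf f_2)=(U_-)^\perp\cap U_+$, $\bbf f_3=V\cap U_+$, and $\bbf f_3\oplus\bbf f_4\oplus\bbf f_5=(V\oplus U_-)\cap U_+$ to obtain the same block form for $g|_{U_+}$, and then the flag conditions $gU_+^4=U_+^4$, $gU_+^3=U_+^3$, $gU_+^1=U_+^1$ together with $gV=V$ force $g|_{U_+}={\rm diag}(a,a,a,a^{-1},a)$, which is your ${\rm diag}(d,d,d,c,d)$ with $cd=1$.

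The one organizational difference is that you explicitly produce the complementary maximally isotropic subspace $U'_-=U_-\oplus(V\cap U_-^\perp)=\bbf f_6\oplus\cdots\oplus\bbf f_{10}$ preserved by $g$, so that $g=\ell(A)$ and the action on $f_6,f_7$ is given by the explicit formula $J_5{}^tA^{-1}J_5$; you then read off $cd=1$ from $g(f_4+f_6)\in V$. The paper never writes $g=\ell(A)$: it works only with $g|_{U_+}$ and extracts $\det B=1$ directly from $gV=V$ (in effect the same computation, since $g(f_4+f_6),g(f_5-f_7)\in V$ determine $gf_6,gf_7$ in terms of $B$ and the orthogonality relations then force $\det B=1$). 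Your extra step is correct and makes the endgame slightly more mechanical; the paper's version is a touch shorter. Either way the final identification $\lambda/\mu=d^2=a^2$ is identical.
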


\begin{proof} Suppose $gm_\lambda=m_\mu$ for some $g\in G'$. Since $gV=V$, we have $g\in G'_0$ by Corollary \ref{cor2.4}. Since $U_+=\bbf f_1\oplus \bbf f_2\oplus \bbf f_3 \oplus \bbf f_4\oplus \bbf f_5$ is a maximally isotropic subspace of $\bbf^{10}$ containing $U_+^4$, we have $gU_+=U_+$ by Proposition \ref{prop2.6}.

Since $\bbf f_1\oplus\bbf f_2$ is the orthogonal space of $U_-$ in $U_+$, we have
\begin{equation}
g(\bbf f_1\oplus\bbf f_2)=\bbf f_1\oplus\bbf f_2. \label{eq3.4}
\end{equation}
Since $V\cap U_+=\bbf f_3$, we have
\begin{equation}
g(\bbf f_3)=\bbf f_3. \label{eq3.5}
\end{equation}
Since $(V\oplus U_-)\cap U_+=\bbf f_3\oplus\bbf f_4\oplus \bbf f_5$, we have
\begin{equation}
g(\bbf f_3\oplus\bbf f_4\oplus \bbf f_5)=\bbf f_3\oplus\bbf f_4\oplus \bbf f_5. \label{eq3.6}
\end{equation}
By (\ref{eq3.4}),\ (\ref{eq3.5}) and (\ref{eq3.6}), we have
$$g|_{U_+}=\bp A & 0 & 0 \\ 0 & c & * \\ 0 & 0 & B \ep$$
with some $c\in\bbf^\times$ and $A,B\in{\rm GL}_2(\bbf)$. Since $gV=V$, we have $B\in{\rm SL}_2(\bbf)$.

Since $gU_+^4=U_+^4$, we have
$$g|_{U_+}=\bp a & * & 0 & 0 \\ 0 & c & 0 & 0 \\ 0 & 0 & c & 0 \\ 0 & 0 & 0 & B \ep$$
with $a,c\in\bbf^\times$ and $B\in{\rm SL}_2(\bbf)$. Since $gU_+^3=U_+^3$, we have
$$g|_{U_+}=\bp a & 0 & 0 & 0 & 0 \\ 0 & c & 0 & 0 & 0 \\ 0 & 0 & c & 0 & 0 \\ 0 & 0 & 0 & a^{-1} & * \\ 0 & 0 & 0 & 0 & a \ep.$$
Since $gU_+^1=U_+^1$, we have $g|_{U_+}={\rm diag}(a,a,a,a^{-1},a)$. Since $gU_+^{2,\lambda}=U_+^{2,\mu}$, we have
$$g(\lambda f_4+f_1+f_5)=\lambda a^{-1}f_4+af_1+af_5\in U_+^{2,\mu}.$$
Hence $\lambda/\mu=a^2\in (\bbf^\times)^2$.
\end{proof}

Now we prove Proposition \ref{prop1.6} (iii): Define a linear inclusion $\varphi: \bbf^{10}\to \bbf^{2n}$ by $\varphi(f_i)=e_{i+n-5}$. Define triple flags $n_\lambda=(W_0,\ W_1,\ W_2\subset W_{3,\lambda}\subset W_4\subset W_5)\in \mct_{(n),(\beta),(\gamma_1,\gamma_2,\gamma_3,\gamma_4)}$ by
\begin{align*}
W_0 & =U_{[n-5]}\oplus\varphi(V),\quad W_1=U_{[\beta-3]}\oplus\varphi(U_-),\quad W_2=U_{[\gamma_1-1]} \oplus\varphi(U_+^1) \\
W_{3,\lambda} & =U_{[\gamma_1+\gamma_2-2]}\oplus\varphi(U_+^{2,\lambda}), \quad 
W_4 =U_{[\gamma_1+\gamma_2+\gamma_3-3]}\oplus\varphi(U_+^3) \\
\mand W_5 & =U_{[\gamma_1+\gamma_2+\gamma_3+\gamma_4-4]}\oplus \varphi(U_+^4).
\end{align*}
Then we have $Gn_\lambda\ni n_\mu\Longrightarrow \lambda/\mu\in (\bbf^\times)^2$ by Lemma \ref{lem2.7} and Lemma \ref{lem3.23}. \hfill$\square$

\section{Triple flag varieties of finite type}

We have only to prove the following three propositions to prove that triple flag varieties in (III-3) and (III-4) are of finite type because
$$gV'=V'\Longrightarrow gV=V$$
for $g\in G_0$ and a flag $V'\subset V$ of $n-1$ and $n$ dimensional isotropic subspaces $V'$ and $V$ by Corollary \ref{cor2.4}. (We exchange the roles of ${\bf a}$ and ${\bf c}$.)

For a triple ${\bf a}=(\alpha_1,\alpha_2,\alpha_3)$ of positive integers, write $|\bf a|=\alpha_1+\alpha_2+\alpha_3$.

\begin{proposition} If ${\bf a}=(\alpha_1,\alpha_2,\alpha_3)$ satisfies one of the following five conditions, then the triple flag variety $\mct_{{\bf a},(\beta),(n)}$ is of finite type.

{\rm (i)} $|{\bf a}|=n$ and $\alpha_1=1$.

{\rm (ii)} $|{\bf a}|=n$ and $\alpha_2=1$.

{\rm (iii)} $\alpha_1=\alpha_2=1$.

{\rm (iv)} $\alpha_2=\alpha_3=1$.

{\rm (v)} $\alpha_1=\alpha_3=1$.
\label{prop4.1}
\end{proposition}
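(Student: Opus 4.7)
The plan is to fix the maximally isotropic subspace $U$ by the $G$-action and reduce to counting orbits of a parabolic subgroup of ${\rm GL}_n(\bbf)$ on a multiple flag variety, which is then handled by the Magyar--Weyman--Zelevinsky classification (whose relevant consequences are collected in Section 12).

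By Corollary \ref{cor2.4}, $M_{(n)}=GU_0$ is a single $G$-orbit, so we may fix $U=U_0$ in the third factor. The $G$-orbits on $\mct_{{\bf a},(\beta),(n)}$ then correspond to the orbits of $P=LN=\{g\in G\mid gU_0=U_0\}$ on $M_{\bf a}\times M_{(\beta)}$. The second factor $M_{(\beta)}$ has a finite $P$-orbit decomposition, by essentially the argument of Proposition \ref{prop2.2}: the orbit of an isotropic subspace $W$ of dimension $\beta$ is determined by $e=\dim(W\cap U_0)$, and for each admissible value of $e$ one may choose a standard representative $W_e$ whose stabilizer in $P$ we denote $Q_e$. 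So the problem reduces to showing $|Q_e\backslash M_{\bf a}|<\infty$ for each such $W_e$.

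The key observation is that the restriction to $U_0$ identifies isotropic subspaces $V\subset\bbf^{2n}$ with pairs consisting of a subspace $V\cap U_0\subset U_0$ together with a transversal part in $U_n/U_0$, and that modulo the unipotent radical $N$ this data is acted on by the Levi $L\cong {\rm GL}(U_0)={\rm GL}_n(\bbf)$. Consequently, counting $Q_e$-orbits on the flag $V_1\subset V_2\subset V_3$ in $M_{\bf a}$ amounts, after fixing the finitely many combinatorial positions $\dim(V_j\cap U_0)$, to counting orbits of a parabolic subgroup of ${\rm GL}_n(\bbf)$ (determined by $W_e\cap U_0$) on a multiple flag variety of the general linear group. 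In cases (i) and (ii) the condition $|{\bf a}|=n$ forces $V_3$ to be maximally isotropic, so its relative position to $U_0$ contributes only one further discrete parameter and the ${\rm GL}_n$ configuration is a triple flag of type $(\alpha_1,\alpha_2,\alpha_3)$ (with one part equal to $1$) plus the small-dimensional piece $W_e\cap U_0$; in cases (iii)--(v) the ${\rm GL}_n$ configuration carries a flag coming from ${\bf a}$ with at least two parts equal to $1$.

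The main obstacle is to verify that each of these ${\rm GL}_n$ multiple flag varieties lies within the finite-type list of Magyar--Weyman--Zelevinsky \cite{MWZ1}. In all five cases the parts of ${\bf a}$ equal to $1$ play the role of lines or hyperplanes, and the subspace $W_e\cap U_0$ contributes an additional small-dimensional piece after intersection with the relevant subquotient, putting the configuration within MWZ's finite-type regime. Careful bookkeeping is required to run through the finitely many values of $e$ and to identify the exact type of the resulting ${\rm GL}_n$ flag variety in each case; this is the content of the subsequent sections of the paper, relying on the general linear results of Section 12.
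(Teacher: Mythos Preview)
Your high-level plan—fix the factors one at a time and reduce to a linear-algebraic orbit problem—is reasonable in spirit, but there are two substantive gaps.

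First, the reduction you sketch does not actually land in a ${\rm GL}_n$ multiple flag variety. After fixing $U_0$ and a representative $W_e$ for the $\beta$-dimensional piece, the remaining datum is still a flag of isotropic subspaces of $\bbf^{2n}$, not a flag in $U_0$; your claim that ``modulo the unipotent radical $N$ this data is acted on by the Levi $L$'' loses information, since an isotropic subspace is not determined by its intersection with $U_0$ together with its image in a complement (the graph data matters). The paper avoids this by normalizing in a different order: it fixes the top $U_+$ of the slot-one flag together with $U_-$ (Proposition~\ref{prop5.1}) and then the maximally isotropic $V$ (Theorem~\ref{th5.9}), so that what remains is literally a flag $U_+^1\subset U_+^2$ inside the fixed vector space $U_+$. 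Only after this step is the problem a genuine ${\rm GL}(U_+)$-orbit problem, and getting there already uses the full machinery of the invariants $b_1,\ldots,b_{15}$.

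Second, and more seriously, even after the correct reduction the acting group $R_V|_{U_+}$ is \emph{not} parabolic in ${\rm GL}(U_+)$, so the MWZ classification \cite{MWZ1} does not apply. Among the diagonal blocks of $R_V|_{U_+}$ sits a factor ${\rm Sp}'_{b_{15}}(\bbf)$ coming from the component $V_{(15)}$ of $V$ (see Lemma~\ref{lem5.11'} and Figure~\ref{fig6.2}). The finiteness results one actually needs are Proposition~\ref{prop7.4} and Lemma~\ref{lem9.8} in the appendix, which treat groups of the shape $\mcb_{sp}(\alpha_1,\alpha_2,\alpha_3)$ with a symplectic middle block; these are proved by a separate induction and are not consequences of MWZ. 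The case-by-case work in Section~6 (using the explicit $R_V$-elements of Lemmas~\ref{lem5.10}--\ref{lem5.17}) is then required to feed into these appendix lemmas, and this is where the hypotheses $\alpha_i=1$ are actually used—not to place the configuration on an MWZ list, but to ensure the reduced flags $\widetilde S_1\subset\widetilde S_2$ have small enough (co)dimension for Proposition~\ref{prop7.4} to apply.
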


\begin{proposition} The triple flag variety $\mct_{(1,1,1,n-3),(\beta),(n)}$ is of finite type.
\label{prop4.2'}
\end{proposition}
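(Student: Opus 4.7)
The approach is a chain of reductions using the transitivity and $P$-orbit decompositions of Section~2, after which the problem becomes a finiteness statement for orbits of a large stabilizer acting on a single isotropic Grassmannian $M_{(\beta)}$.

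First I would use the transitivity of $G$ on $M=M_{(n)}$ (Corollary~\ref{cor2.4}) to fix the ${\bf c}$-component of the triple flag to be $U_0$. The problem then reduces to counting $P$-orbits on $M_{(1,1,1,n-3)}\times M_{(\beta)}$, where $P={\rm Stab}_G(U_0)=LN$. Next, by Proposition~\ref{prop2.2}, the $P$-orbits on the $V_4$-component of the first flag are indexed by $d=n-\dim(V_4\cap U_0)\in\{0,1,\ldots,n\}$, giving finitely many cases; fix representatives $V_4=U_d$ and let $P_d=P\cap{\rm Stab}_G(U_d)$. A Levi of $P_d$ is (up to isomorphism) ${\rm GL}_{n-d}(\bbf)\times{\rm GL}_d(\bbf)$, acting on $U_d\cong\bbf^n$ as a maximal parabolic of ${\rm GL}_n(\bbf)$.

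The $P_d$-action on the $(1,2,3)$-partial flag $V_1\subset V_2\subset V_3\subset U_d$ factors through the action of a parabolic of ${\rm GL}_n(\bbf)$ on the corresponding partial flag variety of $U_d$. By the ${\rm GL}$-theory of~\cite{MWZ1} (reviewed in Section~12 of the paper), this action has finitely many orbits, classified by the triple of intersection dimensions $(\dim(V_i\cap V_4\cap U_0))_{i=1,2,3}$ together with relative positions modulo $V_4\cap U_0$. Fix one representative per orbit, and let $P'\subset P_d$ denote its stabilizer.

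The final and main step is to show that $P'$ acts on $M_{(\beta)}$ with only finitely many orbits. For this I would invoke Lemma~\ref{lem2.7} iteratively to peel off the isotropic part of $W$ lying in the fixed flag, reducing the classification to a similar orbit problem on a smaller non-degenerate quadratic space. Combining this with the technical framework of Section~5 (from~\cite{M3}), which parameterizes such orbits by a finite list of intersection-dimension invariants of $W$ with $U_0,V_4,V_1,V_2,V_3$ and their joins, would yield finiteness. The decisive feature here, distinguishing this case from the infinite-type examples excluded in Proposition~\ref{prop3.19}, is that the first three steps of ${\bf a}=(1,1,1,n-3)$ are each one-dimensional, so the stabilizer $P'$ remains sufficiently large to discretize all continuous moduli in $W$.
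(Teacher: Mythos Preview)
Your reduction chain is sensible up to the last step, but that last step is where all the work is, and you have not actually done it. Saying ``invoke Lemma~\ref{lem2.7} iteratively'' and ``the technical framework of Section~5 \ldots\ parameterizes such orbits by a finite list of intersection-dimension invariants'' is not a proof; it is a hope. The machinery of Section~5 is built for a very specific situation: given a pair of isotropic subspaces $U_+,U_-$, it classifies maximally isotropic $V$ via the invariants $b_1,\ldots,b_{15}$. It does \emph{not} classify a $\beta$-dimensional isotropic $W$ relative to a flag $V_1\subset V_2\subset V_3\subset V_4$ together with another maximally isotropic subspace, which is what your stabilizer $P'$ would need. So the framework you are invoking does not apply to your setup without substantial new development, and you have not indicated what that development would be.

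The paper proceeds in the opposite order from yours, and this is not an accident: it fixes $U_+$ (the top of the ${\bf a}$-flag, which is $n$-dimensional), $U_-$ (the $(\beta)$-component), and $V$ (the $(n)$-component) first, precisely so that the Section~5 machinery applies as designed and produces the normal form $V=V(b_1,\ldots,b_{15})$. The remaining task is then to count $R_V$-orbits of short flags $U_+^1\subset U_+^2\subset U_+^3$ (dimensions $1,2,3$) inside $U_+$. After the reduction of Section~\ref{sec6.1} this becomes a question about $Q$-orbits of flags $\widetilde S_1\subset\widetilde S_2\subset\widetilde S_3$ in $\widetilde U$, and the proof in Section~7.1 is a long explicit case analysis on $\dim\widetilde S_3\in\{0,1,2,3\}$, with the case $\dim\widetilde S_3=3$ splitting further according to $\dim p_{I_{(15)}}W$, isotropy of $p_{I_{(15)}}W$ for the alternating form, and $\dim p_{I_{(5)}}W$, each subcase handled by exhibiting an explicit normal form and checking that the residual stabilizer is large enough (often a parabolic or a group of the type covered by Lemmas~\ref{lem5.1}, \ref{lem9.8}, \ref{lem12.13}). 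Your observation that the one-dimensional steps keep the stabilizer ``large enough'' is the right intuition, but turning it into a proof requires exactly this kind of case-by-case verification, which your proposal omits entirely.
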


\begin{proposition} If $|\bbf^\times/(\bbf^\times)^2|<\infty$, then the triple flag variety $\mct_{(1,1,1,1),(\beta),(n)}$ is of finite type.
\label{prop4.3'}
\end{proposition}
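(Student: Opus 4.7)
The plan is to fix the maximally isotropic subspace $U$ using the $G$-action, reducing the problem to showing that the stabilizer $P$ of $U$ has finitely many orbits on the data consisting of an isotropic flag of type $(1,1,1,1)$ together with an isotropic $\beta$-dimensional subspace $W$. The hypothesis $|\bbf^\times/(\bbf^\times)^2|<\infty$ should enter only at the very end, when we classify the residual continuous parameters as quadratic norms modulo squares.

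First, I would apply Corollary~\ref{cor2.4} to move $U$ to $U_0$. The stabilizer is $P=LN$ with Levi $L\cong {\rm GL}_n(\bbf)$ acting on $U_0$ and dually on the complement $U_n\cong \bbf^{2n}/U_0$, while the unipotent radical $N$ is parametrized by antisymmetric $n\times n$ matrices (via $J_n$). It suffices to show that $P$ has finitely many orbits on the set of pairs $(V_1\subset V_2\subset V_3\subset V_4,\,W)$ where each $V_i$ is isotropic of dimension $i$ and $W$ is isotropic of dimension $\beta$.

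Next, I would stratify this set by the combinatorial invariants $d_i=\dim(V_i\cap U_0)$ for $i=1,2,3,4$, $e=\dim(W\cap U_0)$, together with the dimensions of the pairwise intersections $\dim(V_i\cap W)$ and sums $\dim(V_i+W)$. There are only finitely many such combinatorial patterns, so one may fix one pattern and show that it contributes finitely many $P$-orbits. Within a fixed pattern, I would normalize using $L$: choose bases of $U_0$ and $U_n$ so that the subspaces $V_i\cap U_0$, $W\cap U_0$, and the projections of $V_i$ and $W$ to $U_n$, are all put in standard coordinate form; this is a discrete ${\rm GL}_n$-multiple flag classification, which is finite by the results promised in Section~12. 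After this $L$-normalization, the only continuous data left describes how each $V_i$ and $W$ lifts from its intersection with $U_0$ to its full span; equivalently, after choosing a distinguished lift in $U_n$, one is left with a residual vector in $U_0$ for each generically lifted vector.

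I would then exploit the $N$-action on these residuals. The isotropy conditions $(V_i,V_i)=\{0\}$ and $(W,W)=\{0\}$, together with the inclusions $V_i\subset V_{i+1}$ and the intersection conditions with $W$, translate (via Lemma~\ref{lem2.7} and the explicit shape of $N$) into a system of quadratic constraints on the residuals. The $N$-action is linear on the residuals and kills the ``antisymmetric'' off-diagonal entries, leaving only the symmetric parts which are constrained by the quadratic form itself. In the $(1,1,1,1)$ case this residual should reduce to at most four scalar invariants attached to the four generic isotropic vectors, each determined up to rescaling by a nonzero scalar coming from a diagonal block of $L$; those scalars therefore live in $\bbf^\times/(\bbf^\times)^2$, and the hypothesis $|\bbf^\times/(\bbf^\times)^2|<\infty$ then yields finiteness.

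The main obstacle I anticipate is the delicacy of the last step: showing that the residual parameter really does descend to $\bbf^\times/(\bbf^\times)^2$ rather than to all of $\bbf^\times$ (which would give infinite type, as Proposition~\ref{prop1.6}(iii) shows is otherwise possible). Concretely, one must exhibit, in each combinatorial stratum, a one-parameter subgroup of the stabilizer of the already-normalized data whose action on the residual is precisely squaring. The structure of the counterexample $m_\lambda$ in the proof of Proposition~\ref{prop1.6}(iii), together with Lemma~\ref{lem3.23}, provides a blueprint for exactly which diagonal scalar to chase; verifying this in every one of the finitely many combinatorial patterns is the bulk of the work.
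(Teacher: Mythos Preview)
Your approach is genuinely different from the paper's and, as written, has a real gap at the $L$-normalization step.

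The paper does not fix the maximally isotropic subspace first. Instead it normalizes the pair $(U_+,U_-)$, where $U_+$ is the $4$-dimensional top of the $(1,1,1,1)$ flag and $U_-$ is the $\beta$-dimensional subspace (Proposition~\ref{prop5.1}); then Theorem~\ref{th5.9} classifies the maximally isotropic $V$ by the integers $b_1,\ldots,b_{15}$. Because $\dim U_+=4$ one has
\[
b_1+b_2+b_3+b_7+2b_8+b_{10}+b_5+b_9+2b_{12}+b_{13}+b_{15}+b_6=4,
\]
so only a handful of $b_i$ are nonzero. For each such $V$ the residual problem is $|R_V\backslash M(U_+)|<\infty$, i.e.\ orbits of a concrete group on full flags in a \emph{four}-dimensional space; the reduction of Section~\ref{sec6.1} shrinks this to $|Q\backslash M(\widetilde U)|<\infty$, and Section~7.2 runs through the short list of possible $\widetilde{\mathcal I}=\{i:b_i\ne 0\}$, dispatching each case with the lemmas of Section~12. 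The hypothesis $|\bbf^\times/(\bbf^\times)^2|<\infty$ is used exactly once, in the subcase $\widetilde{\mathcal I}=\{10,13,15\}$ with $b_{15}=2$, where $Q$ carries an $\mathrm{SL}_2$-block and the center trick promotes it to $\mathrm{GL}_2$ at cost $|\bbf^\times/(\bbf^\times)^2|$.

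Your $L$-step asserts that simultaneously normalizing $V_i\cap U_0$, $W\cap U_0$, and the projections of $V_i$ and $W$ to $U_n$ is ``a discrete $\mathrm{GL}_n$-multiple flag classification, which is finite by the results promised in Section~12.'' Identifying $U_n$ with the dual of $U_0$, this is a \emph{four}-flag problem for $\mathrm{GL}_n$ (the intersection flag and the projection flag coming from the $V_i$, plus the two subspaces coming from $W$), and such problems are generically of infinite type; nothing in Section~12 covers this configuration. Your subsequent claim that the $N$-residual collapses to at most four scalars in $\bbf^\times/(\bbf^\times)^2$ is likewise unjustified: in the paper the square-class hypothesis does not emerge from separate scalar residuals attached to the four isotropic vectors, but from a single $\mathrm{SL}_2$-versus-$\mathrm{GL}_2$ index defect in one specific stratum. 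Fixing the ``big'' object first forces you to analyze a pair of ``small'' objects under $P$, which is harder to organize; the paper's order---normalize the small pair $(U_+,U_-)$ first, then $V$---is precisely what makes the case analysis land in dimension at most $4$.
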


We have only to prove the following four propositions to prove that triple flag varieties in (I-1),\ (I-2) and (II) are of finite type.

\begin{proposition} The triple flag varieties $\mct_{(1^n),(n-1),(n)}$ and $\mct_{(1^n),(1,n-1),(n)}$ are of finite type.
\label{prop4.2}
\end{proposition}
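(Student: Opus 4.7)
The plan is to reduce the triple-flag problem to a Borel-orbit problem on a double flag variety, and then classify those orbits by a finite combinatorial invariant via a Witt-style extension.

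I first observe that $G$ acts transitively on the complete isotropic flag variety $M_{(1^n)}$: this is proved by iterating Corollary \ref{cor2.5} to extend partial isometries dimension-by-dimension from $1$ up to $n-1$, together with the fact that $G$ acts transitively on the set of maximal isotropic subspaces (Corollary \ref{cor2.4} combined with the element of $G\setminus G_0$ switching the two $G_0$-orbits on $M$). Normalize the $M_{(1^n)}$-factor to the standard complete isotropic flag $V_i^{(0)}=\bbf e_1\oplus\cdots\oplus\bbf e_i$. Proposition \ref{prop2.6} forces the $G$-stabilizer of this flag to lie in $G_0$, and there it is the standard Borel $B\subset G_0=\mathrm{SO}_{2n}$. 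Therefore
$$|G\backslash\mct_{(1^n),\mathbf b,(n)}|=|B\backslash(M_{\mathbf b}\times M_{(n)})|,$$
so it suffices to show that $B$ has finitely many orbits on $M_{\mathbf b}\times M_{(n)}$, where $\mathbf b=(n-1)$ or $(1,n-1)$.

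To each $(W,U)\in M_{\mathbf b}\times M_{(n)}$ I associate the combinatorial invariant consisting of the three dimension sequences
$$\bigl(\dim(V_i^{(0)}\cap U)\bigr)_{i=0}^n,\quad \bigl(\dim(V_i^{(0)}\cap W)\bigr)_{i=0}^n,\quad \bigl(\dim(V_i^{(0)}\cap U\cap W)\bigr)_{i=0}^n,$$
together with the $G_0$-component of $U$ in $M=G_0U_0\sqcup G_0U_1$ (Corollary \ref{cor2.4}); in the case $\mathbf b=(1,n-1)$ I additionally record the unique jump index of the line $L$ relative to $V_\bullet^{(0)}$ and the $G_0$-component of the chosen maximal isotropic $W_{\max}\supset W$ appearing in the flag. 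Each invariant takes values in a finite set, so only finitely many tuples occur, and the remaining task is to prove that two pairs with identical invariants lie in the same $B$-orbit.

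Given such $(W,U)$ and $(W',U')$, I construct $b\in B$ with $b(W,U)=(W',U')$ by an inductive procedure along the flag. At step $i=1,\ldots,n$ I pick a vector $v_i\in V_i^{(0)}\setminus V_{i-1}^{(0)}$ which is required to lie in $U$, in $W$, in $U\cap W$, or in none of these, exactly according to which of the three dimension sequences jumps at index $i$; the same jump pattern is followed on the primed side to choose $v'_i$, and the matching invariants guarantee that these parallel choices are simultaneously realizable. The identification $v_i\leftrightarrow v'_i$ is then extended to an element of $G$ by combining Proposition \ref{prop2.2} (to align the maximal isotropic $V_n^{(0)}$, and $W_{\max}$ when relevant, on the two sides in accordance with the component data) with Lemma \ref{lem2.7} (to splice the partial isometry on the nondegenerate span of the $v_i$'s across its isotropic orthogonal complement into a global isometry of $\bbf^{2n}$). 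Since the resulting $b$ preserves $V_\bullet^{(0)}$ it lies in $B$, establishing the claim.

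The main obstacle is arranging the vectors $v_i$ to simultaneously satisfy the bilinear-form relations $(v_i,v_{2n+1-i})=1$ and $(v_i,v_j)=0$ for $i+j\neq 2n+1$ alongside the intersection constraints imposed by the jump pattern. At indices $i$ where two or three of the dimension sequences jump together, only a one-dimensional freedom inside $V_i^{(0)}/V_{i-1}^{(0)}$ is available to accommodate all the constraints at once, and one must verify that the matching invariants make these constraints mutually consistent on the two sides of the construction; this is the combinatorial heart of the argument, and Lemma \ref{lem2.7} is the tool that packages the local adjustments into a single global element of $B$. Once this is in place the induction closes in $n$ steps and the proposition follows.
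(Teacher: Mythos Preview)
Your proposed invariant is not complete, so the argument has a genuine gap. Take $n=3$, the standard flag $V_i^{(0)}=\bbf e_1\oplus\cdots\oplus\bbf e_i$, and $U=\bbf e_4\oplus\bbf e_5\oplus\bbf e_6$. Set
\[
W_1=\bbf e_4\oplus\bbf e_5,\qquad W_2=\bbf(e_1+e_4)\oplus\bbf e_5.
\]
Both $W_1,W_2$ are $2$-dimensional isotropic, and one checks directly that all three of your dimension sequences $\dim(V_i^{(0)}\cap U)$, $\dim(V_i^{(0)}\cap W_j)$, $\dim(V_i^{(0)}\cap U\cap W_j)$ vanish identically for $j=1,2$; the $G_0$-component of $U$ is of course the same in both cases. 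Yet $\dim(U\cap W_1)=2$ while $\dim(U\cap W_2)=1$, so $(W_1,U)$ and $(W_2,U)$ lie in distinct $G$-orbits, hence distinct $B$-orbits. Your invariants record only how $W$ and $U$ meet the fixed flag, not how they meet each other (nor, more subtly, how the intersections $W\cap V_i^\perp$, $U\cap W^\perp$, etc.\ sit relative to everything else). The inductive construction you sketch cannot repair this: you are choosing a single vector $v_i$ per step inside $V_i^{(0)}$, but $W$ and $U$ are not contained in $V_n^{(0)}$, so matching bases of $V_n^{(0)}$ alone cannot force $bW=W'$ and $bU=U'$.

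The paper proceeds quite differently. Rather than fixing the full flag, it fixes the pair $(U_+,U_-)\in M_{(n)}\times M_{(\beta)}$ (Proposition~\ref{prop5.1}) and then the maximal isotropic $V$ in the normal form $V(b_1,\ldots,b_{15})$ of Theorem~\ref{th5.9}; since $\dim U_+=n$ this forces many $b_k$ to vanish. One is left with showing that the isotropy group $R_V$ (or, after choosing the extra line in $U_-$ for $\mathbf b=(1,n-1)$, a smaller subgroup $R_V'$) has finitely many orbits on the full flag variety of $U_+$. This is done by computing the image of $R_V$ in $\mathrm{GL}(U_+)$ as an explicit block-triangular group and then invoking the appendix lemmas (Corollary~\ref{cor8.7}, Lemma~\ref{lem9.8}, Corollary~\ref{cor10.9}, Lemma~\ref{lem9.12}), case by case in the remaining parameters $b_3,b_8,b_{10}$ (for $\mathbf b=(n-1)$) or in the six normal forms of $U_-^1$ (for $\mathbf b=(1,n-1)$).
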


\begin{proposition} {\rm (i)} If $\beta\le 2$, then the triple flag variety $\mct_{(1^n),(\beta),(n)}$ is of finite type.

{\rm (ii)} If $|\bbf^\times/(\bbf^\times)^2|<\infty$, then the triple flag varieties $\mct_{(1^n),(3),(n)}$ is of finite type.

{\rm (iii)} If $r\ge 4$ and $\gamma_1+\gamma_2+\gamma_3+\gamma_4=n$, then the triple flag variety $\mct_{(\gamma_1,\gamma_2,\gamma_3,\gamma_4),(3),(n)}$ is of finite type.
\label{prop4.3}
\end{proposition}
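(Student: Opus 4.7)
The plan is to analyze $G$-orbits on $\mct$ by a sequence of Bruhat-style reductions, starting from the top subspace $V_p$ of the ${\bf a}$-flag (which in every part is maximally isotropic, since $|{\bf a}|=n$) paired with the maximally isotropic $W\in M_{(n)}$. By the Bruhat decomposition (Corollary~2.3), one may normalize $(V_p,W)=(U_0,U_d)$ for some $d\in\{0,1,\ldots,n\}$. The joint stabilizer preserves $U_0\cap U_d$ and $U_0+U_d$, and its Levi acts on $U_0\cong\bbf^n$ as the parabolic of ${\rm GL}(U_0)$ fixing the subspace $U_0\cap U_d$. Under this group the residual ${\bf a}$-flag in $U_0$ has finitely many orbits by the ordinary ${\rm GL}_n$-Bruhat decomposition modulo a parabolic. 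The problem in each part thereby reduces to bounding the orbits, on $M_{(\beta)}$, of the remaining stabilizer $H$: a Borel-type subgroup in parts (i) and (ii), and a larger parabolic whose Levi contains the full blocks ${\rm GL}_{\gamma_1}\times\cdots\times{\rm GL}_{\gamma_4}$ in part (iii).

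For part (i), the relative position of a one- or two-dimensional isotropic $U$ with respect to $(U_0,U_d,V_\bullet)$ is determined by a small finite list of intersection dimensions and incidence data, so $U$ belongs to one of finitely many combinatorial types. Within each type $U$ is described in the standard basis by a few scalar coefficients, and the Cartan torus of $H$, of rank $n$ and subject only to $\lambda_i\lambda_{2n+1-i}=1$, absorbs each such coefficient. Thus no continuous invariant survives and only finitely many orbits remain, unconditionally on $\bbf$.

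For part (ii), when $\beta=3$ most combinatorial types are still handled as in (i), but a \emph{critical} type remains, closely analogous to the family constructed in the proof of Lemma~\ref{lem3.22}. Here $U$ contains a vector whose coefficients are coupled by an orthogonality relation so that Cartan rescaling sends the parameter $\lambda\in\bbf^\times$ only to $\lambda/c^2$. The $H$-orbits in this type are thus in bijection with $\bbf^\times/(\bbf^\times)^2$, which is finite by hypothesis; combined with the other finitely many types this yields $|G\backslash \mct_{(1^n),(3),(n)}|<\infty$.

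For part (iii), the residual stabilizer $H$ contains each block ${\rm GL}_{\gamma_i}$ in full, making available non-diagonal (in particular unipotent and Weyl) elements that were absent from the Borel-type $H$ of (ii). In the critical type that produced the square-class obstruction, the coefficient that previously required rescaling by a square can now be rescaled by an arbitrary element of $\bbf^\times$ via a suitable non-diagonal element of some ${\rm GL}_{\gamma_i}$ (for example, one mixing two basis vectors lying in the same block). The obstruction disappears and finiteness holds without any condition on $\bbf$. The principal difficulty throughout is the detailed case analysis: one must enumerate every combinatorial type of a $3$-dimensional isotropic $U$ relative to the fixed data, exhibit a standard-form representative, and then verify in (ii) that only square-class moduli survive the Cartan action, and in (iii) that a non-diagonal Levi element completely absorbs that modulus.
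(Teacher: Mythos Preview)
Your outline inverts the paper's order of reduction. You first normalize the pair of maximally isotropic subspaces and then the ${\bf a}$-flag, leaving the $\beta$-dimensional space $U_-$ for last; the paper instead normalizes the whole triple $(U_+,U_-,V)$ at once via the invariants $b_1,\ldots,b_{15}$ of Section~5, and only then treats the flag inside $U_+$ through the reduction of Section~6.1. Because $\beta\le 3$ forces $b_1+b_2+b_5+b_6+b_8+b_{15}\le 3$ with $b_{15}$ even, only a short list of $(b_i)$-patterns can occur (cases (A)--(G) of Section~9), and for each the group $Q$ acting on $\widetilde U$ has an explicit block-triangular shape that is matched directly against the appendix lemmas. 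In your order the residual group $H$ depends on the Bruhat cell of the flag relative to $U_0\cap U_d$, giving many more cases and no uniform description of $H$.

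There are also genuine gaps. Your torus-only claim in part~(i) is not sufficient: for an isotropic line supported on both $e_i$ and $e_{2n+1-i}$ the torus already leaves a square-class invariant, so you must invoke unipotent elements of $H$ as well, and you do not say which ones or why they exist in every Bruhat cell. More importantly, you miss the actual mechanism behind the obstruction in~(ii): in the paper it is the appearance of an ${\rm Sp}'_{2}(\bbf)={\rm SL}_2(\bbf)$ block in $Q$ (from $b_{15}=2$) alongside an extra $\bbf^\times$ factor (from $b_5=1$ or $b_8=1$), and the hypothesis $|\bbf^\times/(\bbf^\times)^2|<\infty$ is exactly what lets one replace this ${\rm SL}_2$ by ${\rm GL}_2$ at finite index. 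Your sketch for~(iii) is likewise off target: the paper does not ``absorb the modulus'' via some ${\rm GL}_{\gamma_i}$ block, but instead proves separate finiteness lemmas in the appendix showing that these same ${\rm SL}_2$-containing groups already have finitely many orbits on \emph{partial} flag varieties $M_{k_1,k_2,k_3}$, unconditionally on $\bbf$; the case $b_8=1$ even requires an ad~hoc construction of an element of $R_V$ that does not stabilize the ambient $U'_+$.
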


\begin{proposition} {\rm (i)} If $|\bbf^\times/(\bbf^\times)^2|<\infty$, then the triple flag variety $\mct_{(1^n),(1,1),(n)}$ is of finite type.

{\rm (ii)} The triple flag variety $\mct_{(\gamma_1,n-\gamma_1),(1,1),(n)}$ is of finite type.
\label{prop4.3''}
\end{proposition}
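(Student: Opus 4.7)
The plan for both parts is to reduce by successively fixing the two maximally isotropic subspaces that appear in the data, and then analyzing the residual stabilizer's action on the remaining factor. First, I would use Corollary~\ref{cor2.4} to set $U = U_0$ (the $(n)$-factor); the residual acting group is the Siegel parabolic $P = LN$. Next, by Proposition~\ref{prop2.2} the $P$-orbits on maximally isotropic subspaces are the $n+1$ Bruhat cells $PU_d$, so I can further fix $V_n = U_d$ in part (i), or $V_2 = U_d$ in part (ii), for each $d = 0, \ldots, n$. The resulting stabilizer $H_d := P \cap w_d P w_d^{-1}$ has Levi $L_d \cong GL_{n-d}(\bbf) \times GL_d(\bbf)$ acting on $A := U_0 \cap U_d$, $C := U_d / A$, and the duality-paired complements $D, B$ in $\bbf^{2n}$.

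Second, I would normalize the remaining portion of the ${\bf c}$-flag which sits inside $U_d$. For part (ii), $H_d$ acts on the Grassmannian of $\gamma_1$-planes in $U_d$ through a block-upper-triangular representation with $L_d$-Levi, so the orbits are classified by $\dim(V_1 \cap A)$ and hence finite; choose a coordinate representative. For part (i), $H_d$-orbits on full flags of $U_d$ are the Bruhat double cosets $L_d \backslash GL(U_d) / B_{U_d}$, also finite by the standard $GL$-theory summarized in Section~12; choose a coordinate full flag. Let $H'$ denote the residual stabilizer after this step.

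The heart of the proof is to show that $H'$ has only finitely many orbits on $M_{(1,1)} = \{W_1 \subset W_2\}$. I would stratify the target by the finite combinatorial pattern of intersections of $W_1$ and $W_2$ with $A, B, C, D$ and with the already-fixed ${\bf c}$-flag. In part (ii), $H'$ still contains a substantial Levi consisting of a product of $GL$-blocks, and within each stratum the orbit problem reduces to a flag variety problem for a general linear group; this is finite by the results imported from Section~12 combined with Lemma~\ref{lem2.7} to transfer structure across the factors. In part (i), after the full flag in $U_d$ is fixed, the Levi of $H'$ is only a Borel of $L_d$, essentially a maximal torus; torus-orbits on a nonzero isotropic vector $v = \sum v_i e_i$ are controlled by the invariants $v_i v_{2n+1-i}$ in each dual coordinate pair, and as a subspace $W_1 = \bbf v$ these invariants are defined only up to the global rescaling $v \mapsto \lambda v$, which multiplies each invariant by $\lambda^2$. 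The hypothesis $|\bbf^\times/(\bbf^\times)^2| < \infty$ then bounds the number of orbits of $W_1$, after which choosing $W_2$ reduces to a similar finite problem inside $W_1^\perp/W_1$. The hard part will be this final step of part (i): enumerating the square-class invariants and verifying the secondary stratification for $W_2$, which is precisely where the finiteness of $\bbf^\times/(\bbf^\times)^2$ is indispensable.
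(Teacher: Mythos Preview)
Your reduction strategy is coherent through the first two steps, but the final step for part (i) has a genuine gap. After fixing $U_0$, $U_d$, and a full flag of $U_d$, your residual group $H'$ is $P\cap B'$ where $B'$ is a Borel of $G$; this is strictly smaller than a Borel of $G$ when $d>0$, so Bruhat does not apply. Your proposed torus argument fails: the quantities $v_i v_{2n+1-i}$ are indeed torus-invariant, but there are $n$ of them, and their \emph{ratios} are invariants of the line $\bbf v$ that can vary continuously. For instance, with $n\ge 3$ and $v=e_1-e_{2n}+\alpha(e_2-e_{2n-1})+\beta(e_3-e_{2n-2})$ subject to $1+\alpha^2+\beta^2=0$, the ratios $\alpha^2,\beta^2$ are continuous torus-invariants of $\bbf v$. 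The hypothesis $|\bbf^\times/(\bbf^\times)^2|<\infty$ only controls the overall scaling, not these ratios. You would need the unipotent part of $H'$ to kill these moduli, and you do not address this; in effect your ``final step'' is a restatement of the original problem on each Bruhat stratum, not a proof.

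The paper proceeds quite differently and avoids this trap. It first normalizes the triple $(U_+,U_-,V)$ with $U_+$ $n$-dimensional, $U_-$ two-dimensional, via the $b_1,\dots,b_{15}$ structure theory of Section~5; since $\dim U_-=2$, only a handful of $b_j$ can be nonzero, yielding about nine explicit cases. In each case there are only finitely many $R_V$-orbits of one-dimensional $U_-^1\subset U_-$, and for each representative the restriction $Q'$ of the stabilizer to $U_+$ is an explicit block-triangular matrix group. The problem then becomes $|Q'\backslash M(U_+)|<\infty$ (part (i)) or $|Q'\backslash M_{\gamma_1}(U_+)|<\infty$ (part (ii)), handled by the lemmas of Section~12. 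The square-class hypothesis enters only in two specific cases where $Q'$ has a coupled diagonal pair $(\lambda,\lambda^{-1})$ and one compares with the nearby group having independent entries $(\mu_1,\mu_2)$; the index is exactly $|\bbf^\times/(\bbf^\times)^2|$. The point is that normalizing the small factor $(1,1)$ \emph{before} the full flag keeps the residual group large and explicit, whereas your ordering leaves you with a small group acting on a large space.
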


\begin{proposition} If $|\bbf^\times/(\bbf^\times)^2|<\infty$, then the triple flag variety $\mct_{(\beta,n-\beta),(1),(1^n)}$ is of finite type.
\label{prop4.4}
\end{proposition}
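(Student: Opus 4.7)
The plan is to reduce any triple $(V_1\subset V_2,\ell,W_\bullet)\in\mct_{(\beta,n-\beta),(1),(1^n)}$ to one of finitely many canonical forms through a stepwise normalization that introduces at each step either a discrete invariant or a scalar class in $\bbf^\times/(\bbf^\times)^2$; the hypothesis then forces the total orbit count to be finite.

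First, since $G$ is transitive on $M_{(\beta,n-\beta)}$, use $G$ to put
$$V_2=U_0=\bbf e_1\oplus\cdots\oplus\bbf e_n,\qquad V_1=\bbf e_1\oplus\cdots\oplus\bbf e_\beta.$$
Let $H_0$ denote the stabilizer in $G$ of $(V_1\subset V_2)$; it is a parabolic subgroup whose Levi factor is isomorphic to $\mathrm{GL}_\beta\times\mathrm{GL}_{n-\beta}$ and whose unipotent radical is contained in $N$. Classify the $H_0$-orbits on isotropic lines $\ell\subset\bbf^{2n}$ by the discrete data $(\dim(\ell\cap V_1),\dim(\ell\cap V_2),\dim(\ell\cap V_1^\perp))$: this takes only finitely many values, and a direct argument combining the $\mathrm{GL}_\beta\times\mathrm{GL}_{n-\beta}$ action on $V_2$ and $\bbf^{2n}/V_2^\perp$ with the unipotent radical shows that each value yields a single $H_0$-orbit. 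Fix one representative for each, and let $H$ denote the resulting stabilizer in $G$ of the triple $(V_1\subset V_2,\ell)$.

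It then remains to prove that $H$ has finitely many orbits on complete isotropic flags $W_\bullet\in M_{(1^n)}$. I proceed by induction on $i=0,1,\ldots,n-1$, letting $H_i$ be the stabilizer in $H$ of the partial flag $W_1\subset\cdots\subset W_i$. Then $H_i$ acts on the quotient space $W_i^\perp/W_i$, which carries an induced nondegenerate symmetric form of dimension $2(n-i)$, and $W_{i+1}/W_i$ is an isotropic line in this quotient. I classify the $H_i$-orbits on such lines by tracking the intersection pattern of $W_{i+1}$ with the images in $W_i^\perp/W_i$ of $V_1$, $V_2$, $\ell$ and their orthogonal complements. For the majority of indices $i$, the orbit is determined purely by these discrete invariants; at certain ``critical'' indices, however, the residual freedom is only a nonzero scalar acting on a fixed nonisotropic representative, contributing a parameter valued in $\bbf^\times/(\bbf^\times)^2$.

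The main obstacle is bounding the number of such critical indices. The key point, which relies on a careful analysis of the structure of $H_i$ together with repeated use of Lemma \ref{lem2.7}, is that a critical index can arise only when the image of a specific one-dimensional piece built from $\ell$, $V_1$, or $V_2$ first becomes anisotropic in the quotient $W_i^\perp/W_i$; since the number of such pieces depends only on the normalization of $(V_1\subset V_2,\ell)$, it is bounded by a constant independent of $n$. Hence the total number of $H$-orbits on $M_{(1^n)}$ is at most a constant times a bounded power of $|\bbf^\times/(\bbf^\times)^2|$, which is finite by hypothesis. Combined with the finiteness of the preceding normalization steps, this yields $|G\backslash\mct_{(\beta,n-\beta),(1),(1^n)}|<\infty$.
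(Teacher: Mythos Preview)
Your proposal is a strategic outline rather than a proof, and the crucial step is not established. The inductive scheme you describe requires, at each stage $i$, a concrete description of the image of $H_i$ acting on $W_i^\perp/W_i$: without that, the assertion that ``for the majority of indices $i$, the orbit is determined purely by these discrete invariants'' is unsupported. The group $H_i$ is the stabilizer in $H$ of an arbitrary partial isotropic flag, and its image in ${\rm O}(W_i^\perp/W_i)$ depends on how $W_1,\ldots,W_i$ sit relative to $V_1,V_2,\ell$; you give no mechanism for tracking this. The bound on the number of ``critical'' indices is likewise asserted but not argued: the claim that a critical step occurs only when ``a specific one-dimensional piece \ldots\ first becomes anisotropic'' is plausible-sounding but not proved, and the appeal to Lemma~\ref{lem2.7} is misplaced (that lemma passes from a subgroup of $G$ stabilizing certain subspaces to an action on a nondegenerate quotient; it does not control orbit counts or detect when square-class parameters appear).

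The paper's argument is organized quite differently and avoids these difficulties. It first normalizes the pair $(U_+,U_-)=(V_2,\ell)$ and one maximal isotropic subspace $V$ (playing the role of $W_n$) using the structural machinery of Section~5, which produces finitely many explicit invariants $b_1,\ldots,b_{15}$ with $b_1+b_2+b_5+b_6+b_8=1$. This yields five cases, and in each one the $R_V$-orbits of the $\beta$-dimensional subspace $U_+^1=V_1$ are enumerated directly. For every resulting representative, the restriction $Q_V$ of the stabilizer to $V$ is written down as an explicit block-upper-triangular subgroup of ${\rm GL}(V)$, and finiteness of $Q_V\backslash M(V)$ then follows from the Bruhat decomposition together with Lemma~\ref{lem5.1}, Lemma~\ref{lem9.8}, and Corollary~\ref{cor8.7}. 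The hypothesis $|\bbf^\times/(\bbf^\times)^2|<\infty$ enters only in two specific subcases (the last subcase of (C) and case (E.3)), via a single index argument $|H'/ZH|=|\bbf^\times/(\bbf^\times)^2|$. To make your approach rigorous you would need to replicate this level of explicitness for each $H_i$, which is essentially what the paper's reduction accomplishes in one stroke by normalizing $V$ first.
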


In order to prove these propositions, we prepare in the next section technical tools which were introduced in \cite{M3} Section 3.

\section{Review of technical results in \cite{M3}}

In this section, we will prepare notations and technical results given in \cite{M3} which are also valid in the even-degree case with a slight modification.

\subsection{Normalization of $U_+$ and $U_-$}

For $i\in I=\{1,\ldots,2n\}$, write $\overline{i}=2n+1-i$. Let $U_+$ and $U_-$ be $\alpha$ and $\beta$-dimensional isotropic subspaces of $\bbf^{2n}$, respectively. Define subspaces
$$W_0=U_+\cap U_-,\quad W_+=U_+\cap U_-^\perp\mand W_-=U_-\cap U_+^\perp$$
of $\bbf^{2n}$. Write
$a_0 =\dim W_0, \ 
a_+  =\dim W_+ -a_0$ and $a_- =\dim W_- -a_0$.
Since the bilinear form $(\ ,\ )$ is nondegenerate on the pair $(U_+/W_+,U_-/W_-)$, we have
$$\alpha-a_0-a_+=\beta-a_0-a_-.$$
Put $a_1=\alpha-a_0-a_+=\beta-a_0-a_-$ and $d=a_0+a_++a_-$. Define subspaces
\begin{align*}
W_{(0)} & =\bbf e_1\oplus\cdots\oplus \bbf e_{a_0}, \quad 
W_{(+)} =\bbf e_{a_0+1}\oplus\cdots\oplus\bbf e_{a_0+a_+}, \\ 
W_{(-)} & =\bbf e_{a_0+a_++1}\oplus\cdots\oplus\bbf e_d, \quad
U_{(+)} =\bbf e_{d+1}\oplus\cdots\oplus\bbf e_{d+a_1} \\
\quad\mbox{and}\quad U_{(-)} & =\bbf e_{\overline{d+a_1}}\oplus\cdots\oplus\bbf e_{\overline{d+1}} =\bbf e_{d'+1}\oplus\cdots\oplus\bbf e_{d'+a_1}
\end{align*}
of $\bbf^{2n}$ where $d'=\overline{d+a_1}-1=2n-d-a_1$.

\begin{proposition} $($Proposition 3.6 in \cite{M3}$)$ There exists an element $g\in G$ such that
$$gU_+=W_{(0)}\oplus W_{(+)}\oplus U_{(+)}\quad\mbox{and that}\quad gU_-=W_{(0)}\oplus W_{(-)}\oplus U_{(-)}.$$
\label{prop5.1}
\end{proposition}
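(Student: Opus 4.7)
The plan is to apply Witt's extension theorem after hand-constructing a linear isometry from $U_++U_-$ onto the standard isotropic configuration $(W_{(0)}\oplus W_{(+)}\oplus U_{(+)})+(W_{(0)}\oplus W_{(-)}\oplus U_{(-)})$. Since $(\ ,\ )$ is nondegenerate on $\bbf^{2n}$ and ${\rm char}\,\bbf\ne 2$, Witt's theorem extends any such isometry between subspaces to an element $g\in G$; the images $f(U_+)$ and $f(U_-)$ will then be the required standard subspaces by construction, and a dimension count $\dim(U_++U_-) = a_0+a_++a_-+2a_1 = d+2a_1$ matches the dimension of the target.

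For the construction, pick a basis $w_1,\ldots,w_{a_0}$ of $W_0$ and extend it to bases of $W_+$ and of $W_-$ by vectors $w_1^+,\ldots,w_{a_+}^+\in U_+$ and $w_1^-,\ldots,w_{a_-}^-\in U_-$ respectively. The canonical pairing $U_+/W_+\times U_-/W_-\to\bbf$ induced by $(\ ,\ )$ is nondegenerate, since its left and right kernels are $(U_+\cap U_-^\perp)/W_+=0$ and $(U_-\cap U_+^\perp)/W_-=0$. I therefore pick lifts $u_1,\ldots,u_{a_1}\in U_+$ and $v_1,\ldots,v_{a_1}\in U_-$ of dual bases satisfying the anti-diagonal relation $(u_i,v_j)=\delta_{i,\,a_1+1-j}$; this is well-defined independent of the lift because $(W_+,U_-)=(U_+,W_-)=0$. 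Then define $f$ on these basis vectors by $w_i\mapsto e_i$, $w_j^+\mapsto e_{a_0+j}$, $w_k^-\mapsto e_{a_0+a_++k}$, $u_\ell\mapsto e_{d+\ell}$, and $v_m\mapsto e_{d'+m}$.

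To check that $f$ is a linear isometry, it suffices to verify matching pairings among the chosen basis vectors. All pairings inside $U_+$ or inside $U_-$ vanish by isotropy; the remaining mixed pairings between the $w_j^\pm$ and elements of the other isotropic summand vanish because $W_+\subset U_-^\perp$ and $W_-\subset U_+^\perp$; and the only surviving pairings are $(u_\ell,v_{a_1+1-\ell})=1$. On the $e$-side, the only nonzero pairings among the target basis vectors are $(e_{d+\ell},e_{d'+a_1+1-\ell})=1$, because $(d+\ell)+(d'+a_1+1-\ell)=2n+1$ while every other sum of the chosen indices is strictly less than $2n+1$. Hence $f$ is an isometry; by Witt's theorem it extends to some $g\in G$, which by design satisfies $gU_+=W_{(0)}\oplus W_{(+)}\oplus U_{(+)}$ and $gU_-=W_{(0)}\oplus W_{(-)}\oplus U_{(-)}$. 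The only real obstacle is the bookkeeping of the anti-diagonal shift $j\mapsto a_1+1-j$ imposed by the convention $(e_i,e_j)=\delta_{i,2n+1-j}$; this is precisely the ``slight modification'' needed to carry the argument from \cite{M3} over to the even-degree setting.
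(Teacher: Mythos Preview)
Your proof is correct. The paper does not give its own proof of this proposition; it merely cites it as Proposition~3.6 in \cite{M3}, the odd-degree companion paper, as part of the review section. Your argument via Witt's extension theorem is the standard route and is almost certainly what \cite{M3} does as well: build a basis of $U_++U_-$ adapted to the filtration $W_0\subset W_\pm\subset U_\pm$, match the induced Gram matrix with that of the standard basis vectors, and invoke Witt. Your dimension check $d+a_1\le n$ (needed so that the target indices land where you claim) follows, as you implicitly use, from the fact that $U_++W_-$ is totally isotropic of dimension $d+a_1$. One small remark: the anti-diagonal bookkeeping you flag is not really an even/odd modification---the same index shift appears in \cite{M3} with $2n+1$ replaced by the odd-degree analogue---so the final sentence slightly overstates the novelty, but this does not affect the argument.
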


So we may assume 
$$U_+=W_{(0)}\oplus W_{(+)}\oplus U_{(+)}\mand U_-=W_{(0)}\oplus W_{(-)}\oplus U_{(-)}$$
in the following. Hence we have
$$W_0=W_{(0)},\quad W_+=W_{(0)}\oplus W_{(+)}\mand W_-=W_{(0)}\oplus W_{(-)}.$$
Write
$$R=\{g\in G\mid gU_+=U_+\mbox{ and }gU_-=U_-\}.$$

\subsection{Invariants of the $R$-orbit of $V\in M=M_{(n)}$} \label{sec5.2}

In order to describe $G$-orbits on the triple flag variety $\mct_{(\alpha),(\beta),(n)}$, we have only to describe $R$-orbits of maximally isotropic subspaces for each $(U_+,U_-)$ in the previous subsection.

Let $V$ be a maximal isotropic subspace in $\bbf^{2n}$. Then the following $b_1,\ldots,b_{11}$ are clearly invariants of the $R$-orbit of $V$.
\begin{align*}
b_1 & =\dim(W_0\cap V),\quad b_2=a_0-b_1,\\
b_3 & =\dim(W_+\cap V)-b_1,\quad b_4=\dim(W_-\cap V)-b_1, \\
b_5 & =\dim(U_+\cap V)-b_1-b_3,\quad b_6=\dim(U_-\cap V)-b_1-b_4, \\
b_7 & =\dim(W\cap V)-b_1-b_3-b_4, \\
b_8 & =\dim((W_++U_-)\cap V)-\dim(W\cap V)-b_6, \\
b_9 & =\dim((U_++W_-)\cap V)-\dim(W\cap V)-b_5, \\
b_{10} & =a_+-b_3-b_7-b_8,\quad b_{11}=a_--b_4-b_7-b_9.
\end{align*}

We will deduce the remaining essential invariants from
$$X=X(V)=(U_++U_-)\cap V.$$
(Note that we don't need $X'=X^\perp$ in the even-degree case because $X^\perp=X$.) Write $W=W_++W_-=W_{(0)}\oplus W_{(+)}\oplus W_{(-)}$. Consider the projection $\pi: W^\perp\to \pi(W^\perp)=W^\perp/W$. Then $\pi(W^\perp)$ is decomposed as
$$\pi(W^\perp)=\pi(U_+)\oplus\pi(U_-)\oplus\pi(Z)$$
where $Z=(U_++U_-)^\perp\subset W^\perp$. Put $a_2=n-d-a_1$. Then
$$\dim \pi(Z)=2a_2\mand \dim Z=2a_2+d.$$

Let $\pi_+,\pi_-$ and $\pi_Z$ denote the projections of $W^\perp$ onto
$\pi(U_+),\ \pi(U_-)$ and $\pi(Z)$,
respectively.

\begin{lemma} $($Lemma 3.8 in \cite{M3}$)$ $a_1-\dim\pi(X)=a_2-\dim\pi(Z\cap V)$.
\label{lem5.2'}
\end{lemma}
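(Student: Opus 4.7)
The plan is to interpret both sides as measures of how the Lagrangian $V' := \pi(V\cap W^\perp)$ sits inside the orthogonal decomposition of $\pi(W^\perp)$.

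First I would collect the basic containments and dimensions. Since $W_\pm\subset U_\pm$ and $W_\pm\subset U_\mp^\perp$, the sum $W=W_++W_-$ is isotropic (so $W\subset W^\perp$), $W\subset U_++U_-\subset W^\perp$, and $W_+\cap W_-=W_0$ gives $\dim W=d$. Consequently the induced form on $\pi(W^\perp)=W^\perp/W$ is nondegenerate of dimension $2n-2d=2a_1+2a_2$, the subspaces $\pi(U_+),\pi(U_-)$ are dual Lagrangians in the hyperbolic block $H:=\pi(U_+)\oplus\pi(U_-)$ of dimension $2a_1$, and $\pi(Z)=H^\perp$ has dimension $2a_2$.

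Next I would compute $\dim V'$. Because $V$ is maximally isotropic, $V=V^\perp$, hence $V\cap W^\perp=(V+W)^\perp$ and
\[
\dim(V\cap W^\perp)=2n-\dim(V+W)=n-d+\dim(V\cap W).
\]
The kernel of $\pi$ restricted to $V\cap W^\perp$ is $V\cap W^\perp\cap W=V\cap W$, so $\dim V'=n-d=a_1+a_2$; in particular $V'$ is a Lagrangian of $\pi(W^\perp)$. A direct check using $W\subset U_++U_-$ and $U_\pm\subset W^\perp$ then identifies
\[
\pi(X)=V'\cap H\mand\pi(Z\cap V)=V'\cap\pi(Z).
\]

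The final step is the Lagrangian identity. Let $p_H,p_Z$ denote the projections of $\pi(W^\perp)$ onto $H$ and $\pi(Z)$. For $h\in H$ and any $v'=h'+z'\in V'$, we have $(h,v')=(h,h')$ since $H\perp\pi(Z)$, so using $V'=V'^\perp$ one obtains
\[
V'\cap H=p_H(V')^{\perp_H},\qquad\dim(V'\cap H)=2a_1-\dim p_H(V').
\]
Since $\ker(p_H|_{V'})=V'\cap\pi(Z)$, we have $\dim p_H(V')=a_1+a_2-\dim(V'\cap\pi(Z))$. Substituting yields $\dim(V'\cap H)=a_1-a_2+\dim(V'\cap\pi(Z))$, which rearranges to the claim. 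The only non-routine step is the dimension count $\dim V'=n-d$, which essentially uses maximality of $V$ (so $V=V^\perp$); everything else is bookkeeping with the orthogonal decomposition.
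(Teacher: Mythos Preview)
Your argument is correct. The paper does not reprove this lemma; it simply cites it as Lemma~3.8 of \cite{M3}, so there is no in-paper proof to compare against. Your approach---showing that $V':=\pi(V\cap W^\perp)$ is a Lagrangian of $\pi(W^\perp)$, identifying $\pi(X)=V'\cap H$ and $\pi(Z\cap V)=V'\cap\pi(Z)$, and then using the Lagrangian identity $V'\cap H=p_H(V')^{\perp_H}$ together with $\ker(p_H|_{V'})=V'\cap\pi(Z)$---is the natural one and is essentially what the original proof in \cite{M3} does. Every step checks out, including the two identifications (where you correctly use $W\subset U_++U_-$ and $W\subset Z$ to pull representatives back).
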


The bilinear form $(\ ,\ )$ naturally induces a nondegenerate bilinear form on $W^\perp/W$. It is nondegenerate on the pair $(\pi(U_+),\pi(U_-))\cong (U_+/W_+,U_-/W_-)\cong (U_{(+)},U_{(-)})$. Put
$$X_0=((U_++W_-)\cap V)+((W_++U_-)\cap V).$$
For $v\in X$, write $v=v_++v_-$ with $v_+\in U_++W_-$ and $v_-\in W_++U_-$. Then $\pi(v_+)$ is uniquely defined from $v$. Define a subspace
$$X_1=X_1(V)=\{v\in X\mid (v_+,X)=\{0\}\}$$
of $X$.

\begin{lemma}  $($Lemma 3.9 in \cite{M3}$)$ $X_0\subset X_1$.
\end{lemma}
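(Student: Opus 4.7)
The plan is to unwind the definitions and then exploit the isotropy of $V$. My first step would be to verify the preliminary inclusion $X_0\subset X$: for any $v\in X_0$, write $v=v_1+v_2$ with $v_1\in (U_++W_-)\cap V$ and $v_2\in (W_++U_-)\cap V$. Then $v\in V$ (sum of elements of $V$), while $v\in U_++U_-$ follows from $(U_++W_-)+(W_++U_-)=U_++U_-$ (since $W_\pm\subset U_\pm$). Hence $v\in X$.

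Next I would check that the condition $(v_+,X)=\{0\}$ appearing in the definition of $X_1$ is independent of the choice of representative $v_+\in U_++W_-$ with $v-v_+\in W_++U_-$. A direct calculation gives $(U_++W_-)\cap (W_++U_-)=W$: indeed, if $u_++w_-=w'_++u_-$ with $u_+\in U_+$, $w_-\in W_-$, $w'_+\in W_+$, $u_-\in U_-$, then $u_+-w'_+=u_--w_-\in U_+\cap U_-=W_0\subset W_+$, so $u_+\in W_+$ and the element lies in $W_++W_-=W$. Thus $v_+$ is defined modulo $W$, and to legitimise the condition I must show $(W,X)=\{0\}$; but $W_+\subset U_+\subset U_+^\perp$ (by isotropy of $U_+$) and $W_+\subset U_-^\perp$ by construction, so $W_+\perp(U_++U_-)$, and symmetrically for $W_-$, and $X\subset U_++U_-$ gives the claim.

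For the main step, observe that the decomposition $v=v_1+v_2$ of an element of $X_0$ already realises the required splitting: $v_1\in U_++W_-$ while $v-v_1=v_2\in W_++U_-$, so $v_+:=v_1$ is a legitimate representative. Since $v_1\in V$ and $V$ is maximally isotropic, while $X\subset V$, we obtain
$$(v_+,X)=(v_1,X)\subset(V,V)=\{0\},$$
which is exactly the statement $v\in X_1$. Hence $X_0\subset X_1$.

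The argument is entirely formal and I do not anticipate any serious obstacle. The only observation worth stressing is that the structure of $X_0$ dictates a canonical representative $v_+=v_1$ already lying inside $V$, which turns the defining condition of $X_1$ into an immediate consequence of the isotropy of $V$.
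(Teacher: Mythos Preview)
Your argument is correct. The paper does not reproduce a proof of this lemma; it simply records the statement with a reference to Lemma~3.9 of \cite{M3}, so there is no in-paper proof to compare against in detail. That said, your line of reasoning is the natural one and matches what the setup in Section~\ref{sec5.2} anticipates: the key observation is precisely that an element of $X_0$ admits a representative $v_+$ already lying in $V$, after which isotropy of $V$ finishes the job. Your preliminary verifications---that $X_0\subset X$, that $(U_++W_-)\cap(W_++U_-)=W$, and that $(W,X)=\{0\}$---are exactly the well-definedness checks implicit in the paper's remark that ``$\pi(v_+)$ is uniquely defined from $v$.''
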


Put
$b_{12}=\dim X_1-\dim X_0$ and $b_{15}=\dim X-\dim X_1$.

\begin{lemma}  $($Lemma 3.10 in \cite{M3}$)$ The projections $\pi_+$ and $\pi_-$ induce bijections
$$X/X_0\stackrel{\sim}{\to} \pi_+(X)/\pi_+(X_0)\mand X/X_0\stackrel{\sim}{\to} \pi_-(X)/\pi_-(X_0),$$
respectively.
\label{lem5.4}
\end{lemma}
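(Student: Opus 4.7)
The plan is to prove the bijection for $\pi_+$; the statement for $\pi_-$ is obtained by swapping the roles of $+$ and $-$. First I would record the ingredients already set up above the statement: from $W_+ \subset U_+$ (isotropic) and $W_- \subset U_+^\perp$ one gets $W \subset (U_+ + U_-)^\perp$ together with $U_\pm \cap W = W_\pm$, so the decomposition $\pi(W^\perp) = \pi(U_+) \oplus \pi(U_-) \oplus \pi(Z)$ is indeed direct, and the projections $\pi_+, \pi_-$ are well defined on $\pi(W^\perp)$.

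Next I would split $X_0 = ((U_+ + W_-) \cap V) + ((W_+ + U_-) \cap V)$ according to which projection kills each summand. Since $W_+ \subset W$, any element of $W_+ + U_-$ lies in $W + U_-$, so $\pi_+$ annihilates $(W_+ + U_-) \cap V$ and hence
$$\pi_+(X_0) = \pi_+\bigl((U_+ + W_-) \cap V\bigr).$$
This gives a well defined, obviously surjective map $X/X_0 \to \pi_+(X)/\pi_+(X_0)$; the content of the lemma is its injectivity.

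For injectivity, given $v \in X$ with $\pi_+(v) \in \pi_+(X_0)$, I would lift to find $w_1 \in (U_+ + W_-) \cap V$ with $\pi_+(w_1) = \pi_+(v)$. The difference $v - w_1$ still lies in $(U_+ + U_-) \cap V = X$ and has $\pi_+(v - w_1) = 0$. Writing $v - w_1 = u_+ + u_-$ with $u_\pm \in U_\pm$, the directness of the decomposition of $\pi(W^\perp)$ forces $\pi(u_+) = 0$, i.e.\ $u_+ \in U_+ \cap W = W_+$. Therefore $v - w_1 \in (W_+ + U_-) \cap V \subset X_0$, and hence $v \in X_0$. The same argument with $+$ and $-$ exchanged handles $\pi_-$.

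The main obstacle is largely notational rather than conceptual: one must keep straight the four subspaces $W_0, W_\pm, U_\pm$ and the intersections and sums defining $X$ and $X_0$, and use in an essential way that $\pi(W^\perp) = \pi(U_+) \oplus \pi(U_-) \oplus \pi(Z)$ is a genuine direct sum so that $\pi_+$ is well defined and projects $\pi(u_+ + u_-)$ to $\pi(u_+)$. Once this preparatory step is in hand the remainder is a short diagram chase on the definitions, and no new technical input beyond Proposition \ref{prop5.1} and the description of $X_0$ is required.
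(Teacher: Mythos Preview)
Your argument is correct. The paper itself does not give a proof of this lemma---it simply cites Lemma~3.10 of \cite{M3}---so there is nothing in the present paper to compare against; your direct verification (showing $\pi_+$ kills $(W_++U_-)\cap V$, then chasing an element $v$ with $\pi_+(v)\in\pi_+(X_0)$ back into $X_0$ via $U_+\cap W=W_+$) is exactly the elementary diagram chase one expects and supplies what the paper omits.
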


\begin{corollary} $($Corollary 3.11 in \cite{M3}$)$ $b_{12}\le a_1-\dim \pi(X)$.
\end{corollary}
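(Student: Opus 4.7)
The plan is to push everything into the quotient $W^\perp/W$, where the induced form is nondegenerate on the pair $(\pi(U_+),\pi(U_-))$, and read the condition defining $X_1$ as an orthogonality condition on that side. Given $v\in X$ with a decomposition $v=v_++v_-$ as in Section~\ref{sec5.2}, the vector $v_+$ lies in $W^\perp$ and its class in the quotient equals $\pi_+(v)\in\pi(U_+)$ (since $v_+\in U_++W_-$ and $W_-\subset W$, the $\pi(U_-)$- and $\pi(Z)$-components vanish). Because $X\subset W^\perp$ and $(W,W^\perp)=0$, the condition $(v_+,X)=0$ descends to $(\pi_+(v),\pi(X))=0$ in $W^\perp/W$. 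Using that $\pi(U_+)$ is isotropic and $\pi(X)\subset\pi(U_+)\oplus\pi(U_-)$, this is equivalent to $\pi_+(v)\in A:=\pi_-(X)^\perp\cap\pi(U_+)$, so $\pi_+(X_1)=A\cap\pi_+(X)$.

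I would then apply Lemma~\ref{lem5.4}: $\pi_+$ induces a bijection $X/X_0\xrightarrow{\sim}\pi_+(X)/\pi_+(X_0)$, so $X_1/X_0$ is carried bijectively onto $(A\cap\pi_+(X))/\pi_+(X_0)$. The containment $\pi_+(X_0)\subset A\cap\pi_+(X)$ required for this to make sense is automatic: writing $N_-=(U_++W_-)\cap V$, the isotropy of $V$ gives $(N_-,X)\subset(V,V)=0$, which passes to $(\pi_+(X_0),\pi_-(X))=0$ in the quotient form, i.e.\ $\pi_+(X_0)\subset A$. This yields
\begin{equation*}
b_{12}=\dim X_1-\dim X_0=\dim(A\cap\pi_+(X))-\dim\pi_+(X_0).
\end{equation*}

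To conclude, I need two dimension identities. Nondegeneracy of the pairing between $\pi(U_+)$ and $\pi(U_-)$ gives $\dim A=a_1-\dim\pi_-(X)$. The key technical step is the identification $\pi_+(X_0)=\pi(X)\cap\pi(U_+)$: the inclusion $\subset$ is clear from $N_-\subset U_++W_-$, while $\supset$ requires showing that any $v\in X$ with $\pi_-(v)=0$ admits a decomposition $v=v_++v_-$ with $v_-\in W_+\subset U_+$, forcing $v\in N_-\subset X_0$. Once this is in hand, $\pi_+(X_0)$ is the kernel of the surjection $\pi(X)\twoheadrightarrow\pi_-(X)$, giving $\dim\pi(X)=\dim\pi_+(X_0)+\dim\pi_-(X)$; hence
\begin{equation*}
b_{12}\le \dim A-\dim\pi_+(X_0)=a_1-\dim\pi_-(X)-\dim\pi_+(X_0)=a_1-\dim\pi(X).
\end{equation*}
The only place that requires real work is the modular-arithmetic identification of $\pi_+(X_0)$ with $\pi(X)\cap\pi(U_+)$; everything else is a formal consequence of the pairing picture and Lemma~\ref{lem5.4}.
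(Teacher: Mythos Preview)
Your argument is correct and is precisely the natural deduction from Lemma~\ref{lem5.4} that the cited result in \cite{M3} intends: translate the defining condition of $X_1$ into the orthogonality $\pi_+(X_1)\subset\pi_-(X)^\perp\cap\pi(U_+)$, use the bijection $X/X_0\cong\pi_+(X)/\pi_+(X_0)$, and combine $\dim(\pi_-(X)^\perp\cap\pi(U_+))=a_1-\dim\pi_-(X)$ with the identification $\pi_+(X_0)=\pi(X)\cap\pi(U_+)=\ker(\pi_-|_{\pi(X)})$. The verification you single out as the ``real work'' is exactly right, and your sketch of it (that $\pi_-(v)=0$ forces $v\in U_++W=U_++W_-$, hence $v\in N_-$) is the point.
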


Put $b_{13}=a_1-\dim \pi(X)-b_{12}$ and $b_{14}=a_2-b_{12}-b_{13}$. Then $b_{14}=\dim \pi(Z\cap V)$ by Lemma \ref{lem5.2'}. By the definition of $X_1$, the bilinear form $(\ ,\ )$ is nondegenerate on the pair $(\pi_+(X)/\pi_+(X_1),\pi_-(X)/\pi_-(X_1))$. By Lemma \ref{lem5.4}, there exists a bijection
$$\xi: \pi_+(X)/\pi_+(X_1)\stackrel{\sim}{\to} \pi_-(X)/\pi_-(X_1)$$
induced by $\pi_-|_{X}\circ \pi_+|_{X}^{-1}$. Define a bilinear form $\langle\ ,\ \rangle$ on $\pi_+(X)/\pi_+(X_1)$ by
$$\langle u,v\rangle =(u,\xi(v))\quad \mbox{for }u,v\in \pi_+(X).$$
(This is well-defined since $(u,\xi(v))=0$ if $u$ or $v$ is contained in $\pi_+(X_1)$.)

\begin{lemma} $($Lemma 3.12 in \cite{M3}$)$ For $u,\ v\in X$, we have $\langle \pi_+(u),\pi_+(v)\rangle =-\langle  \pi_+(v), \pi_+(u)\rangle$.
\end{lemma}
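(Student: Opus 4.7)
The plan is to reduce the skew-symmetry of $\langle\ ,\ \rangle$ to two ingredients: the isotropy of $V$ and the defining orthogonality relations $W_+\subset U_-^\perp$ and $W_-\subset U_+^\perp$. First I would unwind the definitions to rewrite $\langle \pi_+(u),\pi_+(v)\rangle$ as an honest inner product in $\bbf^{2n}$. Given $v\in X$, write $v=v_++v_-$ with $v_+\in U_++W_-$ and $v_-\in W_++U_-$. Since $W_\pm\subset W$, we have $\pi(v_+)\in\pi(U_+)$ and $\pi(v_-)\in\pi(U_-)$, so $\pi(v)=\pi(v_+)+\pi(v_-)$ is precisely the decomposition of $\pi(v)$ according to $\pi(W^\perp)=\pi(U_+)\oplus\pi(U_-)\oplus\pi(Z)$ (the $\pi(Z)$-component vanishes because $v\in U_++U_-$). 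Hence $\pi_+(v)=\pi(v_+)$ and $\xi(\pi_+(v))=\pi_-(v)=\pi(v_-)$. Since the pairing on $W^\perp/W$ is induced from $(\ ,\ )$, this yields
$$\langle \pi_+(u),\pi_+(v)\rangle = (\pi(u_+),\pi(v_-)) = (u_+, v_-).$$

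Next I would exploit that $V$ is isotropic: expanding $0=(u,v)$ through the decompositions of $u$ and $v$ gives
$$0 = (u_+,v_+) + (u_+,v_-) + (u_-,v_+) + (u_-,v_-).$$
The outer two terms vanish because $U_++W_-$ is isotropic --- combine $(U_+,U_+)=0$, $(W_-,W_-)=0$ and $W_-\subset U_+^\perp$ --- and symmetrically for $W_++U_-$. Thus $(u_+,v_-)=-(u_-,v_+)$, and symmetry of $(\ ,\ )$ then gives
$$\langle \pi_+(u),\pi_+(v)\rangle = (u_+,v_-) = -(v_+,u_-) = -\langle \pi_+(v),\pi_+(u)\rangle,$$
as desired.

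The main (and modest) obstacle is the first step: one must verify $X\subset W^\perp$ (immediate from $X\subset U_++U_-$ together with $W\perp U_+$ and $W\perp U_-$) and check that the expression $(u_+,v_-)$ is independent of the choice of splitting. The latter follows because any ambiguity in $v_+$ lies in $(U_++W_-)\cap(W_++U_-)\subset W$, while $u_+\in U_++W_-$ is orthogonal to $W$; once this bookkeeping is in place the skew-symmetry is a three-line computation driven entirely by the isotropy of $V$.
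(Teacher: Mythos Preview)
Your argument is correct. The paper does not give its own proof of this lemma but defers to Lemma~3.12 of \cite{M3}; your computation --- rewriting $\langle\pi_+(u),\pi_+(v)\rangle$ as $(u_+,v_-)$, then expanding $0=(u,v)$ using the isotropy of $U_++W_-$ and of $W_++U_-$ --- is exactly the natural proof and is what the cited reference does. Your remarks on well-definedness (the ambiguity in the splitting lies in $W$, which is orthogonal to both $U_++W_-$ and $W_++U_-$) are also right and complete the argument.
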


\begin{corollary} $($Corollary 3.13 in \cite{M3}$)$ $b_{15}$ is even.
\end{corollary}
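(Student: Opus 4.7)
The plan is to read off the parity of $b_{15}$ directly from the nondegenerate bilinear form $\langle\ ,\ \rangle$ on $\pi_+(X)/\pi_+(X_1)$ produced just above the statement. First I would confirm the identification $b_{15}=\dim\pi_+(X)/\pi_+(X_1)$. By definition $b_{15}=\dim X-\dim X_1$, and the previous Lemma 3.10 gives $X/X_0\cong\pi_+(X)/\pi_+(X_0)$ via $\pi_+|_X$; since $X_0\subset X_1\subset X$, restricting to cosets mod $X_1$ upgrades this to $X/X_1\cong\pi_+(X)/\pi_+(X_1)$. In particular the two dimensions agree.

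Next I would recall the two features of $\langle\ ,\ \rangle$ that have already been established. It is nondegenerate, because it was defined from the pairing of $\pi_+(X)/\pi_+(X_1)$ with $\pi_-(X)/\pi_-(X_1)$, which is nondegenerate by construction of $X_1$, together with the bijection $\xi$ coming from $\pi_-|_X\circ(\pi_+|_X)^{-1}$. And by the immediately preceding Lemma 3.12 it is skew-symmetric: $\langle\pi_+(u),\pi_+(v)\rangle=-\langle\pi_+(v),\pi_+(u)\rangle$ for all $u,v\in X$.

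The last step is the standard linear-algebra argument. Since $\mathrm{char}\,\bbf\neq 2$, skew-symmetry implies that the form is alternating: taking $u=v$ gives $2\langle\pi_+(u),\pi_+(u)\rangle=0$, hence $\langle\pi_+(u),\pi_+(u)\rangle=0$. A nondegenerate alternating bilinear form on a finite-dimensional vector space admits a symplectic basis, so the dimension of the space is even. Applying this to $\pi_+(X)/\pi_+(X_1)$ yields that $b_{15}$ is even.

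I don't anticipate a real obstacle here: the two lemmas just above the corollary were engineered precisely so that this reduction to ``a nondegenerate skew-symmetric form on a space of dimension $b_{15}$'' goes through verbatim. The only point that requires a sentence of justification, rather than just an invocation, is the identification $\dim X-\dim X_1=\dim\pi_+(X)-\dim\pi_+(X_1)$, which follows cleanly from the bijection in Lemma 3.10 restricted to the chain $X_0\subset X_1\subset X$.
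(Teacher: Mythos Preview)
Your argument is correct and is exactly the intended one: the preceding lemmas set up a nondegenerate skew-symmetric (hence, since ${\rm char}\,\bbf\ne 2$, alternating) form on $\pi_+(X)/\pi_+(X_1)$, and the identification $b_{15}=\dim X-\dim X_1=\dim\pi_+(X)/\pi_+(X_1)$ via Lemma~\ref{lem5.4} then forces $b_{15}$ to be even. This matches the paper's approach, which simply records the corollary as the immediate consequence of the two lemmas just stated.
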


Summarizing the arguments in this subsection, we have:

\begin{proposition}  $($Proposition 3.14 in \cite{M3}$)$ The invariants $b_1,\ldots,b_{15}$ for $V$ satisfy following equalities. $($$b_{15}$ is even.$)$
\begin{align}
a_0 & =b_1+b_2, \label{eq5.1'} \\
a_+ & =b_3+b_7+b_8+b_{10}, \label{eq5.2} \\
a_- & =b_4+b_7+b_9+b_{11}, \label{eq5.3} \\
a_1 & =b_5+b_6+b_8+b_9+2b_{12}+b_{13}+b_{15}, \label{eq5.4} \\
a_2 & =b_{12}+b_{13}+b_{14} \label{eq5.5}
\end{align}
\label{prop5.8}
\end{proposition}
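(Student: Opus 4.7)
The plan is to observe that four of the five displayed identities are essentially bookkeeping. Identity (\ref{eq5.1'}) is just the definition $b_2 := a_0 - b_1$; identities (\ref{eq5.2}) and (\ref{eq5.3}) restate $b_{10} := a_+ - b_3 - b_7 - b_8$ and $b_{11} := a_- - b_4 - b_7 - b_9$; and (\ref{eq5.5}) restates $b_{14} := a_2 - b_{12} - b_{13}$. The parity statement $b_{15} \in 2\bz$ is precisely Corollary 3.13 of \cite{M3}, already invoked in the excerpt. So the only equation that requires genuine argument is (\ref{eq5.4}).

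For (\ref{eq5.4}) the strategy is to evaluate $\dim \pi(X)$ and then invoke $a_1 = \dim \pi(X) + b_{12} + b_{13}$, which is merely the definition of $b_{13}$. Since $W \subset U_+ + U_-$, one has $X \cap W = V \cap W$, so $\dim \pi(X) = \dim X - \dim(V \cap W) = \dim X - (b_1 + b_3 + b_4 + b_7)$. The filtration $X_0 \subset X_1 \subset X$ together with the definitions of $b_{12}$ and $b_{15}$ gives $\dim X = \dim X_0 + b_{12} + b_{15}$, so everything is reduced to computing $\dim X_0$.

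By construction $X_0 = ((U_+ + W_-)\cap V) + ((W_+ + U_-)\cap V)$, and the dimensions of the two summands have already been tabulated in terms of $b_1,\dots,b_9$. The key linear-algebra step, and the only point in the proof that is not purely formal, is the identity
$$(U_+ + W_-) \cap (W_+ + U_-) = W.$$
Granting this, the triple intersection in inclusion--exclusion collapses to $V \cap W$ and one reads off $\dim X_0 = b_1 + b_3 + b_4 + b_5 + b_6 + b_7 + b_8 + b_9$. To prove the identity, write a common element as $u_+ + w_- = w_+ + u_-$ with $u_\pm \in U_\pm$ and $w_\pm \in W_\pm$; then $u_+ - w_+ = u_- - w_- \in U_+ \cap U_- = W_0 \subset W_+$, forcing $u_+ \in W_+$, whence the vector lies in $W_+ + W_- = W$. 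Substituting $\dim X_0$ back up the chain to $\dim X$, then to $\dim \pi(X)$, and finally to $a_1 = \dim\pi(X) + b_{12} + b_{13}$ yields exactly (\ref{eq5.4}). The main (and mild) obstacle is the intersection identity above; everything else is a matter of unwinding definitions.
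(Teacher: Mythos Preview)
Your proof is correct and follows the same approach as the paper, which simply states the proposition as ``summarizing the arguments in this subsection'' without further argument (the result being imported from \cite{M3}). Your explicit verification of the intersection identity $(U_+ + W_-)\cap(W_+ + U_-)=W$ is the only step that goes beyond pure bookkeeping, and your argument for it is clean and correct; the paper does not spell this out here.
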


\subsection{Representative of the $R$-orbit of $V$}

Conversely suppose that nonnegative numbers $b_1,\ldots,b_{15}$ satisfy the equalities in Proposition \ref{prop5.8}. Then we define a maximally isotropic subspace
$$V(b_1,\ldots,b_{15})=\bigoplus_{j=1}^{15} V_{(j)}$$
as follows.

Define subsets $I_{(j)}$ of $I$ for $j=1,\ldots,15$ by 
$I_{(j)}=\{i(j,1),\ldots,i(j,b_j)\}$
where
\begin{align*}
i(1,k) & =k, \quad
i(2,k)=b_1+k, \quad
i(3,k)=a_0+k, \quad
i(4,k) =a_0+a_++k, \\
i(5,k) & =d+k, \quad
i(6,k) =d'+k, \quad
i(7,k) =a_0+b_3+k, \\
i(8,k) & =a_0+b_3+b_7+k, \quad
i(9,k) =a_0+a_++b_4+b_7+k, \\
i(10,k) & =a_0+a_+-b_{10}+k, \quad
i(11,k) =d-b_{11}+k \\
i(12,k) & =d+b_5+b_9+k, \quad 
i(13,k) =d+b_5+b_9+b_{12}+b_{15}+k, \\
i(14,k) & =d+a_1+k 
\mand i(15,k) =d+b_5+b_9+b_{12}+k.
\end{align*}
For $j\in J_1=\{1,2,3,4,5,6,10,11,14\}$, put $U_{(j)}=\bigoplus_{i\in \widetilde{I_{(j)}}} \bbf e_i$ where $\widetilde{I_{(j)}}=I_{(j)}\sqcup \overline{I_{(j)}}$ and define maximally isotropic subspaces $V_{(j)}$ of $U_{(j)}$ by
$$V_{(j)}=\begin{cases} \bigoplus_{i\in I_{(j)}} \bbf e_i & \text{if $j=1,3,4,5,6,14$}, \\
\bigoplus_{i\in I_{(j)}} \bbf e_{\overline{i}} & \text{if $j=2,10,11$}.
\end{cases}$$

Define maps $\eta_j: I_{(j)}\to I$ for $j\in J_2=\{7,8,9,13\}$ by
\begin{align*}
\eta_7(i(7,k)) & =a_0+a_++b_4+k, \quad
\eta_8(i(8,k)) =d'+b_6+k, \\
\eta_9(i(9,k)) & =d+b_5+k \quad
\mand \eta_{13}(i(13,k)) =d+a_1+b_{14}+b_{12}+k
\end{align*}
for $k=1,\ldots,b_j$. For $j\in J_2$, put $U_{(j)}=\bigoplus_{i\in \widetilde{I_{(j)}}} \bbf e_i$ where 
$$\widetilde{I_{(j)}}=I_{(j)}\sqcup \eta_j(I_{(j)})\sqcup \overline{I_{(j)}}\sqcup \overline{\eta_j(I_{(j)})}.$$
Define maximally isotropic subspaces $V_{(j)}=V_{(j)}^1\oplus V_{(j)}^2$ of $U_{(j)}$ for $j\in J_2$ by
$$V_{(j)}^1=\bigoplus_{i\in I_{(j)}} \bbf (e_i+e_{\eta_j(i)}) \mand  V_{(j)}^2=\bigoplus_{i\in I_{(j)}} \bbf (e_{\overline{i}}-e_{\overline{\eta_j(i)}}).$$

Define maps $\kappa,\lambda:I_{(12)}\to I$ by
$$\kappa(i(12,k))=d'+b_6+b_8+k\mand \lambda(i(12,k))=d+a_1+b_{14}+k$$
for $k=1,\ldots,b_{12}$. Put $U_{(12)}=\bigoplus_{i\in \widetilde{I_{(12)}}} \bbf e_i$ where 
$$\widetilde{I_{(12)}}=I_{(12)}\sqcup \kappa(I_{(12)})\sqcup \lambda(I_{(12)})\sqcup \overline{I_{(12)}}\sqcup \overline{\kappa(I_{(12)})}\sqcup \overline{\lambda(I_{(12)})}.$$
Define a maximally isotropic subspace $V_{(12)}=V_{(12)}^1\oplus V_{(12)}^2\oplus V_{(12)}^3$ of $U_{(12)}$ by
\begin{align*}
V_{(12)}^1 & =\bigoplus_{i\in I_{(12)}} \bbf (e_i+e_{\kappa(i)}),\quad V_{(12)}^2=\bigoplus_{i\in I_{(12)}} \bbf (e_i+e_{\lambda(i)}) \\
\mand V_{(12)}^3 & =\bigoplus_{i\in I_{(12)}} \bbf (e_{\overline{i}}-e_{\overline{\kappa(i)}}-e_{\overline{\lambda(i)}}).
\end{align*}

Put $U_{(15)}=(\bigoplus_{i\in I_{15}\sqcup \overline{I_{(15)}}} \bbf e_i)$ 
and define a map
$$\eta_{15}(i(15,k))=d'+b_6+b_8+b_{12}+b_{13}+k$$
for $k=1,\ldots,b_{15}$. Put
$$I_{(15)}^+ =\{i(15,1),\ldots,i(15,\frac{b_{15}}{2})\}.$$
Then we define a maximally isotropic subspace $V_{(15)}$ of $U_{(15)}$ by
$$V_{(15)}=\bigoplus_{i\in I_{(15)}^+} (\bbf (e_i+e_{\eta_{15}(i)})\oplus \bbf (e_{\overline{i}}-e_{\overline{\eta_{15}(i)}})).$$
(We see that the definition of $V_{(15)}$ becomes much simpler in the even-degree case than $V_{(15)}^\varepsilon$ in \cite{M3}.) We can prove the following theorem in the same way as in \cite{M3} Section 3.6.

\begin{theorem} $($Theorem 3.15 in \cite{M3}$)$ Let $V$ be a maximally isotropic subspace of $\bbf^{2n}$. Define the numbers $b_1,\ldots,b_{15}$ as in Section \ref{sec5.2}. Then the $R$-orbit of $V$ contains the representative
$$V(b_1,\ldots,b_{15}).$$
\label{th5.9}
\end{theorem}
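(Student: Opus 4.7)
The plan is to build an adapted basis of $V$ via a refined flag compatible with the invariants $b_1,\ldots,b_{15}$, then use the $R$-action to push the basis onto the prescribed canonical vectors $e_i$ appearing in the definition of $V(b_1,\ldots,b_{15})$.

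First, I would decompose $V$ as a direct sum of subspaces whose dimensions are precisely the $b_j$'s, working through the chain of intersections in Section~\ref{sec5.2}: begin with $W_0\cap V$ of dimension $b_1$, extend to $W_+\cap V$ and $W_-\cap V$ accounting for $b_3,b_4$, then $U_+\cap V$ and $U_-\cap V$ for $b_5,b_6$, then $W\cap V$, $X_0$, $X_1$, and finally $X$ and a complement in $V$ (yielding $b_7,b_8,b_9,b_{12},b_{15},b_{13},b_{14}$ in turn). Each step is expected to produce basis vectors of a specific shape: pure elements of $W_{(0)}$, $W_{(\pm)}$, or $U_{(\pm)}$; pairwise sums $e_i+e_{\eta_j(i)}$; triple sums $e_i+e_{\kappa(i)}+e_{\lambda(i)}$; and symplectic pairs.

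Next I would exploit the structure of $R$. By Proposition~\ref{prop5.1}, $R$ contains (subject to the form) independent linear groups acting on $W_{(0)}$, on the dual pair $(W_{(+)},W_{(-)})$, on the dual pair $(U_{(+)},U_{(-)})$, together with an orthogonal group on $\pi(Z)$. The ``pure'' contributions indexed by $b_1,b_2,b_3,b_4,b_5,b_6,b_{10},b_{11},b_{14}$ would then be moved to the specified $e_i$'s by elementary changes of basis within these pieces. The ``mixed'' contributions indexed by $b_7,b_8,b_9$ are sums of vectors whose individual projections have already been normalized; the residual diagonal rescaling in $R$ is used to equalize coefficients and reach $e_i+e_{\eta_j(i)}$. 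For $b_{13}$, a vector in a complement of $X$ in $V$ injects into $\pi(Z)$ by Lemma~\ref{lem5.2'} and may be normalized by the $Z$-factor of $R$.

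The delicate parts are $b_{12}$ and $b_{15}$. For $b_{12}$, Lemma~\ref{lem5.4} forces the projections of a vector in a complement of $X_0$ in $X_1$ to correspond under prescribed bijections among $\pi_+(X)/\pi_+(X_0)$, $\pi_-(X)/\pi_-(X_0)$, and $\pi(Z\cap V)$; once a basis is chosen on the $U_{(+)}$ side, the matching $W_{(-)}$- and $Z$-components are determined, yielding the triple sums $e_i+e_{\kappa(i)}+e_{\lambda(i)}$. For $b_{15}$, the alternating form $\langle\,,\,\rangle$ on $\pi_+(X)/\pi_+(X_1)$ is nondegenerate and of even rank, so a change of basis on $U_{(+)}$ will put it into standard symplectic form, with the dual change on $U_{(-)}$ induced automatically. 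The main obstacle I anticipate is bookkeeping: each reduction must be executed in an order that preserves those already completed, and one must check at every step that the residual subgroup of $R$ remains large enough for the next normalization, as in Section~3.6 of~\cite{M3}. The even-degree case should be genuinely simpler than loc.\ cit.: the symplectic form on $\pi_+(X)/\pi_+(X_1)$ admits a unique normal form, so no square-class invariant $\varepsilon$ arises, and this is why $V_{(15)}$ above is much simpler than the analogue $V_{(15)}^\varepsilon$ in the odd case.
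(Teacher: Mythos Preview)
Your outline is correct and matches the paper's approach: the paper does not give an independent proof but simply states that the argument proceeds ``in the same way as in \cite{M3} Section 3.6,'' and your sketch is a faithful summary of that reduction, including the correct observation that the even-degree case dispenses with the extra invariant $\varepsilon$ in $V_{(15)}$. One small slip: the canonical form $V_{(12)}$ is not spanned by triple sums $e_i+e_{\kappa(i)}+e_{\lambda(i)}$ but by the two families $e_i+e_{\kappa(i)}$ and $e_i+e_{\lambda(i)}$ together with their dual combinations $e_{\overline{i}}-e_{\overline{\kappa(i)}}-e_{\overline{\lambda(i)}}$; this does not affect your strategy.
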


\subsection{Construction of elements in $R_V$} \label{sec5.4}

Assume $V=V(b_1,\ldots,b_{15})$. As in \cite{M3}, we can construct elements in $R_V=\{g\in R \mid gV=V\}$ as follows.

\begin{lemma} $($Lemma 3.18 in \cite{M3}$)$ For $j=1,\ldots,14$, let $A=\{a_{i,k}\}|_{i,k\in I_{(j)}}$ be an invertible matrix with the inverse matrix $A^{-1}=\{b_{i,k}\}$. Then we can construct an element $h=h_{(j)}(A)$ of $R_V$ such that $:$

{\rm (i)} If $j\in J_1=\{1,2,3,4,5,6,10,11,14\}$, then
$$he_k=\sum_{i\in I_{(j)}} a_{i,k}e_i,\quad he_{\overline{k}}=\sum_{i\in I_{(j)}} b_{k,i}e_{\overline{i}}$$
for $k\in I_{(j)}$ and $he_\ell=e_\ell$ for $\ell\in I-\widetilde{I_{(j)}}$.

{\rm (ii)} If $j\in J_2=\{7,8,9,13\}$, then
$$he_k=\sum_{i\in I_{(j)}} a_{i,k}e_i,\ he_{\eta_j(k)}=\sum_{i\in I_{(j)}} a_{i,k}e_{\eta_j(i)},\ he_{\overline{k}}=\sum_{i\in I_{(j)}} b_{k,i}e_{\overline{i}},\ he_{\overline{\eta_j(k)}}=\sum_{i\in I_{(j)}} b_{k,i}e_{\overline{\eta_j(i)}}$$
for $k\in I_{(j)}$ and $he_\ell=e_\ell$ for $\ell\in I-\widetilde{I_{(j)}}$.

{\rm (iii)} If $j=12$, then
\begin{align*}
he_k & =\sum_{i\in I_{(12)}} a_{i,k}e_i,\quad he_{\kappa(k)}=\sum_{i\in I_{(12)}} a_{i,k}e_{\kappa(i)},\quad he_{\lambda(k)}=\sum_{i\in I_{(12)}} a_{i,k}e_{\lambda(i)}, \\
he_{\overline{k}} & =\sum_{i\in I_{(12)}} b_{k,i}e_{\overline{i}},\quad he_{\overline{\kappa(k)}}=\sum_{i\in I_{(12)}} b_{k,i}e_{\overline{\kappa(i)}},\quad he_{\overline{\lambda(k)}}=\sum_{i\in I_{(12)}} b_{k,i}e_{\overline{\lambda(i)}}
\end{align*}
for $k\in I_{(12)}$ and $he_\ell=e_\ell$ for $\ell\in I-\widetilde{I_{(12)}}$.
\label{lem5.10}
\end{lemma}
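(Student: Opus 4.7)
The plan is to verify, for each of the three cases (i)--(iii), three successive properties: (a) the prescribed formulas define a linear automorphism of $\bbf^{2n}$ that preserves the symmetric form $(\,,\,)$, hence lies in $G$; (b) this automorphism stabilizes both $U_+$ and $U_-$, hence lies in $R$; and (c) it stabilizes each summand $V_{(j')}$ of $V=V(b_1,\ldots,b_{15})$, hence lies in $R_V$. Well-definedness of $h$ is immediate from the explicit formulas, so the substance lies in (a)--(c).

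For (a), the key algebraic identity is $\sum_i a_{i,k}b_{\ell,i}=(A^{-1}A)_{\ell,k}=\delta_{k,\ell}$. In case (i), for $k,\ell\in I_{(j)}$ one computes $(he_k,he_{\overline{\ell}})=\sum_{i,i'}a_{i,k}b_{\ell,i'}(e_i,e_{\overline{i'}})=\sum_i a_{i,k}b_{\ell,i}=\delta_{k,\ell}=(e_k,e_{\overline{\ell}})$; all other pairings either involve basis vectors fixed by $h$ or vanish already in the standard basis, since within $\widetilde{I_{(j)}}$ the only non-zero pairings are $(e_k,e_{\overline{k}})$ for $k\in I_{(j)}$. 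In case (ii) the coupled sectors $\eta_j(I_{(j)})$ and $\overline{\eta_j(I_{(j)})}$ carry the same matrix $A$, so the same computation gives $(he_{\eta_j(k)},he_{\overline{\eta_j(\ell)}})=\delta_{k,\ell}$, while cross-pairings between the $I_{(j)}$- and $\eta_j(I_{(j)})$-sectors vanish because their underlying index blocks are disjoint and share no ``bar'' pair. Case (iii) for $j=12$ is analogous with three coupled sectors; the sign pattern in $V_{(12)}^3$ plays no role here and enters only in~(c).

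For (b), the explicit definitions of $I_{(j)}$, $\eta_j(I_{(j)})$, $\kappa(I_{(j)})$ and $\lambda(I_{(j)})$ preceding Theorem~\ref{th5.9} place each index set inside a definite sector among the $W_{(0)}$-, $W_{(+)}$-, $W_{(-)}$-, $U_{(+)}$-, $U_{(-)}$-blocks or the $Z$-block; the required inclusions follow from the relations in Proposition~\ref{prop5.8} (for example $b_3+b_7+b_8+b_{10}=a_+$ forces $I_{(3)}\sqcup I_{(7)}\sqcup I_{(8)}\sqcup I_{(10)}$ to fit into the $W_{(+)}$-range). Since $h$ maps each $e_k$ with $k\in I_{(j)}$ into the span of $\{e_i:i\in I_{(j)}\}$, and analogously for the other image-sets and their bars, and fixes every basis vector outside $\widetilde{I_{(j)}}$, one reads off that each $U_+$- and $U_-$-basis vector is sent back into $U_+$ or $U_-$ respectively; bijectivity of $h$ then yields $hU_\pm=U_\pm$.

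For (c), the verification reduces to direct matrix arithmetic on each $V_{(j)}$-summand. For $j\in J_1$, the action is $h(\sum_k c_k e_k)=\sum_i(\sum_k a_{i,k}c_k)e_i$, and dually for the $\overline{i}$-variants. For $j\in J_2$ the coupled generator satisfies $h(e_k+e_{\eta_j(k)})=\sum_i a_{i,k}(e_i+e_{\eta_j(i)})\in V_{(j)}^1$, and similarly $h(e_{\overline{k}}-e_{\overline{\eta_j(k)}})\in V_{(j)}^2$. For $j=12$ the three-term generators of $V_{(12)}^1,V_{(12)}^2,V_{(12)}^3$ transform the same way. The main obstacle I anticipate is the bookkeeping in case (iii): six disjoint index subsets of $I$ must be tracked, the signs in $V_{(12)}^3=\bigoplus_{i\in I_{(12)}}\bbf(e_{\overline{i}}-e_{\overline{\kappa(i)}}-e_{\overline{\lambda(i)}})$ must be preserved under the common $B$-action on $\overline{I_{(12)}}$, $\overline{\kappa(I_{(12)})}$, and $\overline{\lambda(I_{(12)})}$, and one must check that the resulting map matches the formulas displayed in~(iii). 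No genuinely new idea beyond the $A$/$A^{-1}$-duality of case (i) is required, and the argument closely mirrors the proof of Lemma~3.18 in \cite{M3}.
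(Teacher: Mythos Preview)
The paper does not give its own proof of this lemma; it is simply quoted as Lemma~3.18 of \cite{M3}, and the surrounding text only records that the construction carries over to the even-degree setting. Your direct verification---check $h\in G$ via the $A/A^{-1}$ duality, check $hU_\pm=U_\pm$ from the block structure of the index sets, and check $hV_{(j)}=V_{(j)}$ by the coupled action on the generators---is correct and is exactly the kind of argument one expects (and presumably the one in \cite{M3}). The only point you leave implicit is that $h=h_{(j)}(A)$ fixes $V_{(j')}$ pointwise for every $j'\ne j$, which follows immediately from the pairwise disjointness of the sets $\widetilde{I_{(j')}}$ and the clause $he_\ell=e_\ell$ for $\ell\notin\widetilde{I_{(j)}}$; you may want to say this explicitly so that part~(c) visibly covers all fifteen summands of $V$.
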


Next we prepare elements in $R_V$ constructed in \cite{M2} for $U_{(15)}$. Define an alternating form $\langle\ ,\ \rangle$ on $\bbf^{2m}$ with the canonical basis $f_1,\ldots,f_{2m}$ by
$$\langle f_k,f_\ell\rangle=\begin{cases} -\delta_{k,2m+1-\ell} & \text{if $k \le m$,} \\
\delta_{k,2m+1-\ell} & \text{if $k >m$.}
\end{cases}$$
Then we can define a subgroup ${\rm Sp}'_{2m}(\bbf)\ (\cong{\rm Sp}_{2m}(\bbf))$ of ${\rm GL}_{2m}(\bbf)$ by
$${\rm Sp}'_{2m}(\bbf)=\{g\in{\rm GL}_{2m}(\bbf)\mid \langle gu,gv \rangle=\langle u,v \rangle\mbox{ for }u,v\in\bbf^{2m}\}.$$

Write
$U_{(15)}=U^+_{(15)}\oplus U^-_{(15)}$
with
$$U^+=U_{(15)}\cap U_+=\bigoplus_{i\in I_{(15)}} \bbf e_i \mand U^-=U_{(15)}\cap U_-=\bigoplus_{i\in I_{(15)}} \bbf e_{\overline{i}}.$$
Then we can define a bijection
$\xi :U^+_{(15)}\to U^-_{(15)}$
by the condition
$$\xi (u)=v\Longleftrightarrow u+v\in V_{(15)}.$$
Define a bilinear form $\langle\ ,\ \rangle$ on $U^+_{(15)}$ by
$$\langle u,v\rangle=(u,\xi(v)).$$
Then the form $\langle\ ,\ \rangle$ is alternating and nondegenerate. By the definition of $V_{(15)}$, we have
$$\langle e_{i(15,k)},e_{i(15,\ell)}\rangle=\begin{cases} -\delta_{k,b_{15}+1-\ell} & \text{if $k \le b_{15}/2$,} \\
\delta_{k,b_{15}+1-\ell} & \text{if $k >b_{15}/2$.}
\end{cases}$$
Put $m=b_{15}/2$ and define an injective linear map $\varphi: \bbf^{2m}\to U_{(15)}^+$ by
$\varphi (f_k)=e_{i(15,k)}$.
Clearly we have the following lemma.

\begin{lemma} For $A\in {\rm Sp}'_{b_{15}}(\bbf)$, we can define an element $h=h_{(15)}(A)$ of $R_V$ such that
$$h\varphi(f_k)=\varphi(Af_k),\quad h\xi(\varphi(f_k))=\xi(\varphi(Af_k))$$
and that $hu=u$ for $u\in U_{(15)}^\perp=\bigoplus_{j=1}^{14} U_{(j)}$. 
\label{lem5.11'}
\end{lemma}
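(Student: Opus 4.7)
The plan is to transport the symplectic action of $A$ on $\bbf^{2m}$ to an orthogonal action on $U_{(15)}$ via the pair $(\varphi,\xi)$, and extend by the identity on $U_{(15)}^\perp$. Concretely, first I would define $h$ on $U_{(15)}^+$ as the linear map determined by $h\varphi(f_k)=\varphi(Af_k)$ (this is well-defined and invertible because $\varphi$ is a linear bijection onto $U_{(15)}^+$ and $A\in{\rm GL}_{2m}(\bbf)$). Next, since $\xi:U_{(15)}^+\to U_{(15)}^-$ is a bijection, I extend $h$ to $U_{(15)}^-$ by $h\xi(\varphi(f_k))=\xi(\varphi(Af_k))$, so that $h\xi=\xi h$ on $U_{(15)}^+$. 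Finally set $hu=u$ for $u\in U_{(15)}^\perp=\bigoplus_{j\ne 15} U_{(j)}$.

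The core verification is that $h$ preserves $(\ ,\ )$. Since $U_{(15)}\perp U_{(15)}^\perp$ and $h$ stabilizes both summands, only the restriction to $U_{(15)}$ matters. Within $U_{(15)}$, both $U_{(15)}^+$ and $U_{(15)}^-$ are isotropic, so the only nontrivial pairings are between the two halves. For $u=\varphi(f_k)$ and $v=\xi(\varphi(f_\ell))$ I compute
\begin{equation*}
(hu,hv)=(\varphi(Af_k),\xi(\varphi(Af_\ell)))=\langle\varphi(Af_k),\varphi(Af_\ell)\rangle,
\end{equation*}
and the identification $\varphi(f_k)\mapsto f_k$ identifies this alternating form on $U_{(15)}^+$ with the standard form on $\bbf^{2m}$ used to define ${\rm Sp}'_{2m}(\bbf)$. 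Hence the symplectic condition $\langle Af_k,Af_\ell\rangle=\langle f_k,f_\ell\rangle$ gives $(hu,hv)=\langle\varphi(f_k),\varphi(f_\ell)\rangle=(u,v)$. Thus $h\in G$.

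It remains to show $h\in R_V$. The condition $h\in R$ is immediate: $h$ preserves $U_{(15)}^\pm$ by construction and fixes $U_{(15)}^\perp\cap U_{\pm}$ pointwise, so $hU_{\pm}=U_{\pm}$. For $hV=V$, since $h$ is the identity on $U_{(15)}^\perp$ it stabilizes $\bigoplus_{j\ne 15}V_{(j)}$, and by the definition of $\xi$ we have $V_{(15)}=\{u+\xi(u)\mid u\in U_{(15)}^+\}$, so
\begin{equation*}
h(\varphi(f_k)+\xi(\varphi(f_k)))=\varphi(Af_k)+\xi(\varphi(Af_k))\in V_{(15)},
\end{equation*}
proving $hV_{(15)}=V_{(15)}$. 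There is no serious obstacle; the only thing the construction really uses is the compatibility of $\varphi$ with the two forms, which was recorded just before the lemma statement, together with the intertwining relation $h\xi=\xi h$ that is built into the definition.
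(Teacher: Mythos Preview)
Your proof is correct. The paper does not actually prove this lemma: it simply writes ``Clearly we have the following lemma'' immediately before the statement, treating the construction as evident once $\varphi$, $\xi$, and the identification of the alternating forms have been set up. Your argument spells out exactly the verification the paper leaves implicit---defining $h$ on $U_{(15)}^+$ via $\varphi$, extending to $U_{(15)}^-$ via $\xi$, checking that $(\ ,\ )$ is preserved using the symplectic condition on $A$, and confirming $hV_{(15)}=V_{(15)}$ from the graph description of $V_{(15)}$---and there is essentially only one natural way to build $h_{(15)}(A)$ from the given data.
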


The index set $I_+=\{i\in I\mid e_i\in U_+\}$ is decomposed as
$$I_+=\bigsqcup_{j\in \mathcal{I}_+} I_{(j)}$$
where
$\mathcal{I}_+=\{1,2,3,7,8,10,5,9,12,13,15,\overline{12},\overline{8},\overline{6}\}$
and $I_{(\overline{6})}=\overline{I_{(6)}},\ I_{(\overline{8})}=\overline{\eta_8(I_{(8)})}$ and $I_{(\overline{12})}=\overline{\kappa(I_{(12)})}$. Correspondingly $U_+$ is decomposed as
$$U_+=\bigoplus_{j\in\mathcal{I}_+} U_{(j)}^+$$
where 
$U_{(j)}^+ =\bigoplus_{i\in I_{(j)}} \bbf e_i=\bbf e_{i(j,1)}\oplus\cdots\oplus \bbf e_{i(j,\dim U_{(j)}^+)},\ i(\overline{6},k)=\overline{i(6,b_6+1-k)},\ i(\overline{8},k)=\overline{\eta_8(i(8,b_8+1-k)}$ and $i(\overline{12},k)=\overline{\kappa (i(12,b_{12}+1-k)}$.

\setlength{\unitlength}{1mm}
\begin{figure}
\centering
\begin{picture}(140,70)(10,25)
\put(30,90){\makebox(0,0){$1$}}
\put(50,90){\makebox(0,0){$2$}}
\put(30,70){\makebox(0,0){$3$}}
\put(50,70){\makebox(0,0){$7$}}
\put(70,70){\makebox(0,0){$8$}}
\put(90,70){\makebox(0,0){$10$}}
\put(30,50){\makebox(0,0){$5$}}
\put(50,50){\makebox(0,0){$9$}}
\put(70,50){\makebox(0,0){$12$}}
\put(90,50){\makebox(0,0){$13$}}
\put(110,30){\makebox(0,0){$\overline{8}$}}
\put(70,30){\makebox(0,0){$15$}}
\put(90,30){\makebox(0,0){$\overline{12}$}}
\put(130,30){\makebox(0,0){$\overline{6}$}}
\put(45,90){\vector(-1,0){10}}
\put(45,70){\vector(-1,0){10}}
\put(65,70){\vector(-1,0){10}}
\put(85,70){\vector(-1,0){10}}
\put(45,50){\vector(-1,0){10}}
\put(65,50){\vector(-1,0){10}}
\put(85,50){\vector(-1,0){10}}
\put(30,75){\vector(0,1){10}}
\put(50,75){\vector(0,1){10}}
\put(30,55){\vector(0,1){10}}
\put(50,55){\vector(0,1){10}}
\put(70,55){\vector(0,1){10}}
\put(90,55){\vector(0,1){10}}
\put(90,35){\vector(0,1){10}}
\put(70,35){\vector(0,1){10}}
\put(85,30){\vector(-1,0){10}}
\put(105,30){\vector(-1,0){10}}
\put(125,30){\vector(-1,0){10}}
\end{picture}
\caption{Diagram of $\mathcal{I}_+$}
\label{fig5.1}
\end{figure}
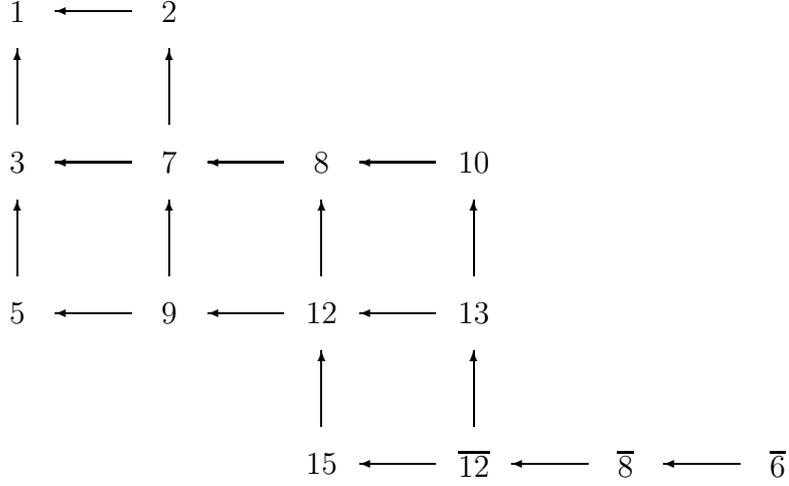

Consider the diagram of $\mathcal{I}_+$ in Figure \ref{fig5.1}.
We define a partial order $j\prec j'$ for $j,j'\in\mathcal{I}_+$ if there exists a sequence $j_0,j_1,\ldots,j_k$ in $\mathcal{I}_+$ such that
$$j=j_0\longleftarrow j_1\longleftarrow \cdots\longleftarrow j_k=j'.$$
For example, $1\prec j$ for all $j\in\mathcal{I}_+-\{1\}$ and $j\prec \overline{6}$ for all $j\in\mathcal{I}_+-\{\overline{6}\}$.

\begin{lemma} $($Lemma 3.21 in \cite{M3}$)$ {\rm (i)} Suppose that $j\prec j'$ for $j,j'\in\mathcal{I}_+$ and that
$$(j,j') \notin\{(8,12),\ (8,15),\ (12,15),\  (15,\overline{12}),\ (15,\overline{8}),\ (\overline{12},\overline{8})\}.$$
Then for $i\in I_{(j)},\ k\in I_{(j')}$ and $\mu\in\bbf$, there exists an element $g=g_{i,k}(\mu)\in R_V$ such that
$$ge_k=e_k+\mu e_i\quad\mbox{and that}\quad ge_\ell=e_\ell\mbox{ for }\ell\in I_+-\{k\}.$$

{\rm (ii)} For $i\in I_{(8)},\ k\in I_{(12)}$ and $\mu\in\bbf$, there exists an element $g=g_{i,k}(\mu)\in R_V$ such that
$$ge_k=e_k+\mu e_i,\quad ge_{\overline{\eta_8(i)}}=e_{\overline{\eta_8(i)}}-\mu e_{\overline{\kappa(k)}}$$
and that $ge_\ell=e_\ell$ for $\ell\in I_+-\{k,\overline{\eta_8(i)}\}$.

{\rm (iii)} For $i\in I_{(12)},\ k\in I_{(15)}$ and $\mu\in\bbf$, there exists an element $g=g_{i,k}(\mu)\in R_V$ such that
$$ge_k=e_k+\mu e_i,\quad ge_{\overline{\kappa(i)}}= e_{\overline{\kappa(i)}}-\mu e_{\overline{\eta_{15}(k)}}$$
and that $ge_\ell=e_\ell$ for $\ell\in I_+-\{k,\overline{\kappa(i)}\}$.

{\rm (iv)} For $i\in I_{(8)},\ k\in I_{(15)}$ and $\mu\in\bbf$, there exists an element $g=g_{i,k}(\mu)\in R_V$ such that
$$ge_k=e_k+\mu e_i,\quad ge_{\overline{\eta_8(i)}}= e_{\overline{\eta_8(i)}}-\mu e_{\overline{\eta_{15}(k)}}$$
and that $ge_\ell=e_\ell$ for $\ell\in I_+-\{k,\overline{\eta_8(i)}\}$.
\label{lem5.11}
\end{lemma}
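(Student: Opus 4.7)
The plan is to construct $g=g_{i,k}(\mu)$ explicitly in each of the four cases and verify that it lies in $R_V$. The data given in the statement specifies the action of $g$ on two or three basis vectors; this must be extended to a linear map on all of $\bbf^{2n}$, with the remaining basis vectors either fixed or modified in a compensating way. The verification then splits into three parts: (a) preservation of the bilinear form, so $g\in G$; (b) $gU_\pm=U_\pm$, so $g\in R$; (c) $gV=V$.

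For case (i), I would extend $g$ by setting $ge_{\overline{i}}=e_{\overline{i}}-\mu e_{\overline{k}}$, which is forced by form preservation, and by fixing all other basis vectors. When $j$ or $j'$ lies in $J_2=\{7,8,9,13\}$, a further adjustment on the paired vectors (such as $e_{\eta_j(i)}$ and its conjugate) is needed so that the generators $e_i+e_{\eta_j(i)}\in V_{(j)}$ and $e_{\overline{i}}-e_{\overline{\eta_j(i)}}\in V_{(j)}$ transform into vectors still in $V$. For cases (ii)--(iv), the second modification of $g$ appearing in the statement is likewise forced by form preservation, and one needs additional compensating adjustments on $e_{\kappa(k)}$, $e_{\lambda(k)}$, or the analogous basis vectors associated to $V_{(15)}$, in order to preserve the generators of $V_{(12)}$ and $V_{(15)}$. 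Once the full action of $g$ is fixed, part (a) follows from a finite check on pairs of modified basis vectors, using that $i\ne k$ and that the images of the maps $\eta_j$, $\kappa$, $\lambda$, $\eta_{15}$ are pairwise disjoint on their domains; part (b) is essentially built into the construction, since each modification exchanges vectors within $U_+$ or within $U_-$; and part (c) reduces to checking generator-by-generator that every $V_{(j'')}$ containing a modified basis vector maps into $V$, which is arranged by the compensating modifications.

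The hard part will be the combinatorial bookkeeping in cases (ii)--(iv): as many as five or six basis vectors are simultaneously altered, and each must be cross-checked against all the generators of $V_{(8)}$, $V_{(12)}$, $V_{(15)}$ that involve them. The arrow diagram of Figure \ref{fig5.1} encodes the situation precisely: the six pairs excluded from case (i) are exactly those for which the naive elementary transvection $e_k\mapsto e_k+\mu e_i$ cannot be extended to an element of $R_V$ without the extra compensations that appear in cases (ii)--(iv) (and in three mirror-dual cases obtained by swapping $U_+$ with $U_-$, handled by the same recipe). The construction closely parallels that of Lemma 3.21 in \cite{M3}, with a simplification in the $U_{(15)}$ part because $V_{(15)}$ in the even-degree case no longer carries the sign parameter $\varepsilon$ used in the odd-degree case.
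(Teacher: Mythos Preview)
The paper does not give its own proof of this lemma: it is stated as a direct quotation of Lemma~3.21 in \cite{M3}, with no argument supplied beyond the citation. Your sketch is essentially the approach taken there---explicit construction of an elementary transvection extended to $\bbf^{2n}$ by the rule $ge_{\overline{i}}=e_{\overline{i}}-\mu e_{\overline{k}}$ plus compensating adjustments on the paired basis vectors attached to $V_{(7)},V_{(8)},V_{(9)},V_{(12)},V_{(13)},V_{(15)}$, followed by a direct check that $g\in G$, $gU_\pm=U_\pm$, and $gV_{(j'')}\subset V$ for each affected $j''$. Your remark about the simplification in $V_{(15)}$ (no $\varepsilon$ parameter in the even-degree case) is also correct and is the only point where the argument differs from \cite{M3}.
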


\begin{lemma} For $k\in I_+-I_{(1)}$ and $u\in U_{(1)}^+$, there exists an element $g\in R_V$ such that
$$g(e_k+u)=e_k$$
and that $ge_\ell=e_\ell$ for $\ell\in I_+-\{k\}$.
\label{lem5.13'}
\end{lemma}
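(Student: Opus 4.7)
\medskip
\noindent\textbf{Proof plan for Lemma \ref{lem5.13'}.}

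The plan is to reduce the statement to Lemma \ref{lem5.11} (i) by expanding $u$ in the basis of $U_{(1)}^+$ and then composing the resulting elementary transvections. Concretely, write
$$u=\sum_{i\in I_{(1)}}\mu_i\, e_i, \qquad \mu_i\in \bbf,$$
and let $j'\in\mathcal{I}_+$ be the unique index with $k\in I_{(j')}$; by hypothesis $k\notin I_{(1)}$, so $j'\ne 1$. Inspecting Figure \ref{fig5.1}, one sees that $1\prec j'$ holds for every $j'\in\mathcal{I}_+-\{1\}$, and the pair $(1,j')$ never lies in the exceptional list
$\{(8,12),(8,15),(12,15),(15,\overline{12}),(15,\overline{8}),(\overline{12},\overline{8})\}$
since that list does not involve the index $1$. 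Hence Lemma \ref{lem5.11} (i) applies and supplies, for each $i\in I_{(1)}$, an element $g_{i,k}(-\mu_i)\in R_V$ satisfying
$$g_{i,k}(-\mu_i)\,e_k=e_k-\mu_i e_i \qquad\text{and}\qquad g_{i,k}(-\mu_i)\,e_\ell=e_\ell\ \text{for all }\ell\in I_+-\{k\}.$$

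Next I would set
$$g=\prod_{i\in I_{(1)}} g_{i,k}(-\mu_i)$$
(the order of the product does not matter, as I explain below). Each factor lies in $R_V$, so $g\in R_V$. Since $I_{(1)}\subset I_+-\{k\}$, each factor fixes every $e_i$ with $i\in I_{(1)}$; therefore $gu=u$. On the other hand, because each factor alters only the image of $e_k$ (within $U_+$) and all the correction vectors $e_i$ ($i\in I_{(1)}$) are fixed by every other factor, a short induction on the number of factors gives
$$g\,e_k=e_k-\sum_{i\in I_{(1)}}\mu_i e_i=e_k-u,$$
and $ge_\ell=e_\ell$ for $\ell\in I_+-\{k\}$. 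Consequently
$$g(e_k+u)=ge_k+gu=(e_k-u)+u=e_k,$$
which is the desired identity.

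The only delicate point is commutativity of the factors $g_{i,k}(-\mu_i)$ for varying $i\in I_{(1)}$ (they share the column index $k$ but differ in the row index $i$). I would verify this by a direct computation on basis vectors: each $g_{i,k}(-\mu_i)$ modifies $e_k$ by adding a multiple of $e_i$, together with a compensating correction on $e_{\bar k}$ forced by preservation of $(\ ,\ )$; since the vectors $e_i$ themselves are fixed by all other factors, the composition is independent of the order and yields the clean formula for $ge_k$ above. This is the step that requires a little care, but it is entirely routine; no new ideas beyond the transvections already constructed in Lemma \ref{lem5.11} are needed. \hfill$\square$
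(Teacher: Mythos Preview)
Your proof is correct and follows essentially the same approach as the paper: expand $u$ in the basis $e_1,\ldots,e_{b_1}$ of $U_{(1)}^+$ and take $g=\prod_{i} g_{i,k}(-\mu_i)$ using Lemma \ref{lem5.11} (i). The paper's proof is terser (it omits the verification that $(1,j')$ avoids the exceptional list and the commutativity discussion), but the argument is the same.
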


\begin{proof} Write $u=\lambda_1e_1+\cdots+\lambda_{b_1}e_{b_1}$. Then the product $g=\prod_{i=1}^{b_1} g_{i,k}(-\lambda_i)$ satisfies the desired condition.
\end{proof}

By using Lemma \ref{lem5.11}, we gave the following two lemmas in \cite{M3}.

\begin{lemma} $($Lemma 3.22 in \cite{M3}$)$ Let $k$ be an index in $I_{(\overline{6})}$. Then for any element $u$ in $\bigoplus_{i\in I_+-I_{(\overline{6})}} \bbf e_i$,
there exists a $g\in R_V$ such that
$g(e_k+u)=e_k$
and that $ge_\ell=e_\ell$ for $\ell\in I_+-\{k\}$. 
\label{lem5.12}
\end{lemma}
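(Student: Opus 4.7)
The plan is to build $g$ as a product of the elementary elements $g_{i,k}(\mu)\in R_V$ from Lemma \ref{lem5.11} (i), in direct analogy with the construction used in the proof of Lemma \ref{lem5.13'}. First I would expand $u=\sum_i \lambda_i e_i$ over all $i\in I_+-I_{(\overline{6})}$, and record that for every such $i$ the block index $j(i)\in\mathcal{I}_+$ with $i\in I_{(j(i))}$ satisfies $j(i)\prec \overline{6}$; this is immediate because, as noted after the definition of the partial order, $\overline{6}$ is the unique maximum element of $\mathcal{I}_+$.

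Next I would check that the hypotheses of Lemma \ref{lem5.11} (i) are satisfied for each pair $(j(i),\overline{6})$. Inspecting the six exceptional pairs, each has its second coordinate in $\{12,15,\overline{12},\overline{8}\}$, so none is of the form $(*,\overline{6})$. Hence case (i) genuinely applies and yields an element $g_{i,k}(-\lambda_i)\in R_V$ with $g_{i,k}(-\lambda_i)\,e_k=e_k-\lambda_i e_i$ and $g_{i,k}(-\lambda_i)\,e_\ell=e_\ell$ for every $\ell\in I_+-\{k\}$. Setting $g=\prod_{i} g_{i,k}(-\lambda_i)$, each factor fixes every $e_{i'}$ appearing in $u$ (since $i'\in I_+-I_{(\overline{6})}\subset I_+-\{k\}$), so the factors commute when evaluated on $e_k$ and on $u$. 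A short induction then gives $g\,e_k=e_k-u$ and $g\,u=u$, whence $g(e_k+u)=e_k$, and $g$ fixes every $e_\ell$ with $\ell\in I_+-\{k\}$ by construction.

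The main, and essentially only, obstacle is the combinatorial bookkeeping needed to confirm that we stay inside case (i) of Lemma \ref{lem5.11} throughout: cases (ii)--(iv) require the target index $k$ to lie in $I_{(12)}$ or $I_{(15)}$, which is excluded by the hypothesis $k\in I_{(\overline{6})}$, and the six excluded pairs in (i) never have $\overline{6}$ as their second entry. Both facts can be read off the diagram of $\mathcal{I}_+$ in Figure \ref{fig5.1}. Once this check is recorded, the rest of the argument is a line-for-line reproduction of the proof of Lemma \ref{lem5.13'}, with $I_{(1)}$ replaced by $I_{(\overline{6})}$ and the transvections pointing into $\overline{6}$ instead of out of $1$.
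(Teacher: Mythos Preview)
Your proposal is correct and follows exactly the approach indicated in the paper: the paper does not reprove this lemma but states that it was obtained in \cite{M3} ``by using Lemma \ref{lem5.11}'', which is precisely what you do, taking $g=\prod_i g_{i,k}(-\lambda_i)$ with each factor supplied by Lemma \ref{lem5.11} (i). Your verification that no pair $(j(i),\overline{6})$ falls into the exceptional list, and hence that case (i) always applies, is the only point requiring care, and you handle it correctly.
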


\begin{lemma} $($Lemma 3.23 in \cite{M3}$)$ Let $k$ be an index in $I_{(\overline{8})}$. Then for any element $u$ in $\bigoplus_{i\in I_+-I_{(\overline{8})}-I_{(\overline{6})}} \bbf e_i$,
there exists a $g\in R_V$ such that
$g(e_k+u)=e_k$
and that $ge_\ell=e_\ell$ for $\ell\in I_+-I_{(12)}-I_{(15)}-\{k\}$. 
\label{lem5.13}
\end{lemma}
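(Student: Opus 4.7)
The plan is to mimic the argument of Lemma~\ref{lem5.12}, building $g$ as a product of elementary transformations from Lemma~\ref{lem5.11}. The new difficulty is that for $j'=\overline{8}$ the two pairs $(15,\overline{8})$ and $(\overline{12},\overline{8})$ lie on the forbidden list of Lemma~\ref{lem5.11}(i), so the components of $u$ in $I_{(15)}\cup I_{(\overline{12})}$ cannot be eliminated by a direct $g_{i,k}$ from that part. The statement of Lemma~\ref{lem5.13} compensates for this by allowing $g$ to move basis vectors in $I_{(12)}\cup I_{(15)}$, which is exactly the flexibility that Lemma~\ref{lem5.11}(ii) and (iv) supply. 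Decompose $u=\sum_{j\in\mathcal{I}_+\setminus\{\overline{8},\overline{6}\}}u_j$ with $u_j=\sum_{i\in I_{(j)}}\lambda_{j,i}e_i\in U_{(j)}^+$, and fix the unique $k^{\ast}\in I_{(8)}$ characterized by $\overline{\eta_8(k^{\ast})}=k$.

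First I would dispose of the ``clean'' components $u_j$ for $j\in\{1,2,3,5,7,8,9,10,12,13\}$: each such pair $(j,\overline{8})$ avoids the forbidden list, so Lemma~\ref{lem5.11}(i) supplies $g_{i,k}(-\lambda_{j,i})$ for every $i\in I_{(j)}$. These factors each modify only $e_k$, so they commute and their product reduces the vector to $e_k+u_{15}+u_{\overline{12}}$. Next, to kill $u_{\overline{12}}$ I would use Lemma~\ref{lem5.11}(ii) with $i=k^{\ast}$: for each $i\in I_{(\overline{12})}$ write $i=\overline{\kappa(k'_i)}$ with $k'_i\in I_{(12)}$, and apply $g_{k^{\ast},k'_i}(\mu_i)$. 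Its secondary action on $e_{\overline{\eta_8(k^{\ast})}}=e_k$ is $e_k\mapsto e_k-\mu_i e_{\overline{\kappa(k'_i)}}=e_k-\mu_i e_i$, so taking $\mu_i=\lambda_{\overline{12},i}$ kills the $e_i$ component. The components $u_{15}$ are treated in parallel via Lemma~\ref{lem5.11}(iv): since a direct computation shows that $\overline{\eta_{15}}$ is an involution on $I_{(15)}$, for each $i\in I_{(15)}$ I can pick $k'_i\in I_{(15)}$ with $\overline{\eta_{15}(k'_i)}=i$ and apply $g_{k^{\ast},k'_i}(\mu_i)$, which gives $e_k\mapsto e_k-\mu_i e_i$.

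Each transformation used in the last two steps also modifies $e_{k'_i}$ with $k'_i\in I_{(12)}\cup I_{(15)}$, which is permitted by the conclusion of Lemma~\ref{lem5.13}. The only surviving side-effect is that when the current residue already has a nonzero $e_{k'_i}$-coefficient, the secondary action $e_{k'_i}\mapsto e_{k'_i}+\mu_i e_{k^{\ast}}$ deposits a stray term in the $I_{(8)}$-direction. Because $(8,\overline{8})$ is clean, I would finally sweep up all such accumulated $I_{(8)}$-debris by one more application of Lemma~\ref{lem5.11}(i) with $i=k^{\ast}$, which again touches only $e_k$. The main obstacle I anticipate is precisely the need to route around the forbidden pairs $(15,\overline{8})$ and $(\overline{12},\overline{8})$; the workaround exploits the secondary clauses of Lemma~\ref{lem5.11}(ii) and (iv) to produce the missing cancellations, paying for them by permitted alterations of $e_\ell$ for $\ell\in I_{(12)}\cup I_{(15)}$ together with an $I_{(8)}$-residue that is removed at the end.
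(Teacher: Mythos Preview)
Your reconstruction is correct and is precisely the approach the paper intends: the paper does not spell out a proof here but simply records that the lemma follows ``by using Lemma~\ref{lem5.11}'' (citing \cite{M3}), and your product-of-$g_{i,k}(\mu)$'s argument is exactly that. The key observation you make---that the forbidden pairs $(15,\overline{8})$ and $(\overline{12},\overline{8})$ force one to route through Lemma~\ref{lem5.11}(ii) and (iv), paying with permitted modifications of $e_\ell$ for $\ell\in I_{(12)}\cup I_{(15)}$ and a residual $I_{(8)}$-term swept up at the end---is the whole point. One small clarification: in your Step~2 (killing $u_{\overline{12}}$) the current residue has no $I_{(12)}$-component after Step~1, so no $e_{k^\ast}$-debris appears there; the debris arises only in Step~3, where the involution $\overline{\eta_{15}}$ keeps $k'_i$ inside $I_{(15)}$ and hence inside the support of $u_{15}$. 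Your final cleanup via $g_{k^\ast,k}(\cdot)$ from Lemma~\ref{lem5.11}(i) (the pair $(8,\overline{8})$ being clean) then finishes the job.
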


Similarly we also have:

\begin{lemma} Let $k$ be an index in $I_{(\overline{12})}$. Then for any element $u$ in

\noindent $\bigoplus_{i\in I_+-I_{(\overline{12})}-I_{(\overline{8})}-I_{(\overline{6})}} \bbf e_i$,
there exists a $g\in R_V$ such that
$g(e_k+u)=e_k$
and that $ge_\ell=e_\ell$ for $\ell\in I_+-I_{(15)}-\{k\}$.
\label{lem5.17}
\end{lemma}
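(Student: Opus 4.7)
The plan is to follow the pattern of Lemmas \ref{lem5.12} and \ref{lem5.13}, cancelling the components of $u$ one index-set at a time. Decompose $u=u_{15}+\sum_{j\in J} u_j$ with $u_j\in\bigoplus_{i\in I_{(j)}}\bbf e_i$ for each $j\in J:=\{1,2,3,5,7,8,9,10,12,13\}$ and $u_{15}\in\bigoplus_{i\in I_{(15)}}\bbf e_i$; by hypothesis $u$ has no component in $I_{(\overline{12})}\cup I_{(\overline{8})}\cup I_{(\overline{6})}$, so this exhausts $u$.

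For each $j\in J$, tracing Figure \ref{fig5.1} shows $j\prec \overline{12}$, and $(j,\overline{12})$ is \emph{not} one of the six exceptional pairs in Lemma \ref{lem5.11}(i). Hence, writing $u_j=\sum_i \mu_i e_i$, the product $\prod_{j,i} g_{i,k}(-\mu_i)\in R_V$ alters only the basis vector $e_k$ (subtracting $\sum_{j\in J} u_j$) and fixes every other $e_\ell$ with $\ell\in I_+$.

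The main obstacle is the cancellation of $u_{15}$, since $(15,\overline{12})$ \emph{is} exceptional. The remedy is Lemma \ref{lem5.11}(iii). Since $I_{(\overline{12})}=\overline{\kappa(I_{(12)})}$, write $k=\overline{\kappa(i^*)}$ for the unique $i^*\in I_{(12)}$. A short calculation from the explicit formulas for $i(15,\cdot)$ and $\eta_{15}$, using $a_1=b_5+b_6+b_8+b_9+2b_{12}+b_{13}+b_{15}$ from (\ref{eq5.4}), yields
$$\overline{\eta_{15}(i(15,k'))}=i(15,b_{15}+1-k');$$
in particular $\overline{\eta_{15}}$ restricts to an involution of $I_{(15)}$. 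For each $\ell\in I_{(15)}$ with coefficient $\nu_\ell$ of $e_\ell$ in $u_{15}$, setting $k_0=\overline{\eta_{15}(\ell)}\in I_{(15)}$, the operator $g_{i^*,k_0}(\nu_\ell)\in R_V$ sends $e_k\mapsto e_k-\nu_\ell e_\ell$ and $e_{k_0}\mapsto e_{k_0}+\nu_\ell e_{i^*}$, fixing every other basis vector in $U_+$.

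The assembly of $g$ then proceeds in two stages. First apply the (iii)-operators $g_{i^*,k_0}(\nu_\ell)$ successively; they cancel the $u_{15}$-components one by one, possibly leaving residual contributions at $e_{i^*}\in I_{(12)}$ produced by the side effects on the various $e_{k_0}$'s. Second, apply the (i)-operators from the second paragraph, with parameters readjusted to the current vector, to wipe out all remaining components in $I_{(j)}$ for $j\in J$, including the accumulated contribution at $e_{i^*}$. Each constituent of the resulting product $g\in R_V$ fixes every $e_\ell$ with $\ell\in I_+-I_{(15)}-\{k\}$, so $g$ does as well, while by construction $g(e_k+u)=e_k$.
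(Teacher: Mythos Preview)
Your proof is correct and follows essentially the same approach that the paper intends: the paper simply writes ``Similarly we also have'' before Lemma \ref{lem5.17}, deferring to the pattern of Lemmas \ref{lem5.12} and \ref{lem5.13} from \cite{M3}, and your argument is precisely that pattern carried out in detail --- handling the non-exceptional indices $j\in J$ via Lemma \ref{lem5.11}(i) and the exceptional pair $(15,\overline{12})$ by exploiting the side effect of Lemma \ref{lem5.11}(iii) on $e_{\overline{\kappa(i^*)}}=e_k$, with the resulting $I_{(12)}$-residues absorbed in a second pass. The computation $\overline{\eta_{15}(i(15,k'))}=i(15,b_{15}+1-k')$ showing that the side effects stay inside $I_{(15)}$ is the key check, and you have it right.
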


We can prove the following by the same arguments as in Section 4 of \cite{M3}.

\begin{proposition} $|R_V\backslash M_{\gamma_1}(U_+)|<\infty$ where $M_{\gamma_1}(U_+)$ is the Grassmann variety consisting of $\gamma_1$-dimensional subspaces in $U_+$.
\label{prop5.18}
\end{proposition}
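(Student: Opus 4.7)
The plan is to carry out a Bruhat-type reduction on the Grassmannian $M_{\gamma_1}(U_+)$, using the ``Levi'' elements from Lemma \ref{lem5.10} and Lemma \ref{lem5.11'} together with the ``unipotent'' elements from Lemma \ref{lem5.11} to bring an arbitrary $\gamma_1$-dimensional subspace $W\subset U_+$ into one of finitely many coordinate normal forms. The overall structure mirrors Section 4 of \cite{M3}.

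First I would exploit the filtration of $U_+$ induced by the partial order $\prec$ on $\mathcal{I}_+$ depicted in Figure \ref{fig5.1}. For every downward-closed subset $S\subset \mathcal{I}_+$ the subspace $U_{(S)}^+:=\bigoplus_{j\in S}U_{(j)}^+$ is stable under $R_V$: this is immediate for the Levi elements in Lemmas \ref{lem5.10} and \ref{lem5.11'}, and follows from Lemma \ref{lem5.11} for the unipotent elements, since those moves send $e_k\in U_{(j)}^+$ to $e_k$ plus a term in $U_{(j')}^+$ with $j'\prec j$, possibly together with a correction of the same shape in a dual block. Consequently the ranks $d_S:=\dim (W\cap U_{(S)}^+)$, as $S$ ranges over downward-closed subsets of $\mathcal{I}_+$, are $R_V$-invariants of $W$, and take only finitely many values.

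Second, having fixed the invariants $\{d_S\}$, I would choose a basis of $W$ compatible with the filtration and normalize it block by block, proceeding upward in the poset $\prec$. For each block $j\ne 15$, the Levi element $h_{(j)}(A)$ from Lemma \ref{lem5.10} provides a full $GL$-action on $U_{(j)}^+$, so the residue of the basis of $W$ in $U_{(j)}^+$ can be reduced to a coordinate subspace. For $j=15$, the Levi element $h_{(15)}(A)$ from Lemma \ref{lem5.11'} provides only an $\mathrm{Sp}'_{b_{15}}(\bbf)$-action; nonetheless the number of $\mathrm{Sp}$-orbits on a Grassmannian of fixed dimension in $U_{(15)}^+$ is finite, a classical fact about symplectic Grassmannians. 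After this step the projections of $W$ to each block are in a fixed normal form, and the unipotent moves from Lemma \ref{lem5.11}~(i)--(iv) and Lemmas \ref{lem5.13'}--\ref{lem5.17} are then used to eliminate the ``off-diagonal'' components compatible with $\prec$, yielding a representative of the form $W=\bigoplus_{j\in\mathcal{I}_+}(W\cap U_{(j)}^+)$.

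The main obstacle is the behaviour of Lemma \ref{lem5.11}~(ii)--(iv): the unipotent moves that clear a coefficient between the triples of blocks $(8,12)$, $(12,15)$, or $(8,15)$ simultaneously modify a component in a dual block ($\overline{12}$, $\overline{8}$, or an $\overline{\eta_{15}}$-image). The order in which the blocks are processed must therefore be arranged so that these dual corrections fall in blocks that either have not yet been touched, or whose previously-fixed coordinate shape is preserved by the correction (possibly after a further Levi readjustment). The partial order $\prec$ of Figure \ref{fig5.1}, in which the blocks $8,12,15$ sit strictly below their duals $\overline{8},\overline{12}$, is arranged exactly so that this cleanup can be carried through consistently; this combinatorial bookkeeping is the delicate part of the argument. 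Once all normalizations have been made, the $R_V$-orbit of $W$ is determined by the finite collection of discrete invariants $\{d_S\}$ together with the finite $\mathrm{Sp}$-orbit label at block $15$, and we conclude $|R_V\backslash M_{\gamma_1}(U_+)|<\infty$.
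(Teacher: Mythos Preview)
Your general plan (use the Levi elements of Lemmas \ref{lem5.10}--\ref{lem5.11'} and the unipotent elements of Lemma \ref{lem5.11} to normalize a subspace $W\subset U_+$ block by block along the poset $\prec$) is the right instinct, and it is indeed how the argument in \cite{M3} begins. However, the specific conclusion you draw---that one can always reach a representative of the form $W=\bigoplus_{j\in\mathcal{I}_+}(W\cap U_{(j)}^+)$, so that the orbit is determined by the filtration invariants $\{d_S\}$ plus an ${\rm Sp}$-label---is incorrect, and this is where the argument breaks.

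The poset $\mathcal{I}_+$ in Figure \ref{fig5.1} is not a chain: for instance, the blocks $2$ and $3$ are incomparable, and more generally the two ``strands'' $\widetilde{U}_1=U_{(2,7,8,10,13)}^+$ and $\widetilde{U}_2=U_{(3,5,9,12,15)}^+$ are linked only through the extremal blocks $1,\overline{6},\overline{8},\overline{12}$. Unipotent moves $g_{i,k}(\mu)$ exist only for $j\prec j'$, so a one-dimensional subspace such as $\bbf(e_{i(2,1)}+e_{i(3,1)})$ cannot be moved to either $\bbf e_{i(2,1)}$ or $\bbf e_{i(3,1)}$; it represents a third orbit not captured by your direct-sum ansatz. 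For higher-dimensional $W$ the mixing between $\widetilde{U}_1$ and $\widetilde{U}_2$ is the essential difficulty, and your invariants $\{d_S\}$ do not separate these orbits. Thus your argument, as written, does not establish finiteness.

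The route actually taken in \cite{M3} (and replicated for flags in Section \ref{sec6.1} of this paper) is different. One first uses Lemmas \ref{lem5.13'}, \ref{lem5.12}, \ref{lem5.13}, \ref{lem5.17} to peel off the extremal blocks $1,\overline{6},\overline{8},\overline{12}$ in that order, reducing $W$ to $\widetilde{S}\oplus U_\#$ with $\widetilde{S}\subset\widetilde{U}=\widetilde{U}_1\oplus\widetilde{U}_2$ and $U_\#$ a coordinate subspace of $U_{(\overline{12},\overline{8},\overline{6},1)}^+$. The residual problem is then to show that the product group $P_1\times K_2$ of Lemma \ref{lem6.2} (with $P_1\subset{\rm GL}(\widetilde{U}_1)$ parabolic and $K_2\subset{\rm GL}(\widetilde{U}_2)$ containing an ${\rm Sp}'_{b_{15}}(\bbf)$-factor) has finitely many orbits on the Grassmannian $M_k(\widetilde{U})$. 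This last step is not a block diagonalization at all; it is a separate finiteness statement for product groups on Grassmannians, proved in \cite{M3} (Proposition 6.3 there) by an inductive argument of a different flavor. That is the missing ingredient in your sketch.
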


\begin{corollary} The triple flag variety $\mct_{(\gamma_1,\gamma_2),(\beta),(n)}$ is of finite type.
\label{cor5.19}
\end{corollary}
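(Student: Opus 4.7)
The plan is to deduce the corollary as an immediate consequence of Theorem \ref{th5.9} and Proposition \ref{prop5.18} via a short chain of reductions. Given a point $(V_1\subset V_2,\ U,\ V)$ of $\mct_{(\gamma_1,\gamma_2),(\beta),(n)}$, I would set $\alpha=\gamma_1+\gamma_2$ and relabel $U_+:=V_2$ as an $\alpha$-dimensional isotropic subspace and $U_-:=U$ as the $\beta$-dimensional isotropic subspace appearing in the framework of Section 5. The crucial observation is that $V_1$ is automatically a $\gamma_1$-dimensional subspace of $U_+$, so the triple flag naturally splits into the ``base'' data $(U_+,U_-,V)\in \mct_{(\alpha),(\beta),(n)}$ plus a point of the Grassmann variety $M_{\gamma_1}(U_+)$.

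First, by Proposition \ref{prop5.1} I would translate $(U_+,U_-)$ by some $g\in G$ into the standard position $(W_{(0)}\oplus W_{(+)}\oplus U_{(+)},\ W_{(0)}\oplus W_{(-)}\oplus U_{(-)})$, which is determined up to $G$ by the nonnegative integers $a_0=\dim(U_+\cap U_-)$, $a_+=\dim(U_+\cap U_-^\perp)-a_0$ and $a_-=\dim(U_-\cap U_+^\perp)-a_0$. Only finitely many such triples occur since $a_0+a_+\le \alpha$ and $a_0+a_-\le\beta$. Second, for each standard pair the residual stabilizer is $R=\{g\in G\mid gU_\pm=U_\pm\}$, and Theorem \ref{th5.9} supplies a normal form $V(b_1,\ldots,b_{15})$ for each $R$-orbit on $M$. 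The defining equalities (\ref{eq5.1'})--(\ref{eq5.5}) bound each $b_i$, so only finitely many admissible tuples exist.

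Finally, once $(U_+,U_-,V)$ has been put into a fixed standard form, the remaining freedom in the triple flag is the $\gamma_1$-dimensional subspace $V_1\subset U_+=V_2$, and the stabilizer acting on it is $R_V$. Proposition \ref{prop5.18} states exactly that $R_V$ has only finitely many orbits on $M_{\gamma_1}(U_+)$. Multiplying the three finite counts yields $|G\backslash \mct_{(\gamma_1,\gamma_2),(\beta),(n)}|<\infty$. The technical heart is already packaged in Theorem \ref{th5.9} and Proposition \ref{prop5.18}, so there is no serious obstacle left; one only has to verify that the successive reductions are consistent, which they are because $R_V\subset R$ stabilizes $U_+=V_2$ and therefore preserves the condition $V_1\subset V_2$ throughout.
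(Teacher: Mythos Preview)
Your argument is correct and is exactly the unpacking that the paper intends: Corollary~\ref{cor5.19} is stated without proof because it follows immediately from Proposition~\ref{prop5.1}, Theorem~\ref{th5.9}, and Proposition~\ref{prop5.18} via the reduction you describe (normalize $(U_+,U_-)=(V_2,U)$, then $V$, then act by $R_V$ on $V_1\in M_{\gamma_1}(U_+)$). There is nothing to add.
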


\section{Proof of Proposition \ref{prop4.1}}

\subsection{Reduction of flags in $U_+$} \label{sec6.1}

For any subset $J$ of $I_+$, let $p_J$ denote the canonical projection of $U_+$ onto $\bigoplus_{i\in J} \bbf e_i$ defined by
$$p_J(\sum_{i\in I_+} x_ie_i)=\sum_{i\in J} x_ie_i.$$
For a subset $\{j_1,\ldots,j_k\}$ of $\mathcal{I}_+$, write
$$U_{(j_1,\ldots,j_k)}^+=U_{(j_1)}^+\oplus\cdots\oplus U_{(j_k)}^+\mand I_{(j_1,\ldots,j_k)}=I_{(j_1)}\sqcup\cdots\sqcup I_{(j_k)}.$$
For $j\in\mathcal{I}_+$ and $\ell$ with $0\le \ell\le\dim U_{(j)}^+$, define a subset $I_{(j)}(\ell)=\{i(j,1),\ldots,i(j,\ell)\}$ of $I_{(j)}$ and a subspace
$$U_{(j)}^{+,\ell}=\bigoplus_{i\in I_{(j)}(\ell)} \bbf e_i=\bbf e_{i(j,1)}\oplus\cdots\oplus \bbf e_{i(j,\ell)}$$
of $U_{(j)}^+$. For a subset $\{j_1,\ldots,j_k\}$ of $\mathcal{I}_+$ and $\ell_1,\ldots,\ell_k$ with $0\le \ell_t\le \dim U^+_{(j_t)}\ (1\le t\le k)$, write
$$U_{(j_1,\ldots,j_k)}^{+,\ell_1,\ldots,\ell_k}=U^{+,\ell_1}_{(j_1)}\oplus\cdots \oplus U^{+,\ell_k}_{(j_k)}.$$

Let $U_+^1\subset\cdots\subset U_+^m$ be a flag in $U_+$. Put $s_{0,j}=\dim (U_+^j\cap U_{(1)}^+)$ for $j=1,\ldots,m$. Then we can take an $h_0=h_{(1)}(A)\in R_V$ with some $A\in{\rm GL}_{b_1}(\bbf)$ such that
$$h_0U_+^j\cap U_{(1)}^+=U_{(1)}^{+,s_{0,j}}$$
for $j=1,\ldots,m$ by Lemma \ref{lem5.10}. We can write
$$h_0U_+^j=U_{(1)}^{+,s_{0,j}}\oplus \bbf v_1\oplus\cdots\oplus \bbf v_{d(j)}$$
with some linearly independent vectors $v_1,\ldots, v_{d(m)}$ in $U_+^m$. Let $W$ be a subspace of $U_+$ such that
$$v_1,\ldots, v_{d(m)}\in W$$
and that $U_+=U_{(1)}^+\oplus W$. Write $W=\sum_{k\in I_+-I_{(1)}} \bbf(e_k+u_k)$ with $u_k\in U_{(1)}^+$. Then we have $g_{(k)}\in R_V$ such that
$$g_{(k)}(e_k+u_k)=e_k$$
and that $ge_\ell=e_\ell$ for $\ell\in I_+-\{k\}$ by Lemma \ref{lem5.13'}. Then the product $g_0=\prod_{k\in I_+-I_{(1)}} g_{(k)}$ satisfies
$$g_0W=U'$$
where $U'=\bigoplus_{i\in I_+-I_{(1)}} \bbf e_i$. Hence we have
$$g_0h_0U_+^j=S_{0,j}\oplus U_{(1)}^{+,s_{0,j}}$$
with $S_{0,j}=g_0 (\bbf v_1\oplus\cdots\oplus \bbf v_{d(j)})\subset U'$.

Put $s_{1,j}=\dim p_{I_{(\overline{6})}} S_{0,j}$ for $j=1,\ldots,m$. Then there exists an $h_1=h_{(6)}(A)\in R_V$ with some $A\in{\rm GL}_{b_6}(\bbf)$ such that
$$p_{I_{(\overline{6})}} h_1S_{0,j} =U_{(\overline{6})}^{+,s_{1,j}}=\bigoplus_{i\in I_{(\overline{6})}(s_{1,j})} \bbf e_i $$
by Lemma \ref{lem5.10}. Put $U''=\bigoplus_{i\in I_+-I_{(1)}-I_{(\overline{6})}} \bbf e_i$. Then we can write
$$h_1U_+^1 =(h_1S_{0,j}\cap U'')\oplus (\bigoplus_{i\in I_{(\overline{6})}(s_{1,j})} \bbf v_i)$$
with vectors $v_i\in e_i+U''$ for $i\in I_{(\overline{6})}(s_{1,m})$.

By Lemma \ref{lem5.12}, we can take a $g_1\in R_V$ such that
$$g_1v_i=e_i\quad\mbox{for all }i\in  I_{(\overline{6})}(s_{1,m}).$$
and that $g_1$ acts trivially on $U^+_{(1)}\oplus U''$. Thus we have
$$g_1h_1S_{0,j}=S_{1,j}\oplus U_{(\overline{6})}^{+,s_{1,j}}\mand g_1h_1g_0h_0U_+^j=S_{1,j}\oplus U_{(\overline{6},1)}^{+,s_{1,j},s_{0,j}}$$
where $S_{1,j}=S_{0,j}\cap U''$.

Put $s_{2,j}=\dim p_{I_{(\overline{8})}} S_{1,j}$. Then there exists an $h_2=h_{(8)}(A)\in R_V$ with some $A\in{\rm GL}_{b_8}(\bbf)$ such that
\begin{align*}
p_{I_{(\overline{8})}} h_2S_{1,j} & =U_{(\overline{8})}^{+,s_{2,j}} =\bigoplus_{i\in I_{((\overline{8})}(s_{2,j})} \bbf e_i
\end{align*}
by Lemma \ref{lem5.10}.

Put $U'''=\bigoplus_{i\in I-I_{(1)}-I_{(\overline{6})}-I_{(\overline{8})}} \bbf e_i$. Then we can write
$$h_2S_{1,1} =(h_2S_{1,1}\cap U''') \oplus (\bigoplus_{i\in I_{(\overline{8})}(s_{2,j})} \bbf v_i)$$
with vectors $v_i\in e_i+U'''$ for $i\in I_{(\overline{8})}(s_{2,j})$. By Lemma \ref{lem5.13}, we can take an element $g_2\in R_V$ such that
$$g_2v_i=e_i \quad \mbox{for all } i\in I_{(\overline{8})}(s_{2,j})$$
and that $g_2e_\ell=e_\ell$ for $\ell\in I_+-I_{(15)}-I_{(12)}-I_{(\overline{8})}(s_{2,j})$. Thus we have
$$g_2h_2S_{1,j}=S_{2,j}\oplus U_{(\overline{8})}^{+,s_{2,j}}\mand g_2h_2g_1h_1g_0h_0U_+^j=S_{2,j}\oplus U_{(\overline{8},\overline{6},1)}^{+,s_{2,j},s_{1,j},s_{0,j}}$$
where $S_{2,j}=g_2h_2S_{1,j}\cap U'''$.

Put $s_{3,j}=\dim p_{I_{(\overline{12})}} S_{2,j}$. Then there exists an $h_3=h_{(12)}(A)$ with some $A\in{\rm GL}_{b_{12}}(\bbf)$ such that
$$p_{I_{(\overline{12})}} h_3S_{2,j} =U_{(\overline{12})}^{+,s_{3,j}}=\bigoplus_{i\in I_{(\overline{12})}(s_{3,j})} \bbf e_i$$
by Lemma \ref{lem5.10}.

Put $\widetilde{U}=\bigoplus_{i\in I-I_{(1)}-I_{(\overline{6})}-I_{(\overline{8})}-I_{(\overline{12})}} \bbf e_i$. Then we can write
$$h_3S_{2,j}=(h_3S_{2,j}\cap \widetilde{U})\oplus (\bigoplus_{i\in I_{(\overline{12},s_{3,j})}} \bbf v_i)$$
with vectors $v_i\in e_i+\widetilde{U}$ for $i\in I_{(\overline{12})}(s_{3,j})$. By Lemma \ref{lem5.17}, we can take an element $g_3\in R_V$ such that
$$g_3v_i=e_i\quad\mbox{for all }i\in I_{(\overline{12})}(s_{3,j})$$
and that $g_3e_\ell=e_\ell$ for $\ell\in I-I_{(15)}-I_{(\overline{12})}(s_{3,j})$. Thus we have $g_3h_3S_{2,j} =\widetilde{S}_j\oplus U_{(\overline{12})}^{+,s_{3,j}}$ and
\begin{equation}
g_3h_3g_2h_2g_1h_1g_0h_0U_+^j  =\widetilde{S}_j\oplus U^j_\# \label{eq6.0'}
\end{equation}
where $\widetilde{S}_j=g_3h_3S_{2,j} \cap \widetilde{U}$ and $U^j_\#=U^{+,s_{3,j},s_{2,j},s_{1,j},s_{0,j}}_{(\overline{12},\overline{8},\overline{6},1)}$.

Clearly we have the following lemma.

\begin{lemma} {\rm (i)} Let $h=h_{(8)}(A)$ be the element of $R_V$ defined in Lemma \ref{lem5.10} for $A\in {\rm GL}_{b_8}(\bbf)$. Then
$$hU_{(\overline{8})}^{+,s_{2,j}}=U_{(\overline{8})}^{+,s_{2,j}}
\Longleftrightarrow  h(U_{(8)}^{+,b_8-s_{2,j}})=U_{(8)}^{+,b_8-s_{2,j}}$$

{\rm (ii)} Let $h=h_{(12)}(A)$ be the element of $R_V$ defined in Lemma \ref{lem5.10} for $A\in {\rm GL}_{b_{12}}(\bbf)$. Then
$$hU_{(\overline{12})}^{+,s_{3,j}}=U_{(\overline{12})}^{+,s_{3,j}}
\Longleftrightarrow  h(U_{(12)}^{+,b_{12}-s_{3,j}})=U_{(12)}^{+,b_{12}-s_{3,j}}$$
\label{lem4.2}
\end{lemma}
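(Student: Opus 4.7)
The plan is to reduce both equivalences in the lemma to the standard linear algebra fact that a square matrix is block upper-triangular (with a given block partition) if and only if its inverse is block upper-triangular with the same partition. Neither part should require any nontrivial computation, which is why the author calls the lemma ``clear'': the content is essentially an indexing check.

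For part (i), I would begin by recalling from Lemma \ref{lem5.10} (ii) that, with respect to the ordered basis $\{e_{i(8,k)}\}_{k=1}^{b_8}$ of $U_{(8)}^+$, the element $h_{(8)}(A)$ acts by the matrix $A$. Hence $hU_{(8)}^{+,b_8-s_{2,j}}=U_{(8)}^{+,b_8-s_{2,j}}$ holds if and only if $A$ preserves the subspace spanned by the first $b_8-s_{2,j}$ basis vectors, i.e. if and only if $A$ is block upper-triangular with diagonal blocks of sizes $b_8-s_{2,j}$ and $s_{2,j}$. Next, since $i(\overline{8},k)=\overline{\eta_8(i(8,b_8+1-k))}$, the subspace $U_{(\overline{8})}^{+,s_{2,j}}$ is the span of the last $s_{2,j}$ vectors in the ordered basis $\{e_{\overline{\eta_8(i(8,k))}}\}_{k=1}^{b_8}$. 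The formula $he_{\overline{\eta_8(k)}}=\sum_{i\in I_{(8)}} b_{k,i}e_{\overline{\eta_8(i)}}$ from Lemma \ref{lem5.10} (ii) shows that, in this ordered basis, $h$ acts by the transpose $(A^{-1})^T$. So the stability condition on $U_{(\overline{8})}^{+,s_{2,j}}$ translates to the vanishing of the entries $(A^{-1})_{k,\ell}$ for $k\ge b_8+1-s_{2,j}$ and $\ell\le b_8-s_{2,j}$, which is exactly the statement that $A^{-1}$ is block upper-triangular with diagonal blocks of sizes $b_8-s_{2,j}$ and $s_{2,j}$. Invoking the inverse-of-block-triangular fact completes part (i).

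For part (ii), the argument is entirely parallel: the role played by $\eta_8$ in Lemma \ref{lem5.10} (ii) is now played by $\kappa$ in Lemma \ref{lem5.10} (iii), and the additional relation involving $\lambda$ is irrelevant since it acts on indices outside $I_+ \cap (I_{(12)}\sqcup I_{(\overline{12})})$. The same indexing convention $i(\overline{12},k)=\overline{\kappa(i(12,b_{12}+1-k))}$ places $U_{(\overline{12})}^{+,s_{3,j}}$ as the span of the last $s_{3,j}$ basis vectors in $\{e_{\overline{\kappa(i(12,k))}}\}_{k=1}^{b_{12}}$, on which $h_{(12)}(A)$ acts as $(A^{-1})^T$, while $h_{(12)}(A)$ acts as $A$ on $\{e_{i(12,k)}\}_{k=1}^{b_{12}}$. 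The same block-triangularity argument gives the equivalence.

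The only real thing to be careful about is the order-reversing convention in the definitions $i(\overline{8},k)=\overline{\eta_8(i(8,b_8+1-k))}$ and $i(\overline{12},k)=\overline{\kappa(i(12,b_{12}+1-k))}$: this is exactly what makes the natural complementary dimensions match, namely $s_{2,j}$ on the $\overline{8}$ side pairing with $b_8-s_{2,j}$ on the $8$ side (and likewise for $12$). Once this bookkeeping is set up, there is no genuine obstacle — the proof is a one-paragraph linear-algebra observation.
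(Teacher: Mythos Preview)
Your proposal is correct and is exactly the verification the paper has in mind when it says ``Clearly we have the following lemma'': the paper gives no proof at all. Your reduction to the block upper-triangularity of $A$ versus $A^{-1}$, together with the order-reversing convention $i(\overline{8},k)=\overline{\eta_8(i(8,b_8+1-k))}$ (and likewise for $\overline{12}$), is precisely the bookkeeping that makes the statement immediate.
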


By (\ref{eq6.0'}) we may assume
$$U_+^j=\widetilde{S}_j\oplus U^j_\#$$
for $j=1,\ldots,m$. Let $Q$ denote the restriction of
\begin{align*}
R_V(\widetilde{U}) & =\{g\in R_V\mid g\widetilde{U}=\widetilde{U},\ gU_{(\overline{12})}^{+,s_{3,j}}=U_{(\overline{12})}^{+,s_{3,j}},\ gU_{(\overline{8})}^{+,s_{2,j}}=U_{(\overline{8})}^{+,s_{2,j}}\mbox{ for }j=1,\ldots,m \\
& \qquad\qquad\qquad \mbox{ and $g$ acts trivially on }U^+_{(\overline{6},1)}\}
\end{align*}
to $\widetilde{U}$. Consider the decomposition $\widetilde{U}=\widetilde{U}_1\oplus \widetilde{U}_2$ of $\widetilde{U}$ where
$$\widetilde{U}_1=U_{(2,7,8,10,13)}^+ \mand \widetilde{U}_2=U_{(3,5,9,12,15)}^+.$$
Consider ${\rm GL}(\widetilde{U}_1)\times {\rm GL}(\widetilde{U}_2)$ naturally as a subgroup of ${\rm GL}(\widetilde{U})$. 

Let $P_1$ denote the parabolic subgroup of ${\rm GL}(\widetilde{U}_1)$ consisting of elements represented by matrices
$$\bp A & * & * & * & * \\
0 & B & * & * & * \\
0 & 0 & C & * & * \\
0 & 0 & 0 & D & * \\
0 & 0 & 0 & 0 & E \ep$$
$(A\in{\rm GL}_{b_2}(\bbf),\ B\in{\rm GL}_{b_7}(\bbf),\ C\in \mcb_{b_8},\ D\in{\rm GL}_{b_{10}}(\bbf),\ E\in{\rm GL}_{b_{13}}(\bbf))$ with respect to the basis
\begin{align*}
& e_{i(2,1)},\ldots,e_{i(2,b_2)},e_{i(7,1)},\ldots,e_{i(7,b_7)},e_{i(8,1)},\ldots,e_{i(8,b_8)}, \\
& e_{i(10,1)},\ldots,e_{i(10,b_{10})},e_{i(13,1)},\ldots,e_{i(13,b_{13})}
\end{align*}
of $\widetilde{U}_1$. On the other hand, let $K_2$ denote the subgroup of ${\rm GL}(\widetilde{U}_2)$ represented by matrices
$$\bp A & * & * & * & * \\
0 & B & * & * & * \\
0 & 0 & C & * & * \\
0 & 0 & 0 & D & * \\
0 & 0 & 0 & 0 & E \ep$$
$(A\in{\rm GL}_{b_3}(\bbf),\ B\in{\rm GL}_{b_5}(\bbf),\ C\in{\rm GL}_{b_9}(\bbf),\ D\in \mcb_{b_{12}},\ E\in{\rm Sp}'_{b_{15}}(\bbf))$ with respect to the basis
\begin{align*}
& e_{i(3,1)},\ldots,e_{i(3,b_3)},e_{i(5,1)},\ldots,e_{i(5,b_5)},e_{i(9,1)},\ldots,e_{i(9,b_9)}, \\
& e_{i(12,1)},\ldots,e_{i(12,b_{12})},e_{i(15,1)},\ldots,e_{i(15,b_{15})}
\end{align*}
of $\widetilde{U}_2$. By Lemma \ref{lem5.10}, Lemma \ref{lem5.11'}, Lemma \ref{lem5.11} (with Figure \ref{fig5.1}) and Lemma \ref{lem4.2}, we have:

\begin{lemma} {\rm (i)} Let $g$ be an element of $P_1\times K_2$. Then there exists a $\widetilde{g}\in R_V$ such that
$$\widetilde{g}\widetilde{U}=\widetilde{U},\quad \widetilde{g}|_{\widetilde{U}}=g,\quad \widetilde{g}\mbox{ acts trivially on }U^+_{(\overline{6},1)},\quad  \widetilde{g}e_{i(\overline{8},k)}\in U_{(\overline{8})}^{+,k}\mbox{ for }k=1,\ldots,b_8
$$
and that
$\widetilde{g}e_{i(\overline{12},k)}\in U_{(\overline{12})}^{+,k}\oplus U^+_{(15)}$
for $k=1,\ldots,b_{12}$.

{\rm (ii)} If $s_{3,m}=0$, then we have
$P_1\times K_2\subset Q$.
\label{lem6.2}
\end{lemma}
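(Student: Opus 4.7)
The plan is to write every $g\in P_1\times K_2$ as a product of ``standard'' generators in $R_V$ supplied by Lemmas \ref{lem5.10}, \ref{lem5.11'} and \ref{lem5.11}, and to observe that their restrictions to $\widetilde{U}$ are exactly the block- and elementary-matrix pieces of $g$. Decompose $g=g_Lg_U$, where $g_L$ is the block-diagonal (Levi) part and $g_U$ is the upper block-unipotent part with respect to the splittings $\widetilde{U}_1=U_{(2,7,8,10,13)}^+$ and $\widetilde{U}_2=U_{(3,5,9,12,15)}^+$. For each diagonal block at position $j$ take $h_{(j)}(A_j)\in R_V$ from Lemma \ref{lem5.10}, except for $j=15$, where Lemma \ref{lem5.11'} provides $h_{(15)}(A_{15})$ with $A_{15}\in{\rm Sp}'_{b_{15}}(\bbf)$. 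For each off-diagonal entry in $g_U$ connecting block-positions $(j,j')$ (with both $j,j'$ in $\{2,7,8,10,13\}$ or both in $\{3,5,9,12,15\}$) I would take the elementary element $g_{i,k}(\mu)$; by inspection of Figure \ref{fig5.1} every such pair lies in the partial order, and the only one falling into the exception list of Lemma \ref{lem5.11}(i) is $(12,15)$, which must instead be realised via Lemma \ref{lem5.11}(iii). Let $\widetilde{g}$ be the ordered product of these factors.

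For (i), the equality $\widetilde{g}|_{\widetilde{U}}=g$ follows because each chosen factor, restricted to $\widetilde{U}$, performs precisely the intended block- or elementary-matrix operation. Triviality of $\widetilde{g}$ on $U^+_{(\overline{6},1)}$ is immediate since none of the generators touches indices in $I_{(\overline{6})}\cup I_{(1)}$. The preservation $\widetilde{g}\,e_{i(\overline{8},k)}\in U_{(\overline{8})}^{+,k}$ reduces, via Lemma \ref{lem4.2}, to the fact that $A_8\in\mcb_{b_8}$ is upper triangular: by Lemma \ref{lem5.10}(ii), $h_{(8)}(A_8)$ acts on the $e_{\overline{\eta_8(\cdot)}}$-coordinates by the upper triangular matrix $A_8^{-1}$, while the off-diagonal factors with labels in $\{2,7,8,10,13\}$ leave $I_{(\overline{8})}$ untouched. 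The analogous argument using $A_{12}\in\mcb_{b_{12}}$ gives $\widetilde{g}\,e_{i(\overline{12},k)}\in U_{(\overline{12})}^{+,k}$, modulo contributions from the $(12,15)$-moves of Lemma \ref{lem5.11}(iii), each of which adds a vector in $U_{(15)}^+$; this matches exactly the $\oplus U_{(15)}^+$ slack in the stated conclusion.

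For (ii), assume $s_{3,m}=0$. Then each $U_{(\overline{12})}^{+,s_{3,j}}$ is zero, so the corresponding stability condition in the definition of $R_V(\widetilde{U})$ is vacuous. The remaining condition $\widetilde{g}U_{(\overline{8})}^{+,s_{2,j}}=U_{(\overline{8})}^{+,s_{2,j}}$ is immediate from (i) and invertibility of $\widetilde{g}$, and triviality on $U^+_{(\overline{6},1)}$ has already been verified. Finally $\widetilde{g}\widetilde{U}=\widetilde{U}$ holds because every chosen generator preserves $\widetilde{U}$ setwise: the Levi factors act inside $\widetilde{U}$, the ordinary off-diagonal factors send $e_k\mapsto e_k+\mu e_i$ with both vectors in $\widetilde{U}$, and the Lemma \ref{lem5.11}(iii) factor for $(12,15)$ acts on $\widetilde{U}$ as $e_k\mapsto e_k+\mu e_i$ with $e_k,e_i\in\widetilde{U}_2$, its side-effect on $e_{\overline{\kappa(i)}}\in I_{(\overline{12})}$ lying outside $\widetilde{U}$. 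Hence $\widetilde{g}\in R_V(\widetilde{U})$ and $g=\widetilde{g}|_{\widetilde{U}}\in Q$, giving $P_1\times K_2\subset Q$.

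The main obstacle is the exceptional pair $(12,15)$: Lemma \ref{lem5.11}(i) cannot directly realise its off-diagonal elementary moves, so one is forced to use Lemma \ref{lem5.11}(iii), whose unavoidable side-effect on $e_{i(\overline{12},k)}$ modulo $U_{(15)}^+$ is precisely what the $\oplus U_{(15)}^+$ factor in the conclusion of (i) absorbs. This slack is then harmless in (ii) because the flag condition on $U_{(\overline{12})}^{+,\cdot}$ is vacuous once $s_{3,m}=0$.
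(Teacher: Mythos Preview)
Your argument is correct and follows exactly the approach the paper intends: the paper merely cites Lemma \ref{lem5.10}, Lemma \ref{lem5.11'}, Lemma \ref{lem5.11} (with Figure \ref{fig5.1}) and Lemma \ref{lem4.2} as justification, and you have spelled out precisely how those lemmas combine---building $\widetilde{g}$ from the block-diagonal factors $h_{(j)}(A_j)$ and the elementary factors $g_{i,k}(\mu)$, correctly isolating the single exceptional pair $(12,15)$ handled by Lemma \ref{lem5.11}(iii), and tracing its $U_{(15)}^+$-valued side-effect on $U_{(\overline{12})}^+$. The deduction of (ii) from (i) via the vacuity of the $U_{(\overline{12})}^{+,s_{3,j}}$-condition when $s_{3,m}=0$ is also exactly right.
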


\subsection{Proof of (i) and (ii)} \label{sec6.2}

Since $\dim U_+=a_0+a_++a_1=n$, we have
$a_-=a_2=0$.
Hence
$$b_k=0\quad\mbox{for }k=4,7,9,11,12,13,14$$
by (\ref{eq5.3}) and (\ref{eq5.5}). So the Figure \ref{fig5.1} for Lemma \ref{lem5.11} is reduced to much simpler Figure \ref{fig6.1}. Furthermore when we use Lemma \ref{lem5.11} in $\widetilde{U}$, we have only to consider the diagram in Figure \ref{fig6.2}.

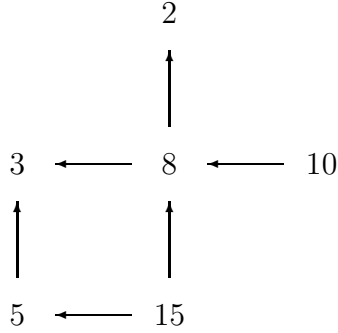
\begin{figure}
\centering
\begin{picture}(140,50)(-10,45)
\put(30,90){\makebox(0,0){$1$}}
\put(50,90){\makebox(0,0){$2$}}
\put(30,70){\makebox(0,0){$3$}}
\put(50,70){\makebox(0,0){$8$}}
\put(70,70){\makebox(0,0){$10$}}
\put(30,50){\makebox(0,0){$5$}}
\put(50,50){\makebox(0,0){$15$}}
\put(70,50){\makebox(0,0){$\overline{8}$}}
\put(90,50){\makebox(0,0){$\overline{6}$}}

\put(45,90){\vector(-1,0){10}}
\put(45,70){\vector(-1,0){10}}
\put(65,70){\vector(-1,0){10}}

\put(45,50){\vector(-1,0){10}}
\put(65,50){\vector(-1,0){10}}
\put(85,50){\vector(-1,0){10}}
\put(30,75){\vector(0,1){10}}
\put(50,75){\vector(0,1){10}}
\put(30,55){\vector(0,1){10}}
\put(50,55){\vector(0,1){10}}
\put(70,55){\vector(0,1){10}}

\end{picture}
\caption{Diagram of $\mathcal{I}_+$ when $\dim U_+=n$}
\label{fig6.1}
\end{figure}

\begin{figure}
\centering
\begin{picture}(140,50)(-10,45)
\put(50,90){\makebox(0,0){$2$}}
\put(30,70){\makebox(0,0){$3$}}
\put(50,70){\makebox(0,0){$8$}}
\put(70,70){\makebox(0,0){$10$}}
\put(30,50){\makebox(0,0){$5$}}
\put(50,50){\makebox(0,0){$15$}}

\put(45,70){\vector(-1,0){10}}
\put(65,70){\vector(-1,0){10}}

\put(45,50){\vector(-1,0){10}}

\put(50,75){\vector(0,1){10}}
\put(30,55){\vector(0,1){10}}
\put(50,55){\vector(0,1){10}}
\end{picture}
\caption{Diagram of $\mathcal{I}_+$ for $\widetilde{U}$ when $\dim U_+=n$}
\label{fig6.2}
\end{figure}

Let $U_+^1\subset U_+^2$ be a flag in $U_+$ such that $\dim U_+^1=1$ or $\dim U_+^2-\dim U_+^1=1$. As in Section \ref{sec6.1}, we may assume $U_+^1=\widetilde{S}_1\oplus U_\#^1$ and $U_+^2=\widetilde{S}_2\oplus U_\#^2$. Hence $\dim \widetilde{S}_1\le 1$ or $\dim \widetilde{S}_2-\dim \widetilde{S}_1\le 1$.

Since $b_{12}=0$, we have $P_1\times K_2\subset Q$ by Lemma \ref{lem6.2}. It follows from Proposition \ref{prop7.4} in the appendix that there are a finite number of $P_1\times K_2$-orbits of flags $\widetilde{S}_1\subset\widetilde{S}_2$ in $\widetilde{U}$. Thus we have proved (i) and (ii).

\subsection{Proof of (iii)} \label{sec6.4}

(A) Case of $\dim \widetilde{S}_2=2$. Since $s_{3,2}=0$, we have $P_1\times K_2\subset Q$ by Lemma \ref{lem6.2}. By Proposition \ref{prop7.4}, there are a finite number of $P_1\times K_2$-orbits of flags $\widetilde{S}_1\subset \widetilde{S}_2$ in $\widetilde{U}$. Hence there are a finite number of $Q$-orbits of flags $\widetilde{S}_1\subset \widetilde{S}_2$ in $\widetilde{U}$.

(B) Case of $\dim \widetilde{S}_2=1$. By Proposition \ref{prop5.18}, there are a finite number of $R_V$-orbits of $U_+^2$. So we may fix a $U_+^2=\widetilde{S}_2\oplus U^2_\#$. Since $U_+^1=\widetilde{S}_2$ or $U^2_\#$, the assertion is clear.

(C) The case of $\dim \widetilde{S}_2=0$ is clear because $U_+^j=U^j_\#$ for $j=1,2$.

\subsection{Proof of (iv)} \label{sec6.5}

Put
$$d_1=\dim U_0-\dim (U_0\cap \widetilde{S}_1)\mand d_2=\dim \widetilde{U}-\dim \widetilde{S}_1$$
where $U_0=U_{(2,7,8,10,13,3,5,9,12)}^+$. Then we have $0\le d_1\le d_2\le 2$.

(A) Case of $d_1=d_2=2$. By Proposition \ref{prop7.4}, there are a finite number of $P_1\times K_2$-orbits of flags $\widetilde{S}_1\subset \widetilde{S}_2$ in $\widetilde{U}$. For $g\in P_1\times K_2$, we can take a $\widetilde{g}\in R_V$ such that
\begin{align*}
\widetilde{g}|_{\widetilde{U}} & =g,\quad \widetilde{g}u =u\mbox{ for }u\in U_{(\overline{6},1)}^+,\quad \widetilde{g}U_{(\overline{8})}^+=U_{(\overline{8})}^+ \\
\mbox{and that}\quad \widetilde{g}U_{(\overline{12})}^+ & =\bbf(e_{i(\overline{12},1)}+v_1)\oplus\cdots\oplus \bbf(e_{i(\overline{12},b_{12})}+v_{b_{12}})
\end{align*}
with some $v_1,\ldots,v_{b_{12}}\in U_{(15)}^+$ by Lemma \ref{lem6.2}. Hence we may assume
$$U_+^j=\widetilde{S}_j\oplus \widetilde{g}U_{(\overline{12})}^+\oplus U_{(\overline{8},\overline{6},1)}^+$$
for $j=1,2$. We have only to show that there exists a $g'\in R_V$ such that
\begin{equation}
g'U_+^j=\widetilde{S}_j\oplus U^+_{(\overline{12},\overline{8},\overline{6},1)} \label{eq6.2'}
\end{equation}
for $j=1,2$.

Since $p_{I_{(15)}} \widetilde{S}_1=U_{(15)}^+$, we can take $u_1,\ldots,u_{b_{12}}\in U_0$ such that
$$v_k\in u_k+\widetilde{S}_1$$
for $k=1,\ldots,b_{12}$. So we have
$$\bbf(e_{i(\overline{12},1)}+u_1)\oplus\cdots\oplus \bbf(e_{i(\overline{12},b_{12})}+u_{b_{12}}) \subset U_+^j.$$
By Lemma \ref{lem5.11}, we can take a $g'\in R_V$ such that
$$g'(e_{i(\overline{12},k)}+u_k)=e_{i(\overline{12},k)}$$
for $k=1,\ldots,b_{12}$ and that $g'e_\ell=e_\ell$ for $\ell\in I_+-I_{(\overline{12})}$. Hence we have (\ref{eq6.2'}).

(B) Case of $d_1=1$ and $d_2=2$.

(B.1) Suppose that $\widetilde{S}_1\supset U'_0=U_{(2,7,8,3,5,9,12)}^+$. Then we have
$$\widetilde{S}_j=U'_0\oplus (\widetilde{S}_j\cap U_{(10,13,15)}^+)$$
for $j=1,2$. Let $P$ be the subgroup of ${\rm GL}(U_{(10,13)}^+)$ stabilizing the subspace $U_{(10)}^+$ and $H$ the subgroup of ${\rm GL}(U_{(15)}^+)$ consisting of elements $h_{(15)}(A)\in R_V$ with $A\in{\rm Sp}'_{b_{15}}(\bbf)$. Then there are a finite number of $P\times H$-orbits of flags
$$\widetilde{S}_1\cap U_{(10,13,15)}^+\subset \widetilde{S}_2\cap U_{(10,13,15)}^+$$
in $U_{(10,13,15)}^+$ by Proposition \ref{prop7.4}. Since $P\times H$ acts trivially on $U'_0\oplus U^+_{(\overline{12},\overline{8},\overline{6},1)}$, we have a finite number of $R_V$-orbits of the flags $U_+^1\subset U_+^2$ in $U_+$.

(B.2) Suppose that $\widetilde{S}_1\not\supset U'_0$. Then we have
$$U_0=(U_0\cap \widetilde{S}_1)\oplus \bbf e_k$$
with some $k\in I_{(2,7,8,3,5,9,12)}$. Since $\dim p_{I_{(15)}} \widetilde{S}_1=b_{15}-1$, we can take an $h_1=h_{(15)}(A)\in R_V$ with some $A\in{\rm Sp}'_{b_{15}}(\bbf)$ such that
$$p_{I_{(15)}} h_1\widetilde{S}_1=U_{(15),1}^+=\bbf e_{i(15,1)}\oplus\cdots\oplus \bbf e_{i(15,b_{15}-1)}.$$

If $\dim(U_{(15)}^+\cap h_1\widetilde{S}_1)=b_{15}-1$, then we have
\begin{equation}
h_1\widetilde{S}_1=(U_0\cap \widetilde{S}_1)\oplus U_{(15),1}^+. \label{eq6.3}
\end{equation}
On the other hand, if $\dim(U_{(15)}^+\cap h_1\widetilde{S}_1)=b_{15}-2$, then we can take an $h_2=h_{(15)}(B)\in R_V$ with some $B\in{\rm Sp}'_{b_{15}}(\bbf)$ such that
$$h_2(U_{(15)}^+\cap h_1\widetilde{S}_1)=U_{(15),2}^+\quad \mbox{or}\quad U_{(15),3}^+$$
and that $h_2U_{(15),1}^+=U_{(15),1}^+$ where
$$U_{(15),2}^+=\bbf e_{i(15),2}\oplus\cdots\oplus \bbf e_{i(15,b_{15}-1)} \mand U_{(15),3}^+=\bbf e_{i(15,1)}\oplus\cdots\oplus \bbf e_{i(15,b_{15}-2)}.$$
If $h_2(U_{(15)}^+\cap h_1\widetilde{S}_1)=U_{(15),2}^+$, then we have
$$h_2h_1\widetilde{S}_1 =(U_0\cap \widetilde{S}_1)\oplus U_{(15),2}^+\oplus \bbf(e_k+\lambda e_{i(15,1)})$$
with some $\lambda\in\bbf^\times$. Take an element $h_3\in R_V$ such that
$$h_3e_{i(15,1)}=\lambda^{-1}e_{i(15,1)},\quad h_3e_{i(15,b_{15})}=\lambda e_{i(15,b_{15})}$$
and that $h_3e_\ell=e_\ell\mbox{ for }\ell\in I-\{i(15,1),i(15,b_{15})\}$. Then we have
\begin{equation}
h_3h_2h_1\widetilde{S}_1 =(U_0\cap \widetilde{S}_1)\oplus U_{(15),2}^+\oplus \bbf(e_k+e_{i(15,1)}). \label{eq6.4}
\end{equation}
If $h_2(U_{(15)}^+\cap h_1\widetilde{S}_1)=U_{(15,3)}^+$, then we have
$$h_2h_1\widetilde{S}_1 =(U_0\cap \widetilde{S}_1)\oplus U_{(15),3}^+\oplus \bbf(e_k+\lambda e_{i(15,b_{15}-1)})$$
with some $\lambda\in\bbf^\times$. Take an element $h_3\in R_V$ such that
$$h_3e_{i(15,2)}=\lambda e_{i(15,2)},\quad h_3e_{i(15,b_{15}-1)}=\lambda^{-1} e_{i(15,b_{15}-1)}$$
and that $h_3e_\ell=e_\ell\mbox{ for }\ell\in I-\{i(15,2),i(15,b_{15}-1)\}$. Then we have
\begin{equation}
h_3h_2h_1\widetilde{S}_1 =(U_0\cap \widetilde{S}_1)\oplus U_{(15),3}^+\oplus \bbf(e_k+e_{i(15,b_{15}-1)}). \label{eq6.5}
\end{equation}

By Proposition \ref{prop5.18}, there are a finite number of $R_V$-orbits of subspaces $U_+^1$ in $U_+$. So we may assume $U_+^1=\widetilde{S}_1\oplus U^+_{(\overline{12},\overline{8},\overline{6},1)}$ with $\widetilde{S}_1$ of the form
\begin{align*}
& (U_0\cap \widetilde{S}_1)\oplus U_{(15),1}^+,\quad (U_0\cap \widetilde{S}_1)\oplus U_{(15),2}^+\oplus \bbf(e_k+e_{i(15,1)}) \\
\quad\mbox{or}\quad & (U_0\cap \widetilde{S}_1)\oplus U_{(15),3}^+\oplus \bbf(e_k+e_{i(15,b_{15}-1)})
\end{align*}
by (\ref{eq6.3}), (\ref{eq6.4}) and (\ref{eq6.5}).

We have only to show that there are a finite number of $Q(U_+^1)$-orbits of the spaces $U_+^2=\widetilde{S}_2\oplus U^+_{(\overline{12},\overline{8},\overline{6},1)}$ such that $U_+^1\subsetneqq U_+^2\subsetneqq U_+$ where $Q(U_+^1)=\{g\in R_V\mid gU_+^1=U_+^1\}$. Put $W_\mu=\bbf (e_{i(15,b_{15})}+\mu e_k)$ for $\mu\in\bbf$.

(B.2.1) Case of $\widetilde{S}_1=(U_0\cap \widetilde{S}_1)\oplus U_{(15),1}^+$. The space $U_+^2$ equals
$$U_0\oplus U_{(15),1}^+\oplus U^+_{(\overline{12},\overline{8},\overline{6},1)}\quad\mbox{or}\quad U_{+,\mu}^2=(U_0\cap \widetilde{S}_1)\oplus U_{(15),1}^+\oplus W_\mu\oplus U^+_{(\overline{12},\overline{8},\overline{6},1)}$$
with some $\mu\in\bbf$. Take a $g(\mu)=g_{k,i(15,b_{15})}(\mu)\in R_V$ such that
$$g(\mu)e_{i(15,b_{15})}=e_{i(15,b_{15})}+\mu e_k$$
by Lemma \ref{lem5.11}. If $k\notin I_{(8,12)}=I_{(8)}\sqcup I_{(12)}$, then $g(\mu)e_\ell=e_\ell$ for $\ell\in I-\{i(15,b_{15})\}$. If $k=i(8,m)$, then
$$g(\mu)e_{i(\overline{8},b_8+1-m)}=e_{i(\overline{8},b_8+1-m)}-\mu e_{i(15,1)}$$
and $g(\mu)e_\ell=e_\ell$ for $\ell\in I-\{i(15,b_{15}),i(\overline{8},b_8+1-m)\}$. If $k=i(12,m)$, then
$$g(\mu)e_{i(\overline{12},b_{12}+1-m)}=e_{i(\overline{12},b_{12}+1-m)}-\mu e_{i(15,1)}$$
and $g(\mu)e_\ell=e_\ell$ for $\ell\in I-\{i(15,b_{15}),i(\overline{12},b_{12}+1-m)\}$. Hence we have
$g(\mu)U_{+,0}^2=U_{+,\mu}^2$
and $g(\mu)U_+^1=U_+^1$.

(B.2.2) Case of $\widetilde{S}_1=(U_0\cap \widetilde{S}_1)\oplus U_{(15),2}^+\oplus \bbf(e_k+e_{i(15,1)})$. The space $U_+^2$ equals
\begin{align*}
& U_0\oplus U_{(15),2}^+\oplus \bbf(e_k+e_{i(15,1)})\oplus U^+_{(\overline{12},\overline{8},\overline{6},1)} \\
\mbox{or}\quad U_{+,\mu}^2 = & (U_0\cap \widetilde{S}_1)\oplus U_{(15),2}^+\oplus \bbf(e_k+e_{i(15,1)})\oplus W_\mu\oplus U^+_{(\overline{12},\overline{8},\overline{6},1)}
\end{align*}
with some $\mu\in\bbf$. If $k\notin I_{(8,12)}$, then we have $g(\mu)U_{+,0}^2=U_{+,\mu}^2$ and $g(\mu)U_+^1=U_+^1$. When $k=i(8,m)$, we take $g'(-\mu)=g_{k,i(\overline{8},b_8+1-m)}(-\mu)$ such that
$$g'(-\mu)e_{i(\overline{8},b_8+1-m)}=e_{i(\overline{8},b_8+1-m)}-\mu e_k$$
and that $g'(-\mu)e_\ell=e_\ell$ for $\ell\in I-\{i(\overline{8},b_8+1-m)\}$ by Lemma \ref{lem5.11}. Then we have
$$g'(-\mu)g(\mu)U_{+,0}^2=U_{+,\mu}^2\mand g'(-\mu)g(\mu)U_+^1=U_+^1.$$
When $k=i(12,m)$, we take $g'(-\mu)=g_{k,i(\overline{12},b_{12}+1-m)}(-\mu)$ such that
$$g'(-\mu)e_{i(\overline{12},b_{12}+1-m)}=e_{i(\overline{12},b_{12}+1-m)}-\mu e_k$$
and that $g'(-\mu)e_\ell=e_\ell$ for $\ell\in I-\{i(\overline{12},b_{12}+1-m)\}$ by Lemma \ref{lem5.11}. Then we have
$$g'(-\mu)g(\mu)U_{+,0}^2=U_{+,\mu}^2\mand g'(-\mu)g(\mu)U_+^1=U_+^1.$$

(B.2.3) Case of $\widetilde{S}_1=(U_0\cap \widetilde{S}_1)\oplus U_{(15),3}^+\oplus \bbf(e_k+e_{i(15,b_{15}-1)})$. The space $U_+^2$ equals
\begin{align*}
& U_0\oplus U_{(15),3}^+\oplus \bbf(e_k+e_{i(15,b_{15}-1)})\oplus U^+_{(\overline{12},\overline{8},\overline{6},1)} \\
\mbox{or}\quad U_{+,\mu}^2 = & (U_0\cap \widetilde{S}_1)\oplus U_{(15),3}^+\oplus \bbf(e_k+e_{i(15,b_{15}-1)})\oplus W_\mu\oplus U^+_{(\overline{12},\overline{8},\overline{6},1)}
\end{align*}
with some $\mu\in\bbf$. Since $e_{i(15,1)}\in U_{(15),3}^+$, we have $g(\mu)U_{+,0}^2=U_{+,\mu}^2$ and $g(\mu)U_+^1=U_+^1$ as in (B.2.1).

(C) Case of $d_1=0$ and $d_2=2$. Since $\widetilde{S}_1\supset U_0$, we have $\widetilde{S}_j=U_0\oplus (U_{(15)}^+\cap \widetilde{S}_j)$ and 
$U_+^j=\widetilde{S}_j\oplus U^+_{(\overline{12},\overline{8},\overline{6},1)}$.
As in (B.2), we can take an $h=h_{(15)}(A)\in R_V$ with some $A\in{\rm Sp}'_{b_{15}}(\bbf)$ such that $h(U_{(15)}^+\cap \widetilde{S}_2)=U_{(15),1}^+$ and that $h(U_{(15)}^+\cap \widetilde{S}_1)=U_{(15),2}^+$ or $U_{(15),3}^+$. So the assertion is proved.

(D) Case of $d_2=1$. By Proposition \ref{prop5.18}, there are a finite number of $R_V$-orbits of $U_+^1$. So we may fix a $U_+^1=\widetilde{S}_1\oplus U^1_\#$. 
Since $U_+^2=\widetilde{S}_1\oplus U^+_{(\overline{12},\overline{8},\overline{6},1)}$ or $\widetilde{U}\oplus U^1_\#$, the assertion is clear.

(E) The case of $d_2=0$ is clear because $U_+^j=\widetilde{U}\oplus U^j_\#$ for $j=1,2$.

\subsection{Proof of (v)}

Put $d_1=\dim U_0-\dim (U_0\cap \widetilde{S}_2)$ and $d_2=\dim \widetilde{U}-\widetilde{S}_2$. Then $0\le d_1\le d_2\le 1$.

(A) Case of $d_1=d_2=1$. 

(A.1) Suppose $U_+^1\subset \widetilde{U}$. By Proposition \ref{prop7.4}, there are a finite number of $P_1\times K_2$-orbits of flags $\widetilde{S}_1\subset \widetilde{S}_2$ in $\widetilde{U}$. For $g\in P_1\times K_2$, we can take a $\widetilde{g}\in R_V$ such that
\begin{align*}
\widetilde{g}|_{\widetilde{U}} & =g,\quad \widetilde{g}u =u\mbox{ for }u\in U_{(\overline{6},1)}^+,\quad \widetilde{g}U_{(\overline{8})}^+=U_{(\overline{8})}^+ \\
\mbox{and that}\quad \widetilde{g}U_{(\overline{12})}^+ & =\bbf(e_{i(\overline{12},1)}+v_1)\oplus\cdots\oplus \bbf(e_{i(\overline{12},b_{12})}+v_{b_{12}})
\end{align*}
with some $v_1,\ldots,v_{b_{12}}\in U_{(15)}^+$. Hence we may assume
$$U_+^2=\widetilde{S}_2\oplus \widetilde{g}U_{(\overline{12})}^+\oplus U_{(\overline{8},\overline{6},1)}^+.$$
We have only to show that there exists a $g'\in R_V$ such that
$$g'U_+^2=\widetilde{S}_2\oplus U^+_{(\overline{12},\overline{8},\overline{6},1)}$$
and that $g'U_+^1=U_+^1$.

Since $p_{I_{(15)}} \widetilde{S}_2=U_{(15)}^+$, we can take $u_1,\ldots,u_{b_{12}}\in U_0$ such that
$$v_k\in u_k+\widetilde{S}_2$$
for $k=1,\ldots,b_{12}$. So we have
$$\bbf(e_{i(\overline{12},1)}+u_1)\oplus\cdots\oplus \bbf(e_{i(\overline{12},b_{12})}+u_{b_{12}}) \subset U_+^2.$$
By Lemma \ref{lem5.11}, we can take a $g'\in R_V$ such that
$$g'(e_{i(\overline{12},k)}+u_k)=e_{i(\overline{12},k)}$$
for $k=1,\ldots,b_{12}$ and that $g'e_\ell=e_\ell$ for $\ell\in I_+-I_{(\overline{12})}$. Hence $g'U_+^2=\widetilde{S}_2\oplus U^+_{(\overline{12},\overline{8},\overline{6},1)}$ and $g'U_+^1=U_+^1$.

(A.2) Suppose $U_+^1\not\subset \widetilde{U}$. Then $U_+^1=U^1_\#$. We can take a nontrivial linear form $f$ on $\widetilde{U}$ such that $\widetilde{S}_2=\{v\in \widetilde{U}\mid f(v)=0\}$. By Lemma \ref{lem5.10} and Lemma \ref{lem5.11'}, we can take an $h\in R_V$ such that
$$f(he_k)\begin{cases} \in\{0,1\} & \text{for $k\in I_0\sqcup\{i(15,1)\}$,} \\
=0 & \text{for $k=i(15,2),\ldots,i(15,b_{15})$} \end{cases}$$
where $I_0=I_{(2,7,8,10,13,3,5,9,12)}$. Since $hU^1_\#=U^1_\#$, there are a finite number of $R_V$-orbits of flags $U_+^1\subset U_+^2$.

(B) Case of $d_1=0$ and $d_2=1$. Since $\widetilde{S}_2\supset U_0$, we have $\widetilde{S}_2=U_0\oplus (U_{(15)}^+\cap \widetilde{S}_2)$. We can take an $h=h_{(15)}(A)\in R_V$ with some $A\in{\rm Sp}'_{b_{15}}(\bbf)$ such that $h(U_{(15)}^+\cap \widetilde{S}_2)=U_{(15),1}^+$ as in Section \ref{sec6.5} (B). So we may assume
$$U_+^2=U_0\oplus U_{(15),1}^+\oplus U_{(\overline{12},\overline{8},\overline{6},1)}^+.$$
We have only to consider one-dimensional subspaces $U_+^1$ in $U_+^2$.

If $U_+^1\not\subset U_0\oplus U_{(15),1}^+$, then the assertion is clear since $U_+^1=U^1_\#$. So we may assume $U_+^1\subset U_0\oplus U_{(15),1}^+$. Write $U_+^1=\bbf v$ with $v=(\sum_{k\in I_0} \lambda_ke_k)+v_{(15)}$ and $v_{(15)}\in U^+_{(15),1}$. By Lemma \ref{lem5.10} and Lemma \ref{lem5.11'}, we can take an $h\in R_V$ such that $h\lambda e_k\in \{0,e_k\}$ for $k\in I_0$, $hv_{(15)}\in \{0,e_{i(15,1)},e_{i(15,2)}\}$ and that $hU_+^2=U_+^2$. So the assertion is proved.

(C) Case of $d_1=d_2=0$. we have $U_+^2=\widetilde{U}\oplus U^2_\#$. If $U_+^1\not\subset \widetilde{U}$, then the assertion is clear since $U_+^1=U^1_\#$. So we may assume $U_+^1\subset \widetilde{U}$. By the same argument as in (B), we can take an $h\in R_V$ such that $hU_+^1=\bbf u$ with some $u\in (\sum_{k\in I_0} \{0,e_k\})+\{0,e_{i(15,1)}\}$ and that $hU_+^2=U_+^2$.

Thus we have completed the proof of Proposition \ref{prop4.1}.

\section{Proof of Propositions \ref{prop4.2'} and \ref{prop4.3'}}

\subsection{Proof of Proposition \ref{prop4.2'}}

We have only to show that there are a finite number of $R_V$-orbits of flags $U_+^1\subset U_+^2\subset U_+^3$ with $\dim U_+^j=j$ in $U_+=\bbf e_1\oplus\cdots\oplus \bbf e_n$. Since $\dim U_+=n$, we have 
$$b_k=0\quad\mbox{for }k=4,7,9,11,12,13,14.$$
As in Section \ref{sec6.1}, we may assume
$$U_+^j=\widetilde{S}_j\oplus U_{(\overline{8},\overline{6},1)}^{+,s_{2,j},s_{1,j},s_{0,j}}$$
for $j=1,2,3$. We have only to show that there are a finite number of $Q$-orbits of flags $\widetilde{S}_1\subset \widetilde{S}_2\subset \widetilde{S}_3$ where $Q$ is the subgroup of ${\rm GL}(\widetilde{U})$ defined in Section \ref{sec6.1} for $m=3$.

(A) Case of $\dim \widetilde{S}_3=0$. The assertion is clear.

(B) Case of $\dim \widetilde{S}_3=1$. Since $P_1\times K_2\subset Q$ and since there are a finite number of $P_1\times K_2$-orbits of one-dimensional subspaces in $\widetilde{U}$ by Proposition 6.3 in \cite{M3}, the assertion is clear.

(C) Case of $\dim \widetilde{S}_3=2$. We have
$$\widetilde{S}_1=\{0\},\quad \widetilde{S}_2=\widetilde{S}_1\quad\mbox{or}\quad \widetilde{S}_3=\widetilde{S}_2.$$
So we have only to show that there are a finite number of $Q$-orbits of flags $S'\subset \widetilde{S}_3$ with $\dim S'=1$.

Since $P_1\times K_2\subset Q$ and since there are a finite number of $P_1\times K_2$-orbits of flags $S'\subset \widetilde{S}_3$ in $\widetilde{U}$ by Proposition \ref{prop7.4}, the assertion is clear.

(D) Case of $\dim \widetilde{S}_3=3$. Write $W=U_+^3=\widetilde{S}_3$. Let $Q(W')$ denote the restriction of $\{g\in R_V\mid gW'=W'\}$ to $W'=g'W$ for some $g'\in R_V$. Then we have only to show $|Q(W')\backslash M(W')|<\infty$.

(D.1) Suppose that $\dim p_{I_{(15)}} W=3$ and that $p_{I_{(15)}} W$ is not isotropic with respect to the alternating form $\langle\ ,\ \rangle$ on $U_{(15)}^+$. By Lemma \ref{lem5.11'}, we can take an $h_1=h_{(15)}(B)\in R_V$ with some $B\in {\rm Sp}'_{b_{15}}(\bbf)$ such that
$$h_1p_{I_{(15)}} W=U_{(15),1}^+=\bbf e_{i(15,1)}\oplus\bbf e_{i(15,2)}\oplus\bbf e_{i(15,b_{15}-1)}.$$
By Lemma \ref{lem5.11} (with Figure \ref{fig6.2}), we can take a $g_1\in R_V$ such that
$$W'=g_1h_1W\subset U_{(15),1}^+\oplus U_{(10)}^+.$$

(D.1.1) Case of $W'= U_{(15),1}^+$. With respect to the basis $e_{i(15,1)},\  e_{i(15,2)},\ e_{i(15,b_{15}-1)}$ of $W'$, elements of $Q(W')$ are represented by matrices
$$\bp \lambda & * \\ 0 & A \ep$$
with $\lambda\in\bbf^\times$ and $A\in{\rm SL}_2(\bbf)$. Hence $|Q(W')\backslash M(W')|<\infty$.

(D.1.2) Case of $W'\cap U_{(15),1}^+=\{0\}$. By Lemma \ref{lem5.10}, we may assume
$$W'=\bbf(e_{i(15,1)}+e_{i(10,1)})\oplus\bbf (e_{i(15,2)}+e_{i(10,2)})\oplus\bbf (e_{i(15,b_{15}-1)}+e_{i(10,3)}).$$
With respect to the basis $e_{i(15,1)}+e_{i(10,1)},\ e_{i(15,2)}+e_{i(10,2)},\ e_{i(15,b_{15}-1)}+e_{i(10,3)}$ of $W'$, elements of $Q(W')$ are represented by
$$\bp \lambda & * \\ 0 & A \ep$$
with $\lambda\in\bbf^\times$ and $A\in{\rm SL}_2(\bbf)$. Hence $|Q(W')\backslash M(W')|<\infty$.

(D.1.3) Case of $W'\cap U_{(15),1}^+=\bbf e_{i(15,1)}$. By Lemma \ref{lem5.10}, we may assume
$$W'=\bbf e_{i(15,1)}\oplus\bbf (e_{i(15,2)}+e_{i(10,1)})\oplus\bbf (e_{i(15,b_{15}-1)}+e_{i(10,2)}).$$
With respect to the basis $e_{i(15,1)},\ e_{i(15,2)}+e_{i(10,1)},\ e_{i(15,b_{15}-1)}+e_{i(10,2)}$ of $W'$, elements of $Q(W')$ are represented by
$$\bp \lambda & * \\ 0 & A \ep$$
with $\lambda\in\bbf^\times$ and $A\in{\rm SL}_2(\bbf)$. Hence $|Q(W')\backslash M(W')|<\infty$.

(D.1.4) Case of $\dim(W'\cap U_{(15),1}^+)=1$ and $W'\cap U_{(15),1}^+\ne \bbf e_{i(15,1)}$. We can take an $h_2=h_{(15)}(B)\in R_V$ with some $B\in{\rm Sp}'_{b_{15}}(\bbf)$ such that
$$h_2U_{(15),1}=U_{(15),1}\quad\mbox{and that}\quad h_2(W'\cap U_{(15)}^+) =\bbf e_{i(15,2)}.$$
So we may assume that
$$W'=\bbf (e_{i(15,1)}+e_{i(10,1)})\oplus\bbf e_{i(15,2)}\oplus\bbf (e_{i(15,b_{15}-1)}+e_{i(10,2)}).$$
With respect to the basis $e_{i(15,1)}+e_{i(10,1)},\ e_{i(15,2)},\ e_{i(15,b_{15}-1)}+e_{i(10,2)}$ of $W'$, elements of $Q(W')$ are represented by
$$\bp \lambda & 0 & * \\ 0 & a & * \\ 0 & 0 & a^{-1} \ep$$
with $\lambda,a\in\bbf^\times$. Since the subgroup of ${\rm GL}_2(\bbf)\times\bbf^\times$ consisting of elements
$$\left(\bp \lambda & 0 \\ 0 & a \ep,\ a^{-1}\right)$$
has a finite number of orbits on $M(\bbf^2)\times M(\bbf)$, we have $|Q(W')\backslash M(W')|<\infty$.

(D.1.5) Case of $\dim(W'\cap U_{(15),1}^+)=2$ and $W'\ni e_{i(15,1)}$. We may assume
$$W'=\bbf e_{i(15,1)}\oplus\bbf e_{i(15,2)}\oplus\bbf (e_{i(15,b_{15}-1)}+e_{i(10,1)}).$$
With respect to the basis $e_{i(15,1)},\ e_{i(15,2)},\ e_{i(15,b_{15}-1)}+e_{i(10,1)}$ of $W'$, elements of $Q(W')$ are represented by
$$\bp \lambda & * & * \\ 0 & a & * \\ 0 & 0 & a^{-1} \ep$$
with $\lambda,a\in\bbf^\times$. So we have $|Q(W')\backslash M(W')|<\infty$.

(D.1.6) Case of $\dim(W'\cap U_{(15),1}^+)=2$ and $W'\not\ni e_{i(15,1)}$. We may assume
$$W'=\bbf (e_{i(15,1)}+e_{i(10,1)})\oplus\bbf e_{i(15,2)}\oplus\bbf e_{i(15,b_{15}-1)}.$$
With respect to the basis $e_{i(15,1)}+e_{i(10,1)},\ e_{i(15,2)},\ e_{i(15,b_{15}-1)}$ of $W'$, elements of $Q(W')$ are represented by
$$\bp \lambda & 0 \\ 0 & A \ep$$
with $\lambda\in\bbf^\times$ and $A\in{\rm SL}_2(\bbf)$. So we have $|Q(W')\backslash M(W')|<\infty$ by Lemma \ref{lem12.13} in the appendix.

(D.2) Suppose that $\dim p_{I_{(15)}} W=3$ and that $p_{I_{(15)}} W$ is  isotropic with respect to the alternating form $\langle\ ,\ \rangle$ on $U_{(15)}^+$. (Hence $b_{15}\ge 6$.) Then we can take an $h_1\in R_V$ such that
$$h_1p_{I_{(15)}} W=U_{(15),1}^+=\bbf e_{i(15,1)}\oplus\bbf e_{i(15,2)}\oplus\bbf e_{i(15,3)}.$$
By Lemma \ref{lem5.11}, we can take $g_1\in R_V$ such that
$$g_1h_1W\subset U_{(15),1}^+\oplus U_{(10)}^+.$$
Put $d=\dim (W\cap U_{(15),1}^+)$. Then we can take an $h_2\in R_V$ such that
$$W'=h_2g_1h_1W=(\bigoplus_{1\le k\le d} \bbf e_{i(15,k)})\oplus (\bigoplus_{1\le\ell\le 3-d} \bbf (e_{i(15,d+\ell)}+e_{i(10,\ell)}))$$
by Lemma \ref{lem5.10} and Lemma \ref{lem5.11'}. Let $P$ denote the parabolic subgroup of ${\rm GL}(W')$ stabilizing the subspace $W'\cap U_{(15),1}^+$. Then we have $Q(W')=P$. Hence $|Q(W')\backslash M(W')|<\infty$ by the Bruhat decomposition.

(D.3) Suppose that $\dim p_{I_{(15)}} W=2$ and that $p_{I_{(15)}} W$ is not isotropic with respect to $\langle\ ,\ \rangle$. By Lemma \ref{lem5.10}. we can take an $h_1=h_{(15)}(B)\in R_V$ with some $B\in {\rm Sp}'_{b_{15}}(\bbf)$ such that
$$h_1p_{I_{(15)}} W=U_{(15),1}^+=\bbf e_{i(15,1)}\oplus\bbf e_{i(15,b_{15})}.$$
Put $W_1=W\cap U_0$ where $U_0=U_{(2,3,5,8,10)}^+$. Then $\dim W_1=1$. First suppose $W_1=\bbf v\subset U'_0=U_{(2,3,5,8)}^+$. Take a complementary subspace $W_2$ of $W_1$ in $W$. Then by Lemma \ref{lem5.11}, we can take a $g_1\in R_V$ such that
$$W'_2=g_1h_1W_2\subset U_{(15),1}^+\oplus U_{(10)}^+.$$
Put $W'=g_1W=W_1\oplus W'_2$.

(D.3.1) Case of $W'_2= U_{(15),1}^+$. With respect to the basis $v,\ e_{i(15,1)},\ e_{i(15,b_{15})}$ of $W'$, elements of $Q(W')$ are represented by
$$\bp \lambda & * \\ 0 & A \ep$$
with $\lambda\in\bbf^\times$ and $A\in{\rm SL}_2(\bbf)$. Hence $|Q(W')\backslash M(W')|<\infty$.

(D.3.2) Case of $W'_2\cap U_{(15),1}^+=\{0\}$. We may assume
$$W'=\bbf v\oplus \bbf(e_{i(15,1)}+e_{i(10,1)})\oplus \bbf(e_{i(15,b_{15})}+e_{i(10,2)})$$
by Lemma \ref{lem5.10}. With respect to the basis $v,\ e_{i(15,1)}+e_{i(10,1)},\ e_{i(15,b_{15})}+e_{i(10,2)}$ of $W'$, elements of $Q(W')$ are represented by
$$\bp \lambda & * \\ 0 & A \ep$$
with $\lambda\in\bbf^\times$ and $A\in{\rm SL}_2(\bbf)$. Hence $|Q(W')\backslash M(W')|<\infty$.

(D.3.3) Case of $\dim(W'_2\cap U_{(15),1}^+)=1$. We may assume
$$W'=\bbf v\oplus \bbf e_{i(15,1)}\oplus \bbf(e_{i(15,b_{15})}+e_{i(10,1)}).$$
With respect to the basis $v,\ e_{i(15,1)}\ e_{i(15,b_{15})}+e_{i(10,1)}$ of $W'$, elements of $Q(W')$ are represented by
$$\bp \lambda & * & * \\ 0 & a & * \\ 0 & 0 & a^{-1} \ep$$
with $\lambda,a\in\bbf^\times$. Hence $|Q(W')\backslash M(W')|<\infty$.

\bigskip
Next suppose $W_1=\bbf v\not\subset U_0$. By lemma \ref{lem5.10}, we may assume $p_{I_{(10)}} W_1=\bbf e_{i(10,1)}$. Put $U_{(10),2}^+=\bbf e_{i(10,2)}\oplus\cdots\oplus \bbf e_{i(10,b_{10})}$. Take a complementary subspace $W_2$ of $W_1$ in $W$ such that $p_{I_{(10)}} W_2\subset U_{(10),2}^+$. Then by Lemma \ref{lem5.11}, we can take a $g_1\in R_V$ such that
$$W'_2=g_1W_2\subset U_{(15)}^+\oplus U_{(10),2}^+.$$

(D.3.4) Case of $W'_2= U_{(15),1}^+$. With respect to the basis $v,\ e_{i(15,1)},\ e_{i(15,b_{15})}$ of $W'=W_1\oplus W'_2$, elements of $Q(W')$ are represented by
$$\bp \lambda & 0 \\ 0 & A \ep$$
with $\lambda\in\bbf^\times$ and $A\in{\rm SL}_2(\bbf)$. Hence we have $|Q(W')\backslash M(W')|<\infty$ by Lemma \ref{lem12.13}.

(D.3.5) Case of $W'_2\cap U_{(15),1}^+=\{0\}$. We may assume
$$W'=\bbf v\oplus \bbf(e_{i(15,1)}+e_{i(10,2)})\oplus \bbf(e_{i(15,b_{15})}+e_{i(10,3)}).$$
With respect to the basis $v,\ e_{i(15,1)}+e_{i(10,2)},\ e_{i(15,b_{15})}+e_{i(10,3)}$ of $W'$, elements of $Q(W')$ are represented by
$$\bp \lambda & * \\ 0 & A \ep$$
with $\lambda\in\bbf^\times$ and $A\in{\rm SL}_2(\bbf)$. Hence $|Q(W')\backslash M(W')|<\infty$.

(D.3.6) Case of $\dim(W'_2\cap U_{(15),1}^+)=1$. We may assume
$$W'=\bbf v\oplus \bbf e_{i(15,1)}\oplus \bbf(e_{i(15,b_{15})}+e_{i(10,2)}).$$
With respect to the basis $v,\ e_{i(15,1)}\ e_{i(15,b_{15})}+e_{i(10,2)}$ of $W'$, elements of $Q(W')$ are represented by
$$\bp \lambda & 0 & * \\ 0 & a & * \\ 0 & 0 & a^{-1} \ep$$
with $\lambda,a\in\bbf^\times$. Hence we have $|Q(W')\backslash M(W')|<\infty$ as in (D.1.4).

(D.4) Suppose that $\dim p_{I_{(15)}} W=2$ and that $p_{I_{(15)}} W$ is isotropic with respect to $\langle\ ,\ \rangle$. Then we may assume
$$p_{I_{(15)}} W=\bbf e_{i(15,1)}\oplus\bbf e_{i(15,2)}.$$
We can prove $|Q(W')\backslash M(W')|<\infty$ by the same arguments as in (D.3).

(D.5) Suppose that $\dim p_{I_{(15)}} W=1$. Then we may assume
$p_{I_{(15)}} W=\bbf e_{i(15,1)}$.
Put $W_1=W\cap U_0$.

(D.5.1) Case of $W_1\subset U'_0=U_{(2,8,3,5)}^+$. By Lemma \ref{lem11.5} in the appendix, we can take a basis $v_1,v_2$ of $W_1$ such that $R_V$ contains elements $g_{\lambda,\mu}$ satisfying
$$g_{\lambda,\mu}v_1=\lambda v_1,\quad g_{\lambda,\mu}v_2=\mu v_2\mand g_{\lambda,\mu}\mbox{ acts trivially on }U_{(10)}^+\oplus U_{(15)}^+.$$
Let $W_2$ be a complementary subspace of $W_1$ in $W$. By Lemma \ref{lem5.10} and Lemma \ref{lem5.11}, we can take a $g_1\in R_V$ such that $W'_2=g_1W_2=\bbf v_3$ and that $g_1W_1=W_1$ where $v_3=e_{i(15,1)}+\varepsilon e_{i(10,1)}$ with $\varepsilon\in\{0,1\}$. Put $W'=g_1W=W_1\oplus W'_2$.

Let $Q'$ be the subgroup of ${\rm GL}(W')$ consisting of elements represented as
$$\bp \lambda & 0 & * \\ 0 & \mu & * \\ 0 & 0 & \nu \ep$$
($\lambda,\mu,\nu\in\bbf^\times$) with respect to the basis $v_1,v_2,v_3$. Then $Q'$ is contained in $Q(W')$. Hence $|Q(W')\backslash M(W')|<\infty$.

(D.5.2) Case of $\dim(W_1\cap U'_0)=1$. Put $W_{1,1}=W_1\cap U'_0=\bbf v_1$.

First suppose that $W_{1,1}\subset U_{(2,3,8)}^+$. Let $W_{1,2}$ be a complementary subspace of $W_{1,1}$ in $W_1$. Then by Lemma \ref{lem5.10} and Lemma \ref{lem5.11}, we can take a $g_1\in R_V$ such that $g_1W_{1,2}=\bbf v_2$ with $v_2=e_{i(10,1)}+\varepsilon_1 e_{i(5,1)}\ (\varepsilon_1\in\{0,1\})$ and that $g_1W_{1,1}=W_{1,1}$. Take a complementary subspace $W_2$ of $g_1W_1$ in $g_1W$ such that $p_{I_{(10)}} W_2\subset U_{(10),2}^+$ where $U_{(10),2}^+=\bbf e_{i(10,2)}\oplus\cdots\oplus \bbf e_{i(10,b_{10})}$. Then we can take a $g_2\in R_V$ such that $W'_2=g_2W_2=\bbf v_3$ with $v_3=e_{i(15,1)}+\varepsilon_2 e_{i(10,2)}\ (\varepsilon_2\in\{0,1\})$ and that $g_2g_1W_1=g_1W_1$ by Lemma \ref{lem5.10} and Lemma \ref{lem5.11}. Put $W'=g_2g_1W$. Let $Q'$ be the subgroup of ${\rm GL}(W')$ consisting of elements represented by matrices
$$\bp \lambda & 0 & * \\ 0 & \mu & 0 \\ 0 & 0 & \nu \ep$$
$(\lambda,\mu,\nu\in\bbf^\times)$ with respect to the basis $v_1,v_2,v_3$. Then $Q'\subset Q(W')$ and hence $|Q(W')\backslash M(W')|<\infty$ by Lemma \ref{lem5.1}.

Next suppose that  $W_{1,1}\not\subset U_{(2,3,8)}^+$. Then we may assume $p_{I_{(5)}} W_{1,1}=\bbf e_{i(5,1)}$. Let $W_{1,2}$ be a complementary subspace of $W_{1,1}$ in $W_1$ such that $p_{I_{(5)}}(W_{1,2})\subset \bbf e_{i(5,2)}\oplus\cdots\oplus \bbf e_{i(5,b_5)}$. Then by Lemma \ref{lem5.10} and Lemma \ref{lem5.11}, we can take a $g_1\in R_V$ such that $g_1W_{1,2}=\bbf v_2$ with $v_2=e_{i(10,1)}+\varepsilon_1 e_{i(5,2)}\ (\varepsilon_1\in\{0,1\})$ and that $g_1W_{1,1}=W_{1,1}$. Take a complementary subspace $W_2$ of $g_1W_1$ in $g_1W$ such that $p_{I_{(10)}} W_2\subset U_{(10),2}^+$. Then we can take a $g_2\in R_V$ such that $W'_2=g_2W_2=\bbf v_3$ with $v_3=e_{i(15,1)}+\varepsilon_2 e_{i(10,2)}\ (\varepsilon_2\in\{0,1\})$ and that $g_2g_1W_1=g_1W_1$. Put $W'=g_2g_1W$. Let $Q'$ be the subgroup of ${\rm GL}(W')$ consisting of elements represented by matrices
$$\bp \lambda & 0 & * \\ 0 & \mu & 0 \\ 0 & 0 & \nu \ep$$
$(\lambda,\mu,\nu\in\bbf^\times)$ with respect to the basis $v_1,v_2,v_3$. Then $Q'\subset Q(W')$ and hence $|Q(W')\backslash M(W')|<\infty$.

(D.5.3) Case of $W_1\cap U'_0=\{0\}$. By Lemma \ref{lem5.10} and Lemma \ref{lem5.11}, we can take a $g_1\in R_V$ such that $g_1W_1\subset U_{(5)}^+\oplus U_{(10)}^+$. Let $W_2$ be a complementary subspace of $g_1W_1$ in $g_1W$. We can also take a $g_2\in R_V$ such that $g_2W_2\subset \bbf e_{i(15,1)}\oplus U_{(10)}^+$ and that $g_2g_1W_1=g_1W_1$. Hence we have $W'=g_2g_1W\subset U_\#=U_{(5)}^+\oplus \bbf e_{i(15,1)}\oplus U_{(10)}^+$.

Let $Q_\#$ be the restriction of $\{g\in R_V\mid gU_\#=U_\#\}$ to $U_\#$. Then with respect to the basis $e_{i(5,1)},\ldots,e_{i(5,b_5)},e_{i(15,1)},e_{i(10,1)},\ldots,e_{i(10,b_{10})}$ of $U_\#$, elements of $Q_\#$ are represented by matrices
$$\bp A & * & 0 \\ 0 & \lambda & 0 \\ 0 & 0 & B \ep$$
with $\lambda\in\bbf^\times,\ A\in{\rm GL}_{b_5}(\bbf)$ and $B\in{\rm GL}_{b_{10}}(\bbf)$ by Lemma \ref{lem5.10} and Lemma \ref{lem5.11}. By Lemma \ref{lem9.8} in the appendix (case of $\alpha_1=b_{10},\ \alpha_2=b_5$ and $\alpha_3=\alpha_4=\alpha_5=0$), we have $$|Q_\#\backslash M(U_\#)|<\infty.$$
Hence we have $|Q(W')\backslash M(W')|<\infty$.

(D.6) Finally suppose that $\dim p_{I_{(15)}} W=0$. Then
$$W\subset U_0=U_{(2,8,10,3,5)}^+.$$

(D.6.1) Case of $\dim p_{I_{(5)}}W=3$. By Lemma \ref{lem5.11}, we may assume
$W\subset U_{(2,8,10,5)}^+$.
Put
\begin{align*}
d_0 & =\dim (W\cap U^+_{(5)}),\quad d_1=\dim (W\cap U^+_{(2,5)})-d_0, \\
d_2 & =\dim (W\cap U^+_{(2,8,5)})-d_0-d_1,\quad d_3=3-d_0-d_1-d_2
\end{align*}
and
\begin{align*}
W' & =(\bigoplus_{1\le k_0\le d_0} \bbf e_{i(5,k_0)}) \oplus(\bigoplus_{1\le k_1\le d_1} \bbf (e_{i(5,d_0+k_1)}+e_{i(2,k_1)}) \\
& \quad \oplus(\bigoplus_{1\le k_2\le d_2} \bbf (e_{i(5,d_0+d_1+k_2)}+e_{i(8,k_2)}) \oplus(\bigoplus_{1\le k_3\le d_3} \bbf (e_{i(5,d_0+d_1+d_2+k_3)}+e_{i(10,k_3)}).
\end{align*}
Then we have $gW=W'$ for some $g\in R_V$ by Lemma \ref{lem5.10} and Lemma \ref{lem5.11}. Let $P$ denote the parabolic subgroup of ${\rm GL}(W')$ stabilizing the flag
$$W'\cap U^+_{(5)}\subset W'\cap U^+_{(2,5)}\subset W'\cap U^+_{(2,8,5)}.$$
Then we have $Q(W')=P$ by Lemma \ref{lem5.10} and Lemma \ref{lem5.11}. Hence $|Q(W')\backslash M(W')|<\infty$ by the Bruhat decomposition.

(D.6.2) Case of $\dim p_{I_{(5)}}W=2$. Put $W_1=W\cap U_{(2,8,10,3)}^+$. Then $\dim W_1=1$. By Lemma \ref{lem5.10} and Lemma \ref{lem5.11}, we can take a $g_1\in R_V$ such that $g_1W_1=\bbf v_1$ where $v_1$ is one of the five vectors
$$e_{i(3,1)},\quad e_{i(3,1)}+e_{i(2,1)},\quad e_{i(2,1)},\quad  e_{i(8,1)}\quad\mbox{or}\quad e_{i(10,1)}.$$

(D.6.2.1) Case of $v_1=e_{i(10,1)}$. We can take a subspace $W_2$ of $g_1W$ such that $g_1W=\bbf v_1\oplus W_2$ and that $W_2\subset U_{(2,8,3,5)}^+\oplus U^+_{(10),1}$ where $U^+_{(10),1}=\bbf e_{i(10,2)}\oplus\cdots\oplus \bbf e_{i(10,b_{10})}$. Since $\dim p_{I_{(5)}}W_2=2$, we can take a $g_2\in R'_V=\{g\in R_V\mid gv_1=v_1\}$ such that
$$g_2W_2\subset U_{(2,8,5)}^+\oplus U^+_{(10),1}$$
by Lemma \ref{lem5.11}. By Lemma \ref{lem5.10} and Lemma \ref{lem5.11}, we can take a $g_3\in R'_V$ such that $g_3g_2W_2=\bbf (e_{i(5,1)}+v_2)\oplus\bbf (e_{i(5,2)}+v_3)$ where $(v_2,v_3)$ is one of
\begin{align*}
& (0,0),\quad (0,e_{i(2,1)}),\quad (0,e_{i(8,1)}),\quad (0,e_{i(10,2)}),\quad (e_{i(2,1)},e_{i(2,2)}),\quad (e_{i(2,1)},e_{i(8,1)}), \\
& (e_{i(2,1)},e_{i(10,2)}),\quad (e_{i(8,1)},e_{i(8,2)}),\quad (e_{i(8,1)},e_{i(10,2)})\quad\mbox{or}\quad (e_{i(10,2)},e_{i(10,3)}).
\end{align*}
Put $W'=g_3g_2g_1W=\bbf v_1\oplus \bbf (e_{i(5,1)}+v_2)\oplus\bbf (e_{i(5,2)}+v_3)$. For $(\lambda_1,\lambda_2,\lambda_3)\in(\bbf^\times)^3$, we can take an $h\in R_V$ such that
$$hv_1=\lambda_1v_1,\quad h(e_{i(5,1)}+v_2)=\lambda_2(e_{i(5,1)}+v_2)\quad\mbox{and that}\quad h(e_{i(5,2)}+v_3)=\lambda_3(e_{i(5,2)}+v_3)$$
by Lemma \ref{lem5.10}. On the other hand, we can take a $g\in R_V$ such that $gv_1=v_1,\ g(e_{i(5,1)}+v_2)=e_{i(5,1)}+v_2$ and that
$$g(e_{i(5,2)}+v_3)=e_{i(5,2)}+v_3+\lambda(e_{i(5,1)}+v_2)$$
for $\lambda\in\bbf$ by Lemma \ref{lem5.10} and Lemma \ref{lem5.11}. Hence $Q(W')$ contains a subgroup $H$ represented by matrices
$$\bp \lambda_1 & 0 & 0 \\ 0 & \lambda_2 & \lambda \\ 0 & 0 & \lambda_3 \ep \quad(\lambda_1,\lambda_2,\lambda_3\in\bbf^\times,\ \lambda\in\bbf)$$
with respect to the basis $v_1,e_{i(5,1)}+v_2,e_{i(5,2)}+v_3$. Since $|H\backslash M(W')|<\infty$ by Lemma \ref{lem5.1}, we have $|Q(W')\backslash M(W')|<\infty$.

(D.6.2.2) Case of $v_1=e_{i(8,1)}$. In the same way as in (D.6.2.1), we can take $g_2,g_3\in R_V$ such that $g_3g_2W_2=\bbf (e_{i(5,1)}+v_2)\oplus\bbf (e_{i(5,2)}+v_3)$ where $(v_2,v_3)$ is one of
\begin{align*}
& (0,0),\quad (0,e_{i(2,1)}),\quad (0,e_{i(8,2)}),\quad (0,e_{i(10,1)}),\quad (e_{i(2,1)},e_{i(2,2)}),\quad (e_{i(2,1)},e_{i(8,2)}), \\
& (e_{i(2,1)},e_{i(10,1)}),\quad (e_{i(8,2)},e_{i(8,3)}),\quad (e_{i(8,2)},e_{i(10,1)})\quad\mbox{or}\quad (e_{i(10,1)},e_{i(10,2)}).
\end{align*}
Put $W'=g_3g_2g_1W=\bbf v_1\oplus \bbf (e_{i(5,1)}+v_2)\oplus\bbf (e_{i(5,2)}+v_3)$. Then $Q(W')$ contains a subgroup $H$ of the form in (D.6.2.1) and hence $|Q(W')\backslash M(W')|<\infty$.

(D.6.2.3) Case of $v_1=e_{i(2,1)}$. In the same way as in (D.6.2.1), we can take $g_2,g_3\in R_V$ such that $g_3g_2W_2=\bbf (e_{i(5,1)}+v_2)\oplus\bbf (e_{i(5,2)}+v_3)$ where $(v_2,v_3)$ is one of
\begin{align*}
& (0,0),\quad (0,e_{i(2,2)}),\quad (0,e_{i(8,1)}),\quad (0,e_{i(10,1)}),\quad (e_{i(2,2)},e_{i(2,3)}),\quad (e_{i(2,2)},e_{i(8,1)}), \\
& (e_{i(2,2)},e_{i(10,1)}),\quad (e_{i(8,1)},e_{i(8,2)}),\quad (e_{i(8,1)},e_{i(10,1)})\quad\mbox{or}\quad (e_{i(10,1)},e_{i(10,2)}).
\end{align*}
Put $W'=g_3g_2g_1W=\bbf v_1\oplus \bbf (e_{i(5,1)}+v_2)\oplus\bbf (e_{i(5,2)}+v_3)$. Then $Q(W')$ contains a subgroup $H$ of the form in (D.6.2.1) and hence $|Q(W')\backslash M(W')|<\infty$.

(D.6.2.4) Case of $v_1=e_{i(3,1)}$. In the same way as in (D.6.2.1), we can take $g_2,g_3\in R_V$ such that $g_3g_2W_2=\bbf (e_{i(5,1)}+v_2)\oplus\bbf (e_{i(5,2)}+v_3)$ where $(v_2,v_3)$ is one of
\begin{align*}
& (0,0),\quad (0,e_{i(2,1)}),\quad (0,e_{i(8,1)}),\quad (0,e_{i(10,1)}),\quad (e_{i(2,1)},e_{i(2,2)}),\quad (e_{i(2,1)},e_{i(8,1)}), \\
& (e_{i(2,1)},e_{i(10,1)}),\quad (e_{i(8,1)},e_{i(8,2)}),\quad (e_{i(8,1)},e_{i(10,1)})\quad\mbox{or}\quad (e_{i(10,1)},e_{i(10,2)}).
\end{align*}
Put $W'=g_3g_2g_1W=\bbf v_1\oplus \bbf (e_{i(5,1)}+v_2)\oplus\bbf (e_{i(5,2)}+v_3)$. Then $Q(W')$ contains a subgroup $H$ of the form in (D.6.2.1) and hence $|Q(W')\backslash M(W')|<\infty$.

(D.6.2.5) Case of $v_1=e_{i(3,1)}+e_{i(2,1)}$. We can take a subspace $W_2$ of $g_1W$ such that $g_1W=\bbf v_1\oplus W_2$ and that $W_2\subset U_{(8,3,10,5)}^+\oplus U^+_{(2),1}$ where $U^+_{(2),1}=\bbf e_{i(2,2)}\oplus\cdots\oplus \bbf e_{i(2,b_2)}$. Since $\dim p_{I_{(5)}}W_2=2$, we can take a $g_2\in R'_V=\{g\in R_V\mid gv_1=v_1\}$ such that
$$g_2W_2\subset U_{(8,10,5)}^+\oplus U^+_{(2),1}$$
by Lemma \ref{lem5.11}. By Lemma \ref{lem5.10} and Lemma \ref{lem5.11}, we can take a $g_3\in R'_V$ such that $g_3g_2W_2=\bbf (e_{i(5,1)}+v_2)\oplus\bbf (e_{i(5,2)}+v_3)$ where $(v_2,v_3)$ is one of
\begin{align*}
& (0,0),\quad (0,e_{i(2,2)}),\quad (0,e_{i(8,1)}),\quad (0,e_{i(10,1)}),\quad (e_{i(2,2)},e_{i(2,3)}),\quad (e_{i(2,2)},e_{i(8,1)}), \\
& (e_{i(2,2)},e_{i(10,1)}),\quad (e_{i(8,1)},e_{i(8,2)}),\quad (e_{i(8,1)},e_{i(10,1)})\quad\mbox{or}\quad (e_{i(10,1)},e_{i(10,2)}).
\end{align*}
Put $W'=g_3g_2g_1W=\bbf v_1\oplus \bbf (e_{i(5,1)}+v_2)\oplus\bbf (e_{i(5,2)}+v_3)$.  Then $Q(W')$ contains a subgroup $H$ of the form in (D.6.2.1) and hence $|Q(W')\backslash M(W')|<\infty$.

(D.6.3) Case of $\dim p_{I_{(5)}}W\le 1$. We can take an $h=h_{(5)}(A)$ with some $A\in{\rm GL}_{b_5}(\bbf)$ such that $p_{I_{(5)}}(hW)\subset \bbf e_{i(5,1)}$. Hence
$$hW\subset U_\#=U^+_{(2,3,8,10)}\oplus \bbf e_{i(5,1)}.$$
Let $Q_\#$ be the restriction of $\{g\in R_V\mid gU_\#=U_\#\}$ to $U_\#$. With respect to the basis
$$e_{i(2,1)},\ldots,e_{i(2,b_2)},e_{i(3,1)},\ldots,e_{i(3,b_3)},e_{i(8,1)},\ldots,e_{i(8,b_8)},e_{i(10,1)},\ldots,e_{i(10,b_{10})},e_{i(5,1)}$$
of $U_\#$, elements of $Q_\#$ are represented by matrices
$$\bp A & 0 & * & * & 0 \\
0 & B & * & * & * \\
0 & 0 & C & * & 0 \\
0 & 0 & 0 & D & 0 \\
0 & 0 & 0 & 0 & \lambda \ep$$
with $A\in{\rm GL}_{b_2}(\bbf),\ B\in{\rm GL}_{b_3}(\bbf),\ C\in{\rm GL}_{b_8}(\bbf),\ D\in{\rm GL}_{b_{10}}(\bbf)$ and $\lambda\in\bbf^\times$ by Lemma \ref{lem5.10} and Lemma \ref{lem5.11}. Hence it follows from Lemma \ref{lem9.8} in the appendix (case of $\alpha_1=b_2,\ \alpha_2=b_3,\ \alpha_3=b_8+b_{10}$ and $\alpha_4=\alpha_5=0$) that $|Q_\#\backslash M(U_\#)|<\infty$. Since $hW\subset U_\#$, we have $|Q(hW)\backslash M(hW)|<\infty$.

Thus we have completed the proof of Proposition \ref{prop4.2'}.

\subsection{Proof of Proposition \ref{prop4.3'}}

Suppose $\dim U_+=4$ and $V=V(b_1,\ldots,b_{15})$ as in Theorem \ref{th5.9}. We have only to show $|R_V\backslash M(U_+)|<\infty$. Since $\dim U_+=4$, we have
\begin{equation*}
b_1+b_2+b_3+b_7+2b_8+b_{10}+b_5+b_9+2b_{12}+b_{13}+b_{15}+b_6=4. 
\end{equation*}
Put $\widetilde{\mathcal{I}}=\{i=2,3,7,8,10,5,9,12,13,15\mid b_i\ne 0\}$. By the reduction of full flags $U_+^1\subset U_+^2\subset U_+^3\subset U_+$ into $\widetilde{S}_1\subset \widetilde{S}_2\subset \widetilde{S}_3\subset \widetilde{U}$ in Section \ref{sec6.1}, we have only to show that $|Q\backslash M(\widetilde{U})|<\infty$.

(A) Case of $b_{15}=4$ ($\widetilde{U}=U_+$). Since $Q\cong {\rm Sp}_4(\bbf)$ by Lemma \ref{lem5.11'}, we have $|Q\backslash M(U_+)|<\infty$ by Corollary 1.11 in \cite{M2}.

(B) Case of $b_{15}=2$. If $\widetilde{\mathcal{I}}=\{15\}$, then $Q\cong {\rm SL}_2(\bbf)$ and hence $|Q\backslash M(\widetilde{U})|<\infty$.

If $\widetilde{\mathcal{I}}=\{j,15\}$ with some $j=2,3,7,8,10,5,9,12,13$, then $Q$ contains a subgroup isomorphic to ${\rm GL}_{b_j}(\bbf)\times {\rm SL}_2(\bbf)$ by Lemma \ref{lem5.10} and Lemma \ref{lem5.11'}. Hence we have $|Q\backslash M(\widetilde{U})|<\infty$ by Lemma \ref{lem12.13} in the appendix.

So we have only to consider the case of $\widetilde{\mathcal{I}}=\{j,k,15\}$ with some $j,k=2,3,7,10,5,$ $9,13\ (j<k)$. Note that $b_j=b_k=1$ and $\widetilde{U}=U_+$. Let us identify $U_+$ with $\bbf^4$ with respect to the basis $e_{i(j,1)},e_{i(k,1)},e_{i(15,1)},e_{i(15,2)}$ of $U_+$. 

Suppose $(j,k)\ne (10,13)$. Since $j=2,3,5,7$ or $9$, $Q$ contains a subgroup consisting of matrices
$$\bp \lambda & 0 & * \\ 0 & \mu & 0 \\ 0 & 0 & B \ep$$
with $\lambda,\mu\in\bbf^\times$ and $B\in{\rm SL}_2(\bbf)$ by Lemma \ref{lem5.10}, Lemma \ref{lem5.11'} and Lemma \ref{lem5.11}. By Lemma \ref{lem9.8} (case of $\alpha_3=1,\ \alpha_4=2$ and $\alpha_1=\alpha_2=\alpha_5=0$), we have $|Q\backslash M(U_+)|<\infty$.

Suppose $\widetilde{\mathcal{I}}=\{10,13,15\}$. Then $Q$ consists of matrices
\begin{equation}
\bp \lambda & * & 0 \\ 0 & \mu & 0 \\ 0 & 0 & B \ep \label{eq7.3''}
\end{equation}
with $\lambda,\mu\in\bbf^\times$ and $B\in{\rm SL}_2(\bbf)$. Let $Q'$ be the subgroup of ${\rm GL}(U_+)$ consisting of matrices (\ref{eq7.3''}) with $\lambda,\mu\in\bbf^\times$ and $B\in{\rm GL}_2(\bbf)$. Since we assume $|\bbf^\times/(\bbf^\times)^2|<\infty$, we have $|Q'/ZQ|=|\bbf^\times/(\bbf^\times)^2|<\infty$ where $Z=\{\lambda I_4\mid \lambda\in\bbf^\times\}$ is the center of ${\rm GL}(U_+)$. By Corollary \ref{cor10.9} (case of $\alpha_1=2,\ \alpha_2=1$ and $\alpha_3=\alpha_4=\alpha_5=0$), we have $|Q'\backslash M(U_+)|<\infty$. Hence $|Q\backslash M(U_+)|<\infty$.

(C) Case of $b_{15}=0$. We have $0\le |\widetilde{\mathcal{I}}|\le 4$.

(C.0) Case of $|\widetilde{\mathcal{I}}|=0$ is clear since $\widetilde{U}=\{0\}$.

(C.1) Case of $|\widetilde{\mathcal{I}}|=1$ is also clear since $Q\cong {\rm GL}(\widetilde{U})$.

(C.2) Case of $|\widetilde{\mathcal{I}}|=2$. If $\widetilde{\mathcal{I}}=\{j,k\}$, then $Q$ contains a subgroup of ${\rm GL}(\widetilde{U})$ isomorphic to ${\rm GL}_{b_j}(\bbf) \times {\rm GL}_{b_k}(\bbf)$. So we have $|Q\backslash M(\widetilde{U})|<\infty$.

(C.3) Case of $|\widetilde{\mathcal{I}}|=3$. Suppose $\widetilde{\mathcal{I}}=\{j,k,\ell\}$. Consider the decomposition $\widetilde{\mathcal{I}}=\mathcal{I}_1\sqcup \mathcal{I}_2$ with
$$\mathcal{I}_1\subset \{2,7,8,10,13\}\mand \mathcal{I}_2\subset \{3,5,9,12\}.$$

Suppose $\widetilde{\mathcal{I}}= \mathcal{I}_s$ with $s=1$ or $2$ and $j<k<\ell$. Then $Q$ is the parabolic subgroup of ${\rm GL}(\widetilde{U})$ stabilizing the flag
$$U_{(j)}^+\subset U_{(j,k)}^+$$
by Lemma \ref{lem5.10} and Lemma \ref{lem5.11}. Hence we have $|Q\backslash M(\widetilde{U})|<\infty$.

Suppose $\mathcal{I}_1=\{j,k\}$ and $\mathcal{I}_2=\{\ell\}$ with $j<k$. Then $Q$ contains a subgroup $Q'$ of ${\rm GL}(\widetilde{U})$ consisting of elements represented by
$$\bp A & * & 0 \\ 0 & B & 0 \\ 0 & 0 & C \ep$$
with $A\in{\rm GL}_{b_j}(\bbf),\ B\in {\rm GL}_{b_k}(\bbf)$ and $C\in {\rm GL}_{b_\ell}(\bbf)$ by Lemma \ref{lem5.10} and Lemma \ref{lem5.11}. If $b_\ell=1$, then we have $|Q'\backslash M(\widetilde{U})|<\infty$ by Lemma \ref{lem5.1}. If $b_j=b_k=1$ and $b_\ell=2$, then we have $|Q'\backslash M(\widetilde{U})|<\infty$ by Corollary \ref{cor10.9} (case of $\alpha_1=2,\ \alpha_2=1$ and $\alpha_3=\alpha_4=\alpha_5=0$).

We can also prove the case of $\mathcal{I}_1=\{j\}$ and $\mathcal{I}_2=\{k,\ell\}$ in the same way.

(C.4) Case of $|\widetilde{\mathcal{I}}|=4$. Put $\widetilde{\mathcal{I}}=\{j,k,\ell,m\}$. Then $j,k,\ell,m\ne 8,12,\ b_j=b_k=b_\ell=b_m=1$ and $\widetilde{U}=U_+$.

Suppose $\{j,k,\ell\}\subset \{2,7,10,13\}$ with $j<k<\ell$. Then $Q$ contains a subgroup $Q'$ of ${\rm GL}(U_+)$ consisting of elements represented by
$$\bp A & 0 \\ 0 & \lambda \ep$$
with $A\in \mcb_3$ and $\lambda\in\bbf^\times$ by Lemma \ref{lem5.10} and Lemma \ref{lem5.11}. Hence we have $|Q\backslash M(U_+)|<\infty$ by Lemma \ref{lem5.1}.

We can also prove the case of $\{j,k,\ell\}\subset \{3,5,9,13\}$ in the same way.

So we have only to consider the case of $\{j,k\}\subset \{2,7,10\}$ and $\{\ell,m\}\subset \{3,5,9\}$ with $j<k$ and $\ell<m$. If $\ell=3$, then $Q$ contains a subgroup $Q'$ consisting of elements represented by matrices
\begin{equation}
\bp a & * & 0 & 0 \\ 0 & b & 0 & 0 \\ 0 & * & c & * \\ 0 & 0 & 0 & d \ep \label{eq7.4}
\end{equation}
($a,b,c,d\in\bbf^\times$) with respect to the basis $e_{i(j,1)},e_{i(k,1)},e_{i(\ell,1)},e_{i(m,1)}$ of $U_+$ by Lemma \ref{lem5.10} and Lemma \ref{lem5.11}. So we have $|Q\backslash M(U_+)|<\infty$ by Lemma \ref{lem9.8} (case of $\alpha_1=\alpha_2=\alpha_3=1$ and $\alpha_4=\alpha_5=0$).

So we may assume $\{\ell,m\}=\{5,9\}$. In this case, $Q$ contains a subgroup $Q'$ consisting of elements represented by matrices (\ref{eq7.4}) with respect to the basis $e_{i(\ell,1)},e_{i(m,1)},$ $e_{i(j,1)},e_{i(k,1)}$ of $U_+$ by Lemma \ref{lem5.10} and Lemma \ref{lem5.11}. So we have $|Q\backslash M(U_+)|<\infty$ by Lemma \ref{lem9.8}.

Thus we have completed the proof of Proposition \ref{prop4.3'}.

\section{Proof of Proposition \ref{prop4.2}} \label{sec7}

In each $G$-orbit of the triple flag variety $\mct_{(n),(\beta),(n)}$, we can take a triple flag $(U_+,U_-,V)$ such that
\begin{align*}
U_+ =\bbf e_1\oplus\cdots\oplus \bbf e_n,\quad U_- & =\bbf e_1\oplus\cdots\oplus e_{a_0}\oplus \bbf e_{n+1}\oplus\cdots\oplus \bbf e_{n+\beta-a_0} \\
\mbox{and that }V & =\bigoplus_{j=1}^{15} V_{(j)}
\end{align*}
by Proposition \ref{prop5.1} and Theorem \ref{th5.9}. Since $\dim U_+=n$, we have $a_-=a_2=0$ and hence
\begin{equation}
b_k=0\quad\mbox{for }k=4,7,9,11,12,13,14 \label{eq7.1}
\end{equation}
by (\ref{eq5.3}) and (\ref{eq5.5}).

\subsection{Proof of the case of ${\bf b}=(\beta)=(n-1)$} \label{sec7.1}

Since $\dim U_-=n-1$, we have $a_+=1$ and hence
\begin{equation}
b_3+b_8+b_{10}=1 \label{eq7.2}
\end{equation}
by (\ref{eq5.2}). By the reduction in Section \ref{sec6.1}, we have only to show $|Q\backslash M(\widetilde{U})|<\infty$. Let us represent elements in $Q$ by matrices in ${\rm GL}_{\dim \widetilde{U}}(\bbf)$ with respect to the basis
$e_{b_1+1},\ldots,e_{n-b_6-b_8}$ of $\widetilde{U}$.

(A) Case of $b_3=1$. $\widetilde{U}$ is decomposed as
$$\widetilde{U}=U_{(2)}^+\oplus U_{(3)}^+\oplus U_{(5)}^+\oplus U_{(15)}^+$$
by (\ref{eq7.1}) and (\ref{eq7.2}). 
By Lemma \ref{lem5.10}, Lemma \ref{lem5.11'} and Lemma \ref{lem5.11} (with Figure \ref{fig6.2}), the group $Q$ consists of matrices of the form
$$\bp A & 0 & 0 & * \\
0 & \lambda & * & * \\
0 & 0 & B & * \\
0 & 0 & 0 & C \ep$$
with $\lambda\in\bbf^\times,\ A\in {\rm GL}_{b_2}(\bbf),\ B\in {\rm GL}_{b_5}(\bbf)$ and $C\in {\rm Sp}'_{b_{15}}(\bbf)$. By Corollary \ref{cor10.9} (case of $\alpha_3=\alpha_4=\alpha_5=0$), the subgroup of ${\rm GL}(U_{(2,3,5)}^+)$ consisting of matrices of the form
$$\bp A & 0 & 0 \\ 0 & \lambda & * \\ 0 & 0 & B \ep$$
has a finite number of orbits on $M(\bbf^{b_2+b_3+b_5})$. On the other hand,
$$|{\rm Sp}'_{b_{15}}(\bbf)\backslash M(\bbf^{b_{15}})|<\infty$$
by Corollary 1.11 in \cite{M2}. Hence we have $|Q\backslash M(\widetilde{U})|<\infty$ by Corollary \ref{cor8.7}.

(B) Case of $b_{10}=1$. $\widetilde{U}$ is decomposed as
$$\widetilde{U}=U_{(2)}^+\oplus U_{(10)}^+\oplus U_{(5)}^+\oplus U_{(15)}^+$$
by (\ref{eq7.1}) and (\ref{eq7.2}).
By Lemma \ref{lem5.10}, Lemma \ref{lem5.11'} and Lemma \ref{lem5.11}, the group $Q$ consists of matrices of the form
\begin{equation}
\bp A & * & 0 & * & \\
0 & \lambda & 0 & 0 \\
0 & 0 & B & * \\
0 & 0 & 0 & C \ep
\label{eq8.3''}
\end{equation}
with $A\in {\rm GL}_{b_2}(\bbf),\ \lambda\in\bbf^\times,\ B\in {\rm GL}_{b_5}(\bbf)$ and $C\in {\rm Sp}'_{b_{15}}(\bbf)$. By Lemma \ref{lem9.8} (case of $\alpha_3=\alpha_5=0$), we have $|Q\backslash M(\widetilde{U})|<\infty$.

(C) Case of $b_8=1$. $\widetilde{U}$ is decomposed as
$$\widetilde{U}=U_{(2)}^+\oplus U_{(8)}^+\oplus U_{(5)}^+\oplus U_{(15)}^+$$
by (\ref{eq7.1}) and (\ref{eq7.2}).
By Lemma \ref{lem5.10}, Lemma \ref{lem5.11'} and Lemma \ref{lem5.11}, the group $Q$ consists of matrices of the form (\ref{eq8.3''}). Hence $|Q\backslash M(\widetilde{U})|<\infty$.

Thus the triple flag variety $\mct_{(1^n),(n-1),(n)}$ is of finite type.

\subsection{Proof of the case of ${\bf b}=(1,n-1)$}

In each $G$-orbit of the triple flag variety $\mct_{(n),(n),(n)}$, we can take a triple flag $(U_+,U_-,V)$ such that
\begin{align*}
U_+ =\bbf e_1\oplus\cdots\oplus \bbf e_n,\quad U_- & =\bbf e_1\oplus\cdots\oplus e_{a_0}\oplus \bbf e_{n+1}\oplus\cdots\oplus \bbf e_{2n-a_0} \\
\mbox{and that }V & =\bigoplus_{j=1}^{15} V_{(j)}
\end{align*}
by Proposition \ref{prop5.1} and Theorem \ref{th5.9}. Since $\dim U_+=\dim U_-=n$, we have $a_+=a_-=a_2=0$ and hence
$$b_j=0\quad\mbox{for }j=3,4,7,8,9,10,11,12,13,14.$$
(See also \cite{M2} Theorem 1.20.) The space $U_-$ is decomposed as
$$U_-=U_{(1)}^-\oplus U_{(2)}^-\oplus U_{(6)}^-\oplus U_{(15)}^-\oplus U_{(5)}^-$$
where
\begin{align*}
U_{(1)}^- & =U_{(1)}^+= \bbf e_1\oplus\cdots\oplus \bbf e_{b_1},\quad U_{(2)}^- =U_{(2)}^+= \bbf e_{b_1+1}\oplus\cdots\oplus \bbf e_{a_0}, \\
U_{(6)}^- & = \bbf e_{n+1}\oplus\cdots\oplus \bbf e_{n+b_6},\quad U_{(15)}^- = \bbf e_{n+b_6+1}\oplus\cdots\oplus \bbf e_{n+b_6+b_{15}} \\
\mand U_{(5)}^- & = \bbf e_{n+b_6+b_{15}+1}\oplus\cdots\oplus \bbf e_{2n-a_0}.
\end{align*}

First we describe $R_V$-orbits of one-dimensional subspaces of $U_-$ as follows.

\begin{lemma} Let $U_-^1$ be a one-dimensional subspace of $U_-$. Then we can take a $g\in R_V$ such that $gU_-^1$ is one of the following six spaces.
$$\bbf e_1,\quad \bbf e_{b_1+1},\quad \bbf e_{n+1},\quad \bbf (e_{b_1+1}+e_{n+1}),\quad \bbf e_{n+b_6+1},\quad \bbf e_{n+b_6+b_{15}+1}$$
\label{lem7.1}
\end{lemma}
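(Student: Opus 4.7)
The plan is a case analysis driven by which components of $v$ in the block decomposition $v=v_1+v_2+v_6+v_{15}+v_5$ of $U_-^1=\bbf v$ are nonzero. Using $h_{(j)}(A)\in R_V$ from Lemma \ref{lem5.10} for $j\in\{1,2,6,5\}$ and $h_{(15)}(A)$ from Lemma \ref{lem5.11'} for $j=15$, any nonzero $v_j$ can be moved to a nonzero scalar multiple of the first basis vector of $U_{(j)}^-$; the diagonal rescalings $h_{(j)}(\lambda I)$ let us further adjust surviving coefficients once competing components have been killed.

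The essential additional input is a library of ``absorption transvections'' in $R_V$ acting on $U_-$. The only one directly visible from Lemma \ref{lem5.11} (once all $b_j$ outside $\{1,2,5,6,15\}$ vanish) is the $(1,2)$ transvection, which on $U_-$ modifies Block $1$ using a nonzero Block $2$ coefficient. The others are constructed by hand as orthogonal unipotents $g=I+N$ whose matrix entries come in pairs forced by the form constraint $N_{a,b}+N_{\overline{b},\overline{a}}=0$; one augments the naive transvection $N_{b_1+1,k}=\alpha$ (for $k$ in Block $15$ or Block $5$ of $U_-$) by a second compensating pair of entries arranged so that the unwanted term in $ge_{\overline{b_1+1}}$ differs from a $V$-vector only by an element of $V_{(15)}$, respectively $V_{(5)}$. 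A direct verification shows these compound elements lie in $R_V$ and, on $U_-$, realize the absorption pattern: Block $5$ absorbs Blocks $1,2,6,15$; Block $15$ absorbs Blocks $1,2,6$; Block $6$ absorbs Block $1$; Block $2$ absorbs Block $1$. Crucially Blocks $2$ and $6$ stay incomparable, and this is the source of the sixth representative $\bbf(e_{b_1+1}+e_{n+1})$.

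With these tools the reduction proceeds by the highest nonzero block of $v$. If $v_5\ne 0$ one reaches $\bbf e_{n+b_6+b_{15}+1}$; if $v_5=0$ and $v_{15}\ne 0$, $\bbf e_{n+b_6+1}$; if $v_5=v_{15}=0$ and $v_6\ne 0$, normalizing $v_6$ and absorbing $v_1$ gives $\bbf e_{n+1}$ when $v_2=0$, and when $v_2\ne 0$ one additionally normalizes $v_2$, absorbs the residual $v_1$ via $(1,2)$, and equalizes the two surviving coefficients by a suitable $h_{(6)}(\mu I)$ (combined with overall rescaling of $\bbf v$) to reach $\bbf(e_{b_1+1}+e_{n+1})$; the remaining two cases give $\bbf e_{b_1+1}$ and $\bbf e_1$. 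The main obstacle is the explicit construction of the compound absorption transvections linking the $U_+$-side Block $2$ to the $U_-$-side Blocks $15$ and $5$: the naive orthogonal transvection fails to preserve $V_{(15)}$ and must be corrected by adding a specific second pair of entries dictated by the defining relations of $V_{(15)}$ together with the skew constraint $N_{a,b}+N_{\overline{b},\overline{a}}=0$; once this combinatorial adjustment is made the case-by-case reductions are routine.
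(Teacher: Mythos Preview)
Your approach is essentially the same as the paper's: case analysis by the highest nonzero block of $v = v_1+v_2+v_6+v_{15}+v_5$, normalize that block via $h_{(j)}(A)$, then absorb all lower blocks by explicit orthogonal transvections in $R_V$. The paper does exactly this, and in particular writes down the ``simple'' two-entry transvections
\[
g e_{n+b_6+1} = e_{n+b_6+1} + \lambda e_i,\qquad g e_{\,\overline{i}\,} = e_{\,\overline{i}\,} - \lambda e_{n-b_6}
\]
for all $i\in\{1,\dots,a_0\}\sqcup\{n+1,\dots,n+b_6\}$ in case (E), and analogously in (C) and (F).

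Your extra caution about needing a \emph{compound} (four-entry) transvection when absorbing Block~$2$ into Block~$15$ is in fact well founded. For $i\in I_{(2)}$ the paper's simple $g$ above does \emph{not} preserve $V$: one has $e_{\,\overline{i}\,}\in V_{(2)}\subset V$, but $g e_{\,\overline{i}\,} = e_{\,\overline{i}\,} - \lambda e_{n-b_6}$ with $e_{n-b_6}=e_{i(15,b_{15})}\notin V$ (it only occurs in $V$ as part of $e_{\,\overline{i(15,1)}\,}-e_{i(15,b_{15})}$). A small example with $n=4$, $b_1=b_2=1$, $b_{15}=2$ already exhibits the failure. Your fix---adding the compensating pair $N_{\overline{i(15,1)},\,\overline{i}}=\lambda$, which forces $N_{i,\,i(15,1)}=-\lambda$---makes the resulting $g$ lie in $R_V$ and act on $U_-$ exactly as the desired absorption; the two extra entries cancel on the $V_{(15)}$-basis vector $e_{i(15,1)}+e_{n+b_6+1}$, and $g e_{\,\overline{i}\,}$ now lands in $V$. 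So here you have actually repaired a small slip in the paper.

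By contrast, your claim that a compound transvection is also required for absorbing Block~$2$ into Block~$5$ is unnecessary: in case (F) the dual index is $\overline{m}=a_0+b_5=i(5,b_5)$, and $e_{i(5,b_5)}\in V_{(5)}\subset V$, so the paper's simple two-entry transvection already lies in $R_V$ there. Your compound version would still work, just with redundant effort. Finally, your narration of the $v_2,v_6\neq 0$ sub-case is slightly garbled (there is no ``residual $v_1$'' once it has been absorbed via Block~$6$), but the intended reduction to $\bbf(e_{b_1+1}+e_{n+1})$ goes through as you indicate.
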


\begin{proof} (A) Suppose that $U_-^1\subset U_{(1)}^-$. Then we can take an $h=h_{(1)}(A)\in R_V$ with some $A\in {\rm GL}_{b_1}(\bbf)$ such that $hU_-^1=\bbf e_1$ by Lemma \ref{lem5.10}.

(B) Suppose that $U_-^1=\bbf(v_1+v_2)$ with $v_1=\sum_{i=1}^{b_1} \lambda_ie_i\in U_{(1)}^-$ and $v_2\in U_{(2)}^--\{0\}$. Then we can take an $h=h_{(2)}(B)\in R_V$ with some $B\in {\rm GL}_{b_2}(\bbf)$ such that $hv_2=e_{b_1+1}$ by Lemma \ref{lem5.10}. Hence
$$h(v_1+v_2)=v_1+e_{b_1+1}.$$
Define an element $g\in R_V$ by
$$g=\prod_{i=1}^{b_1} g_{i,b_1+1}(-\lambda_i)$$
with the elements $g_{i,b_1+1}(-\lambda_i)\in R_V$ given in Lemma \ref{lem5.11}. Then we have
$$gh(v_1+v_2)=g(v_1+e_{b_1+1})=e_{b_1+1}.$$

(C) Suppose that $U_-^1=\bbf(v_1+v_3)$ with $v_1=\sum_{i=1}^{b_1} \lambda_ie_i\in U_{(1)}^-$ and $v_3\in U_{(6)}^--\{0\}$. Then we can take an $h=h_{(6)}(C)\in R_V$ with some $C\in {\rm GL}_{b_6}(\bbf)$ such that $hv_3=e_{n+1}$ by Lemma \ref{lem5.10}. Hence
$$h(v_1+v_3)=v_1+e_{n+1}.$$

For $i=1,\ldots,b_1$ and $\lambda\in\bbf$, we can define an element $g=g_{i,n+1}(\lambda)\in R_V$ by
$$ge_{n+1}=e_{n+1}+\lambda e_i,\quad ge_{2n+1-i}=e_{2n+1-i}-\lambda e_n$$
and $ge_\ell=e_\ell$ for $\ell\in \{1,\ldots,2n\}-\{n+1,2n+1-i\}$. Put $g_1=\prod_{i=1}^{b_1} g_{i,n+1}(-\lambda_i)$. Then we have
$$g_1h(v_1+v_3)=g_1(v_1+e_{n+1})=e_{n+1}.$$

(D) Suppose that $U_-^1=\bbf(v_1+v_2+v_3)$ with $v_1=\sum_{i=1}^{b_1} \lambda_ie_i\in U_{(1)}^-,\ v_2\in U_{(2)}^--\{0\}$ and $v_3\in U_{(6)}^--\{0\}$. Then we can take $h_1=h_{(2)}(B)\ (B\in {\rm GL}_{b_2}(\bbf))$ and $h_2=h_{(6)}(C)\ (C\in {\rm GL}_{b_6}(\bbf))$ in $R_V$ such that $h_1v_2=e_{b_1+1}$ and that $h_2v_3=e_{n+1}$ by Lemma \ref{lem5.10}. Hence
$$h_1h_2(v_1+v_2+v_3)=v_1+e_{b_1+1}+e_{n+1}.$$
Take the element $g=\prod_{i=1}^{b_1} g_{i,b_1+1}(-\lambda_i)$ given in (B). Then we have
$$gh_1h_2(v_1+v_2+v_3)=g(v_1+e_{b_1+1}+e_{n+1})=e_{b_1+1}+e_{n+1}.$$

(E) Suppose that $U_-^1=\bbf(v_1+v_2+v_3+v_4)$ with
\begin{align*}
v_1 & =\sum_{i=1}^{b_1} \lambda_ie_i\in U_{(1)}^-,\quad v_2=\sum_{i=b_1+1}^{a_0} \lambda_ie_i\in U_{(2)}^-, \\
v_3 & =\sum_{i=n+1}^{n+b_6} \lambda_ie_i\in U_{(6)}^-\mand v_4\in U_{(15)}^--\{0\}.
\end{align*}
Then we can take an $h=h_{(15)}(D)\in R_V$ with some $D\in{\rm Sp}_{b_{15}}(\bbf)$ such that $hv_4=e_{n+b_6+1}$ by Lemma \ref{lem5.11'}. Hence
$$h(v_1+v_2+v_3+v_4)=v_1+v_2+v_3+e_{n+b_6+1}.$$

For $i\in \{1,\ldots,a_0\}\sqcup\{n+1,\ldots,n+b_6\}$ and $\lambda\in\bbf$, we can define an element $g=g_{i,n+b_6+1}(\lambda)\in R_V$ by
$$ge_{n+b_6+1}=e_{n+b_6+1}+\lambda e_i,\quad ge_{2n+1-i}=e_{2n+1-i}-\lambda e_{n-b_6}$$
and $ge_\ell=e_\ell$ for $\ell\in \{1,\ldots,2n\}-\{n+b_6+1,2n+1-i\}$. Put
$$g_1=\prod_{i\in \{1,\ldots,a_0\}\sqcup\{n+1,\ldots,n+b_6\}} g_{i,n+b_6+1}(-\lambda_i).$$
Then we have
$$g_1h(v_1+v_2+v_3+v_4)=g_1(v_1+v_2+v_3+e_{n+b_6+1})=e_{n+b_6+1}.$$

(F) Suppose that $U_-^1=\bbf(v_1+v_2+v_3+v_4+v_5)$ with
\begin{align*}
v_1 & =\sum_{i=1}^{b_1} \lambda_ie_i\in U_{(1)}^-,\quad v_2=\sum_{i=b_1+1}^{a_0} \lambda_ie_i\in U_{(2)}^-, \quad
v_3 =\sum_{i=n+1}^{n+b_6} \lambda_ie_i\in U_{(6)}^-, \\
v_4 & =\sum_{i=n+b_6+1}^{n+b_6+b_{15}} \lambda_ie_i\in U_{(15)}^- \mand v_5\in U_{(5)}^--\{0\}.
\end{align*}
Then we can take an $h=h_{(5)}(E)\in R_V$ with some $E\in {\rm GL}_{b_5}(\bbf)$ such that $hv_5=e_m$ by Lemma \ref{lem5.10} where $m=n+b_6+b_{15}+1$. Hence
$$h(v_1+v_2+v_3+v_4+v_5)=v_1+v_2+v_3+v_4+e_m.$$
For $i\in \{1,\ldots,a_0\}\sqcup\{n+1,\ldots,n+b_6+b_{15}\}$ and $\lambda\in\bbf$, we can define an element $g=g_{i,m}(\lambda)\in R_V$ by
$$ge_m=e_m+\lambda e_i,\quad ge_{2n+1-i}=e_{2n+1-i}-\lambda e_{2n+1-m}$$
and $ge_\ell=e_\ell$ for $\ell\in \{1,\ldots,2n\}-\{m,2n+1-i\}$. Put
$$g_1=\prod_{i\in \{1,\ldots,a_0\}\sqcup\{n+1,\ldots,n+b_6+b_{15}\}} g_{i,m}(-\lambda_i).$$
Then we have
$$g_1h(v_1+v_2+v_3+v_4+v_5)=g_1(v_1+v_2+v_3+v_4+e_m)=e_m.$$
\end{proof}

By Lemma \ref{lem7.1}, we may assume $U_-^1$ is one of the six spaces given in this lemma. We have only to consider the restriction $Q'$ of
$$R'_V=\{g\in R_V\mid gU_-^1=U_-^1\}$$
to $U_+$ and show that $Q'$ has a finite number of orbits on the full flag variety $M$ of ${\rm GL}(U_+)$.

(A) Case of $U_-^1=\bbf e_1$. The group $Q'$ consists of matrices of the form
$$\bp \lambda & * & * & * & * & * \\
0 & A & * & * & * & * \\
0 & 0 & B & 0 & * & * \\
0 & 0 & 0 & C & * & * \\
0 & 0 & 0 & 0 & D & * \\
0 & 0 & 0 & 0 & 0 & E\ep$$
with $\lambda\in\bbf^\times,\ A\in{\rm GL}_{b_1-1}(\bbf),\ B\in{\rm GL}_{b_2}(\bbf),\ C\in{\rm GL}_{b_5}(\bbf),\ D\in{\rm Sp}'_{b_{15}}(\bbf)$ and $E\in{\rm GL}_{b_6}(\bbf)$. In the appendix of \cite{M2}, it is shown that the subgroup of ${\rm GL}_{b_2+b_5}(\bbf)$ consisting of matrices
$$\bp B & 0 \\ 0 & C \ep$$
has a finite number of orbits on $M(\bbf^{b_2+b_5})$. It is also shown that $|{\rm Sp}'_{b_{15}}(\bbf)\backslash M(\bbf^{b_{15}})|<\infty$. Hence $|Q'\backslash M|<\infty$ by Corollary \ref{cor8.7}.

(B) Case of $U_-^1=\bbf e_{b_1+1}$. The group $Q'$ consists of matrices of the form
$$\bp A & * & * & * & * & * \\
0 & \lambda & * & 0 & * & * \\
0 & 0 & B & 0 & * & * \\
0 & 0 & 0 & C & * & * \\
0 & 0 & 0 & 0 & D & * \\
0 & 0 & 0 & 0 & 0 & E\ep$$
with $\lambda\in\bbf^\times,\ A\in{\rm GL}_{b_1}(\bbf),\ B\in{\rm GL}_{b_2-1}(\bbf),\ C\in{\rm GL}_{b_5}(\bbf),\ D\in{\rm Sp}'_{b_{15}}(\bbf)$ and $E\in{\rm GL}_{b_6}(\bbf)$. By Corollary \ref{cor10.9} (case of $\alpha_1=b_5,\ \alpha_2=b_2-1$ and $\alpha_3=\alpha_4=\alpha_5=0$), the subgroup of ${\rm GL}_{b_2+b_6}(\bbf)$ consisting of matrices
$$\bp \lambda & * & 0 \\ 0 & B & 0 \\ 0 & 0 & C \ep$$
has a finite number of orbits on $M(\bbf^{b_2+b_5})$. On the other hand, $|{\rm Sp}'_{b_{15}}(\bbf)\backslash M(\bbf^{b_{15}})|<\infty$. Hence $|Q'\backslash M|<\infty$ by Corollary \ref{cor8.7}.

(C) Case of $U_-^1=\bbf e_{n+1}$. The group $Q'$ consists of matrices of the form
$$\bp A & * & * & * & * & * \\
0 & B & 0 & * & * & * \\
0 & 0 & C & * & * & * \\
0 & 0 & 0 & D & * & * \\
0 & 0 & 0 & 0 & E & * \\
0 & 0 & 0 & 0 & 0 & \lambda \ep$$
with $\lambda\in\bbf^\times,\ A\in{\rm GL}_{b_1}(\bbf),\ B\in{\rm GL}_{b_2}(\bbf),\ C\in{\rm GL}_{b_5}(\bbf),\ D\in{\rm Sp}'_{b_{15}}(\bbf)$ and $E\in{\rm GL}_{b_6-1}(\bbf)$. Since the subgroup of ${\rm GL}_{b_2+b_5}(\bbf)$ consisting of matrices
$$\bp B & 0 \\ 0 & C \ep$$
has a finite number of orbits on $M(\bbf^{b_2+b_5})$, and since $|{\rm Sp}'_{b_{15}}(\bbf)\backslash M(\bbf^{b_{15}})|<\infty$, we have $|Q'\backslash M|<\infty$ by Corollary \ref{cor8.7}.

(D) Case of $U_-^1=\bbf (e_{b_1+1}+e_{n+1})$. The group $Q'$ consists of matrices of the form
$$\bp A & * & * & * & * & * & * \\
0 & \lambda & * & 0 & * & * & * \\
0 & 0 & B & 0 & * & * & * \\
0 & 0 & 0 & C & * & * & * \\
0 & 0 & 0 & 0 & D & * & * \\
0 & 0 & 0 & 0 & 0 & E & * \\
0 & 0 & 0 & 0 & 0 & 0 & \lambda^{-1} \ep$$
with $\lambda\in\bbf^\times,\ A\in{\rm GL}_{b_1}(\bbf),\ B\in{\rm GL}_{b_2-1}(\bbf),\ C\in{\rm GL}_{b_5}(\bbf),\ D\in{\rm Sp}'_{b_{15}}(\bbf)$ and $E\in{\rm GL}_{b_6-1}(\bbf)$.

The subgroup of ${\rm GL}_{b_2+b_5}(\bbf)\times\bbf^\times$ consisting of elements
$$\left(\bp \lambda & * & 0 \\ 0 & B & 0 \\ 0 & 0 & C \ep,\ \lambda^{-1}\right)$$
has a finite number of orbits on $M(\bbf^{b_2+b_5})\times M(\bbf)$ by Corollary \ref{cor10.9}. (Note that $M(\bbf^{b_2+b_5})\times M(\bbf)\cong M(\bbf^{b_2+b_5})$ since $M(\bbf)$ consists of one point.) We also have $|{\rm Sp}'_{b_{15}}\backslash M(\bbf^{b_{15}})|<\infty$. Hence $|Q'\backslash M|<\infty$ by Corollary \ref{cor8.7}.

(E) Case of $U_-^1=\bbf e_{n+b_6+1}$. The group $Q'$ consists of matrices of the form
$$\bp A & * & * & * & * \\
0 & B & 0 & * & * \\
0 & 0 & C & * & * \\
0 & 0 & 0 & D & X \\
0 & 0 & 0 & 0 & E \ep$$
with $A\in{\rm GL}_{b_1}(\bbf),\ B\in{\rm GL}_{b_2}(\bbf),\ C\in{\rm GL}_{b_5}(\bbf),\ D\in Q_{b_{15}},\ E\in{\rm GL}_{b_6}(\bbf)$ and a $b_{15}\times b_6$ matrix $X=\{x_{i,j}\}$ such that
$$x_{b_{15},1}=\cdots=x_{b_{15},b_6}=0$$
where
$Q_{b_{15}}=\{g\in {\rm Sp}'_{b_{15}}(\bbf)\mid g\bbf e_1=\bbf e_1\}$.
The subgroup of ${\rm GL}_{b_2+b_5}(\bbf)$ consisting of matrices
$$\bp B & 0 \\ 0 & C \ep$$
has a finite number of orbits on $M(\bbf^{b_2+b_5})$. On the other hand, the subgroup of ${\rm GL}_{b_{15}+b_6}(\bbf)$ consisting of the matrices
$$\bp D & X \\ 0 & E \ep$$
has a finite number of orbits on $M(\bbf^{b_{15}+b_6})$ by Lemma \ref{lem9.12} in the appendix. So we have $|Q'\backslash M|<\infty$ by Corollary \ref{cor8.7}.

(F) Case of $U_-^1=\bbf e_{n+b_6+b_{15}+1}$. The group $Q'$ consists of matrices of the form
$$\bp A & * & * & * & * & * \\
0 & B & 0 & 0 & * & * \\
0 & 0 & C & * & * & * \\
0 & 0 & 0 & \lambda & 0 & 0 \\
0 & 0 & 0 & 0 & D & * \\
0 & 0 & 0 & 0 & 0 & E \ep$$
with $A\in{\rm GL}_{b_1}(\bbf),\ B\in{\rm GL}_{b_2}(\bbf),\ C\in{\rm GL}_{b_5-1}(\bbf),\ D\in {\rm Sp}'_{b_{15}}(\bbf)$ and $E\in{\rm GL}_{b_6}(\bbf)$. By Lemma \ref{lem9.8} (case of $\alpha_1=b_2,\ \alpha_2=b_5-1,\ \alpha_3=0,\ \alpha_4=b_{15}$ and $\alpha_5=b_6$), the subgroup of ${\rm GL}_{n-b_1}(\bbf)$ consisting of matrices
$$\bp B & 0 & 0 & * & * \\
0 & C & * & * & * \\
0 & 0 & \lambda & 0 & 0 \\
0 & 0 & 0 & D & * \\
0 & 0 & 0 & 0 & E \ep$$
has a finite number of orbits on $M(\bbf^{n-b_1})$. So we have $|Q'\backslash M|<\infty$ by Corollary \ref{cor8.7}.

Thus the triple flag variety $\mct_{(1^n),(1,n-1),(n)}$ is of finite type.

\section{Proof of Proposition \ref{prop4.3}}

Take a triple flag $(U_+,U_-,V)\in \mct_{(n),(\beta),(n)}$ as in Section \ref{sec7}. Since $\dim U_+=n$, we have $a_-=a_2=0$ and hence
$$b_k=0\quad\mbox{for }k=4,7,9,11,12,13,14.$$
Moreover we have
$$\beta=\dim U_-=a_0+a_-+a_1=b_1+b_2+b_5+b_6+b_8+b_{15}.$$

(i) When $\beta\le 2$ or $\beta=3$ and $|\bbf^\times/(\bbf^\times)^2|<\infty$, we will prove $|Q\backslash M(\widetilde{U})|<\infty$ using the reduction in Section \ref{sec6.1}.

(ii) We will also prove $|R_V\backslash M_{\gamma_1,\gamma_2,\gamma_3}(U_+)|<\infty$ without assumption on $\bbf$.

Let us represent $Q$ by matrices with respect to the basis
$$e_{b_1+1},\ldots,e_{n-b_6-b_8}$$
of $\widetilde{U}$. Here we note that the group $Q$ consists of restrictions of elements in
$$\{g\in R_V\mid g\widetilde{U}=\widetilde{U},\ gU_{(\overline{8})}^{+,j}=U_{(\overline{8})}^{+,j}\mbox{ for }j=1,\ldots,b_8\mbox{ and $g$ acts trivially on }U_{\overline{6},1}^+\}$$
to $\widetilde{U}$ since we consider full flags in $U_+$.

(A) Case of $b_5=b_{15}=0$. The space $\widetilde{U}$ is decomposed as
$$\widetilde{U}=U_{(2)}^+\oplus U_{(3)}^+\oplus U_{(8)}^+\oplus U_{(10)}^+.$$
(Note that $0\le b_2+b_8\le \beta$.) The group $Q$ consists of matrices
$$\bp A & 0 & * & * \\
0 & B & * & * \\
0 & 0 & C & * \\
0 & 0 & 0 & D \ep$$
with $A\in{\rm GL}_{b_2}(\bbf),\ B\in{\rm GL}_{b_3}(\bbf),\ C\in \mcb_{b_8}$ and $D\in{\rm GL}_{b_{10}}(\bbf)$ by Lemma \ref{lem5.10} and Lemma \ref{lem5.11} (with Figure \ref{fig6.2}).
Since the subgroup of ${\rm GL}_{b_2+b_3}(\bbf)$ consisting of matrices
$$\bp A & 0 \\ 0 & B \ep$$
has a finite number of orbits on $M(\bbf^{b_2+b_3})$, we have $|Q\backslash M(\widetilde{U})|<\infty$ by Corollary \ref{cor8.7}.

(B) Case of $b_5=1$ and $b_{15}=0$. The space $\widetilde{U}$ is decomposed as
$$\widetilde{U}=U_{(2)}^+\oplus U_{(3)}^+\oplus U_{(8)}^+\oplus U_{(10)}^+\oplus U_{(5)}^+.$$
(Note that $0\le b_2+b_8\le \beta-1$.) The group $Q$ consists of matrices
$$\bp A & 0 & * & * & 0 \\
0 & B & * & * & * \\
0 & 0 & C & * & 0 \\
0 & 0 & 0 & D & 0 \\
0 & 0 & 0 & 0 & \lambda\ep.$$
with $\lambda\in\bbf^\times,\ A\in{\rm GL}_{b_2}(\bbf),\ B\in{\rm GL}_{b_3}(\bbf),\ C\in\mcb_{b_8}$ and $D\in{\rm GL}_{b_{10}}(\bbf)$ by Lemma \ref{lem5.10} and Lemma \ref{lem5.11}. By Lemma \ref{lem9.8} (case of $\alpha_1=b_2,\ \alpha_2=b_3,\ \alpha_3=b_8+b_{10}$ and $\alpha_4=\alpha_5=0$), we have $|Q\backslash M(\widetilde{U})|<\infty$.

(C) Case of $b_5=2$. We have $b_1+b_2+b_6+b_8=\beta-2$ and $b_{15}=0$. If $b_2=b_8=0$, then $\widetilde{U}$ is decomposed as
$$\widetilde{U}=U_{(3)}^+\oplus U_{(10)}^+\oplus U_{(5)}^+$$
and the group $Q$ consists of matrices
$$\bp A & * & * \\ 0 & B & 0 \\ 0 & 0 & C \ep$$
with $A\in{\rm GL}_{b_3}(\bbf),\ B\in{\rm GL}_{b_{10}}(\bbf)$ and $C\in{\rm GL}_2(\bbf)$. Since the subgroup of ${\rm GL}_{b_{10}+2}(\bbf)$ consisting of matrices
$$\bp B & 0 \\ 0 & C \ep$$
has a finite number of orbits on $M(\bbf^{b_{10}+2})$, we have $|Q\backslash M(\widetilde{U})|<\infty$ by Corollary \ref{cor8.7}.

If $b_2=1$, then $\widetilde{U}=U_+$ is decomposed as
$$U_+=U_{(2)}^+\oplus U_{(3)}^+\oplus U_{(10)}^+\oplus U_{(5)}^+$$
and the group $Q$ consists of matrices
$$\bp \lambda & 0 & * & 0 \\
0 & A & * & * \\
0 & 0 & B & 0 \\
0 & 0 & 0 & C \ep$$
with $\lambda\in\bbf^\times,\ A\in{\rm GL}_{b_3}(\bbf),\ B\in{\rm GL}_{b_{10}}(\bbf)$ and $C\in{\rm GL}_2(\bbf)$ by Lemma \ref{lem5.10} and Lemma \ref{lem5.11}. By Corollary \ref{cor10.9} (case of $\alpha_1=2,\ \alpha_2=b_{10},\ \alpha_3=b_3$ and $\alpha_4=\alpha_5=0$), we have $|Q\backslash M(U_+)|<\infty$.

If $b_8=1$, then $\widetilde{U}$ is decomposed as
$$\widetilde{U}=U_{(3)}^+\oplus U_{(8)}^+\oplus U_{(10)}^+\oplus U_{(5)}^+.$$
The group $Q$ consists of matrices
$$\bp A & * & * & * \\
0 & \lambda & * & 0 \\
0 & 0 & B & 0 \\
0 & 0 & 0 & C \ep$$
with $\lambda\in\bbf^\times,\ A\in{\rm GL}_{b_3}(\bbf),\ B\in{\rm GL}_{b_{10}}(\bbf)$ and $C\in{\rm GL}_2(\bbf)$ by Lemma \ref{lem5.10} and Lemma \ref{lem5.11}. The subgroup of ${\rm GL}_{b_{10}+3}(\bbf)$ consisting of matrices
$$\bp \lambda & * & 0 \\ 0 & B & 0 \\ 0 & 0 & C \ep$$
has a finite number of orbits on $M(\bbf^{b_{10}+3})$ by Corollary \ref{cor10.9} (case of $\alpha_1=2,\ \alpha_2=b_{10}$ and $\alpha_3=\alpha_4=\alpha_5=0$). So we have $|Q\backslash M(\widetilde{U})|<\infty$ by Corollary \ref{cor8.7}.

(D) Case of $b_5=3$. The space $\widetilde{U}=U_+$ is decomposed as
$$U_+=U_{(3)}^+\oplus U_{(10)}^+\oplus U_{(5)}^+$$
and the group $Q$ consists of matrices
$$\bp A & * & * \\ 0 & B & 0 \\ 0 & 0 & C \ep$$
with $A\in{\rm GL}_{b_3}(\bbf),\ B\in{\rm GL}_{b_{10}}(\bbf)$ and $C\in{\rm GL}_3(\bbf)$. Since the subgroup of ${\rm GL}_{b_{10}+3}(\bbf)$ consisting of matrices
$$\bp B & 0 \\ 0 & C \ep$$
has a finite number of orbits on $M(\bbf^{b_{10}+3})$, we have $|Q\backslash M(U_+)|<\infty$ by Corollary \ref{cor8.7}.

(E) Case of $b_5=b_8=0$ and $b_{15}=2$. The space $\widetilde{U}$ is decomposed as
$$\widetilde{U}=U_{(2)}^+\oplus U_{(3)}^+\oplus U_{(10)}^+\oplus U_{(15)}^+.$$
(Since $b_1+b_2+b_6=\beta-2$, we have $b_2=0$ or $1$.) The group $Q$ consists of matrices
$$\bp A & 0 & * & * \\
0 & B & * & * \\
0 & 0 & C & 0 \\
0 & 0 & 0 & D \ep$$
with $A\in{\rm GL}_{b_2}(\bbf),\ B\in{\rm GL}_{b_3}(\bbf),\ C\in{\rm GL}_{b_{10}}(\bbf)$ and $D\in{\rm Sp}'_2(\bbf)={\rm SL}_2(\bbf)$. The subgroups consisting of the matrices
$$\bp A & 0 \\ 0 & B \ep \mand \bp C & 0 \\ 0 & D  \ep$$
of ${\rm GL}_{b_2+b_3}(\bbf)$ and ${\rm GL}_{b_{10}+2}(\bbf)$ have finite numbers of orbits on $M(\bbf^{b_2+b_3})$ and $M(\bbf^{b_{10}+2})$ by \cite{M2} and Lemma \ref{lem12.13}, respectively. Hence we have $|Q\backslash M(\widetilde{U})|<\infty$ by Corollary \ref{cor8.7}.

(F) Case of $b_5=1$ and $b_{15}=2$. $\widetilde{U}=U_+$ is decomposed as
$$U_+=U_{(3)}^+\oplus U_{(10)}^+\oplus U_{(5)}^+\oplus U_{(15)}^+$$
and the group $Q$ consists of matrices
$$\bp A & * & * & * \\
0 & B & 0 & 0 \\
0 & 0 & \lambda & * \\
0 & 0 & 0 & C\ep$$
with $\lambda\in\bbf^\times,\ A\in{\rm GL}_{b_3}(\bbf),\ B\in{\rm GL}_{b_{10}}(\bbf)$ and $C\in{\rm SL}_2(\bbf)$ by Lemma \ref{lem5.10} and Lemma \ref{lem5.11}. Let $H$ denote the subgroup of ${\rm GL}_{b_{10}+3}(\bbf)$ consisting of matrices
$$\bp B & 0 & 0 \\ 0 & \lambda & * \\ 0 & 0 & C\ep$$
with $\lambda\in\bbf^\times,\ B\in{\rm GL}_{b_{10}}(\bbf)$ and $C\in{\rm SL}_2(\bbf)$.

(i) First assume $|\bbf^\times/(\bbf^\times)^2|<\infty$. By Corollary \ref{cor8.7}, we have only to show that $|H\backslash M(\bbf^{b_{10}+3})|<\infty$. Let $Z=\{\lambda I_{b_{10}+3}\mid \lambda\in\bbf^\times\}$ be the center of ${\rm GL}_{b_{10}+3}(\bbf)$. Then $ZH$ is a subgroup of
$$H'=\left\{\bp B & 0 & 0 \\ 0 & \lambda & * \\ 0 & 0 & C\ep \Bigm| \lambda\in\bbf^\times,\ B\in{\rm GL}_{b_{10}}(\bbf),\ C\in{\rm GL}_2(\bbf)\right\}$$
of index $|\bbf^\times/(\bbf^\times)^2|$. By Corollary \ref{cor10.9}, we have $|H'\backslash M(\bbf^{b_{10}+3})|<\infty$. Since $Z$ acts trivially on $M(\bbf^{b_{10}+3})$, we have $|H\backslash M(\bbf^{b_{10}+3})|<\infty$.

(ii) We have only to show that $|Q\backslash M_{\gamma_1,\gamma_2,\gamma_3}(\bbf^n)|<\infty$ when $\gamma_1+\gamma_2+\gamma_3<n$. Let $V_1\subset V_2\subset V_3$ be a flag in $\bbf^n$ such that $\dim V_i=\gamma_1+\cdots+\gamma_i$. Then we can take an element $g\in Q$ of the form
$$g=\bp A & * \\ 0 & I_{b_{10}+3} \ep$$
such that $gV_i=\bbf e_1\oplus\cdots\oplus \bbf e_{\ell_i} \oplus V'_i$ with some flag $V'_1\subset V'_2\subset V'_3$ in $U=U_{(10)}^+\oplus U_{(5)}^+\oplus U_{(15)}^+$ where $\ell_i=\dim(V_i\cap U^+_{(3)})$. By Lemma \ref{lem11.15} in the appendix, we have $|H\backslash M_{k_1,k_2,k_3}|<\infty$ for any $k_1,k_2,k_3$. Hence we have $|Q\backslash M_{\gamma_1,\gamma_2,\gamma_3}(\bbf^n)|<\infty$ for any $\gamma_1,\gamma_2,\gamma_3$.

(G) Case of $b_8=1$ and $b_{15}=2$. The space $\widetilde{U}$ is decomposed as
$$\widetilde{U}=U_{(3)}^+\oplus U_{(8)}^+\oplus U_{(10)}^+\oplus U_{(15)}^+.$$
With respect to the basis $e_1,\ldots,e_{n-1}$ of $\widetilde{U}$, elements of $Q$ are represented by
$$\bp A & * & * & * \\
0 & \lambda & * & 0 \\
0 & 0 & B & 0 \\
0 & 0 & 0 & C \ep$$
with $\lambda\in\bbf^\times,\ A\in{\rm GL}_{b_3}(\bbf),\ B\in{\rm GL}_{b_{10}}(\bbf)$ and $C\in{\rm SL}_2(\bbf)$.

(i) Suppose $|\bbf^\times/(\bbf^\times)^2|<\infty$. Then by the same reason as in (F), we have $|Q\backslash M(\widetilde{U})|<\infty$.

(ii) Let $V_1\subset V_2\subset V_3$ be a flag in $\mct_{\gamma_1,\gamma_2,\gamma_3}(U_+)$ with $\gamma_1+\gamma_2+\gamma_3<n$. Then we have only to show that $|R_V\backslash \mct_{\gamma_1,\gamma_2,\gamma_3}(U_+)|<\infty$. (Note that we cannot use $Q$ in this case.) By the same argument as in (F), we have only to prove the following lemma for the case of $b_3=0$.

\begin{lemma} Suppose that $n=2b_8+b_{10}+b_{15}$ with $b_8=1$ and $b_{15}=2$. Then we have $|R_V\backslash \mct_{\gamma_1,\gamma_2,\gamma_3}(U_+)|<\infty$ for any $\gamma_1,\gamma_2,\gamma_3$ with $\gamma_1+\gamma_2+\gamma_3<n$.
\end{lemma}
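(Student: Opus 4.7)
The plan is to stratify flags according to their interaction with the $R_V$-invariant codimension-one subspace $\widetilde{U}:=U_{(8)}^+\oplus U_{(10)}^+\oplus U_{(15)}^+$ of $U_+$.  The generators of $R_V$ given in Lemmas~\ref{lem5.10}, \ref{lem5.11} and \ref{lem5.11'} all preserve $\widetilde{U}$, and the restriction map $R_V\twoheadrightarrow H\subset\mathrm{GL}(\widetilde{U})$ recovers exactly the block-triangular group $H$ already studied in case (F), consisting of matrices $\begin{pmatrix} a & * & * \\ 0 & B & 0 \\ 0 & 0 & C \end{pmatrix}$ with $a\in\bbf^\times$, $B\in\mathrm{GL}_{b_{10}}(\bbf)$, $C\in\mathrm{SL}_2(\bbf)$.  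For this $H$, Lemma~\ref{lem11.15} (invoked in the proof of case (F)(ii) above) yields $|H\backslash\mathcal{T}_{k_1,k_2,k_3}(\widetilde{U})|<\infty$ for every triple $(k_1,k_2,k_3)$.

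Given a flag $V_1\subset V_2\subset V_3$, set $V_i':=V_i\cap\widetilde{U}$ and $\epsilon_i:=\dim V_i-\dim V_i'\in\{0,1\}$.  The monotone triple $(\epsilon_1,\epsilon_2,\epsilon_3)$ is an $R_V$-invariant, giving four subcases.  When $\epsilon_3=0$ the flag lies in $\widetilde{U}$ and the assertion follows from the case (F)(ii) argument.  Otherwise set $i_0:=\min\{i:\epsilon_i=1\}$; since $V_{i_0}\subset V_j$ for $j\geq i_0$, any vector $v\in V_{i_0}\setminus\widetilde{U}$ simultaneously serves as a lift of $V_j/V_j'$ for every such $j$, and we write $V_j=V_j'\oplus\bbf v$ with $v=e_{i(\overline{8},1)}+u$ where $u\in\widetilde{U}$ is determined modulo $V_{i_0}'$.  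Fixing $(V_1',V_2',V_3')$ in one of its finitely many $H$-orbits reduces the remaining problem to classifying the lifting parameter $[u]\in\widetilde{U}/V_{i_0}'$ modulo the stabiliser $S:=\mathrm{Stab}_{R_V}(V_1',V_2',V_3')$.

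The core calculation is to pin down the kernel of $R_V\twoheadrightarrow H$.  A direct application of preservation of $(\,,\,)$ on $\bbf^{2n}$ together with the relations $g\cdot V_{(j)}\subset V$ for each $j$ shows that this kernel consists precisely of the $(n-3)$-parameter family $g_w$, $w\in\bbf e_{i(8,1)}\oplus U_{(10)}^+$, acting as the identity on $\widetilde{U}$ and sending $e_{i(\overline{8},1)}\mapsto e_{i(\overline{8},1)}+w$:  the constraints coming from $V_{(8)}$ and $V_{(15)}$ force $w$ to avoid the $U_{(15)}^+$-directions, while the generators $\bar g_k$ of $V_{(10)}$ impose no further condition on the $\bbf e_{i(8,1)}\oplus U_{(10)}^+$-component.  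Using these kernel translations we normalize $[u]$ to have zero image in $(\bbf e_{i(8,1)}\oplus U_{(10)}^+)/V_{i_0}'$, after which $[u]$ reduces to an element of the at most two-dimensional quotient $U_{(15)}^+/\mathrm{image}(V_{i_0}')$; the residual action of the $C$-block of $S\cap H$ factors through a subgroup of $\mathrm{SL}_2$ acting on this quotient with finitely many orbits (using transitivity of $\mathrm{SL}_2$ on $\bbf^2\setminus\{0\}$).

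The main obstacle will be the explicit kernel computation: one must carefully trace through how preservation of $V=\bigoplus_j V_{(j)}$ together with preservation of the bilinear form on $\bbf^{2n}$ constrains the translation parameter $w$, verifying that the constraints cut out exactly $\bbf e_{i(8,1)}\oplus U_{(10)}^+$ and no smaller subspace.  Once this kernel is in hand, the residual count reduces to a straightforward parabolic orbit problem on a quotient of dimension at most two, whose finiteness is visible from the same structural features of $H$ exploited in case (F)(ii).
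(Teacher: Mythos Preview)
Your approach is genuinely different from the paper's and the setup is largely correct, but the final orbit-counting step has a real gap.

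\textbf{Comparison with the paper.} The paper exploits the $R_V$-invariant \emph{line} $\bbf e_1=U_{(8)}^+$ (invariance follows from $V\cap(W_++U_-)=\bbf(e_1+e_{n+1})$): for each flag it locates the first $V_i$ containing $e_1$, then writes $V_j=\bbf e_1\oplus(V_j\cap U)$ for $j\ge i$ and normalizes the complement $U$ to $U'_+=\bbf e_2\oplus\cdots\oplus\bbf e_n$.  Everything then reduces to a flag in $U'_+$ under the explicit group $H=\{\text{diag}(A,B,\lambda):A\in\mathrm{SL}_2,\ B\in\mathrm{GL}_{n-4},\ \lambda\in\bbf^\times\}$ with one off-diagonal block, and a finite case analysis on $\dim(V'_3\cap U_{(15)}^+)$ finishes it (including one case, (B.5), where an element of $R_V$ not preserving $U'_+$ is needed).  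There is no lift parameter to track.  You instead use the dual $R_V$-invariant \emph{hyperplane} $\widetilde U$; your invariance claim, your identification of the restriction image, and your kernel computation are all correct.

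\textbf{The gap.}  After your kernel normalization, the residual $[u]$ lives in the quotient $U_{(15)}^+/\pi_{15}(V'_{i_0})$ (dimension $0$, $1$, or $2$), and you assert that the $C$-block of the stabilizer $S$ acts with finitely many orbits ``using transitivity of $\mathrm{SL}_2$ on $\bbf^2\setminus\{0\}$''.  But the $C$-block of $S$ is typically only a Borel (or torus) of $\mathrm{SL}_2$, not all of $\mathrm{SL}_2$---for instance if $\pi_{15}(V'_2)\subsetneq\pi_{15}(V'_3)$ is a full flag in $U_{(15)}^+$ while $\pi_{15}(V'_{i_0})=0$.  A Borel of $\mathrm{SL}_2$ alone does \emph{not} have finitely many orbits on $\bbf^2$.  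What actually makes this finite is a combination of three ingredients you have not accounted for: (i) the scalar $\lambda$ coming from $ge_1=\lambda e_1$ (equivalently $ge_n\equiv\lambda^{-1}e_n\bmod\widetilde U$), (ii) the affine translation by $\pi_{15}(w')$ where $ge_n=\lambda^{-1}e_n+w'$---your kernel computation correctly shows $w'\in W_+$ when $g|_{\widetilde U}=\mathrm{id}$, but for general $g\in S$ the $U_{(15)}^+$-component $w'_{15}$ is nonzero and is tied to the $(1,3)$-block of $g|_{\widetilde U}$ via the relation $(\alpha,\alpha')=\lambda\bigl(\begin{smallmatrix}C_{21}&-C_{11}\\-C_{22}&C_{12}\end{smallmatrix}\bigr)w'_{15}$---and (iii) the constrained $C$-action.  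In the examples I checked the three together do give finitely many orbits, but this requires a case analysis comparable in length to the paper's, and your one-line justification does not supply it.
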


\begin{proof} The space $U_+$ is decomposed as
$$U_+=U_{(8)}^+\oplus U_{(15)}^+\oplus U_{(10)}^+\oplus U_{(\overline{8})}^+$$
with $U_{(8)}^+=\bbf e_1,\ U_{(15)}^+=\bbf e_2\oplus\bbf e_3,\ U_{(10)}^+=\bbf e_4\oplus\cdots\oplus\bbf e_{n-1}$ and $U_{(\overline{8})}^+=\bbf e_n$. (Here we exchanged the order of $U_{(10)}^+$ and $U_{(15)}^+$ for the sake of convenience.)

Write $U'_+=\bbf e_2\oplus\cdots\oplus \bbf e_n=U_{(15)}^+\oplus U_{(10)}^+\oplus U_{(\overline{8})}^+$. Let $V_1\subset V_2\subset V_3$ be a flag in $\mct_{\gamma_1,\gamma_2,\gamma_3}(U_+)$. Then there exists an $i\in\{0,1,2,3\}$ such that
$$V_i\not\ni e_1\mand V_{i+1}\ni e_1.$$
(We write $V_0=\{0\}$ and $V_4=U_+$.) Take an $n-1$ dimensional subspace $U$ of $U_+$ such that $U\supset V_i$ and $U\not\ni e_1$. Then we have
$$V_j=\bbf e_1\oplus (V_j\cap U)$$
for $j>i$. Take a $g\in R_V$ such that $gU=U'_+$. Then we have
$$gV_j=\begin{cases} gV_j\cap U'_+ & \text{ for $j\le i$,} \\ \bbf e_1\oplus (gV_j\cap U'_+) & \text{for $j>i$.} \end{cases}$$
So we have only to consider the flag $V'_1\subset V'_2\subset V'_3$ in $M_{k_1,k_2,k_3}(U'_+)$ where
$$V'_j=gV_j\cap U'_+.$$
We can write the restriction $H$ of $\{g\in R_V\mid gU'_+=U'_+\}$ to $U'_+$ as
$$H=\left\{\bp A & 0 & 0 \\ 0 & B & * \\ 0 & 0 & \lambda \ep \Bigm| \lambda\in\bbf^\times,\ A\in{\rm SL}_2(\bbf),\ B\in{\rm GL}_{n-4}(\bbf)\right\}.$$
Let $H'=H'(hV'_3)$ denote the restriction of $\{g\in H\mid ghV'_3=hV'_3\}$ to $hV'_3$ for $h\in H$. Write $V'_{3,1}=V'_3\cap U_{(15)}^+$ and $V'_{3,2}=V'_3\cap (U_{(10)}^+\oplus U_{(\overline{8})}^+)$. Let $\pi_2$ denote the projection $U'_+\to U_{(10)}^+\oplus U_{(\overline{8})}^+$.

(A) First suppose $\dim V'_{3,1}=2$. Then we have $V'_3=U_{(15)}^+\oplus V'_{3,2}$.

(A.1) If $V'_{3,2}\subset U_{(10)}^+$, then we can take an $h\in H$ such that
$$hV'_3=U_{(15)}^+\oplus \bbf e_4\oplus\cdots\oplus \bbf e_{k+1}$$
where $k=\dim V'_3=k_1+k_2+k_3$. Elements of $H'=H'(hV'_3)$ are represented by matrices
$$\bp A & 0 \\ 0 & B \ep$$
with $A\in{\rm SL}_2(\bbf)$ and $B\in{\rm GL}_{k-2}(\bbf)$. By Lemma \ref{lem12.13}, we have $|H'\backslash M(hV'_3)|<\infty$ and hence $|H'\backslash M_{k_1,k_2}(hV'_3)|<\infty$.

(A.2) If $V'_{3,2}\not\subset U_{(10)}^+$, then we can take an $h\in H$ such that
$$hV'_3=U_{(15)}^+\oplus \bbf e_4\oplus\cdots\oplus \bbf e_k\oplus\bbf e_n.$$
Elements of $H'=H'(hV'_3)$ are represented by matrices
$$\bp A & 0 & 0 \\ 0 & B & * \\ 0 & 0 & \lambda \ep$$
with $\lambda\in\bbf^\times,\ A\in{\rm SL}_2(\bbf)$ and $B\in{\rm GL}_{k-3}(\bbf)$. So we have $|H'\backslash M_{k_1,k_2}(hV'_3)|<\infty$ by Corollary \ref{cor11.15}.

(B) Next suppose $\dim V'_{3,1}=1$. By the action of $H$, we may assume
$V'_{3,1}=\bbf e_2$.

(B.1) Case of $\dim V'_{3,2}=k-1$ and $\pi_2(V'_3)\subset U_{(10)}^+$. We can take an $h\in H$ such that
$$hV'_3=\bbf e_2\oplus \bbf e_4\oplus\cdots\oplus \bbf e_{k+2}.$$
Elements of $H'$ are represented by matrices
$$\bp a & 0 \\ 0 & B \ep$$
with $a\in\bbf^\times$ and $B\in{\rm GL}_{k-1}(\bbf)$. So we have $|H'\backslash M(hV'_3)|<\infty$.

(B.2) Case of $\dim V'_{3,2}=k-1$ and $\pi_2(V'_3)\not\subset U_{(10)}^+$. We can take an $h\in H$ such that
$$hV'_3=\bbf e_2\oplus \bbf e_4\oplus\cdots\oplus \bbf e_{k+1}\oplus \bbf e_n.$$
Elements of $H'$ are represented by matrices
$$\bp a & 0 & 0 \\ 0 & B & * \\ 0 & 0 & \lambda \ep$$
with $a,\lambda\in\bbf^\times$ and $B\in{\rm GL}_{k-2}(\bbf)$. So we have $|H'\backslash M(hV'_3)|<\infty$ by Lemma \ref{lem5.1}.

(B.3) Case of $\dim V'_{3,2}=k-2$ and $\pi_2(V'_3)\subset U_{(10)}^+$. We can take an $h\in H$ such that
$$hV'_3=\bbf e_2\oplus \bbf e_4\oplus\cdots\oplus \bbf e_{k+1}\oplus \bbf(e_3+e_{k+2}).$$
With respect to the basis $e_2,e_4,\ldots,e_{k+1},e_3+e_{k+2}$ of $hV'_3$, elements of $H'$ are represented by matrices
$$\bp a & 0 & * \\ 0 & B & * \\ 0 & 0 & a^{-1} \ep$$
with $a\in\bbf^\times$ and $B\in{\rm GL}_{k-2}(\bbf)$. Since the subgroup of ${\rm GL}_{k-1}(\bbf)\times \bbf^\times$ consisting of elements
$$\left(\bp a & 0 \\ 0 & B \ep,\ a^{-1}\right)$$
has a finite number of orbits on $M(\bbf^{k-1})\times M(\bbf)$, we have $|H'\backslash M(hV'_3)|<\infty$ by Corollary \ref{cor8.7}.

(B.4) Case of $\dim V'_{3,2}=k-2,\ V'_{3,2}\subset U_{(10)}^+$ and $\pi_2(V'_3)\not\subset U_{(10)}^+$. We can take an $h\in H$ such that
$$hV'_3=\bbf e_2\oplus \bbf e_4\oplus\cdots\oplus \bbf e_{k+1}\oplus \bbf(e_3+e_n).$$
With respect to the basis $e_2,e_4,\ldots,e_{k+1},e_3+e_n$ of $hV'_3$, elements of $H'$ are represented by matrices
$$\bp a & 0 & * \\ 0 & B & * \\ 0 & 0 & a^{-1} \ep$$
with $a\in\bbf^\times$ and $B\in{\rm GL}_{k-2}(\bbf)$. As in (B.3), we have $|H'\backslash M(hV'_3)|<\infty$.

(B.5) Case of $\dim V'_{3,2}=k-2$ and $V'_{3,2}\not\subset U_{(10)}^+$. We can take an $h\in H$ such that
$$hV'_3=\bbf e_2\oplus \bbf e_4\oplus\cdots\oplus \bbf e_k\oplus \bbf e_n\oplus \bbf(e_3+e_{k+1}).$$
With respect to the basis
\begin{equation}
e_2,e_4,\ldots,e_k,e_n,e_3+e_{k+1} \label{eq8.1'}
\end{equation}
of $hV'_3$, elements of $H'$ are represented by matrices
$$\bp a & 0 & 0 & * \\ 0 & B & * & * \\ 0 & 0 & \lambda & 0 \\ 0 & 0 & 0 & a^{-1} \ep$$
with $a,\lambda\in\bbf^\times$ and $B\in{\rm GL}_{k-3}(\bbf)$. By Lemma \ref{lem5.11} (iv), we can take an element $g_1\in R_V$ such that
$$g_1e_n=e_n+\mu e_2,\quad g_1e_3=e_3-\mu e_1$$
with $\mu\in\bbf$ and that $g_1e_j=e_j$ for $j\ne 3,n$. By Lemma \ref{lem5.11} (i), we can take an element $g_2\in R_V$ such that
$$g_2e_{k+1}=e_{k+1}+\mu e_1$$
and that $g_2e_j=e_j$ for $j\ne k+1$. The product $g=g_2g_1\in R_V$ stabilizes $hV'_3$ and $g|_{hV'_3}$ is represented by the matrix
$$\bp 1 & 0 & \mu & 0 \\ 0 & I_{k-3} & 0 & 0 \\ 0 & 0 & 1 & 0 \\ 0 & 0 & 0 & 1 \ep$$
with respect to the basis (\ref{eq8.1'}). (Here we note that $g$ does not stabilize $U'_+$ if $\mu\ne 0$.) Hence the restriction of $\{g\in R_V\mid ghV'_3=hV'_3\}$ to $hV'_3$ contains the subgroup $H''$ consisting of matrices
$$\bp a & 0 & * & * \\ 0 & B & * & * \\ 0 & 0 & \lambda & 0 \\ 0 & 0 & 0 & a^{-1} \ep.$$
We can see that the subgroup of ${\rm GL}_{k-2}(\bbf)\times{\rm GL}_2(\bbf)$ consisting of elements
$$\left(\bp a & 0 \\ 0 & B \ep, \bp \lambda & 0 \\ 0 & a^{-1} \ep\right)$$
has a finite number of orbits on $M(\bbf^{k-2})\times M(\bbf^2)$. So we have $|H''\backslash M(hV'_3)|<\infty$ by Corollary \ref{cor8.7}.

(C) Finally suppose $\dim V'_{3,1}=0$. Put $V'_4=V'_3\oplus \bbf e_2$ and let $Q'$ be the restriction of $\{g\in R_V\mid gV'_4=V'_4\}$ to $V'_4$. Then we have $|Q'\backslash M(V'_4)|<\infty$ by the arguments in (B). So we have $|Q''\backslash M(V'_3)|<\infty$ where $Q''$ is the restriction of $Q'$ to $V'_3$.
\end{proof}

\section{Proof of Proposition \ref{prop4.3''}}

In each $G$-orbit of the triple flag variety $\mct_{(n),(2),(n)}$, we can take a triple flag $(U_+,U_-,V)$ such that
\begin{align*}
U_+ =\bbf e_1\oplus\cdots\oplus \bbf e_n,\quad U_- & =\begin{cases} \bbf e_1\oplus\bbf e_2 & \text{if $a_0=2$,} \\
\bbf e_1\oplus \bbf e_{n+1} & \text{if $a_0=1$,} \\
\bbf e_{n+1}\oplus \bbf e_{n+2} & \text{if $a_0=0$} \end{cases} 
\end{align*}
and that $V=\bigoplus_{j=1}^{15} V_{(j)}$ by Proposition \ref{prop5.1} and Theorem \ref{th5.9}. Since $\dim U_+=n$, we have $a_-=a_2=0$ and hence
$$b_j=0\quad\mbox{for }j=4,7,9,11,12,13,14.$$
On the other hand, since $\dim U_-=2$, we have
$$b_1+b_2+b_5+b_6+b_8+b_{15}=2.$$
For each case, we will first describe $R_V$-orbits of one-dimensional subspaces of $U_-$. Then for each representative $U_-^1$ of these orbits, we will compute the restriction $Q'$ of $R'_V=\{g\in R_V\mid gU_-^1=U_-^1\}$ to $U_+$. We have only to show that $Q'$ has a finite number of orbits on the full flag variety $M=M(U_+)$. Note that the restriction $R_V(U_+)$ of $R_V$ to $U_+$ has a finite number of orbits on $M$ by Proposition \ref{prop4.3} (i). So we have only to consider the cases of $Q'\subsetneqq R_V(U_+)$.

(A) Case of $b_1+b_2=2$. The space $U_+$ is decomposed as
$$U_+=U_{(1)}^+\oplus U_{(2)}^+\oplus U_{(3)}^+\oplus U_{(10)}^+$$
with $U_{(1)}^+\oplus U_{(2)}^+=U_-=\bbf e_1\oplus\bbf e_2,\ U_{(3)}^+=\bbf e_3\oplus\cdots\oplus \bbf e_{b_3+2}$ and $U_{(10)}^+=\bbf e_{b_3+3}\oplus\cdots\oplus \bbf e_n$.

(A.1) Case of $b_1=2$. By Lemma \ref{lem5.10}, we may assume $U_-^1=\bbf e_1$. By Lemma \ref{lem5.10} and Lemma \ref{lem5.11} (with Figure \ref{fig6.1}), $Q'$ consists of matrices
$$\bp \lambda & * & * & * \\
0 & \mu & * & * \\
0 & 0 & A & * \\
0 & 0 & 0 & B \ep$$
with $\lambda,\mu\in\bbf^\times,\ A\in{\rm GL}_{b_3}(\bbf)$ and $B\in{\rm GL}_{b_{10}}(\bbf)$. So we have $|Q'\backslash M|<\infty$ by the Bruhat decomposition.

(A.2) Case of $b_2=2$. By Lemma \ref{lem5.10}, we may assume $U_-^1=\bbf e_1$. By Lemma \ref{lem5.10} and Lemma \ref{lem5.11}, $Q'$ consists of matrices
$$\bp \lambda & * & 0 & * \\
0 & \mu & 0 & * \\
0 & 0 & A & * \\
0 & 0 & 0 & B \ep$$
with $\lambda,\mu\in\bbf^\times,\ A\in{\rm GL}_{b_3}(\bbf)$ and $B\in{\rm GL}_{b_{10}}(\bbf)$. Since the subgroup of ${\rm GL}_{b_3+2}(\bbf)$ consisting of matrices
$$\bp \lambda & * & 0 \\ 0 & \mu & 0 \\ 0 & 0 & A \ep$$
has a finite number of orbits on the full flag variety of ${\rm GL}_{b_3+2}(\bbf)$ by Corollary \ref{cor10.9} (case of $\alpha_1=b_{10},\ \alpha_2=1$ and $\alpha_3=\alpha_4=\alpha_5=0$), we have $|Q'\backslash M|<\infty$ by Corollary \ref{cor8.7}.

(A.3) Case of $b_1=b_2=1$. We have $U_{(1)}^+=\bbf e_1$ and $U_{(2)}^+=\bbf e_2$. If $U_-^1=\bbf e_1$, then we have $Q'=R_V(U_+)$. So we may assume $U_-^1\ne \bbf e_1$ and hence
$$U_-^1=\bbf(e_2+\lambda e_1)$$
with some $\lambda\in\bbf$. Define an element $g\in R_V$ by
$$ge_2=e_2+\lambda e_1,\quad ge_{2n}=e_{2n}-\lambda e_{2n-1}$$
and $ge_k=e_k$ for all $k\ne 2,2n$. Then we have $g^{-1}U_-^1=\bbf e_2$. So we may assume
$$U_-^1=\bbf e_2.$$
For this $U_-^1$, the group $Q'$ consists of matrices
$$\bp \lambda & 0 & * & * \\
0 & \mu & 0 & * \\
0 & 0 & A & * \\
0 & 0 & 0 & B \ep$$
with $\lambda,\mu\in\bbf^\times,\ A\in{\rm GL}_{b_3}(\bbf)$ and $B\in{\rm GL}_{b_{10}}(\bbf)$. By Lemma \ref{lem5.1}, the subgroup of ${\rm GL}_{b_3+2}(\bbf)$ consisting of matrices
$$\bp \lambda & 0 & * \\ 0 & \mu & 0 \\ 0 & 0 & A \ep$$
has a finite number of orbits on $M(\bbf^{b_3+2})$. So we have $|Q'\backslash M|<\infty$ by Corollary \ref{cor8.7}.

(B) Case of $b_5+b_6=2$. $U_+=U_{(3)}^+\oplus U_{(10)}^+\oplus U_{(5)}^+\oplus U_{(\overline{6})}^+$
with $U_{(3)}^+=\bbf e_1\oplus\cdots\oplus \bbf e_{b_3},\ U_{(10)}^+=\bbf e_{b_3+1}\oplus\cdots\oplus \bbf e_{n-2}$ and $U_{(5)}^+\oplus U_{(\overline{6})}^+=\bbf e_{n-1}\oplus\bbf e_n$. $U_-=U_{(\overline{6})}^-\oplus U_{(5)}^-=\bbf e_{n+1}\oplus\bbf e_{n+2}$.

(B.1) Case of $b_6=2$. Since $U_-=U_{(\overline{6})}^-=\bbf e_{n+1}\oplus\bbf e_{n+2}$, we may assume $U_-^1=\bbf e_{n+1}$ by Lemma \ref{lem5.10}. The group $Q'$ consists of matrices
$$\bp A & * & * & * \\
0 & B & * & * \\
0 & 0 & \lambda & * \\
0 & 0 & 0 & \mu \ep$$
with $\lambda,\mu\in\bbf^\times,\ A\in{\rm GL}_{b_3}(\bbf)$ and $B\in{\rm GL}_{b_{10}}(\bbf)$ by Lemma \ref{lem5.10} and Lemma \ref{lem5.11} . So we have $|Q'\backslash M|<\infty$ by the Bruhat decomposition.

(B.2) Case of $b_5=2$. Since $U_-=U_{(5)}^-=\bbf e_{n+1}\oplus\bbf e_{n+2}$, we may assume $U_-^1=\bbf e_{n+1}$ by Lemma \ref{lem5.10}. The group $Q'$ consists of matrices
$$\bp A & * & * & * \\
0 & B & 0 & 0 \\
0 & 0 & \lambda & * \\
0 & 0 & 0 & \mu \ep$$
with $\lambda,\mu\in\bbf^\times,\ A\in{\rm GL}_{b_3}(\bbf)$ and $B\in{\rm GL}_{b_{10}}(\bbf)$ by Lemma \ref{lem5.10} and Lemma \ref{lem5.11}. Since the subgroup of ${\rm GL}_{b_{10}+2}(\bbf)$ consisting of matrices
$$\bp B & 0 & 0 \\ 0 & \lambda & * \\ 0 & 0 & \mu \ep$$
has a finite number of orbits on the full flag variety of ${\rm GL}_{b_{10}+2}(\bbf)$ by Lemma \ref{lem9.8}, we have $|Q'\backslash M|<\infty$ by Corollary \ref{cor8.7}.

(B.3) Case of $b_5=b_6=1$. We have $U_-=U_{(\overline{6})}^-\oplus U_{(5)}^-$ with $U_{(\overline{6})}^-=\bbf e_{n+1}$ and $U_{(5)}^-=\bbf e_{n+2}$. If $U_-^1=\bbf e_{n+1}$, then we have $Q'=R_V(U_+)$. So we may assume $U_-^1\ne \bbf e_{n+1}$ and hence
$$U_-^1=\bbf(e_{n+2}+\lambda e_{n+1})$$
with some $\lambda\in\bbf$. Define an element $g\in R_V$ by
$$ge_{n+2}=e_{n+2}+\lambda e_{n+1},\quad ge_n=e_n-\lambda e_{n-1}$$
and $ge_k=e_k$ for all $k\ne n,n+2$. Then we have $g^{-1}U_-^1=\bbf e_{n+2}$. So we may assume
$$U_-^1=\bbf e_{n+2}.$$
For this $U_-^1$, the group $Q'$ consists of matrices
$$\bp A & * & * & * \\
0 & B & 0 & * \\
0 & 0 & \lambda & 0 \\
0 & 0 & 0 & \nu \ep$$
with $\lambda,\mu\in\bbf^\times,\ A\in{\rm GL}_{b_3}(\bbf)$ and $B\in{\rm GL}_{b_{10}}(\bbf)$. Since the subgroup of ${\rm GL}_{b_3+2}(\bbf)$ consisting of matrices
$$\bp B & 0 & * \\ 0 & \lambda & 0 \\ 0 & 0 & \mu\ep$$
has a finite number of orbits on $M(\bbf^{b_3+2})$ by Lemma \ref{lem5.1}, we have $|Q'\backslash M|<\infty$ by Corollary \ref{cor8.7}.

(C) Case of $b_8=2$. $U_+=U_{(3)}^+\oplus U_{(8)}^+\oplus U_{(10)}^+\oplus U_{(\overline{8})}^+$
with $U_{(3)}^+=\bbf e_1\oplus\cdots\oplus \bbf e_{b_3},\ U_{(8)}^+=\bbf e_{b_3+1}\oplus\bbf e_{b_3+2},\ U_{(10)}^+=\bbf e_{b_3+3}\oplus\cdots\oplus \bbf e_{n-2}$ and $U_{(\overline{8})}^+=\bbf e_{n-1}\oplus\bbf e_n$. By Lemma \ref{lem5.10}, we may assume $U_-^1=\bbf e_{n+1}$. The group $Q'$ consists of matrices
$$\bp A & * & * & * \\
0 & C & * & * \\
0 & 0 & B & * \\
0 & 0 & 0 & C^* \ep$$
with $A\in{\rm GL}_{b_3}(\bbf),\ B\in{\rm GL}_{b_{10}}(\bbf)$ and $C\in \mcb_2$
and $C^*=J_2{}^tC^{-1}J_2$. Consider the subgroup
$$H=\{(C,C^*)\mid C\in \mcb_2\}$$
of ${\rm GL}_2(\bbf)\times {\rm GL}_2(\bbf)$. Then the full flag variety $M(\bbf^2)\times M(\bbf^2)\cong P^1(\bbf)\times P^1(\bbf)$ is decomposed into five $H$-orbits.
Hence we have $|Q'\backslash M|<\infty$ by Corollary \ref{cor8.7}.

(D) Case of $b_{15}=2$. $U_+=U_{(3)}^+\oplus U_{(10)}^+\oplus U_{(15)}^+$
with $U_{(3)}^+=\bbf e_1\oplus\cdots\oplus \bbf e_{b_3},\ U_{(10)}^+=\bbf e_{b_3+1}\oplus\cdots\oplus \bbf e_{n-2}$ and $U_{(15)}^+=\bbf e_{n-1}\oplus\bbf e_n$. By Lemma \ref{lem5.11'}, we may assume $U_-^1=\bbf e_{n+1}$. The group $Q'$ consists of matrices
$$\bp A & * & * & * \\ 0 & B & 0 & 0 \\ 0 & 0 & \lambda & * \\ 0 & 0 & 0 & \lambda^{-1} \ep$$
with $A\in{\rm GL}_{b_3}(\bbf),\ B\in{\rm GL}_{b_{10}}(\bbf)$ and $\lambda\in\bbf^\times$.

(i) First we will show that
$$|\bbf^\times/(\bbf^\times)^2|<\infty \Longrightarrow |Q'\backslash M|<\infty.$$
Let $Q''$ denote the subgroup of ${\rm GL}_{b_{10}+2}(\bbf)$ consisting of matrices
$$\bp B & 0 & 0 \\ 0 & \lambda & * \\ 0 & 0 & \lambda^{-1} \ep$$
with $B\in{\rm GL}_{b_{10}}(\bbf)$ and $\lambda\in\bbf^\times$. Then we have only to show that $|Q''\backslash M(\bbf^{b_{10}+2})|<\infty$ by Corollary \ref{cor8.7}.

Let $Q'''$ denote the subgroup of ${\rm GL}_{b_{10}+2}(\bbf)$ consisting of matrices
$$\bp B & 0 & 0 \\ 0 & \mu_1 & * \\ 0 & 0 & \mu_2 \ep$$
with $B\in{\rm GL}_{b_{10}}(\bbf)$ and $\mu_1,\mu_2\in\bbf^\times$. Then we have $|Q'''\backslash M(\bbf^{b_{10}+2})|<\infty$ by Lemma \ref{lem9.8} (case of $\alpha_1=b_3,\ \alpha_2=1$ and $\alpha_3=\alpha_4=\alpha_5=0$).

Let $Z=\{\mu I_{b_{10}+2}\mid \mu\in\bbf^\times\}$ denote the center of ${\rm GL}_{b_{10}+2}(\bbf)$. Since $Z$ acts trivially on $M(\bbf^{b_{10}+2})$, we have
$$Q''\backslash M(\bbf^{b_{10}+2})\cong ZQ''\backslash M(\bbf^{b_{10}+2}).$$
So we may consider the subgroup $ZQ''$ of $Q'''$ consisting of matrices
$$\bp B & 0 & 0 \\ 0 & \mu_1 & * \\ 0 & 0 & \mu_2 \ep$$
with $B\in{\rm GL}_{b_{10}}(\bbf)$ and
$\mu_1,\mu_2\in\bbf^\times$ such that $\mu_1/\mu_2\in (\bbf^\times)^2$. Since $|Q'''/ZQ''|=|\bbf^\times/(\bbf^\times)^2|<\infty$, we have $|ZQ''\backslash M(\bbf^{b_{10}+2})|<\infty$.

(ii) Next we will show that $|Q'\backslash M_{\gamma_1}(\bbf^n)|<\infty$ without the assumption on $\bbf$ where $M_{\gamma_1}(\bbf^n)$ denote the Grassmann variety consisting of $\gamma_1$-dimensional subspaces in $\bbf^n$. Let $V$ be a $\gamma_1$-dimensional subspace in $\bbf^n$. Put $\gamma'_1=\dim (V\cap U_{(3)}^+)$. Then we can take an element $g\in Q'$ of the form
$$\bp A & * \\ 0 & I_{b_{10}+2} \ep$$
such that
$$gV=\bbf e_1\oplus\cdots\oplus \bbf e_{\gamma'_1}\oplus V'$$
with some subspace $V'$ of $U_{(10)}\oplus U_{(15)}$. Since $|Q''\backslash M_k(\bbf^{b_{10}+2})|<\infty$ for any $k$ by Lemma \ref{lem11.13} in the appendix, we have 
$|Q'\backslash M_{\gamma'_1}(\bbf^n)|<\infty$.

(E) Case of $b_1+b_2=b_5+b_6=1$. We have
$$U_+=U_{(i)}^+\oplus U_{(3)}^+\oplus U_{(10)}^+\oplus U_{(j)}^+\mand U_-=\bbf e_1\oplus \bbf e_{n+1}$$
with $i=1$ or $2$, $j=5$ or $\overline{6}$, $U_{(i)}^+=\bbf e_1,\ U_{(3)}^+=\bbf e_2\oplus\cdots\oplus \bbf e_{b_3+1},\ U_{(10)}^+=\bbf e_{b_3+2}\oplus\cdots\oplus \bbf e_{n-1}$ and $U_{(j)}^+=\bbf e_n$.

(E.1) Case of $(i,j)=(1,5),\ (2,5)$ or $(1,\overline{6})$. If $U_-^1=\bbf e_1$, then we have $Q'=R_V(U_+)$. So we may assume $U_-^1\ne \bbf e_1$ and hence
$$U_-^1=\bbf(e_{n+1}+\lambda e_1)$$
with some $\lambda\in\bbf$. Define an element $g_\lambda\in R_V$ by
$$g_\lambda e_{n+1}=e_{n+1}+\lambda e_1,\quad g_\lambda e_{2n}=e_{2n}-\lambda e_n$$
and $g_\lambda e_k=e_k$ for all $k\ne n+1,2n$. Then we have $g_\lambda^{-1}U_-^1=\bbf e_{n+1}$. So we may assume
$$U_-^1=\bbf e_{n+1}.$$

We can show $Q'=R_V(U_+)$ as follows. Let $g$ be an element of $R_V$. Then we have
$$g\bbf e_{n+1}=\bbf(e_{n+1}+\lambda e_1)$$
with some $\lambda\in\bbf$ since $gU_-=U_-$ and since $g\bbf e_1=\bbf e_1$. So we have $g_\lambda^{-1}g\bbf e_{n+1}=\bbf e_{n+1}$ and hence $g_\lambda^{-1}g|_{U_+}\in Q'$. Since $g_\lambda$ acts trivially on $U_+$, we have $g|_{U_+}\in Q'$. Hence $Q'=R_V(U_+)$.

(E.2) Case of $(i,j)=(2,\overline{6})$. If $U_-^1=\bbf e_1$ or $\bbf e_{n+1}$, then we have $Q'=R_V(U_+)$. So we may assume
$$U_-^1=\bbf(e_{n+1}+\lambda e_1)$$
with some $\lambda\in\bbf^\times$. Take an element $h\in R_V$ such that
$$he_1=\lambda^{-1}e_1,\quad he_{2n}=\lambda e_{2n}$$
and that $he_k=e_k$ for all $k\ne 1,2n$ by Lemma \ref{lem5.10}. Then we have $hU_-^1=\bbf(e_1+e_{n+1})$. So we may assume
$$U_-^1=\bbf(e_1+e_{n+1}).$$
For this $U_-^1$, the group $Q'$ consists of matrices
$$\bp \lambda & 0 & * & * \\
0 & A & * & * \\
0 & 0 & B & * \\
0 & 0 & 0 & \lambda^{-1} \ep$$
with $\lambda\in\bbf^\times,\ A\in{\rm GL}_{b_3}(\bbf)$ and $B\in{\rm GL}_{b_{10}}(\bbf)$. Since the subgroup of ${\rm GL}_{b_3+1}(\bbf)\times\bbf^\times$ consisting of elements
$$\left(\bp \lambda & 0 \\ 0 & A \ep,\ \lambda^{-1}\right)$$
has a finite number of orbits on $M(\bbf^{b_3+1})\times M(\bbf)$, we have $|Q'\backslash M|<\infty$ by Corollary \ref{cor8.7}.

(F) Case of $b_1=b_8=1$. We have
$$U_+=U_{(1)}^+\oplus U_{(3)}^+\oplus U_{(8)}^+\oplus U_{(10)}^+\oplus U_{(\overline{8})}^+\mand U_-=\bbf e_1\oplus \bbf e_{n+1}$$
with $U_{(1)}^+=\bbf e_1,\ U_{(3)}^+=\bbf e_2\oplus\cdots\oplus \bbf e_{b_3+1},\ U_{(8)}^+=\bbf e_{b_3+2},\ U_{(10)}^+=\bbf e_{b_3+3}\oplus\cdots\oplus \bbf e_{n-1}$ and $U_{(\overline{8})}^+=\bbf e_n$. If $U_-^1=\bbf e_1$, then we have $Q'=R_V(U_+)$. So we may assume $U_-^1\ne \bbf e_1$ and hence
$$U_-^1=\bbf(e_{n+1}+\lambda e_1).$$
By the same argument as in (E.1), we may assume $U_-^1=\bbf e_{n+1}$ and we have $Q'=R_V(U_+)$.

(G) Case of $b_2=b_8=1$. We have
$$U_+=U_{(2)}^+\oplus U_{(3)}^+\oplus U_{(8)}^+\oplus U_{(10)}^+\oplus U_{(\overline{8})}^+\mand U_-=\bbf e_1\oplus \bbf e_{n+1}$$
with $U_{(2)}^+=\bbf e_1,\ U_{(3)}^+=\bbf e_2\oplus\cdots\oplus \bbf e_{b_3+1},\ U_{(8)}^+=\bbf e_{b_3+2},\ U_{(10)}^+=\bbf e_{b_3+3}\oplus\cdots\oplus \bbf e_{n-1}$ and $U_{(\overline{8})}^+=\bbf e_n$.  If $U_-^1=\bbf e_1$, then we have $Q'=R_V(U_+)$. So we may assume $U_-^1\ne \bbf e_1$ and hence
$$U_-^1=\bbf(e_{n+1}+\lambda e_1)$$
with some $\lambda\in\bbf$. Define an element $g\in R_V$ by
$$ge_{n+1}=e_{n+1}+\lambda e_1,\quad ge_{b_3+2}=e_{b_3+2}-\lambda e_1,\quad ge_{2n}=e_{2n}+\lambda(e_{\overline{b_3+2}}-e_n)$$
and $ge_k=e_k$ for all $k\ne b_3+2,n+1,2n$. Then we have $g^{-1}U_-^1=\bbf e_{n+1}$. So we may assume
$$U_-^1=\bbf e_{n+1}.$$
For this $U_-^1$, the group $Q'$ consists of matrices
$$\bp \lambda & 0 & 0 & * & * \\
0 & A & * & * & * \\
0 & 0 & \mu & * & * \\
0 & 0 & 0 & B & * \\
0 & 0 & 0 & 0 & \mu^{-1} \ep$$
with $\lambda,\mu\in\bbf^\times,\ A\in{\rm GL}_{b_3}(\bbf)$ and $B\in{\rm GL}_{b_{10}}(\bbf)$. The subgroup of ${\rm GL}_{b_3+2}(\bbf)\times \bbf^\times$ consisting of elements
$$\left(\bp \lambda & 0 & 0 \\ 0 & A & * \\ 0 & 0 & \mu\ep,\ \mu^{-1}\right)$$
has a finite number of orbits on $M(\bbf^{b_3+2})\times M(\bbf)$ by Lemma \ref{lem5.1}. So we have $|Q'\backslash M|<\infty$ by Corollary \ref{cor8.7}.

(H) Case of $b_6=b_8=1$. We have
$$U_+=U_{(3)}^+\oplus U_{(8)}^+\oplus U_{(10)}^+\oplus U_{(\overline{8})}^+\oplus U_{(\overline{6})}^+\mand U_-=\bbf e_{n+1}\oplus \bbf e_{n+2}$$
with $U_{(3)}^+=\bbf e_1\oplus\cdots\oplus \bbf e_{b_3},\ U_{(8)}^+=\bbf e_{b_3+1},\ U_{(10)}^+=\bbf e_{b_3+2}\oplus\cdots\oplus \bbf e_{n-2},\ U_{(\overline{8})}^+=\bbf e_{n-1}$ and $U_{(\overline{6})}^+=\bbf e_n$. If $U_-^1=\bbf e_{n+1}$, then we have $Q'=R_V(U_+)$. So we may assume $U_-^1\ne \bbf e_{n+1}$ and hence
$$U_-^1=\bbf(e_{n+2}+\lambda e_{n+1})$$
with some $\lambda\in\bbf$. Define an element $g\in R_V$ by
$$ge_{n+2}=e_{n+2}+\lambda e_{n+1},\quad ge_n=e_n-\lambda e_{n-1}$$
and $ge_k=e_k$ for all $k\ne n,n+2$. Then we have $g^{-1}U_-^1=\bbf e_{n+2}$. So we may assume
$$U_-^1=\bbf e_{n+2}.$$
For this $U_-^1$, the group $Q'$ consists of matrices
$$\bp A & * & * & * & * \\
0 & \lambda & * & * & * \\
0 & 0 & B & * & * \\
0 & 0 & 0 & \lambda^{-1} & 0 \\
0 & 0 & 0 & 0 & \nu \ep$$
with $\lambda,\mu\in\bbf^\times,\ A\in{\rm GL}_{b_3}(\bbf)$ and $B\in{\rm GL}_{b_{10}}(\bbf)$. Since the subgroup of ${\rm GL}_1(\bbf)\times {\rm GL}_2(\bbf)$ consisting of elements
$$\left(\lambda,\ \bp \lambda^{-1} & 0 \\ 0 & \mu \ep\right)$$
has three orbits on $M(\bbf)\times M(\bbf^2)$, we have $|Q'\backslash M|<\infty$ by Corollary \ref{cor8.7}.

(I) Case of $b_5=b_8=1$. We have
$$U_+=U_{(3)}^+\oplus U_{(8)}^+\oplus U_{(10)}^+\oplus U_{(5)}^+\oplus U_{(\overline{8})}^+\mand U_-=\bbf e_{n+1}\oplus \bbf e_{n+2}$$
with $U_{(3)}^+=\bbf e_1\oplus\cdots\oplus \bbf e_{b_3},\ U_{(8)}^+=\bbf e_{b_3+1},\ U_{(10)}^+=\bbf e_{b_3+2}\oplus\cdots\oplus \bbf e_{n-2},\ U_{(5)}^+=\bbf e_{n-1}$ and $U_{(\overline{8})}^+=\bbf e_n$. If $U_-^1=\bbf e_{n+1}$, then we have $Q'=R_V(U_+)$. So we may assume $U_-^1\ne \bbf e_{n+1}$ and hence
$$U_-^1=\bbf(e_{n+2}+\lambda e_{n+1})$$
with some $\lambda\in\bbf$. Define an element $g\in R_V$ by
$$ge_{n+2}=e_{n+2}+\lambda e_{n+1},\quad ge_n=e_n-\lambda e_{n-1}$$
and $ge_k=e_k$ for all $k\ne n,n+2$. Then we have $g^{-1}U_-^1=\bbf e_{n+2}$. So we may assume
$$U_-^1=\bbf e_{n+2}.$$
With respect to the basis $e_1,\ldots,e_{n-2},e_n,e_{n-1}$ of $U_+$, elements of $Q'$ are represented by matrices
$$\bp A & * & * & * & * \\
0 & \lambda & * & * & 0 \\
0 & 0 & B & * & 0 \\
0 & 0 & 0 & \lambda^{-1} & 0 \\
0 & 0 & 0 & 0 & \mu \ep$$
with $\lambda,\mu\in\bbf^\times,\ A\in{\rm GL}_{b_3}(\bbf)$ and $B\in{\rm GL}_{b_{10}}(\bbf)$.

(i) We will first show that
$$|\bbf^\times/(\bbf^\times)^2|<\infty \Longrightarrow |Q'\backslash M|<\infty.$$
By Corollary \ref{cor8.7}, we have only to show that the subgroup $Q''$ of ${\rm GL}_{n-b_3}(\bbf)$ consisting of matrices
$$\bp \lambda & * & * & 0 \\
0 & B & * & 0 \\
0 & 0 & \lambda^{-1} & 0 \\
0 & 0 & 0 & \mu \ep$$
has a finite number of orbits on $M(\bbf^{n-b_3})$. Since the center $Z=\{\nu I_{n-b_3}\mid \nu\in\bbf^\times\}$ acts trivially on $M(\bbf^{n-b_3})$, we have only to consider the action of
$$ZQ''=\left\{\bp \lambda_1 & * & * & 0 \\
0 & B & * & 0 \\
0 & 0 & \lambda_2 & 0 \\
0 & 0 & 0 & \mu \ep\Bigm| \lambda_1,\lambda_2,\mu\in\bbf^\times,\ B\in{\rm GL}_{b_{10}}(\bbf),\ \frac{\lambda_1}{\lambda_2}\in (\bbf^\times)^2\right\}.$$
On the other hand, the group
$$Q'''=\left\{\bp \lambda_1 & * & * & 0 \\
0 & B & * & 0 \\
0 & 0 & \lambda_2 & 0 \\
0 & 0 & 0 & \mu \ep \Bigm|\lambda_1,\lambda_2,\mu\in\bbf^\times,\ B\in{\rm GL}_{b_{10}}(\bbf)\right\}$$
has a finite number of orbits on $M(\bbf^{n-b_3})$ by Lemma \ref{lem5.1}. Since the index of $ZQ''$ in $Q'''$ is $|\bbf^\times/(\bbf^\times)^2|$, we have
$$|ZQ''\backslash M(\bbf^{n-b_3})|<\infty.$$

(ii) Next we will show that $|Q'\backslash M_{\gamma_1}(\bbf^n)|<\infty$ without the assumption on $\bbf$. By the same argument as in (F), we have only to show $|Q''\backslash M_k(\bbf^{n-b_3})|<\infty$ for $k=0,\ldots,\gamma_1$. Let $H$ denote the subgroup of ${\rm GL}_{n-b_3-1}(\bbf)$ consisting of matrices
$$\bp \lambda & * & * \\
0 & B & * \\
0 & 0 & \lambda^{-1} \ep$$
with $B\in{\rm GL}_{b_{10}}(\bbf)$ and $\lambda\in\bbf^\times$. Then we have $|H\backslash M(\bbf^{n-b_3-1})|<\infty$ by the Bruhat decomposition. Applying Proposition 6.3 in \cite{M3}, we have
$$|Q''\backslash M_k(\bbf^{n-b_3})|<\infty.$$

Thus we have completed the proof of Proposition \ref{prop4.3''}.

\section{Proof of Proposition \ref{prop4.4}}

In each $G$-orbit of the triple flag variety $\mct_{(n),(1),(n)}$, we can take a triple flag $(U_+,U_-,V)$ such that
\begin{align*}
U_+ =\bbf e_1\oplus\cdots\oplus \bbf e_n,\quad U_- =\bbf e_1\mbox{ or }\bbf e_{n+1} \quad
\mbox{and that }V & =\bigoplus_{j=1}^{15} V_{(j)}
\end{align*}
by Proposition \ref{prop5.1} and Theorem \ref{th5.9}. Since $\dim U_+=n$, we have $a_-=a_2=0$ and hence
\begin{equation}
b_j=0\quad\mbox{for }j=4,7,9,11,12,13,14. \label{eq8.1}
\end{equation}
On the other hand, since $\dim U_-=a_0+a_1=1$, we have
$$b_1+b_2+b_5+b_6+b_8=1.$$
Note that
$$U_-=\begin{cases} \bbf e_1 & \text{if $b_1+b_2=a_0=1$,} \\ \bbf e_{n+1} & \text{if $b_5+b_6+b_8=a_1=1$.} \end{cases}$$
First we take representatives $U_+^1$ of $R_V$-orbits of $\beta$-dimensional subspaces of $U_+$. Then we show that the restriction $Q_V$ of $R'_V=\{g\in R_V\mid gU_+^1=U_+^1\}$ to $V$ has a finite number of orbits on the full flag variety $M(V)$ of ${\rm GL}(V)$.

(A) Case of $b_1=1$. The space $U_+$ is decomposed as
$$U_+=U_{(1)}^+\oplus U_{(3)}^+\oplus U_{(10)}^+$$
with $U_{(1)}^+=\bbf e_1,\ U_{(3)}^+=\bbf e_2\oplus\cdots\oplus \bbf e_{b_3+1}$ and $U_{(10)}^+=\bbf e_{b_3+2}\oplus\cdots\oplus \bbf e_n$.

First suppose that $U_+^1\supset U_{(1)}^+$. Then we can take a $g\in R_V$ such that
$$gU_+^1=U_{(1)}^+\oplus U_{(3),1}^+\oplus U_{(10),1}^+$$
where $U_{(3),1}^+=\bbf e_2\oplus\cdots\oplus \bbf e_{b'_3+1}$ and $U_{(10),1}^+=\bbf e_{b_3+2}\oplus\cdots\oplus \bbf e_{b_3+b'_{10}+1}$ with some $b'_3\le b_3$ and $b'_{10}\le b_{10}$, respectively, by Lemma \ref{lem5.10} and Lemma \ref{lem5.11}. So we may assume
$$U_+^1=U_{(1)}^+\oplus U_{(3),1}^+\oplus U_{(10),1}^+.$$
With respect to the basis
\begin{equation}
e_1,\ldots,e_{b_3+1},e_{n+1},\ldots,e_{\overline{b_3+2}} \label{eq8.2'}
\end{equation}
of $V$, elements of $Q_V$ are represented by matrices
$$\bp \lambda & * & * & * & * \\
0 & A & * & * & * \\
0 & 0 & B & * & * \\
0 & 0 & 0 & C & * \\
0 & 0 & 0 & 0 & D \ep$$
with $\lambda\in\bbf^\times,\ A\in{\rm GL}_{b'_3}(\bbf),\ B\in{\rm GL}_{b_3-b'_3}(\bbf),\ C\in{\rm GL}_{b_{10}-b'_{10}}(\bbf)$ and $D\in{\rm GL}_{b'_{10}}(\bbf)$ (c.f. Section 5 in \cite{M3}). So we have $|Q_V\backslash M(V)|<\infty$ by the Bruhat decomposition.

Next suppose that $U_+^1\not\supset U_{(1)}^+$. Then we can take a $g\in R_V$ such that
$$gU_+^1=U_{(3),1}^+\oplus U_{(10),1}^+$$
by Lemma \ref{lem5.10} and Lemma \ref{lem5.11}. So we may assume
$$U_+^1=U_{(3),1}^+\oplus U_{(10),1}^+.$$
With respect to the basis (\ref{eq8.2'}) of $V$, elements of $Q_V$ are represented by matrices
$$\bp \lambda & 0 & * & * & * \\
0 & A & * & * & * \\
0 & 0 & B & * & * \\
0 & 0 & 0 & C & * \\
0 & 0 & 0 & 0 & D \ep$$
with $\lambda\in\bbf^\times,\ A\in{\rm GL}_{b'_3}(\bbf),\ B\in{\rm GL}_{b_3-b'_3}(\bbf),\ C\in{\rm GL}_{b_{10}-b'_{10}}(\bbf)$ and $D\in{\rm GL}_{b'_{10}}(\bbf)$. Since the subgroup of ${\rm GL}_{b'_3+1}(\bbf)$ consisting of matrices
$$\bp \lambda & 0 \\ 0 & A \ep$$
has a finite number of orbits on $M(\bbf^{b'_3+1})$, we have $|Q_V\backslash M(V)|<\infty$ by Corollary \ref{cor8.7}.

(B) Case of $b_2=1$. The space $U_+$ is decomposed as
$$U_+=U_{(2)}^+\oplus U_{(3)}^+\oplus U_{(10)}^+$$
with $U_{(2)}^+=\bbf e_1,\ U_{(3)}^+=\bbf e_2\oplus\cdots\oplus \bbf e_{b_3+1}$ and $U_{(10)}^+=\bbf e_{b_3+2}\oplus\cdots\oplus \bbf e_n$.

First suppose that $U_+^1\supset U_{(2)}^+$. Then we may assume
$$U_+^1=U_{(2)}^+\oplus U_{(3),1}^+\oplus U_{(10),1}^+$$
by Lemma \ref{lem5.10} and Lemma \ref{lem5.11}. With respect to the basis
\begin{equation}
e_2,\ldots,e_{b_3+1},e_{n+1},\ldots,e_{\overline{b_3+2}},e_{2n} \label{eq8.3'}
\end{equation}
of $V$, elements of $Q_V$ are represented by matrices
\begin{equation}
\bp A & * & * & * & * \\
0 & B & * & * & * \\
0 & 0 & C & * & * \\
0 & 0 & 0 & D & * \\
0 & 0 & 0 & 0 & \lambda \ep \label{eq8.4}
\end{equation}
with $\lambda\in\bbf^\times,\ A\in{\rm GL}_{b'_3}(\bbf),\ B\in{\rm GL}_{b_3-b'_3}(\bbf),\ C\in{\rm GL}_{b_{10}-b'_{10}}(\bbf)$ and $D\in{\rm GL}_{b'_{10}}(\bbf)$. So we have $|Q_V\backslash M(V)|<\infty$ by the Bruhat decomposition.

Next suppose that $U_+^1\subset U_{(3)}^+\oplus U_{(10)}^+$. Then we 
may assume
$$U_+^1=U_{(3),1}^+\oplus U_{(10),1}^+$$
by Lemma \ref{lem5.10} and Lemma \ref{lem5.11}. With respect to the basis (\ref{eq8.3'}), elements of $Q_V$ are represented by the matrices (\ref{eq8.4}). So we have $|Q_V\backslash M(V)|<\infty$ by the Bruhat decomposition.

Finally suppose that $U_+^1\not\supset U_{(2)}^+$ and that $U_+^1\not\subset U_{(3)}^+\oplus U_{(10)}^+$. Then we can take a $g\in R_V$ such that
$$gU_+^1=U_{(3),1}^+\oplus \bbf (e_1+e_{b'_3+2})\oplus U_{(10),1}^+$$
by Lemma \ref{lem5.10} and Lemma \ref{lem5.11}. So we may assume
$$U_+^1=U_{(3),1}^+\oplus \bbf (e_1+e_{b'_3+2})\oplus U_{(10),1}^+.$$
With respect to the basis (\ref{eq8.3'}), elements of $Q_V$ are represented by the matrices
$$\bp A & * & * & * & * & * \\
0 & \lambda & * & * & * & * \\
0 & 0 & B & * & * & * \\
0 & 0 & 0 & C & * & * \\
0 & 0 & 0 & 0 & D & * \\
0 & 0 & 0 & 0 & 0 & \lambda^{-1} \ep$$
with $\lambda\in\bbf^\times,\ A\in{\rm GL}_{b'_3}(\bbf),\ B\in{\rm GL}_{b_3-b'_3-1}(\bbf),\ C\in{\rm GL}_{b_{10}-b'_{10}}(\bbf)$ and $D\in{\rm GL}_{b'_{10}}(\bbf)$. So we have $|Q_V\backslash M(V)|<\infty$ by the Bruhat decomposition.

(C) Case of $b_5=1$. The space $U_+$ is decomposed as
$$U_+=U_{(3)}^+\oplus U_{(10)}^+\oplus U_{(5)}^+$$
with $U_{(3)}^+=\bbf e_1\oplus\cdots\oplus \bbf e_{b_3},\ U_{(10)}^+=\bbf e_{b_3+1}\oplus\cdots\oplus \bbf e_{n-1}$ and $U_{(5)}^+=\bbf e_n$.

First suppose that $U_+^1\subset U_{(3)}^+\oplus U_{(10)}^+$. Then we may assume
$$U_+^1=U_{(3),1}^+\oplus U_{(10),1}^+$$
by Lemma \ref{lem5.10} and Lemma \ref{lem5.11}. With respect to the basis
\begin{equation}
e_1,\ldots,e_{b_3},e_n,e_{n+2},\ldots,e_{\overline{b_3+1}} \label{eq8.5}
\end{equation}
of $V$, elements of $Q_V$ are represented by matrices
$$\bp A & * & * & * & * \\
0 & B & * & * & * \\
0 & 0 & \lambda & 0 & 0 \\
0 & 0 & 0 & C & * \\
0 & 0 & 0 & 0 & D \ep$$
with $\lambda\in\bbf^\times,\ A\in{\rm GL}_{b'_3}(\bbf),\ B\in{\rm GL}_{b_3-b'_3}(\bbf),\ C\in{\rm GL}_{n_{10}-b'_{10}}(\bbf)$ and $D\in{\rm GL}_{b'_{10}}(\bbf)$. By Lemma \ref{lem5.1}, the subgroup of ${\rm GL}_{b_{10}+1}(\bbf)$ consisting of matrices
$$\bp \lambda & 0 & 0 \\ 0 & C & * \\ 0 & 0 & D \ep$$
has a finite number of orbits on $M(\bbf^{b_{10}+1})$. So we have $|Q_V\backslash M(V)|<\infty$ by Corollary \ref{cor8.7}.

Next suppose that $U_+^1\supset U_{(5)}^+$. Then we may assume
$$U_+^1=U_{(3),1}^+\oplus U_{(10),1}^+\oplus U_{(5)}^+$$
by Lemma \ref{lem5.10} and Lemma \ref{lem5.11}. With respect to the basis (\ref{eq8.5}), elements of $Q_V$ are represented by matrices
$$\bp A & * & * & * & * \\
0 & B & 0 & * & * \\
0 & 0 & \lambda & 0 & 0 \\
0 & 0 & 0 & C & * \\
0 & 0 & 0 & 0 & D \ep$$
with $\lambda\in\bbf^\times,\ A\in{\rm GL}_{b'_3}(\bbf),\ B\in{\rm GL}_{b_3-b'_3}(\bbf),\ C\in{\rm GL}_{b_{10}-b'_{10}}(\bbf)$ and $D\in{\rm GL}_{b'_{10}}(\bbf)$. By Lemma \ref{lem5.1}, the subgroup of ${\rm GL}_{n-b'_3}(\bbf)$ consisting of matrices
$$\bp B & 0 & * & * \\ 0 & \lambda & 0 & 0 \\ 0 & 0 & C & * \\ 0 & 0 & 0 & D \ep$$
has a finite number of orbits on $M(\bbf^{n-b_3})$. So we have $|Q_V\backslash M(V)|<\infty$ by Corollary \ref{cor8.7}.

Finally suppose that $U_+^1\not\subset U_{(3)}^+\oplus U_{(10)}^+$ and that $U_+^1\not\supset U_{(5)}^+$. Then we may assume
$$U_+^1=U_{(3),1}^+\oplus U_{(10),1}^+\oplus \bbf(e_n+e_{b_3+b'_{10}+1})$$
by Lemma \ref{lem5.10} and Lemma \ref{lem5.11}. With respect to the basis (\ref{eq8.5}), elements of $Q_V$ are represented by matrices
$$\bp A & * & * & * & * & * \\
0 & B & * & * & * & * \\
0 & 0 & \lambda & 0 & 0 & 0 \\
0 & 0 & 0 & C & * & * \\
0 & 0 & 0 & 0 & \lambda^{-1} & * \\
0 & 0 & 0 & 0 & 0 & D \ep$$
with $\lambda\in\bbf^\times,\ A\in{\rm GL}_{b'_3}(\bbf),\ B\in{\rm GL}_{b_3-b'_3}(\bbf),\ C\in{\rm GL}_{b_{10}-b'_{10}-1}(\bbf)$ and $D\in{\rm GL}_{b'_{10}}(\bbf)$. Let $H$ denote the subgroup of ${\rm GL}_{b_{10}+1}(\bbf)$ consisting of matrices
$$\bp \lambda & 0 & 0 & 0 & \\ 0 & C & * & * \\ 0 & 0 & \lambda^{-1} & * \\ 0 & 0 & 0 & D \ep$$
with $\lambda\in\bbf^\times,\ C\in{\rm GL}_{b_{10}-b'_{10}-1}(\bbf)$ and $D\in{\rm GL}_{b'_{10}}(\bbf)$. By Corollary \ref{cor8.7}, we have only to show that $|H\backslash M(\bbf^{b_{10}+1})|<\infty$. Let $H'$ be the subgroup of ${\rm GL}_{b_{10}+1}(\bbf)$ consisting of matrices
$$\bp \mu_1 & 0 & 0 & 0 & \\ 0 & C & * & * \\ 0 & 0 & \mu_2 & * \\ 0 & 0 & 0 & D \ep$$
with $\mu_1,\mu_2\in\bbf^\times,\ C\in{\rm GL}_{b_{10}-b'_{10}-1}(\bbf)$ and $D\in{\rm GL}_{b'_{10}}(\bbf)$. Then we have

\noindent $|H'\backslash M(\bbf^{b_{10}+1})|<\infty$ by Lemma \ref{lem5.1}. Since the center $Z=\{\nu I_{b_{10}+1}\mid \nu\in\bbf^\times\}$ of ${\rm GL}_{b_{10}+1}(\bbf)$ acts trivially on $M(\bbf^{b_{10}+1})$ and since $|H'/ZH|=|\bbf^\times/(\bbf^\times)^2|<\infty$, we have $|H\backslash M(\bbf^{b_{10}+1})|<\infty$.

(D) Case of $b_6=1$. The space $U_+$ is decomposed as
$$U_+=U_{(3)}^+\oplus U_{(10)}^+\oplus U_{(\overline{6})}^+$$
with $U_{(3)}^+=\bbf e_1\oplus\cdots\oplus \bbf e_{b_3},\ U_{(10)}^+=\bbf e_{b_3+1}\oplus\cdots\oplus \bbf e_{n-1}$ and $U_{(\overline{6})}^+=\bbf e_n$.

First suppose that $U_+^1\subset U_{(3)}^+\oplus U_{(10)}^+$. Then we may assume
$$U_+^1=U_{(3),1}^+\oplus U_{(10),1}^+$$
by Lemma \ref{lem5.10} and Lemma \ref{lem5.11}. With respect to the basis
\begin{equation}
e_1,\ldots,e_{b_3},e_{n+1},e_{n+2},\ldots,e_{\overline{b_3+1}} \label{eq8.6}
\end{equation}
of $V$, elements of $Q_V$ are represented by matrices
$$\bp A & * & 0 & * & * \\
0 & B & 0 & * & * \\
0 & 0 & \lambda & * & * \\
0 & 0 & 0 & C & * \\
0 & 0 & 0 & 0 & D \ep$$
with $\lambda\in\bbf^\times,\ A\in{\rm GL}_{b'_3}(\bbf),\ B\in{\rm GL}_{b_3-b'_3}(\bbf),\ C\in{\rm GL}_{b_{10}-b'_{10}}(\bbf)$ and $D\in{\rm GL}_{b'_{10}}(\bbf)$. By Lemma \ref{lem5.1}, the subgroup of ${\rm GL}_{b_3+1}(\bbf)$ consisting of matrices
$$\bp A & * & 0 \\ 0 & B & 0 \\ 0 & 0 & \lambda \ep$$
has a finite number of orbits on $M(\bbf^{b_3+1})$. So we have $|Q_V\backslash M(V)|<\infty$ by Corollary \ref{cor8.7}.

Next suppose that $U_+^1\not\subset U_{(3)}^+\oplus U_{(10)}^+$. Then we may assume
$$U_+^1=U_{(3),1}^+\oplus U_{(10),1}^+\oplus U_{(\overline{6})}^+$$
by Lemma \ref{lem5.10} and Lemma \ref{lem5.11}. With respect to the basis (\ref{eq8.6}), elements of $Q_V$ are represented by matrices
$$\bp A & * & 0 & * & * \\
0 & B & 0 & * & * \\
0 & 0 & \lambda & 0 & * \\
0 & 0 & 0 & C & * \\
0 & 0 & 0 & 0 & D \ep$$
with $\lambda\in\bbf^\times,\ A\in{\rm GL}_{b'_3}(\bbf),\ B\in{\rm GL}_{b_3-b'_3}(\bbf),\ C\in{\rm GL}_{b_{10}-b'_{10}}(\bbf)$ and $D\in{\rm GL}_{b'_{10}}(\bbf)$. By Lemma \ref{lem5.1}, the subgroup of ${\rm GL}_{b_3+b'_{10}+1}(\bbf)$ consisting of matrices
$$\bp A & * & 0 & * \\ 0 & B & 0 & * \\ 0 & 0 & \lambda & 0 \\ 0 & 0 & 0 & C \ep$$
has a finite number of orbits on $M(\bbf^{b_3+b'_{10}+1})$. So we have $|Q_V\backslash M(V)|<\infty$ by Corollary \ref{cor8.7}.

(E) Case of $b_8=1$. The space $U_+$ is decomposed as
$$U_+=U_{(3)}^+\oplus U_{(8)}^+\oplus U_{(10)}^+\oplus U_{(\overline{8})}^+$$
with $U_{(3)}^+=\bbf e_1\oplus\cdots\oplus \bbf e_{b_3},\ U_{(8)}^+= \bbf e_{b_3+1},\ U_{(10)}^+=\bbf e_{b_3+2}\oplus\cdots\oplus \bbf e_{n-1}$ and $U_{(\overline{8})}^+=\bbf e_n$. By Lemma \ref{lem5.10} and Lemma \ref{lem5.11}, we may assume
$$U_+^1=U_{(3),1}^+\oplus U_{(8),1}^+\oplus U_{(10),1}^+\oplus U_{(\overline{8}),1}^+$$
with $U_{(8),1}^+=\{0\}$ or $U_{(8)}^+$ and $U_{(\overline{8}),1}^+=\{0\}$ or $U_{(\overline{8})}^+$.

(E.1) Case of $U_{(8),1}^+=U_{(\overline{8}),1}^+=\{0\}$. With respect to the basis
\begin{equation}
e_1,\ldots,e_{b_3},e_{b_3+1}+e_{n+1},e_{n+2},\ldots,e_{\overline{b_3+2}},e_{\overline{b_3+1}}-e_n \label{eq8.7}
\end{equation}
of $V$, elements of $Q_V$ are represented by matrices
$$\bp A & * & * & * & * & * \\
0 & B & * & * & * & * \\
0 & 0 & \lambda & * & * & * \\
0 & 0 & 0 & C & * & * \\
0 & 0 & 0 & 0 & D & 0 \\
0 & 0 & 0 & 0 & 0 & \lambda^{-1} \ep$$
with $\lambda\in\bbf^\times,\ A\in{\rm GL}_{b'_3}(\bbf),\ B\in{\rm GL}_{b_3-b'_3}(\bbf),\ C\in{\rm GL}_{b_{10}-b'_{10}}(\bbf)$ and $D\in{\rm GL}_{b'_{10}}(\bbf)$. The subgroup of $\bbf^\times\times {\rm GL}_{b'_{10}}(\bbf)$ consisting of elements
$$\left(\lambda,\bp D & 0 \\ 0 & \lambda^{-1} \ep\right)$$
has a finite number of orbits on $M(\bbf)\times M(\bbf^{b'_{10}})$. So we have $|Q_V\backslash M(V)|<\infty$ by Corollary \ref{cor8.7}.

(E.2) Case of $U_{(8),1}^+=U_{(8)}^+$ and $U_{(\overline{8}),1}^+=\{0\}$. With respect to the basis (\ref{eq8.7}), elements of $Q_V$ are represented by matrices
$$\bp A & * & * & * & * & * \\
0 & B & 0 & * & * & * \\
0 & 0 & \lambda & * & * & * \\
0 & 0 & 0 & C & * & * \\
0 & 0 & 0 & 0 & D & * \\
0 & 0 & 0 & 0 & 0 & \lambda^{-1} \ep$$
with $\lambda\in\bbf^\times,\ A\in{\rm GL}_{b'_3}(\bbf),\ B\in{\rm GL}_{b_3-b'_3}(\bbf),\ C\in{\rm GL}_{b_{10}-b'_{10}}(\bbf)$ and $D\in{\rm GL}_{b'_{10}}(\bbf)$. Since the subgroup of ${\rm GL}_{b_3-b'_3}(\bbf)\times \bbf^\times$ consisting of elements
$$\left(\bp B & 0 \\ 0 & \lambda \ep,\lambda^{-1}\right)$$
has a finite number of orbits on $M(\bbf^{b_3-b'_3})\times M(\bbf)$, we have $|Q_V\backslash M(V)|<\infty$ by Corollary \ref{cor8.7}.

(E.3) Case of $U_{(8),1}^+=\{0\}$ and $U_{(\overline{8}),1}^+=U_{(\overline{8})}^+$. With respect to the basis (\ref{eq8.7}), elements of $Q_V$ are represented by matrices
$$\bp A & * & * & * & * & * \\
0 & B & * & * & * & * \\
0 & 0 & \lambda & 0 & * & 0 \\
0 & 0 & 0 & C & * & * \\
0 & 0 & 0 & 0 & D & 0 \\
0 & 0 & 0 & 0 & 0 & \lambda^{-1} \ep$$
with $\lambda\in\bbf^\times,\ A\in{\rm GL}_{b'_3}(\bbf),\ B\in{\rm GL}_{b_3-b'_3}(\bbf),\ C\in{\rm GL}_{b_{10}-b'_{10}}(\bbf)$ and $D\in{\rm GL}_{b'_{10}}(\bbf)$. Let $H$ denote the subgroup of ${\rm GL}_{b_{10}+2}(\bbf)$ consisting of matrices
$$\bp \lambda & 0 & * & 0 \\ 0 & C & * & * \\ 0 & 0 & D & 0 \\ 0 & 0 & 0 & \lambda^{-1} \ep$$
with $\lambda\in\bbf^\times,\ C\in{\rm GL}_{b_{10}-b'_{10}}(\bbf)$ and $D\in{\rm GL}_{b'_{10}}(\bbf)$. Then we have only to show that $|H\backslash M(\bbf^{b_{10}+2})|<\infty$ by Corollary \ref{cor8.7}. Let $H'$ be the subgroup of ${\rm GL}_{b_{10}+2}(\bbf)$ consisting of matrices
$$\bp \mu_1 & 0 & * & 0 \\ 0 & C & * & * \\ 0 & 0 & D & 0 \\ 0 & 0 & 0 & \mu_2 \ep$$
with $\mu_1,\mu_2\in\bbf^\times,\ C\in{\rm GL}_{b_{10}-b'_{10}}(\bbf)$ and $D\in{\rm GL}_{b'_{10}}(\bbf)$. Then we have $|H'\backslash M(\bbf^{b_{10}+2})|<\infty$ by Lemma \ref{lem9.8}. Since $Z=\{\nu I_{b_{10}+2}\mid \nu\in\bbf^\times\}$ acts trivially on $M(\bbf^{b_{10}+2})$ and since $|H'/ZH|=|\bbf^\times/(\bbf^\times)^2|<\infty$, we have $|H\backslash M(\bbf^{b_{10}+2})|<\infty$.

(E.4) Case of $U_{(8),1}^+=U_{(8)}^+$ and $U_{(\overline{8}),1}^+=U_{(\overline{8})}^+$. With respect to the basis (\ref{eq8.7}), elements of $Q_V$ are represented by matrices
$$\bp A & * & * & * & * & * \\
0 & B & 0 & * & * & * \\
0 & 0 & \lambda & 0 & * & * \\
0 & 0 & 0 & C & * & * \\
0 & 0 & 0 & 0 & D & * \\
0 & 0 & 0 & 0 & 0 & \lambda^{-1} \ep$$
with $\lambda\in\bbf^\times,\ A\in{\rm GL}_{b'_3}(\bbf),\ B\in{\rm GL}_{b_3-b'_3}(\bbf),\ C\in{\rm GL}_{b_{10}-b'_{10}}(\bbf)$ and $D\in{\rm GL}_{b'_{10}}(\bbf)$. By Lemma \ref{lem5.1}, the subgroup of ${\rm GL}_{b_3-b'_3+b_{10}-b'_{10}+1}(\bbf)\times \bbf^\times$ consisting of elements
$$\left(\bp B & 0 & * \\ 0 & \lambda & 0 \\ 0 & 0 & C \ep,\lambda^{-1}\right)$$
has a finite number of orbits on $M(\bbf^{b_3-b'_3+b_{10}-b'_{10}+1})\times M(\bbf)$. So we have $|Q_V\backslash M(V)|<\infty$ by Corollary \ref{cor8.7}.

Thus we have completed the proof of Proposition \ref{prop4.4}.

\section{Appendix}

\subsection{Two lemmas} \label{sec9.1}

Consider the general linear group $G={\rm GL}_n(\bbf)$ over an arbitrary field $\bbf$. Let $\mcb_n$ denote the Borel subgroup of $G$ consisting of upper triangular matrices in $G$ and $\mcb'$ the subgroup of $\mcb_n$ defined by
\begin{align*}
\mcb' & =\{g\in \mcb_n\mid ge_n\in \bbf e_n\} 
=\left\{\bp A & 0 \\ 0 & b \ep \Bigm| A\in\mcb_{n-1},\ b\in\bbf^\times\right\}.
\end{align*}
The following lemma is given in \cite{H}.

\begin{lemma} $|\mcb_n\backslash G/\mcb'|<\infty$.
\label{lem5.1}
\end{lemma}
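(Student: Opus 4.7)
The plan is to identify the right quotient $G/\mcb'$ with a concrete set of geometric data on which $\mcb_n$ acts, and then to enumerate $\mcb_n$-orbits combinatorially. The first step would be to observe that an element $g\in G$ belongs to $\mcb'$ exactly when it stabilises both the partial flag $V_\bullet:V_1\subset V_2\subset\cdots\subset V_{n-1}$ with $V_k=\bbf e_1\oplus\cdots\oplus\bbf e_k$ and the line $L_0=\bbf e_n$, since stabilising every $V_k$ forces $g$ to be upper triangular, while $ge_n\in\bbf e_n$ is equivalent to stabilising $L_0$. Combined with the fact that $G$ acts transitively on pairs $(F,L)$ consisting of a partial flag $F:F_1\subset\cdots\subset F_{n-1}$ with $\dim F_k=k$ and a line $L$ satisfying $L\oplus F_{n-1}=\bbf^n$ (lift adapted bases to all of $\bbf^n$), this yields a $G$-equivariant bijection $G/\mcb'\leftrightarrow\{(F,L)\}$ and reduces the lemma to counting $\mcb_n$-orbits on such pairs.

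Next I would apply the Bruhat decomposition to the flag coordinate alone: the $\mcb_n$-orbits on partial flags $F$ of this type are indexed by $w\in S_n$, with representative $F^{(w)}$ defined by $F^{(w)}_k=\bbf e_{w(1)}\oplus\cdots\oplus\bbf e_{w(k)}$. After replacing $F$ by $F^{(w)}$, the task is to count orbits of the stabiliser $H_w=\mcb_n\cap w\mcb_nw^{-1}$ on the set of lines $L$ transverse to $F^{(w)}_{n-1}=\bigoplus_{j\ne w(n)}\bbf e_j$. Each such line has a unique representative
$$L=\bbf\Bigl(e_{w(n)}+\sum_{j\ne w(n)}\lambda_je_j\Bigr),\qquad \lambda_j\in\bbf,$$
and the decisive observation is that $H_w$ contains the full diagonal torus $T$: the action of ${\rm diag}(t_1,\ldots,t_n)$ sends this $L$ to $\bbf\bigl(e_{w(n)}+\sum(t_j/t_{w(n)})\lambda_je_j\bigr)$, so each nonzero $\lambda_j$ can be rescaled independently by an arbitrary element of $\bbf^\times$. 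Consequently the $T$-orbit of $L$ is determined solely by the support $S=\{j\ne w(n):\lambda_j\ne 0\}\subseteq\{1,\ldots,n\}\setminus\{w(n)\}$, giving at most $2^{n-1}$ torus orbits per $w$. Since $T\subseteq H_w$, the number of $H_w$-orbits is no larger, and summing over $w$ yields the crude bound $|\mcb_n\backslash G/\mcb'|\le n!\cdot 2^{n-1}<\infty$.

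There is no serious obstacle in this argument: because the torus alone already gives finitely many orbits, one never needs to analyse the finer action of the unipotent radical $U_n\cap wU_nw^{-1}$ of $H_w$. The only points requiring a little care are the geometric identification $G/\mcb'\leftrightarrow\{(F,L)\}$ and the verification that the $T$-action indeed scales each coordinate $\lambda_j$ independently; this is also what makes the argument completely field-free, requiring no hypothesis on squares in $\bbf^\times$ or cardinality of $\bbf$.
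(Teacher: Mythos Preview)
Your argument is correct. The paper itself does not prove this lemma but simply cites Hashimoto \cite{H}, whose work gives a complete description of the $\mcb_{n-1}$-orbits on ${\rm GL}_n/\mcb_n$. Your route is genuinely different and more elementary: you identify $G/\mcb'$ with pairs $(F,L)$ consisting of a full flag together with a transverse line, fibre over the flag coordinate via the Bruhat decomposition, and then observe that the diagonal torus alone (contained in every $H_w=\mcb_n\cap w\mcb_nw^{-1}$) already reduces the line coordinate to finitely many support types. This yields only the crude bound $n!\cdot 2^{\,n-1}$ rather than an explicit orbit parametrisation, but since the paper uses Lemma~\ref{lem5.1} purely as a finiteness statement, nothing is lost, and your argument has the advantage of being self-contained and field-independent without invoking an outside reference.
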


For even $m$, define an alternating form $\langle\ ,\ \rangle$ on $\bbf^m$ by
$$\langle e_i,e_j\rangle =\begin{cases} -\delta_{i,m+1-j} & \text{if $i\le m/2$,} \\ \delta_{i,m+1-j} & \text{if $i> m/2$.} \end{cases}$$
Define $H={\rm Sp}'_m(\bbf)(\cong {\rm Sp}_m(\bbf))$ by
$$H=\{g\in{\rm GL}_m(\bbf)\mid \langle gu,gv\rangle =\langle u,v\rangle \mbox{ for }u,v\in\bbf^m\}.$$
For a decomposition $n=\alpha_1+\alpha_2+\alpha_3$ with even $\alpha_2$, define a subgroup $K=\mcb_{sp}(\alpha_1,\alpha_2,\alpha_3)$ of ${\rm GL}_n(\bbf)$ consisting of matrices
$$\bp A & * & * \\ 0 & B & * \\ 0 & 0 & C \ep$$
with $A\in\mcb_{\alpha_1},\ B\in{\rm Sp}'_{\alpha_2}(\bbf)$ and $C\in\mcb_{\alpha_3}$.

We prove the following extension of Lemma \ref{lem5.1} in Section \ref{sec11.3}.

\begin{lemma} $|K\backslash G/\mcb'|<\infty$.
\label{lem5.2}
\end{lemma}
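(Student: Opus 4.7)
The plan is to extend the standard proof of Lemma \ref{lem5.1} by incorporating the block structure of $K$ and a finiteness result for the symplectic group acting on flags in the middle block. Let $P=P(\alpha_1,\alpha_2,\alpha_3)$ denote the standard parabolic of $G={\rm GL}_n(\bbf)$ stabilizing the partial flag $0\subset W_1\subset W_1\oplus W_2\subset \bbf^n$, where $W_j$ is spanned by the $j$-th group of coordinate vectors, and write $L_P={\rm GL}_{\alpha_1}\times {\rm GL}_{\alpha_2}\times {\rm GL}_{\alpha_3}$ for its Levi and $U_P$ for its unipotent radical. Since $K$ visibly contains $U_P$, we have $K=(K\cap L_P)\cdot U_P$ with $K\cap L_P=\mcb_{\alpha_1}\times {\rm Sp}'_{\alpha_2}(\bbf)\times \mcb_{\alpha_3}$.

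First I would reduce the problem from $G/\mcb'$ to the full flag variety $G/\mcb_n$, exactly as in Lemma \ref{lem5.1}. The projection $G/\mcb'\to G/\mcb_n$ is an affine bundle with fiber $\mcb_n/\mcb'\cong \bbf^{n-1}$, identified with the set of lines in $\bbf^n$ transverse to the hyperplane $F_{n-1}$ of the chosen flag. It therefore suffices to show (i) $|K\backslash G/\mcb_n|<\infty$, and (ii) for each $K$-orbit on $G/\mcb_n$ with representative flag $F_\bullet$, the stabilizer $K_{F_\bullet}$ has finitely many orbits on the affine fiber above it. Claim (ii) is straightforward: $K_{F_\bullet}$ stabilizes the whole flag $F_\bullet$ and therefore acts on $F_{n-1}$ by upper triangular matrices in a basis adapted to $F_\bullet$, and this Borel-type action on the affine space $\bbf^{n-1}$ has finitely many orbits by the same reasoning underlying Lemma \ref{lem5.1}.

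For claim (i) I would combine the parabolic Bruhat decomposition $G=\bigsqcup_w P w \mcb_n$ (indexed by the finite set $W_P\backslash W$, with $W$ the Weyl group of $G$) with an orbit analysis on each $P$-cell. Within a fixed cell $P w\mcb_n/\mcb_n\cong P/(P\cap w\mcb_n w^{-1})$, the inclusion $U_P\subset K$ reduces the orbit count to that of $(K\cap L_P)$-orbits for a Levi action; this decouples into three independent factor problems: $\mcb_{\alpha_1}$-orbits on a partial flag variety of ${\rm GL}_{\alpha_1}(\bbf)$ (classical Bruhat, finite); $\mcb_{\alpha_3}$-orbits on a partial flag variety of ${\rm GL}_{\alpha_3}(\bbf)$ (classical Bruhat, finite); and, most importantly, ${\rm Sp}'_{\alpha_2}(\bbf)$-orbits on a partial flag variety of ${\rm GL}_{\alpha_2}(\bbf)$. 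The last is established by \cite{M2} (see e.g. the symplectic orbit classification culminating in Theorem 1.20 and Corollary 1.11 there), which supplies a finite list of combinatorial invariants valid over any infinite $\bbf$ with ${\rm char}\,\bbf\ne 2$.

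The main obstacle is the middle-block symplectic step: while over an algebraically closed field it follows from the sphericality of ${\rm GL}_{2m}/{\rm Sp}_{2m}$ that ${\rm Sp}_{2m}$ has finitely many orbits on the full flag variety of ${\rm GL}_{2m}$, over a general infinite field the sphericality argument is not directly available (indeed, when $|\bbf^\times/(\bbf^\times)^2|=\infty$ the ambient variety of non-degenerate symplectic forms has infinitely many ${\rm GL}$-orbits). One must instead appeal to the direct combinatorial orbit classification of \cite{M2}, which is insensitive to the arithmetic of $\bbf$ because the underlying invariants depend only on intersection dimensions with isotropic subspaces. Aside from this input, the remaining arguments are routine bookkeeping with the decompositions already developed in Section 5.
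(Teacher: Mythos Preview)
Your step (i) is fine and is essentially the content of Corollary~\ref{cor8.7} together with Corollary~1.11 of \cite{M2}. The gap is step (ii). You assert that since $K_{F_\bullet}\subset g\mcb_n g^{-1}$, the ``Borel-type'' action on the affine fiber $\{\ell:\ell\oplus F_{n-1}=\bbf^n\}\cong\bbf^{n-1}$ has finitely many orbits ``by the same reasoning underlying Lemma~\ref{lem5.1}''. But containment in a Borel is far from sufficient: the diagonal torus sits in $\mcb_n$ yet has infinitely many orbits on the generic fiber already for $n=2$. What you actually need is a structural statement about $K\cap g\mcb_n g^{-1}$ for every representative $g$, and this intersection can be quite thin. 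For a generic ${\rm Sp}'_{\alpha_2}$-orbit on the full flag variety of the middle block, the symplectic stabilizer is no longer a standard parabolic of ${\rm Sp}'$; showing that the resulting $K_{F_\bullet}$ still acts with finitely many orbits on $\bbf^{n-1}$ is precisely the hard part, and invoking Lemma~\ref{lem5.1} is circular since that lemma is the $K=\mcb_n$ case of the very statement you are trying to prove.

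The paper avoids this fiberwise analysis entirely. It embeds $G={\rm GL}_n(\bbf)$ into ${\rm GL}_{n+1}(\bbf)$ via $H=\{\operatorname{diag}(\lambda,A):\lambda\in\bbf^\times,\ A\in G\}$ and exhibits a full flag $m$ in $\bbf^{n+1}$ whose $H$-stabilizer $H_0$ projects onto $\mcb'$; this yields a bijection $K\backslash G/\mcb'\cong H'\backslash H/H_0$ with $H'=\bbf^\times\times K$. Finiteness of the right-hand side then follows from Lemma~\ref{lem9.8} (the case $\alpha_1=\alpha_2=0$), which is proved independently by induction on $n$ and handles the symplectic block through a dimension-reduction (cases (F) and (J) there) rather than through a stabilizer computation. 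So the ``extra line'' encoded by $\mcb'$ is absorbed into one more step of the flag in $\bbf^{n+1}$, and the problem becomes a special case of an already-established finiteness result on the full flag variety one dimension up.
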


\begin{remark} For $i=1,\ldots,n$, write $U_{(i)}=\bigoplus_{j\in\{1,\ldots,n\}-\{i\}} \bbf e_j$. Define a subgroup $\mcb'_i$ of $\mcb_n$ by
$$\mcb'_i=\{g\in \mcb_n\mid g\bbf e_i=\bbf e_i\mbox{ and }gU_{(i)}=U_{(i)}\}.$$
Then it is clear that $\mcb'_i$ is conjugate to $\mcb'\ (=\mcb'_n)$ by some permutation matrix.
\label{rem5.3}
\end{remark}

\subsection{Extensions of Proposition 6.3 in \cite{M3}}

Let $U=\bbf^{m+n}$ be the $m+n$-dimensional vector space over an arbitrary field $\bbf$. Consider the canonical direct sum decomposition
$$U=U_1\oplus U_2$$
where $U_1=\bbf e_1\oplus\cdots\oplus \bbf e_m$ and $U_2=\bbf e_{m+1}\oplus\cdots\oplus \bbf e_{m+n}$. Write $G_1={\rm GL}_m(\bbf)$ and $G_2={\rm GL}_n(\bbf)$. Any subgroup of $G_1\times G_2$ is identified with a subgroup of ${\rm GL}(U)$ by the canonical inclusion
$$G_1\times G_2\ni (A,B)\mapsto \bp A & 0 \\ 0 & B \ep \in {\rm GL}(U).$$
Let $\mcb_{(1)}$ and $\mcb_{(2)}$ denote the canonical Borel subgroups of $G_1$ and $G_2$, respectively, consisting of upper triangular matrices.

Let $M_{k_1,k_2}$ denote the flag variety
$$M_{k_1,k_2}=\{W_1\subset W_2\subset U \mid \dim W_1=k_1\mbox{ and }\dim W_2=k_1+k_2\}$$
of ${\rm GL}(U)$.

\begin{proposition} For $K=\mcb_{sp}(\alpha_1,\alpha_2,\alpha_3)$ with $m=\alpha_1+\alpha_2+\alpha_3$, we have$:$

{\rm (i)} $|(K\times \mcb_{(2)})\backslash M_{1,k}|<\infty$.

{\rm (ii)} $|(K\times \mcb_{(2)})\backslash M_{k,1}|<\infty$.
\label{prop7.4}
\end{proposition}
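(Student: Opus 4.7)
The plan is to deduce both parts from Lemma \ref{lem5.2}, which handles the $K$-action on flag varieties of $U_1$, together with the Bruhat decomposition for $\mcb_{(2)}$ on flag varieties of $U_2$. For any subspace $W \subset U$, the two marginal pieces of data $W \cap U_2 \subset U_2$ and $p_1(W) \subset U_1$ (with $p_1: U \to U_1$ the projection along $U_2$) fit into the exact sequence $0 \to W \cap U_2 \to W \to p_1(W) \to 0$. I will put $W$ into normal form by first normalizing these two marginals and then the resulting extension class.

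For (i), I would first classify the line $W_1$ under $K \times \mcb_{(2)}$ according to the three cases $W_1 \subset U_1$, $W_1 \subset U_2$, and $W_1$ mixed. In each case only finitely many orbits arise: Lemma \ref{lem5.2} gives finitely many $K$-orbits on $\mathbb{P}(U_1)$ via the surjection $G_1/\mcb' \twoheadrightarrow G_1/P$ onto the maximal parabolic quotient; the Bruhat decomposition gives finitely many orbits of $\mcb_{(2)}$ on $\mathbb{P}(U_2)$; and for mixed lines $W_1 = \bbf(u_1 + u_2)$ I can normalize $u_2$ under $\mcb_{(2)}$ and then $u_1$ under $K$ using Lemma \ref{lem5.2} again. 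For each such representative $W_1$ the stabilizer in $K \times \mcb_{(2)}$ is still of the same parabolic-with-symplectic-Levi times Borel shape on $U/W_1$, so Lemma \ref{lem5.2} applies once more to classify the $k$-dimensional subspaces of $U/W_1$. For (ii) the same plan runs in the reverse order: first normalize $W_2$ by successively normalizing $W_2 \cap U_2$, then $p_1(W_2)$, and then the extension class $p_1(W_2) \to U_2/(W_2 \cap U_2)$ under the joint stabilizer; then classify the hyperplane $W_1 \subset W_2$ (equivalently a line in $W_2^\ast$) under the stabilizer of $W_2$, which is again of the right type for Lemma \ref{lem5.2}.

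The main obstacle will be the stabilizer analysis at the last step of each part. Once the distinguished subspace has been normalized, the stabilizer in $K \times \mcb_{(2)}$ does not factor along $U = U_1 \oplus U_2$ in a clean way, since the symplectic block inside $K$ can mix with the stitching datum between $U_1$ and $U_2$. Showing that this twisted stabilizer still has finitely many orbits on the remaining Grassmannian, and in particular keeping track of how the parameters $(\alpha_1, \alpha_2, \alpha_3)$ in Lemma \ref{lem5.2} evolve under restriction, is the technical heart of the argument, and is exactly where the extension from the case $K = \mcb_{(1)}$ in Proposition 6.3 of \cite{M3} needs the symplectic-enhanced Lemma \ref{lem5.2} rather than Lemma \ref{lem5.1}.
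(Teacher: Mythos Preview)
Your plan has a real gap, and it is precisely at the step you flag as ``the technical heart''. The sentence ``Lemma \ref{lem5.2} applies once more to classify the $k$-dimensional subspaces of $U/W_1$'' misuses the lemma: Lemma \ref{lem5.2} is a statement about double cosets $K\backslash G_1/\mcb'$ on the single space $U_1$, not about Grassmannians in a direct sum $U_1\oplus U_2$. What you would actually need there is a finiteness result of the form ``$K'\times\mcb'_{(2)}$ has finitely many orbits on $M_k(U/W_1)$'', i.e.\ an analogue of Proposition~6.3 of \cite{M3} with $K'$ in place of a Borel. Moreover, in the mixed case $W_1=\bbf(u_1+u_2)$ the stabilizer is only a fibre product of $K$-type and $\mcb_{(2)}$-type groups over $\bbf^\times$, not a direct product, so the hypothesis of such a result would not even be met. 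You acknowledge this obstacle but do not indicate how to overcome it; without that, the argument does not close.

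The paper's proof avoids this entirely by a different device. For both (i) and (ii) it first normalizes the large subspace $S=W_2$ (so in (i) the order is opposite to yours), using \cite{M3} Lemma~6.2 to get $\pi_1(Q_{S_0})=P_1$ parabolic in $G_1$. The key step is then the bijection
\[
(K\times\mcb_{(2)})\backslash(G_1\times\mcb_{(2)})/Q_{S_0,S'_0}\ \cong\ K\backslash G_1/\pi_1(Q_{S_0,S'_0}),
\]
which projects out the $\mcb_{(2)}$-factor and reduces the whole question to a double coset count on $G_1$ alone. It then suffices to check, case by case, that $\pi_1(Q_{S_0,S'_0})$ contains either a parabolic of $G_1$ or a conjugate of $\mcb'$, after which Lemma \ref{lem5.2} applies in its intended form. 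This $\pi_1$-reduction is the missing idea in your outline; your inductive normalization does not produce a problem on $G_1$ alone, and that is why Lemma \ref{lem5.2} cannot be invoked at the point where you want it.
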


\begin{proof} Let $S$ be a $k+1$-dimensional subspace of $U=\bbf^{m+n}$. There are two canonical invariants
$$p=\dim(U_1\cap S)\mand q=\dim(U_2\cap S).$$
Put $r=k+1-p-q$. Let $\pi_i$ denote the projection $U_1\oplus U_2\to U_i$ for $i=1,2$.

Consider the canonical full flag
$$U_{2,m+1}\subset\cdots\subset U_{2,m+n-1}$$
of $U_2$ where $U_{2,i}=\bbf e_{m+1}\oplus\cdots\oplus \bbf e_i$. As in \cite{M3} Section 6, we define subsets
\begin{align*}
\{i_1,\ldots,i_q\} & =\{i\in I_2\mid \dim(U_{2,i}\cap S)=\dim(U_{2,i-1}\cap S)+1\} \\
\mand \{j_1,\ldots,j_r\} & =\{i\in I_2 \mid \dim(U_{2,i}\cap S)=\dim(U_{2,i-1}\cap S), \\
& \qquad \qquad \dim(\pi_2(S)\cap U_{2,i})=\dim(\pi_2(S)\cap U_{2,i-1})+1\}
\end{align*}
of $I_2=\{m+1,\ldots,m+n\}$ with $i_1<\cdots<i_q$ and $j_1<\cdots<j_r$. By the action of $\mcb_{(2)}$, we may assume
$$\pi_2(S)=W\oplus T \mand U_2\cap S=T$$
where
$$W=\bbf e_{i_1}\oplus\cdots\oplus \bbf e_{i_q}\mand T=\bbf e_{j_1}\oplus\cdots\oplus \bbf e_{j_r}.$$

(i) Let $S'$ be a one-dimensional subspace of $S$.

(A) First suppose $\pi_2(S')\not\subset W$. Then there exists an index $x\in\{1,\cdots,r\}$ such that
$$\pi_2(S')\subset W+U_{2,j_x} \quad\mbox{and that}\quad \pi_2(S')\not\subset W+U_{2,j_{x-1}}.$$
We can write
$$\pi_2(S')=\bbf(w+\lambda_1e_{j_1}+\cdots+\lambda_{x-1} e_{j_{x-1}}+e_{j_x})$$
with some $w\in W$ and $\lambda_1,\ldots,\lambda_{x-1}\in\bbf$. Take a $b\in\mcb_{(2)}$ such that
$$be_{j_x}=\lambda_1e_{j_1}+\cdots+\lambda_{x-1} e_{j_{x-1}}+e_{j_x}$$
and that $be_\ell=e_\ell$ for $\ell\in\{m+1,\ldots,m+n\}-\{j_x\}$. Then we have
$$b^{-1}\pi_2(S')=\bbf(w+e_{j_x}).$$
Since $b^{-1}\pi_2(S)=\pi_2(S)$, we can write
\begin{align*}
b^{-1}S & =\bbf u_1\oplus\cdots\oplus \bbf u_p\oplus W \oplus \bbf (v_1+e_{j_1})\oplus\cdots\oplus \bbf (v_r+e_{j_r}) \\
\mand b^{-1}S' & =\bbf(w+v_x+e_{j_x})
\end{align*}
with some linearly independent vectors $u_1,\ldots,u_p,v_1,\ldots,v_r$ in $U_1$. So we can take a $g\in G_1$ such that
\begin{align*}
S_0 & =gb^{-1}S=U_{1,p}\oplus W \oplus \bbf (e_{p+1}+e_{j_1})\oplus\cdots\oplus \bbf (e_{p+r}+e_{j_r}) \\
\mbox{and that}\quad S'_0 & =gb^{-1}S'=\bbf(w+e_{p+x}+e_{j_x})
\end{align*}
where $U_{1,\ell}=\bbf e_1\oplus\cdots\oplus \bbf e_\ell$ for $\ell=1,\ldots, m$.

Let $Q_{S_0}$ denote the isotropy subgroup of $S_0$ in $G_1\times \mcb_{(2)}$. Then it is shown in \cite{M3} Lemma 6.2 that
$$\pi_1(Q_{S_0})=P_1$$
where $P_1$ is the parabolic subgroup of $G_1$ stabilizing the flag $U_{1,p}\subset U_{1,p+1}\subset\cdots\subset U_{1,p+r}$ in $U_1$.
Let $Q_{S_0,S'_0}$ denote the isotropy subgroup
$$Q_{S_0,S'_0}=\{g\in G_1\times \mcb_{(2)}\mid gS_0=S_0\mbox{ and }gS'_0=S'_0\}$$
of the flag $S'_0\subset S_0$ in $G_1\times \mcb_{(2)}$. Then we have
$$\pi_1(Q_{S_0,S'_0})=\{g\in P_1\mid g\bbf e_{p+x}=\bbf e_{p+x}\}.$$
We have only to show that $(G_1\times \mcb_{(2)})/Q_{S_0,S'_0}$ is decomposed into a finite number of $K\times \mcb_{(2)}$-orbits. By the map $\pi_1$, we have
$$(K\times \mcb_{(2)})\backslash (G_1\times \mcb_{(2)})/Q_{S_0,S'_0}\stackrel{\sim}{\to} K\backslash G_1/\pi_1(Q_{S_0,S'_0}).$$
Write $U_{1,(p+x)}=\bigoplus_{i\in I_1-\{p+x\}} \bbf e_i$ with $I_1=\{1,\ldots,m\}$. Since $\pi_1(Q_{S_0,S'_0})$ contains a subgroup
$$\mcb'_{(1),p+x}=\{g\in \mcb_{(1)}\mid g\bbf e_{p+x}=\bbf e_{p+x},\ gU_{1,(p+x)}=U_{1,(p+x)}\}$$
and since $|K\backslash G_1/\mcb'_{(1),p+x}|<\infty$ by Lemma \ref{lem5.2} and Remark \ref{rem5.3}, we have
$$|K\backslash G_1/\pi_1(Q_{S_0,S'_0})|<\infty.$$

(B) Next suppose $\pi_2(S')\subset W$. Then we can write
\begin{align*}
S & =\bbf u_1\oplus\cdots\oplus \bbf u_p\oplus W \oplus \bbf (v_1+e_{j_1})\oplus\cdots\oplus \bbf (v_r+e_{j_r}) \\
\mand S' & =\bbf(\varepsilon u_1+w)
\end{align*}
with some linearly independent vectors $u_1,\ldots,u_p,v_1,\ldots,v_r$ in $U_1$, $w\in W$ and $\varepsilon\in\{0,1\}$. So we can take a $g\in G_1$ such that
\begin{align*}
S_0 & =gS=U_{1,p}\oplus W \oplus \bbf (e_{p+1}+e_{j_1})\oplus\cdots\oplus \bbf (e_{p+r}+e_{j_r}) \\
\mbox{and that}\quad S'_0 & =gS'=\bbf(\varepsilon e_1+w).
\end{align*}
Hence we have
$$\pi_1(Q_{S_0,S'_0})=\begin{cases} P'_1=\{g\in P_1\mid g\bbf e_1=\bbf e_1\} & \text{if $\varepsilon=1$,} \\ P_1 & \text{if $\varepsilon=0$.}\end{cases}$$
Since $P_1$ and $P'_1$ are parabolic subgroups of $G_1$, we have $|K\backslash G_1/\pi_1(Q_{S_0,S'_0})|<\infty$ by Lemma \ref{lem5.2} (or Lemma \ref{lem9.8}).

\bigskip
(ii) Let $S'$ be a $k$-dimensional subspace of $S$. Then we can take a nontrivial linear form $f$ on $S$ such that $S'=\{v\in S\mid f(v)=0\}$.

(A) First suppose $S'\supset S\cap U_1$. Then we can take a linear form $f'$ on $\pi_2(S)$ such that $f=f'\circ \pi_2$.

(A.1) Case of $f'(W)=\{0\}$. There exists an index $x\in\{1,\cdots,r\}$ such that
$$f'(e_{j_1})=\cdots=f'(e_{j_{x-1}})=0\quad\mbox{and that}\quad f'(e_{j_x})\ne 0.$$
Take a $b\in \mcb_{(2)}$ such that
$$be_{j_z}=e_{j_z}-\frac{f'(e_{j_z})}{f'(e_{j_x})}e_{j_x}$$
for $z=x+1,\ldots,r$ and that $be_\ell=e_\ell$ for $\ell\in\{m+1,\ldots,m+n\}-\{j_{x+1},\ldots,j_r\}$. Then
$$b^{-1}\pi_2(S')=W\oplus T_x$$
where $T_x=\bigoplus_{z\in\{j_1,\ldots,j_r\}-\{j_x\}} \bbf e_z$. Since $b^{-1}S=S$, we can write
\begin{align*}
b^{-1}S & =\bbf u_1\oplus\cdots\oplus \bbf u_p\oplus W \oplus \bbf (v_1+e_{j_1})\oplus\cdots\oplus \bbf (v_r+e_{j_r}) \\
\mand b^{-1}S' & =\bbf u_1\oplus\cdots\oplus \bbf u_p\oplus W \oplus T'_x
\end{align*}
with some linearly independent vectors $u_1,\ldots,u_p,v_1,\ldots,v_r$ in $U_1$ and $T'_x=$

\noindent $\bigoplus_{z\in\{1,\ldots,r\}-\{x\}} \bbf (v_z+e_{j_z})$. So we can take a $g\in G_1$ such that
\begin{align*}
S_0 & =gb^{-1}S=U_{1,p}\oplus W \oplus T'' \quad
\mbox{and that}\quad S'_0 =gb^{-1}S'=U_{1,p}\oplus W \oplus T''_x
\end{align*}
where $T''=\bigoplus_{z\in\{1,\ldots,r\}} \bbf (e_{p+z}+e_{j_z})$ and $T''_x=\bigoplus_{z\in\{1,\ldots,r\}-\{x\}} \bbf (e_{p+z}+e_{j_z})$.

Let $Q_{S_0,S'_0}$ denote the isotropy subgroup of the flag $S'_0\subset S_0$ in $G_1\times \mcb_{(2)}$. Then we have
$$\pi_1(Q_{S_0,S'_0})=\{g=\{g_{i,j}\}\in P_1\mid g_{p+x,j}=0\mbox{ for }j=p+x+1,\ldots,p+r\}.$$
Since $\pi_1(Q_{S_0,S'_0})$ contains the subgroup $\mcb'_{(1)}=\mcb'_{(1),p+x}$ defined in (i) (A), it follows from Lemma \ref{lem5.2} that
$$|K\backslash G_1/\pi_1(Q_{S_0,S'_0})|<\infty.$$

(A.2) Case of $f'(T)=\{0\}$. We can take a $g\in G_1$ such that
\begin{align*}
S_0 & =gS=U_{1,p}\oplus W \oplus T'' \quad
\mbox{and that}\quad S'_0 =gS'=U_{1,p}\oplus (U_2\cap S') \oplus T''.
\end{align*}
So we have $\pi_1(Q_{S_0,S'_0})=\pi_1(Q_{S_0})=P_1$ and hence
$$|K\backslash G_1/\pi_1(Q_{S_0,S'_0})|<\infty.$$

(A.3) Case of $f'(U_2\cap S)=f'(T)=\bbf$. In the same way as in (A.1), we can take a $b\in\mcb_{(2)}$ such that $b^{-1}S=S$ and that
$$b^{-1}\pi_2(S')=W_y\oplus T_x \oplus \bbf(e_{i_y}+e_{j_x})$$
where $x\in\{1,\ldots,r\},\ y\in \{1,\ldots,q\}$ and $W_y=\bigoplus_{z\in\{1,\ldots,q\}-\{y\}} \bbf e_{i_z}$. If $i_y<j_x$, then we can take a $b'\in\mcb_{(2)}$ such that $b'e_{j_x}=e_{i_y}+e_{j_x}$ and that $b'e_\ell=e_\ell$ for $\ell\ne j_x$. Since
$${b'}^{-1}b^{-1}\pi_2(S')=W_y\oplus T,$$
the problem is reduced to the case of (A.2). So we may assume $i_y>j_x$.

We can take a $g\in G_1$ such that
\begin{align*}
S_0 & =gb^{-1}S=U_{1,p}\oplus W \oplus T'' \\
\mbox{and that}\quad S'_0 & =gb^{-1}S'=U_{1,p}\oplus W_y \oplus T''_x\oplus \bbf(e_{i_y}+e_{p+x}+e_{j_x}).
\end{align*}
We will later describe $\pi_1(Q_{S_0,S'_0})$ in Lemma \ref{lem11.5'}. In particular, we have $\pi_1(Q_{S_0,S'_0})\supset \mcb'_{(1),p+x}$. So we have
$$|K\backslash G_1/\pi_1(Q_{S_0,S'_0})|<\infty$$
by Lemma \ref{lem5.2}.

(B) Next suppose $S'\not\supset S_0\cap U_1$. Then we can take linearly independent vectors $u_1,\ldots,u_p,v_1,\ldots,v_r\in U_1$ such that
\begin{align*}
S & =\bbf u_1\oplus\cdots\oplus \bbf u_p\oplus W \oplus \bbf (v_1+e_{j_1})\oplus\cdots\oplus \bbf (v_r+e_{j_r}) \\
\mbox{and that}\quad f(u_1) & =\cdots=f(u_{p-1})=f(v_1+e_{j_1})=\cdots=f(v_r+e_{j_r})=0,\ f(u_p)=1.
\end{align*}
So we have
$$S'=\begin{cases} \bbf u_1\oplus\cdots\oplus \bbf u_{p-1}\oplus W \oplus T' & \text{if $f(W)=\{0\}$,} \\
\bbf u_1\oplus\cdots\oplus \bbf u_{p-1}\oplus (U_2\cap S')\oplus \bbf(u_p+w)\oplus T' & \text{if $f(W)=\bbf$} \end{cases}$$
where $T'=\bbf (v_1+e_{j_1})\oplus\cdots\oplus \bbf (v_r+e_{j_r}),\ w\in U_2\cap S$ and $f(u_p+w)=0$. Hence we can take a $g\in G_1$ such that
\begin{align*}
S_0 & =gS=U_{1,p}\oplus W \oplus T'' \\
\mbox{and that}\quad S'_0 & =gS'=\begin{cases} U_{1,p-1}\oplus W \oplus T'' & \text{if $f(W)=\{0\}$,} \\
U_{1,p-1}\oplus (U_2\cap S')\oplus \bbf(e_p+w) \oplus T'' & \text{if $f(W)=\bbf$.} \end{cases}
\end{align*}
We have $\pi_1(Q_{S_0,S'_0})=P''_1=\{g\in P_1\mid g(U_{1,p-1})=U_{1,p-1}\}$. Since $P''_1$ is a parabolic subgroup of $G_1$, we have
$$|K\backslash G_1/\pi_1(Q_{S_0,S'_0})|<\infty$$
by Lemma \ref{lem5.2} (or Lemma \ref{lem9.8}).
\end{proof}

\begin{lemma} Suppose $S_0 =U_{1,p}\oplus W \oplus T''$ and $S'_0 =U_{1,p}\oplus W_y \oplus T''_x\oplus \bbf(e_{i_y}+e_{p+x}+e_{j_x})$ with $i_y>j_x$. Then
$$\pi_1(Q_{S_0,S'_0})=\{g=\{g_{i,j}\}\in P_1\mid x<z\le r,\ j_z<i_y\Longrightarrow g_{p+x,p+z}=0\}.$$
\label{lem11.5'}
\end{lemma}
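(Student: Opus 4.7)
\textbf{Proof plan for Lemma \ref{lem11.5'}.} The plan is to reduce the problem to checking when an element $(g_1,b) \in Q_{S_0}$ preserves the codimension-one subspace $S'_0 \subset S_0$. Since $\pi_1(Q_{S_0})=P_1$ is already known, it suffices to analyze which $g_1 \in P_1$ can be completed by some $b \in \mcb_{(2)}$ so that $(g_1,b) \in Q_{S_0,S'_0}$. Choose $v^{*}=e_{p+x}+e_{j_x}$ as a representative generator of the one-dimensional quotient $S_0/S'_0$; the relation $e_{i_y}+e_{p+x}+e_{j_x}\in S'_0$ yields $e_{i_y}\equiv -v^{*}\pmod{S'_0}$, which will be the only way $W$-components contribute to the quotient. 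We then evaluate $g=(g_1,b)$ on a basis of $S'_0$, namely $U_{1,p}$, the vectors $e_{i_z}$ for $z\ne y$, the vectors $e_{p+z}+e_{j_z}$ for $z\ne x$, and the distinguished vector $e_{i_y}+e_{p+x}+e_{j_x}$, and read off the conditions for each image to lie in $S'_0$.

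The compatibility forced by $(g_1,b)\in Q_{S_0}$ is the following: writing $g_1 e_{p+z}\equiv \sum_{w=1}^{z}g_{p+w,p+z}e_{p+w}\pmod{U_{1,p}}$, one has $be_{i_u}\in W$ for every $u$, and $be_{j_z}-\sum_{w=1}^{z}g_{p+w,p+z}e_{j_w}\in W\cap U_{2,j_z}$. Applying these identities, a direct computation reduces the four families of basis-vector checks mod $S'_0$ (using $v^{*}$) to: (1) automatic vanishing on $U_{1,p}$; (2) $b_{i_y,i_z}=0$ for $z>y$ from $g(e_{i_z})\in S'_0$; (3) for $z\ne x$, $g(e_{p+z}+e_{j_z})\equiv\bigl(g_{p+x,p+z}\,[z>x]-b_{i_y,j_z}\,[j_z>i_y]\bigr)v^{*}\pmod{S'_0}$; and (4) $g(e_{i_y}+e_{p+x}+e_{j_x})\equiv(g_{p+x,p+x}-b_{i_y,i_y})v^{*}\pmod{S'_0}$. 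The hypothesis $i_y>j_x$ is used decisively in (3) to eliminate spurious $W$-contributions coming from the $W$-part of $be_{j_z}$: those contributions live in $\bigoplus_{i_u<j_z}\bbf e_{i_u}$, and whenever $j_z<i_y$ the index $u$ necessarily satisfies $u\ne y$, so the corresponding term lies in $W_y\subset S'_0$.

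Splitting these conditions into purely $g_1$-conditions and mixed conditions that involve $b$-entries is then the final step. The conditions $b_{i_y,i_z}=0$ ($z>y$), $b_{i_y,i_y}=g_{p+x,p+x}$, and $b_{i_y,j_z}=g_{p+x,p+z}$ (for $z>x$ with $j_z>i_y$) impose no constraint on $g_1$: given any $g_1 \in P_1$, the fiber of $\pi_1:Q_{S_0}\to P_1$ over $g_1$ is non-empty and admits a $b$ with these prescribed entries in the $i_y$-th row (the remaining $b$-entries can be chosen freely upper-triangular, taking $b=\mathrm{id}$ on the basis vectors outside $W\cup T$ and using the flexibility within $W$ and $T$ otherwise). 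The only remaining conditions are those of type (3) with $z>x$ and $j_z<i_y$: the corresponding $b_{i_y,j_z}$ vanishes automatically by upper-triangularity ($i_y>j_z$), so the condition becomes the purely $g_1$-condition $g_{p+x,p+z}=0$. This reproduces exactly the set of equations in the statement.

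The main obstacle is the combinatorial bookkeeping: one has to track exactly how each basis vector of $S'_0$ projects to the quotient, and verify that the assumption $i_y>j_x$ (together with upper-triangularity of $b$) neutralizes every potential $b$-entry that would otherwise contaminate the $g_1$-conditions. Once this is done carefully, the existence of a completing $b$ for any $g_1$ satisfying the displayed conditions is routine, giving the claimed equality.
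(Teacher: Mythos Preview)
Your proposal is correct. The necessary direction (that any $g_1\in\pi_1(Q_{S_0,S'_0})$ must satisfy $g_{p+x,p+z}=0$ for $x<z\le r$ with $j_z<i_y$) is essentially the same computation as the paper's part (ii): both observe that $(g_1,b)(e_{p+z}+e_{j_z})\in S'_0\cap(U_{1,p+z}\oplus U_{2,j_z})$ forces the $e_{p+x}$-coefficient of $g_1e_{p+z}$ to vanish, because $i_y>j_z$ kills any compensating $e_{i_y}$-contribution from $be_{j_z}$.

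For the sufficient direction the paper takes a different tack: rather than arguing abstractly that a suitable $b$ exists in the fiber over each admissible $g_1$, it verifies the inclusion on explicit generators of the right-hand side (diagonal matrices and the elementary matrices $g_{i,k}(\lambda)$), pairing each with a concrete $g'\in\mcb_{(2)}$; in the critical case $i=p+x$, $k=p+z$ with $j_z>i_y$ the companion is $g'e_{j_z}=e_{j_z}+\lambda e_{j_x}+\lambda e_{i_y}$. Your quotient-space approach handles both inclusions in one unified computation and makes transparent exactly which $b$-entries in the $i_y$-th row are free versus forced; the paper's generator approach is more hands-on but requires case-tracking. Both rest on the same structural inputs: the identity $\pi_1(Q_{S_0})=P_1$ and the interplay between upper-triangularity of $b$ and the hypothesis $i_y>j_x$.
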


\begin{proof} Write $Q=\{g=\{g_{i,j}\}\in P_1\mid x<z\le r,\ j_z<i_y\Longrightarrow g_{p+x,p+z}=0\}$.

(i) Proof of $Q\subset \pi_1(Q_{S_0,S'_0})$. We have only to consider generators of $Q$. First consider $g={\rm diag}(\lambda_1,\ldots,\lambda_m)\in P_1$ with $\lambda_i\in\bbf^\times$. Take $g'={\rm diag}(\mu_1,\ldots,\mu_n)\in\mcb_{(2)}$ such that
$$\mu_i=\begin{cases} \lambda_{p+z} & \text{if $i=j_z$ with some $z=1,\ldots,r$,} \\
\lambda_{p+x} & \text{if $i=i_y$,} \\
1 & \text{if $i\notin\{j_1,\ldots,j_r,i_y\}$}. \end{cases}$$
Then we have $(g,g')\in Q_{S_0,S'_0}$.

Next consider $g=g_{i,k}(\lambda)$ with $\lambda\in\bbf$ such that $ge_k=e_k+\lambda e_i$ and that $ge_\ell=e_\ell$ for $\ell\ne k$. If $i\le p$ or $k>p+r$, then we have $(g,e)\in Q_{S_0,S'_0}$. So we may assume $i=p+w$ and $k=p+z$ with $1\le w<z\le r$.

Suppose $w\ne x$. Define $g'\in\mcb_{(2)}$ by $g'e_{j_z}=e_{j_z}+\lambda e_{j_w}$ and $g'e_\ell=e_\ell$ for $\ell\ne j_z$. Then we have $(g,g')\in Q_{S_0,S'_0}$. So we have only to consider the case of $w=x$.

Suppose $j_z>i_y$. Define $g'\in\mcb_{(2)}$ by
$$g'e_{j_z}=e_{j_z}+\lambda e_{j_x}+\lambda e_{i_y}$$
and $g'e_\ell=e_\ell$ for $\ell\ne j_z$. Then we have $(g,g')\in Q_{S_0,S'_0}$.

Thus we have proved $Q\subset \pi_1(Q_{S_0,S'_0})$.

(ii) Proof of $\pi_1(Q_{S_0,S'_0})\subset Q$. Suppose $(g,g')\in Q_{S_0,S'_0}$. Take a $z$ such that $x<z$ and that $j_z<i_y$. Then we have only to show $ge_{p+z}\in U_{1,(p+x)}$.

Since $g\in P_1$ and $g'\in \mcb_{(2)}$, we have
$$(g,g')(e_{p+z}+e_{j_z})=ge_{p+z}+g'e_{j_z}\in U_{1,p+z}\oplus U_{2,j_z}.$$
On the other hand, since $(g,g')(e_{p+z}+e_{j_z})\in S'_0$ and since $i_y>j_z$, we have
$$(g,g')(e_{p+z}+e_{j_z})\in U_{1,p}\oplus W_y \oplus T''_x.$$
Hence $ge_{p+z}\in \pi_1(U_{1,p}\oplus W_y \oplus T''_x)\subset U_{1,(p+x)}$.
\end{proof}

\begin{lemma} Let $S$ be a two-dimensional subspace of $U=\bbf^{m+n}$. Then we can take a basis $v_1,v_2$ of $S$ such that $\mcb_{(1)}\times \mcb_{(2)}$ contains elements $g_{\lambda,\mu}$ satisfying
$$g_{\lambda,\mu}v_1=\lambda v_1\mand g_{\lambda,\mu}v_2=\mu v_2$$
for $\lambda,\mu\in\bbf^\times$.
\label{lem11.5}
\end{lemma}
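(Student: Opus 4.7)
The plan is to bring $S$ into one of a short list of canonical forms under the $\mcb_{(1)}\times \mcb_{(2)}$-action, from which the elements $g_{\lambda,\mu}$ will be read off as diagonal matrices in $\mcb_{(1)}\times \mcb_{(2)}$. I introduce the invariants $p=\dim(U_1\cap S)$ and $q=\dim(U_2\cap S)$ and perform a case analysis on $(p,q)\in\{(2,0),(0,2),(1,1),(1,0),(0,1),(0,0)\}$.

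The cases with $\max(p,q)=2$ are immediate: $S$ lies in $U_1$ or $U_2$, and since the $\mcb_{(1)}$-orbits on the Grassmannian of $2$-planes in $U_1$ are Schubert cells, row-echelon reduction yields $S=\bbf e_{i_1}\oplus\bbf e_{i_2}$ with some $i_1<i_2\le m$, and I take $v_k=e_{i_k}$. The case $(1,1)$ is equally direct: $S=(U_1\cap S)\oplus (U_2\cap S)$ reduces to $\bbf e_{i_1}\oplus \bbf e_{m+j_1}$. In all three situations $v_1,v_2$ are distinct standard basis vectors, so the diagonal element of $\mcb_{(1)}\times \mcb_{(2)}$ taking value $\lambda$ in the position indexed by $v_1$, $\mu$ in the position indexed by $v_2$, and $1$ elsewhere provides $g_{\lambda,\mu}$.

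For the mixed case $(1,0)$ I will proceed as follows. Reduce $U_1\cap S$ to $\bbf e_{i_1}$ by $\mcb_{(1)}$ and the one-dimensional $\pi_2(S)$ to $\bbf e_{m+j_1}$ by $\mcb_{(2)}$, so any complement of $\bbf e_{i_1}$ in $S$ has the form $v_2=u+e_{m+j_1}$ with $u\in U_1$; after subtracting a scalar multiple of $e_{i_1}$ I may assume the $e_{i_1}$-coefficient of $u$ vanishes. If $u=0$, set $v_2=e_{m+j_1}$. Otherwise let $i_2$ be the largest index with the $e_{i_2}$-coefficient of $u$ nonzero and apply the element of $\mcb_{(1)}$ defined by $e_{i_2}\mapsto u$ and $e_k\mapsto e_k$ for $k\ne i_2$ (which is upper triangular because $u\in \bbf e_1\oplus\cdots\oplus \bbf e_{i_2}$, and fixes $e_{i_1}$ because $i_1\ne i_2$); this replaces $u$ by a nonzero scalar multiple of $e_{i_2}$, which after rescaling of $v_2$ gives $v_2=e_{i_2}+e_{m+j_1}$. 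The supports of $v_1=e_{i_1}$ and $v_2$ in the standard basis are disjoint, and again the obvious diagonal matrix works. The case $(0,1)$ is symmetric.

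The main obstacle is the case $(0,0)$, where both $\pi_1|_S$ and $\pi_2|_S$ are injective, so the normalizations of $\pi_1(S)$ and $\pi_2(S)$ and the choice of basis of $S$ cannot be carried out independently. The plan is to first reduce $\pi_2(S)$ to $\bbf e_{m+j_1}\oplus \bbf e_{m+j_2}$ by $\mcb_{(2)}$, producing a basis $v_k=u_k+e_{m+j_k}$ of $S$ with $u_1,u_2\in U_1$ linearly independent, then reduce the plane $\pi_1(S)=\bbf u_1\oplus \bbf u_2$ to $\bbf e_{i_1}\oplus \bbf e_{i_2}$ by $\mcb_{(1)}$, after which $S$ is the graph of an invertible $2\times 2$ matrix $M$ between $\bbf e_{m+j_1}\oplus \bbf e_{m+j_2}$ and $\bbf e_{i_1}\oplus \bbf e_{i_2}$. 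The stabilizer in $\mcb_{(1)}\times \mcb_{(2)}$ of the pair of standard $2$-planes acts on $M$ as $(b_1,b_2)\cdot M=b_1 M b_2^{-1}$ with $b_1,b_2\in \mcb_2$, and since the $(\mcb_2,\mcb_2)$-double cosets in $\mathrm{GL}_2(\bbf)$ form two Bruhat cells, there are exactly two canonical forms: the ``direct'' one $v_1=e_{i_1}+e_{m+j_1}$, $v_2=e_{i_2}+e_{m+j_2}$, and the ``crossed'' one $v_1=e_{i_1}+e_{m+j_2}$, $v_2=e_{i_2}+e_{m+j_1}$. In either canonical form the supports of $v_1$ and $v_2$ in the standard basis are disjoint, and a diagonal matrix of the same kind as above supplies the required $g_{\lambda,\mu}$.
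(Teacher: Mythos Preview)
Your proof is correct. The approach differs from the paper's in an interesting way, so a brief comparison is worthwhile.

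The paper does not attempt a full classification of $\mcb_{(1)}\times\mcb_{(2)}$-orbits of $2$-planes. Instead it normalizes only as much as needed to separate supports: in the case $\dim(S\cap U_1)=1$ it reduces $S\cap U_1$ to $\bbf e_i$, picks any $v_2\in S\cap(U_{1,(i)}\oplus U_2)$ without further reduction, and takes $g_{\lambda,\mu}$ to be $\lambda$ at the single index $i$ and $\mu$ everywhere else. In the generic case $\dim(S\cap U_1)=\dim(S\cap U_2)=0$ it uses the filtration $U_{1,i}\oplus U_2$ to locate the unique jump index $i$ with $\dim(S\cap(U_{1,i}\oplus U_2))=1$, normalizes that line to $v_1=e_i+e_j$, and then chooses $v_2\in S\cap(U_1\oplus U_{2,(j)})$; a short argument shows the leading $U_1$-index $i'$ of $v_2$ satisfies $i'>i$, so after one more upper-triangular move $v_2\in e_{i'}+U_{2,(j)}$ has support disjoint from $\{i,j\}$, and the diagonal $g_{\lambda,\mu}$ with $\lambda$ at $i,j$ and $\mu$ elsewhere works. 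No Bruhat decomposition of ${\rm GL}_2$ is invoked.

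Your route via full canonical forms and the $(\mcb_2,\mcb_2)$-Bruhat decomposition in the $(0,0)$ case is cleaner conceptually---it explains exactly which orbits occur---at the cost of a few extra reduction steps. The paper's filtration argument is quicker because it only needs disjointness of supports, not a canonical representative. Both arrive at diagonal $g_{\lambda,\mu}$.
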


\begin{proof} If $S=(S\cap U_1)\oplus (S\cap U_2)$, then the assertion is clear. So we may assume $\dim((S\cap U_1)\oplus (S\cap U_2))\le 1$.

(A) Case of $\dim(S\cap U_1)=1$. By the action of $\mcb_{(1)}$, we may assume $S\cap U_1=\bbf e_i$ with some $1\le i\le m$. We can take a nonzero $v_2\in S$ such that $v_2\in U_{1,(i)}\oplus U_2$ where $U_{1,(i)}=\bbf e_1\oplus\cdots\oplus\bbf e_{i-1}\oplus \bbf e_{i+1}\oplus\cdots\oplus\bbf e_m$. The element $g_{\lambda,\mu}$ of $\mcb_{(1)}\times \mcb_{(2)}$ defined by
$$g_{\lambda,\mu}e_j=\begin{cases} \lambda e_j & \text{if $j=i$,} \\
\mu e_j & \text{if $j\ne i$} \end{cases}$$
satisfies the desired condition for $v_1=e_i$ and $v_2$.

(B) Case of  $\dim(S\cap U_2)=1$ is proved in the same way as (A).

(C) Case of  $\dim(S\cap U_1)=\dim(S\cap U_2)=0$. There exists an $i$ such that
$$\dim(S\cap (U_{1,i}\oplus U_2))=1\quad \mbox{and that}\quad \dim(S\cap (U_{1,i-1}\oplus U_2))=0.$$
Write $S\cap (U_{1,i}\oplus U_2)=\bbf v_1$. Then there exists a $j$ with $m+1\le j\le m+n$ such that $(\mcb_{(1)}\times \mcb_{(2)})v_1\ni e_i+e_j$. So we may assume $v_1=e_i+e_j$. Take a nonzero element $v_2$ of $S\cap (U_1\oplus U_{2,(j)})$ where $U_{2,(j)}=\bbf e_{m+1}\oplus\cdots\oplus\bbf e_{j-1}\oplus\bbf e_{j+1}\oplus\cdots\oplus\bbf e_{m+n}$. Since $v_2\notin U_{1,i}\oplus U_2$, we can take an $i'>i$ such that
$$v_2\in U_{1,i'}\oplus U_2\quad\mbox{and that}\quad v_2\notin U_{1,i'-1}\oplus U_2.$$
We can take a $b\in \mcb_{(1)}$ such that $bv_2\in e_{i'}+U_{2,(j)}$ and that $be_i=e_i$. The element $g_{\lambda,\mu}$ of $\mcb_{(1)}\times \mcb_{(2)}$ defined by
$$g_{\lambda,\mu}e_k=\begin{cases} \lambda e_k & \text{if $k=i,\ j$,} \\
\mu e_k & \text{if $k\ne i,\ j$} \end{cases}$$
satisfies the desired condition for $v_1=e_i+e_j$ and $bv_2$.
\end{proof}

\subsection{Orbit decompositions of the full flag variety $M(\bbf^n)\cong{\rm GL}_n(\bbf)/\mcb$} \label{sec11.3}

(I) Let $V_1^0\subset V_2^0\subset\cdots\subset V_{n-1}^0$ be the canonical full flag in $\bbf^n$ defined by
$$V_1^0=\bbf e_1,\ V_2^0=\bbf e_1\oplus \bbf e_2,\ \ldots,\ V_{n-1}^0=\bbf e_1\oplus\cdots\oplus \bbf e_{n-1}.$$
Then
\begin{align*}
\mcb & =\mcb_n=\{g\in G\mid gV_i^0=V_i^0\mbox{ for }i=1,\ldots,n-1\} \\
& =\{\mbox{upper triangular matrices in }G\}
\end{align*}
is a Borel subgroup of $G={\rm GL}_n(\bbf)$.

Suppose $n=\alpha_1+\cdots+\alpha_p$ with positive integers $\alpha_1,\ldots,\alpha_p$. Define a partition $I=I_1\sqcup\cdots\sqcup I_p$ of $I=\{1,\ldots,n\}$ by
$$I_j=\{i(j,1),\ldots,i(j,\alpha_j)\}$$
for $j=1,\ldots,p$ where $i(j,k)=\alpha_1+\cdots+\alpha_{j-1}+k$.
Put $U_j=\oplus_{\ell\in I_j} \bbf e_\ell$. Then we have a direct sum decomposition
$\bbf^n=U_1\oplus\cdots\oplus U_p$.
Let $P$ be the parabolic subgroup of $G$ defined by
$$P=\left\{\bp A_1 && * \\ & \ddots & \\ 0 && A_p \ep \Bigm| A_j\in {\rm GL}_{\alpha_j}(\bbf)\mbox{ for }j=1,\ldots,p\right\}.$$
Then $P$ is the isotropy subgroup in $G$ for the flag
$U_1\subset U_1\oplus U_2\subset\cdots\subset U_1\oplus\cdots\oplus U_{p-1}$.

Let $S_n$ denote the symmetric group for $I$. For an element $\sigma\in S_n$, there corresponds a permutation matrix $w_\sigma$ defined by
$$w_\sigma(e_i)=e_{\sigma(i)}\quad\mbox{for }i\in I.$$
Let $\tau=\tau(\sigma)$ denote the unique element in $S_n$ such that $\tau(I_j)=I_j$
and that
\begin{equation}
\sigma^{-1}\tau^{-1}(i(j,1))<\cdots<\sigma^{-1}\tau^{-1}(i(j,\alpha_j)) \label{eq8.2}
\end{equation}
for $j=1,\ldots,p$. Since $w_\tau\in P$, we have
$$Pw_\sigma \mcb=Pw_\tau w_\sigma \mcb=Pw\mcb$$
where $w=w_\tau w_\sigma$. It follows from (\ref{eq8.2}) that
\begin{equation}
U_j\cap wV_{r(j,k)}^0=\bbf e_{i(j,1)}\oplus\cdots\oplus \bbf e_{i(j,k)}\quad\mbox{for }j=1,\ldots p\mbox{ and }k=1,\ldots,\alpha_j \label{eq8.3}
\end{equation}
where $r(j,k)=\sigma^{-1}\tau^{-1}(i(j,k))$.

Let $L$ be the canonical Levi subgroup of $P$ defined by
$$L=\left\{\bp A_1 && 0 \\ & \ddots & \\ 0 && A_p \ep \Bigm| A_j\in {\rm GL}_{\alpha_j}(\bbf)\right\}$$
and $N$ the unipotent part of $P$ defined by
$$N=\left\{\bp I_{\alpha_1} && * \\ & \ddots & \\ 0 && I_{\alpha_p} \ep \Bigm| A_j\in{\rm GL}_{\alpha_j}(\bbf)\right\}.$$
Then we have a Levi decomposition $P=LN=NL$ of $P$.

\begin{lemma} {\rm (i)} $L\cap w\mcb w^{-1}=L\cap \mcb$.

{\rm (ii)} $P\cap w\mcb w^{-1}=(L\cap \mcb)(N\cap w\mcb w^{-1}).$
\label{lem8.5}
\end{lemma}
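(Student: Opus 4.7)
The plan is to exploit the fact that $w=w_\tau w_\sigma$ is a permutation matrix, so $wV_r^0=\bbf e_{w(1)}\oplus\cdots\oplus \bbf e_{w(r)}$ is a coordinate subspace, and in particular decomposes as
\[
wV_r^0=\bigoplus_{j=1}^p (wV_r^0\cap U_j).
\]
The conjugate $w\mcb w^{-1}$ is the stabilizer in $G$ of the flag $\{wV_r^0\}_{r=1}^{n-1}$, so the question becomes one about simultaneous preservation of the subspaces $wV_r^0\cap U_j$.

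For part (i), I would argue both inclusions using (\ref{eq8.3}). If $g\in L\cap w\mcb w^{-1}$, then $g$ stabilizes each $U_j$ and each $wV_r^0$, hence $g(wV_r^0\cap U_j)=wV_r^0\cap U_j$. Taking $r=r(j,k)$ for $k=1,\ldots,\alpha_j$ and applying (\ref{eq8.3}) shows that $g|_{U_j}$ preserves every initial segment $\bbf e_{i(j,1)}\oplus\cdots\oplus \bbf e_{i(j,k)}$, so $g|_{U_j}$ is upper triangular in the basis $e_{i(j,1)},\ldots,e_{i(j,\alpha_j)}$; because $i(j,1)<\cdots<i(j,\alpha_j)$ are consecutive in the standard order on $I$, this is the same as $g\in\mcb$. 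Conversely, if $g\in L\cap \mcb$, then $g|_{U_j}$ is upper triangular in the basis $e_{i(j,1)},\ldots,e_{i(j,\alpha_j)}$. The ordering condition (\ref{eq8.2}) forces $wV_r^0\cap U_j$ to equal an initial segment $\bbf e_{i(j,1)}\oplus\cdots\oplus \bbf e_{i(j,k(r,j))}$ for some $k(r,j)\ge 0$, and such initial segments are preserved by $g|_{U_j}$; summing over $j$ gives $g(wV_r^0)=wV_r^0$ for every $r$.

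For part (ii), the inclusion ``$\supset$'' is obvious since both factors lie in $P\cap w\mcb w^{-1}$. For the reverse inclusion, take $g\in P\cap w\mcb w^{-1}$ and write uniquely $g=\ell n$ with $\ell\in L$, $n\in N$. The plan is to show $\ell\in L\cap w\mcb w^{-1}$; once that is done, $n=\ell^{-1}g$ automatically lies in $N\cap w\mcb w^{-1}$, and by part (i) we then have $\ell\in L\cap\mcb$, as required. To show that $\ell$ stabilizes each $wV_r^0$, I will reduce it to showing that $\ell|_{U_j}$ preserves $wV_r^0\cap U_j$ for all $r,j$. Since $g\in P$, it preserves $U_1\oplus\cdots\oplus U_j$ and $U_1\oplus\cdots\oplus U_{j-1}$; since $g\in w\mcb w^{-1}$, it preserves $wV_r^0$. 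Hence $g$ induces an automorphism of the quotient
\[
\bigl(wV_r^0\cap(U_1\oplus\cdots\oplus U_j)\bigr)\big/\bigl(wV_r^0\cap(U_1\oplus\cdots\oplus U_{j-1})\bigr).
\]
Because $wV_r^0$ is a sum of coordinate lines, this quotient is canonically identified via the projection $U_1\oplus\cdots\oplus U_j\to U_j$ with $wV_r^0\cap U_j$, and the induced action of $g$ on it is precisely the action of $\ell|_{U_j}$. Thus $\ell|_{U_j}$ preserves $wV_r^0\cap U_j$ for every $r$, and summing over $j$ gives $\ell\in w\mcb w^{-1}$.

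The main (minor) obstacle is the identification in the Levi-quotient step of (ii): one must verify cleanly that the projection $\pi_j:U_1\oplus\cdots\oplus U_j\to U_j$ identifies the relevant quotient with $wV_r^0\cap U_j$ and intertwines the $g$-action with the $\ell|_{U_j}$-action. This is where the fact that $wV_r^0$ is a coordinate subspace (so that $wV_r^0\cap(U_1\oplus\cdots\oplus U_j)=\oplus_{k\le j}(wV_r^0\cap U_k)$) is essential; everything else is bookkeeping in the standard basis.
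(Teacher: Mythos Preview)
Your proposal is correct and follows essentially the same approach as the paper. The paper's proof of (ii) works basis vector by basis vector---writing $ge_{i(j,k)}=u_{j,k}+v_{j,k}$ with $v_{j,k}\in U_j$, defining $\ell$ by $\ell e_{i(j,k)}=v_{j,k}$, and observing directly from (\ref{eq8.3}) that $v_{j,k}$ lies in the initial segment $\bbf e_{i(j,1)}\oplus\cdots\oplus\bbf e_{i(j,k)}$, hence $\ell\in L\cap\mcb$---whereas you phrase the same computation as the induced action of $g$ on the subquotient $(wV_r^0\cap(U_1\oplus\cdots\oplus U_j))/(wV_r^0\cap(U_1\oplus\cdots\oplus U_{j-1}))$; but both arguments rest on the identical observation that $wV_r^0$ is a coordinate subspace splitting along the $U_j$, and both invoke (i) to pass between $L\cap\mcb$ and $L\cap w\mcb w^{-1}$.
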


\begin{proof} (i) Let $g$ be an element of $L$. Then $g\in w\mcb w^{-1}$ if and only if $ge_k\in wV_{\sigma^{-1}\tau^{-1}(k)}^0$ for $k=1,\ldots,n$. Hence the assertion follows from (\ref{eq8.3}).

(ii) Suppose $g\in P\cap w\mcb w^{-1}$. Then we have
$$ge_{i(j,k)}\in (U_1\oplus\cdots\oplus U_j)\cap wV_{r(j,k)}^0=(U_1\cap wV_{r(j,k)}^0)\oplus\cdots\oplus (U_j\cap wV_{r(j,k)}^0).$$
We also have $U_j\cap wV_{r(j,k)}^0=\bbf e_{i(j,1)}\oplus\cdots\oplus \bbf e_{i(j,k)}$ for $j=1,\ldots,p$ and $k=1,\ldots,\alpha_j$ by (\ref{eq8.3}). Write
$$ge_{i(j,k)}=u_{j,k}+v_{j,k}$$
with $u_{j,k}\in U_1\oplus\cdots\oplus U_{j-1}$ and $v_{j,k}\in U_j$. Since $g$ defines a linear isomorphism on the factor space $(U_1\oplus\cdots\oplus U_j)/(U_1\oplus\cdots\oplus U_{j-1})$, the map $\ell: e_{i(j,k)}\mapsto v_{j,k}$ defines a linear isomorphism on $U_j$. Hence $\ell\in L$. Since $v_{j,k}\in \bbf e_{i(j,1)}\oplus\cdots\oplus \bbf e_{i(j,k)}$ for $k=1,\ldots,\alpha_j$, we have $\ell\in L\cap \mcb$. Hence $\ell^{-1}g\in w\mcb w^{-1}$ by (i). Since $\ell^{-1}ge_{i(j,k)}=e_{i(j,k)}+u_{j,k}$, we have
$\ell^{-1}g\in N$.
\end{proof}

We can extend Proposition 6.5 and Corollary 6.6 in \cite{M3} as follows. Let $H$ be a subgroup of $L$ such that $|H\backslash L/(L\cap \mcb)|<\infty$ and let $K$ be the subgroup of $P$ defined by $K=HN=NH$.

\begin{proposition} Suppose $L=\bigsqcup_{k=1}^{n_H} Hg_k(L\cap \mcb)$. Then
$$PwB=\bigsqcup_{k=1}^{n_H} Kg_kw\mcb.$$
\label{prop8.6}
\end{proposition}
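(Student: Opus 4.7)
The plan is to establish both the covering $PwB \subseteq \bigcup_{k=1}^{n_H} Kg_kw\mcb$ and the pairwise disjointness of the pieces, both resting on Lemma \ref{lem8.5}.

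For the covering, I would take an arbitrary $p \in P$ and use the Levi decomposition $P = NL$ to write $p = n\ell$ with $n \in N$ and $\ell \in L$. The hypothesis $L = \bigsqcup_{k=1}^{n_H} Hg_k(L\cap \mcb)$ then gives $\ell = h g_k b$ for some $h \in H$, some index $k$, and some $b \in L\cap \mcb$. Hence $p = (nh)g_k b$ with $nh \in NH = K$. By Lemma \ref{lem8.5}(i), $b \in L\cap \mcb = L\cap w\mcb w^{-1}$, so $w^{-1}bw \in \mcb$ and consequently $bw\mcb = w\mcb$. Therefore $pw\mcb = (nh)g_k w\mcb \subseteq Kg_k w\mcb$, proving $PwB \subseteq \bigcup_k Kg_k w\mcb$. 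The reverse inclusion $Kg_k w\mcb \subseteq PwB$ is immediate from $K \subseteq P$.

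For disjointness, suppose $Kg_i w\mcb \cap Kg_j w\mcb \neq \emptyset$, so there exist $k \in K$ and $b' \in \mcb$ with $k g_i w = g_j w b'$. Equivalently $g_j^{-1}k g_i = w b' w^{-1} \in P \cap w\mcb w^{-1}$, so by Lemma \ref{lem8.5}(ii) I may write
$$g_j^{-1}k g_i = b n', \qquad b \in L\cap \mcb,\ n' \in N \cap w\mcb w^{-1}.$$
On the other hand, decomposing $k = n_1 h$ with $n_1 \in N$ and $h \in H$ (using $K = NH$), I obtain
$$g_j^{-1} k g_i = (g_j^{-1} n_1 g_j)(g_j^{-1} h g_i) = \ell \cdot (\ell^{-1} (g_j^{-1} n_1 g_j) \ell),$$
where $\ell := g_j^{-1} h g_i \in L$ and $g_j^{-1} n_1 g_j \in N$ since $L$ normalizes $N$. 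Both $bn'$ and $\ell \cdot (\ell^{-1}(g_j^{-1}n_1 g_j)\ell)$ express $g_j^{-1}kg_i$ as an element of $L$ times an element of $N$, so the uniqueness of the Levi decomposition $P = LN$ forces $\ell = b \in L \cap \mcb$. Hence $h g_i = g_j b \in g_j (L \cap \mcb)$, which yields $Hg_i(L\cap \mcb) = Hg_j(L\cap \mcb)$, and the disjointness hypothesis on the coset representatives $g_1,\ldots,g_{n_H}$ gives $i = j$.

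The only mild obstacle is the bookkeeping in the disjointness step: one must commute $n_1$ past $g_j$ via the $L$-stability of $N$ so as to put the element into $LN$-form, and then match against the factorization furnished by Lemma \ref{lem8.5}(ii) via the uniqueness of the Levi decomposition. This is entirely mechanical and requires no ingredients beyond Lemma \ref{lem8.5}.
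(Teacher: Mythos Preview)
Your argument is correct. The paper does not actually supply its own proof of this proposition; it simply states that the result extends Proposition 6.5 and Corollary 6.6 of \cite{M3}, so there is no in-paper proof to compare against. Your approach---using the Levi decomposition $P=LN$ together with both parts of Lemma \ref{lem8.5}, and then matching the two $LN$-factorizations of $g_j^{-1}kg_i$ via uniqueness---is the standard one and is exactly the mechanism that makes Lemma \ref{lem8.5} the key preparatory lemma, so it is almost certainly the argument intended (and the one in \cite{M3}). One trivial remark: for the reverse inclusion $Kg_kw\mcb\subseteq Pw\mcb$ you use not only $K\subseteq P$ but also $g_k\in L\subseteq P$, which is implicit in the hypothesis.
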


\begin{corollary} $\displaystyle{|K\backslash G/\mcb|=\frac{n_Hn!}{\alpha_1!\cdots \alpha_p!}}$.
\label{cor8.7}
\end{corollary}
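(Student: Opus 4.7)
The plan is to combine Proposition \ref{prop8.6} with the standard Bruhat-type decomposition of $G$ into $P$-$\mcb$ double cosets. First I would recall that the minimal-length (or equivalently ``shuffle'') representatives $w = w_\tau w_\sigma$ produced in part (I) by the normalization condition (\ref{eq8.2}) give a complete and irredundant set of representatives for $P \backslash G / \mcb$. The number of such representatives is the number of partitions of $I = \{1,\ldots,n\}$ into an ordered tuple of subsets of sizes $\alpha_1,\ldots,\alpha_p$, namely $\frac{n!}{\alpha_1!\cdots\alpha_p!}$; equivalently, it is $|W/W_L|$ where $W = S_n$ and $W_L = S_{\alpha_1}\times\cdots\times S_{\alpha_p}$.

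Next I would apply Proposition \ref{prop8.6} to each Bruhat cell: for each such $w$, the $P$-$\mcb$ double coset $PwB$ decomposes as a disjoint union of exactly $n_H$ double cosets of the form $Kg_k w \mcb$, where $g_1,\ldots,g_{n_H}$ are fixed representatives of $H\backslash L/(L\cap\mcb)$. Since this number $n_H$ is the same for every $w$ (the decomposition in Proposition \ref{prop8.6} depends on $w$ only through a translation, and the $g_k$ are chosen once and for all inside $L$), summing over the $P$-$\mcb$ double cosets yields
\[
|K\backslash G/\mcb| \;=\; \sum_{w} n_H \;=\; n_H \cdot \frac{n!}{\alpha_1!\cdots\alpha_p!}.
\]

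The only nontrivial point is to make sure that the decompositions $PwB = \bigsqcup_k K g_k w \mcb$ for different $w$ remain disjoint when combined, but this is immediate from the disjointness of the ambient cells $PwB$ together with the inclusion $K g_k w \mcb \subset P w \mcb$. So the main (and only) obstacle is really the input of Proposition \ref{prop8.6}, which has been established beforehand via Lemma \ref{lem8.5}; the corollary itself reduces to bookkeeping once that proposition is in hand.
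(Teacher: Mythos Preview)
Your proposal is correct and matches the paper's approach: the paper states the corollary without proof, treating it as immediate from Proposition~\ref{prop8.6} together with the standard count $|P\backslash G/\mcb|=n!/(\alpha_1!\cdots\alpha_p!)$ coming from the Bruhat decomposition, exactly as you outline. There is nothing to add.
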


\bigskip
(II) Suppose $p=6$ and $\alpha_6=1$. Then $n$ is decomposed as
$$n=\alpha_1+\alpha_2+\alpha_3+\alpha_4+\alpha_5+1.$$
Define a subgroup $H$ of $G={\rm GL}_n(\bbf)$ by
$$H=\left\{\bp A & 0 & * & 0 \\
0 & B & * & * \\
0 & 0 & C & 0 \\
0 & 0 & 0 & \lambda \ep \Bigm| A\in{\rm GL}_{\alpha_1}(\bbf),\ B\in{\rm GL}_{\alpha_2}(\bbf),\ C\in \mcb_{sp}(\alpha_3,\alpha_4,\alpha_5),\ \lambda\in\bbf^\times\right\}$$

Let $L$ denote the subgroup of $H$ consisting of matrices
$$\ell(A,B,C,D,E,\lambda)=\bp A & 0 & 0 & 0 & 0 & 0 \\
0 & B & 0 & 0 & 0 & 0 \\
0 & 0 & C & 0 & 0 & 0 \\
0 & 0 & 0 & D & 0 & 0 \\
0 & 0 & 0 & 0 & E & 0 \\
0 & 0 & 0 & 0 & 0 & \lambda \ep$$
with $A\in{\rm GL}_{\alpha_1}(\bbf),\ B\in{\rm GL}_{\alpha_2}(\bbf),\ C\in \mcb_{\alpha_3},\ D\in {\rm Sp}'_{\alpha_4}(\bbf),\ E\in\mcb_{\alpha_5}$ and $\lambda\in\bbf^\times$.

\begin{lemma} $H\backslash G/\mcb<\infty$.
\label{lem9.8}
\end{lemma}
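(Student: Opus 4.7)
The plan is to apply Corollary~\ref{cor8.7} with the parabolic $P$ of $G={\rm GL}_n(\bbf)$ of type $(\alpha_1,\alpha_2,\alpha_3,\alpha_4,\alpha_5,1)$. First I would verify that $H\subset P$ by checking that $H$ stabilizes the flag
\[
V_j:=\bbf e_1\oplus\cdots\oplus \bbf e_{\alpha_1+\cdots+\alpha_j}\qquad (j=1,\ldots,5),
\]
which is immediate from the vanishing of $H$'s $(1,2)$- and $(i,6)$-blocks for $i=1,3,4,5$. The image of $H$ under the projection $P\twoheadrightarrow L_P=\prod_{j=1}^5 {\rm GL}_{\alpha_j}(\bbf)\times\bbf^\times$ equals exactly the subgroup $L$ defined in the statement.

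To apply Corollary~\ref{cor8.7}, verify $|L\backslash L_P/(L_P\cap\mcb)|<\infty$. Since $L$ and $L_P\cap\mcb$ both split as products over the six Levi factors, this cardinality factors as
\[
1\cdot 1\cdot |\mcb_{\alpha_3}\backslash{\rm GL}_{\alpha_3}/\mcb_{\alpha_3}|\cdot |{\rm Sp}'_{\alpha_4}(\bbf)\backslash{\rm GL}_{\alpha_4}/\mcb_{\alpha_4}|\cdot |\mcb_{\alpha_5}\backslash{\rm GL}_{\alpha_5}/\mcb_{\alpha_5}|\cdot 1,
\]
which is finite by the Bruhat decomposition (for the third and fifth factors) and by Corollary~1.11 of \cite{M2} (for the symplectic factor, as also used in the proof of Lemma~\ref{lem5.2}). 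Corollary~\ref{cor8.7} then gives $|K\backslash G/\mcb|<\infty$ for $K:=L\cdot N_P$.

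The main obstacle is that $H$ is a proper subgroup of $K$: the unipotent radical $N_H:=H\cap N_P$ of $H$ misses precisely the five blocks $(1,2),(1,6),(3,6),(4,6),(5,6)$ of $N_P$ in the six-block decomposition, so one cannot conclude $|H\backslash G/\mcb|<\infty$ directly from $|K\backslash G/\mcb|<\infty$. It remains to show that each $K$-orbit on $G/\mcb$ decomposes into only finitely many $H$-orbits, i.e.\ $|H\backslash K/(K\cap g\mcb g^{-1})|<\infty$ for each representative $g=g_kw$ of a $K$-orbit (where $w$ ranges over Bruhat representatives and $g_k$ over a system of $(L,L_P\cap\mcb)$-double coset representatives in $L_P$).

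For this final step I would use Lemma~\ref{lem8.5} to decompose $K\cap g\mcb g^{-1}$ along the parabolic structure of $K=L\ltimes N_P$, and then proceed cell-by-cell in the Bruhat decomposition relative to $P$. The ``missing'' $(1,2)$-direction is absorbed by the ${\rm GL}_{\alpha_1}(\bbf)\times{\rm GL}_{\alpha_2}(\bbf)$ subfactor of $L$ acting on that block (a Lemma~\ref{lem5.1}-type argument), while the four missing last-column blocks $(i,6)$ for $i=1,3,4,5$ are killed, in each Bruhat cell, by the right action of appropriate unipotent elements from $N_P\cap w\mcb w^{-1}$. This final absorption argument---ensuring that, uniformly across all Bruhat cells, the five missing unipotent directions of $N_H\subsetneq N_P$ can be normalized within each $K$-orbit---is the technically most delicate step of the proof.
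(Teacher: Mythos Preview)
Your reduction via Corollary~\ref{cor8.7} correctly yields $|K\backslash G/\mcb|<\infty$ for $K=L\cdot N_P$, but the passage from $K$ down to $H$ is a genuine gap, and the absorption sketch does not close it. The missing $(1,2)$-block is not ``absorbed'' by the Levi factor ${\rm GL}_{\alpha_1}(\bbf)\times{\rm GL}_{\alpha_2}(\bbf)$: that factor acts on the block by left/right multiplication, which gives an orbit decomposition of $\mcm_{\alpha_1,\alpha_2}(\bbf)$, not a normalization to zero, and Lemma~\ref{lem5.1} concerns a one-dimensional discrepancy, not an $\alpha_1\alpha_2$-dimensional one. For the missing last-column blocks $(i,6)$, whether they lie in $N_P\cap w\mcb w^{-1}$ depends on $w$; for those $w$ with $w^{-1}(n)<w^{-1}(j)$ for $j$ in the relevant block they do not, and you give no argument for those cells. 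The step you flag as ``technically most delicate'' is in fact the entire content of the lemma beyond the parabolic bookkeeping.

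The paper's proof is completely different: it is an induction on $n$. One classifies the $H$-orbits of hyperplanes $V=\ker f$ into eleven cases (A)--(K) according to which of the blocks $U_1,\ldots,U_6$ the form $f$ vanishes on, normalizes a representative $V$ in each case, and computes the restriction $H_V$ of the stabilizer to $V$ explicitly. In most cases $H_V$ has exactly the same block shape as $H$ with smaller parameters, so the induction applies directly; in the remaining cases $H_V$ is handled by Corollary~\ref{cor8.7} together with the induction hypothesis, Lemma~\ref{lem5.1}, or Lemma~\ref{lem9.12}. The hyperplane slicing is what handles the non-parabolic shape of $H$: it peels off the problematic last column one step at a time while preserving the block pattern, exactly the issue your approach leaves open.
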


\begin{proof} We will proceed by induction on $n$. We will first show that there are a finite number of $H$-orbits of $n-1$-dimensional subspaces in $\bbf^n$. Then for each representative $V$, we have only to show that the restriction $H_V$ of $\{g\in H\mid gV=V\}$ to $V$ has a finite number of orbits on the full flag variety $M(V)$ of $V$.

Let $V=\{v\in\bbf^n\mid f(v)=0\}$ be an $n-1$-dimensional subspace in $\bbf^n$ where $f:\bbf^n\to\bbf$ is a linear form on $\bbf^n$.

(A) Case of $f(U_1)=f(U_2)=\bbf$. We can take an element $\ell=\ell(A,B,I_{\alpha_3},I_{\alpha_4},I_{\alpha_5},1)\in L$ with some $A\in{\rm GL}_{\alpha_1}(\bbf)$ and $B\in{\rm GL}_{\alpha_2}(\bbf)$ such that
$$f(\ell e_i)=\begin{cases} 0 & \text{for $i=1,\ldots,\alpha_1-1,\alpha_1+1,\ldots,\alpha_1+\alpha_2-1$,} \\
1 & \text{for $i=\alpha_1,\alpha_1+\alpha_2$.} \end{cases}$$
Take an element $g\in H$ such that
$$ge_i=\begin{cases} e_i & \text{for $i=1,\ldots,\alpha_1+\alpha_2$,} \\
e_i-f(e_i)e_{\alpha_1+\alpha_2} & \text{for $i=\alpha_1+\alpha_2+1,\ldots,n$.} \end{cases}$$
Then we have
$$f(\ell ge_i)=\begin{cases} 0 & \text{for $i\in \{1,\ldots,n\}-\{\alpha_1,\alpha_1+\alpha_2\}$,} \\
1 & \text{for $i=\alpha_1,\alpha_1+\alpha_2$.} \end{cases}$$
So we may assume $V$ is defined by
$V=\{v\in \bbf^n\mid f(\ell gv)=0\}$
and hence it has a basis
$$e_1,\ldots,e_{\alpha_1-1},e_{\alpha_1+1},\ldots,e_{\alpha_1+\alpha_2-1}, e_{\alpha_1}-e_{\alpha_1+\alpha_2},e_{\alpha_1+\alpha_2+1},\ldots,e_n.$$
With respect to this basis, elements of $H_V$ are represented by matrices
$$\bp A & 0 & * & 0 \\
0 & B & * & * \\
0 & 0 & C & 0 \\
0 & 0 & 0 & \lambda \ep$$
with $A\in{\rm GL}_{\alpha_1-1}(\bbf),\ B\in{\rm GL}_{\alpha_2-1}(\bbf),\ C\in \mcb_{sp}(\alpha_3+1,\alpha_4,\alpha_5)$ and $\lambda\in\bbf^\times$. So we have $|H_V\backslash M(V)|<\infty$  by the assumption of induction.

(B) Case of $f(U_1)=\{0\}$ and $f(U_2)=\bbf$. We can take an element $\ell=$ 

\noindent $\ell(I_{\alpha_1},B,I_{\alpha_3},I_{\alpha_4},I_{\alpha_5},1)\in L$ with some $B\in{\rm GL}_{\alpha_2}(\bbf)$ such that
$$f(\ell e_i)=\begin{cases} 0 & \text{for $i=\alpha_1+1,\ldots,\alpha_1+\alpha_2-1$,} \\
1 & \text{for $i=\alpha_1+\alpha_2$.} \end{cases}$$
Take an element $g\in H$ such that
$$ge_i=\begin{cases} e_i & \text{for $i=1,\ldots,\alpha_1+\alpha_2$,} \\
e_i-f(e_i)e_{\alpha_1+\alpha_2} & \text{for $i=\alpha_1+\alpha_2+1,\ldots,n$.} \end{cases}$$
Then we have
$$f(\ell ge_i)=\begin{cases} 0 & \text{for $i\in \{1,\ldots,n\}-\{\alpha_1+\alpha_2\}$,} \\
1 & \text{for $i=\alpha_1+\alpha_2$.} \end{cases}$$
So we may assume $V$ is defined by
$V=\{v\in \bbf^n\mid f(\ell gv)=0\}$
and hence it has a basis
$e_1,\ldots,e_{\alpha_1+\alpha_2-1}, e_{\alpha_1+\alpha_2+1},\ldots,e_n$.
With respect to this basis, elements of $H_V$ are represented by matrices
$$\bp A & 0 & * & 0 \\
0 & B & * & * \\
0 & 0 & C & 0 \\
0 & 0 & 0 & \lambda \ep$$
with $A\in{\rm GL}_{\alpha_1}(\bbf),\ B\in{\rm GL}_{\alpha_2-1}(\bbf),\ C\in \mcb_{sp}(\alpha_3,\alpha_4,\alpha_5)$ and $\lambda\in\bbf^\times$. So we have $|H_V\backslash M(V)|<\infty$ by the assumption of induction.

(C) Case of $f(U_1)=f(U_6)=\bbf$ and $f(U_2)=\{0\}$. We may assume $f(e_n)=1$. We can take an element $\ell=\ell(A,I_{\alpha_2},I_{\alpha_3},I_{\alpha_4},I_{\alpha_5},1)\in L$ with some $A\in{\rm GL}_{\alpha_1}(\bbf)$ such that
$$f(\ell e_i)=\begin{cases} 0 & \text{for $i=1,\ldots,\alpha_1-1$,} \\
1 & \text{for $i=\alpha_1$.} \end{cases}$$
Take an element $g\in H$ such that
$$ge_i=\begin{cases} e_i & \text{for $i\in\{1,\ldots,\alpha_1+\alpha_2\}\sqcup\{n\}$,} \\
e_i-f(e_i)e_{\alpha_1} & \text{for $i\in \{\alpha_1+\alpha_2+1,\ldots,n-1\}$.} \end{cases}$$
Then we have
$$f(\ell ge_i)=\begin{cases} 0 & \text{for $i\in \{1,\ldots,n\}-\{\alpha_1,\ n\}$,} \\
1 & \text{for $i=\alpha_1,\ n$.} \end{cases}$$
Hence we may assume $V$ is defined by
$V=\{v\in \bbf^n\mid f(\ell gv)=0\}$
and hence it has a basis
$e_1,\ldots,e_{\alpha_1-1},e_{\alpha_1+1},\ldots, e_{n-1},e_{\alpha_1}-e_n$. With respect to this basis, elements of $H_V$ are represented by matrices
$$\bp A & 0 & * & * \\
0 & B & * & * \\
0 & 0 & C & 0 \\
0 & 0 & 0 & \lambda \ep$$
with $A\in{\rm GL}_{\alpha_1-1}(\bbf),\ B\in{\rm GL}_{\alpha_2}(\bbf),\ C\in \mcb_{sp}(\alpha_3,\alpha_4,\alpha_5)$ and $\lambda\in\bbf^\times$. The subgroup of ${\rm GL}_{\alpha_1+\alpha_2-1}(\bbf)$ consisting of matrices
$$\bp A & 0 \\ 0 & B \ep$$
has a finite number of orbits on $M(\bbf^{\alpha_1+\alpha_2-1})$. On the other hand, the subgroup of ${\rm GL}_{\alpha_3+\alpha_4+\alpha_5+1}(\bbf)$ consisting of matrices
$$\bp C & 0 \\ 0 & \lambda \ep$$
has a finite number of orbits on $M(\bbf^{\alpha_3+\alpha_4+\alpha_5+1})$ by the assumption of induction. So we have $|H_V\backslash M(V)|<\infty$ by Corollary \ref{cor8.7}..

(D) Case of $f(U_1)=\bbf$ and $f(U_2)=f(U_6)=\{0\}$. We can take an element $\ell=\ell(A,I_{\alpha_2},I_{\alpha_3},I_{\alpha_4},I_{\alpha_5},1)\in L$ with some $A\in{\rm GL}_{\alpha_1}(\bbf)$ such that
$$f(\ell e_i)=\begin{cases} 0 & \text{for $i=1,\ldots,\alpha_1-1$,} \\
1 & \text{for $i=\alpha_1$.} \end{cases}$$
Take an element $g\in H$ such that
$$ge_i=\begin{cases} e_i & \text{for $i\in\{1,\ldots,\alpha_1+\alpha_2\}\sqcup\{n\}$,} \\
e_i-f(e_i)e_{\alpha_1} & \text{for $i\in \{\alpha_1+\alpha_2+1,\ldots,n-1\}$.} \end{cases}$$
Then we have
$$f(\ell ge_i)=\begin{cases} 0 & \text{for $i\in \{1,\ldots,n\}-\{\alpha_1\}$,} \\
1 & \text{for $i=\alpha_1$.} \end{cases}$$
So we may assume $V$ is defined by
$V=\{v\in \bbf^n\mid f(\ell gv)=0\}$
and hence it has a basis
$e_1,\ldots,e_{\alpha_1-1},e_{\alpha_1+1},\ldots, e_n$. With respect to this basis, elements of $H_V$ are represented by matrices
$$\bp A & 0 & * & 0 \\
0 & B & * & * \\
0 & 0 & C & 0 \\
0 & 0 & 0 & \lambda \ep$$
with $A\in{\rm GL}_{\alpha_1-1}(\bbf),\ B\in{\rm GL}_{\alpha_2}(\bbf),\ C\in \mcb_{sp}(\alpha_3,\alpha_4,\alpha_5)$ and $\lambda\in\bbf^\times$. So we have $|H_V\backslash M(V)|<\infty$ by the assumption of induction.

(E) Case of $f(U_1)=f(U_2)=\{0\}$ and $f(U_3)=f(U_6)=\bbf$. We may assume $f(e_n)=1$. Suppose that
$$f(e_j)\begin{cases} =0 & \text{for $j=\alpha_1+\alpha_2+1,\ldots,k-1$,} \\
\ne 0 & \text{for $j=k$} \end{cases}$$
($\alpha_1+\alpha_2+1\le k\le \alpha_1+\alpha_2+\alpha_3$). Then we can take an element $\ell\in L$ such that
$\ell e_k= f(e_k)^{-1}e_k$
and that $\ell e_i=e_i$ for $i\ne k$. Take an element $g\in H$ such that
$$ge_i= \begin{cases} e_i & \text{for $i\in\{1,\ldots,n\}-\{k+1,\ldots,n-1\}$}, \\
e_i-f(e_i)e_k & \text{for $i\in\{k+1,\ldots,n-1\}$}. \end{cases}$$
Then we have
$$f(\ell ge_i)=\begin{cases} 0 & \text{for $i\in \{1,\ldots,n\}-\{k,n\}$,} \\
1 & \text{for $i=k,n$.} \end{cases}$$
So we may assume $V$ is defined by
$V=\{v\in \bbf^n\mid f(\ell gv)=0\}$
and hence it has a basis
$e_1,\ldots,e_{k-1},e_{k+1}\ldots, e_{n-1},e_k-e_n$. With respect to this basis, elements of $H_V$ are represented by matrices
$$\bp A & 0 & * & * & * \\
0 & B & * & * & * \\
0 & 0 & C & * & * \\ 
0 & 0 & 0 & D & 0 \\
0 & 0 & 0 & 0 & \lambda \ep$$
with $A\in{\rm GL}_{\alpha_1}(\bbf),\ B\in{\rm GL}_{\alpha_2}(\bbf),\ C\in \mcb_{k'-1},\ D\in\mcb_{sp}(\alpha_3-k',\alpha_4,\alpha_5)$ and $\lambda\in\bbf^\times$ where $k'=k-\alpha_1-\alpha_2$. By the assumption of induction, the subgroup of ${\rm GL}_{\alpha_3-k'+\alpha_4+\alpha_5+1}(\bbf)$ consisting of matrices
$$\bp D & 0 \\ 0 & \lambda \ep$$
has a finite number of orbits on $M(\bbf^{\alpha_3-k'+\alpha_4+\alpha_5+1})$. So we have $|H_V\backslash M(V)|<\infty$ by Corollary \ref{cor8.7}.

(F) Case of $f(U_1)=f(U_2)=f(U_3)=\{0\}$ and $f(U_4)=f(U_6)=\bbf$. We may assume $f(e_n)=1$. We can take an element $\ell=\ell(I_{\alpha_1},I_{\alpha_2},I_{\alpha_3},D,I_{\alpha_5},1)$ with some $D\in{\rm Sp}'_{\alpha_4}(\bbf)$ such that
$$f(\ell e_j)= \begin{cases} 0 & \text{for $j=\alpha_1+\alpha_2+\alpha_3+1,\ldots,k-1$,} \\
1 & \text{for $j=k$} \end{cases}$$
where $k=\alpha_1+\alpha_2+\alpha_3+\alpha_4$. Take an element $g\in H$ such that
$$ge_i= \begin{cases} e_i & \text{for $i\in\{1,\ldots,n\}-\{k+1,\ldots,n-1\}$}, \\
e_i-f(e_i)e_k & \text{for $i\in\{k+1,\ldots,n-1\}$}. \end{cases}$$
Then we have
$$f(\ell ge_i)=\begin{cases} 0 & \text{for $i\in \{1,\ldots,n\}-\{k,n\}$,} \\
1 & \text{for $i=k,n$.} \end{cases}$$
So we may assume $V$ is defined by
$V=\{v\in \bbf^n\mid f(\ell gv)=0\}$
and hence it has a basis
$e_1,\ldots,e_{k-1},e_k-e_n,e_{k+1}\ldots, e_{n-1}$. With respect to this basis, elements of $H_V$ are represented by matrices
$$\bp A & 0 & * & * & * \\
0 & B & * & * & * \\
0 & 0 & C & * & * \\ 
0 & 0 & 0 & D & X \\
0 & 0 & 0 & 0 & E \ep$$
with $A\in{\rm GL}_{\alpha_1}(\bbf),\ B\in{\rm GL}_{\alpha_2}(\bbf),\ C\in \mcb_{\alpha_3},\ D\in Q_{\alpha_4},\ E\in \mcb_{\alpha_5}$ and $X=\{x_{i,j}\}$ with $x_{\alpha_4,j}=0$ for $j=1,\ldots,\alpha_5$ where
\begin{align*}
Q_{\alpha_4} & =\{g\in {\rm Sp}'_{\alpha_4}(\bbf)\mid g\bbf e_1=\bbf e_1\} \\
& =\{g\in {\rm Sp}'_{\alpha_4}(\bbf)\mid g(\bbf e_1\oplus\cdots\oplus \bbf e_{\alpha_4-1})=\bbf e_1\oplus\cdots\oplus \bbf e_{\alpha_4-1}\}.
\end{align*}
The subgroup of ${\rm GL}_{\alpha_4+\alpha_5}(\bbf)$ consisting of matrices
$$\bp D & X \\ 0 & E \ep$$
has a finite number of orbits on $M(\bbf^{\alpha_4+\alpha_5})$ by Lemma \ref{lem9.12} in the next subsection. So we have $|H_V\backslash M(V)|<\infty$ by Corollary \ref{cor8.7}.

(G) Case of $f(U_1)=f(U_2)=f(U_3)=f(U_4)=\{0\}$ and $f(U_5)=f(U_6)=\bbf$. We may assume $f(e_n)=1$. Suppose that
$$f(e_j)\begin{cases} =0 & \text{for $j=\alpha_1+\alpha_2+\alpha_3+\alpha_4+1,\ldots,k-1$,} \\
\ne 0 & \text{for $j=k$} \end{cases}$$
($\alpha_1+\alpha_2+\alpha_3+\alpha_4+1\le k\le n-1$). Then we can take an element $\ell=\ell(I_{\alpha_1},I_{\alpha_2},I_{\alpha_3},I_{\alpha_4},E,1)\in L$ with some $E\in\mcb_{\alpha_5}$ and $\lambda\in\bbf^\times$ such that
$$f(\ell e_i)=\begin{cases} 0 & \text{for $i\in \{1,\ldots,n\}-\{k,n\}$,} \\
1 & \text{for $i=k,n$.} \end{cases}.$$
So we may assume $V$ is defined by
$V=\{v\in \bbf^n\mid f(\ell v)=0\}$
and hence it has a basis
$e_1,\ldots,e_{k-1},e_{k+1}\ldots, e_{n-1},e_k-e_n$. With respect to this basis, elements of $H_V$ are represented by matrices
$$\bp A & 0 & * & * & * \\
0 & B & * & * & * \\
0 & 0 & C & * & * \\ 
0 & 0 & 0 & D & 0 \\
0 & 0 & 0 & 0 & \lambda \ep$$
with $A\in{\rm GL}_{\alpha_1}(\bbf),\ B\in{\rm GL}_{\alpha_2}(\bbf),\ C\in \mcb_{sp}(\alpha_3,\alpha_4,k'),\ D\in\mcb_{\alpha_5-k'-1}$ and $\lambda\in\bbf^\times$ where $k'=k-\alpha_1-\alpha_2-\alpha_3-\alpha_4$. Since the subgroup of ${\rm GL}_{\alpha_5-k'}(\bbf)$ consisting of matrices
$$\bp D & 0 \\ 0 & \lambda \ep$$
has a finite number of orbits on $M(\bbf^{\alpha_5-k'})$ by Lemma \ref{lem5.1}, we have $|H_V\backslash M(V)|<\infty$ by Corollary \ref{cor8.7}.

(H) Case of $f(U_1)=f(U_2)=f(U_3)=f(U_4)=f(U_5)=\{0\}$ and $f(U_6)=\bbf$. Clearly we may assume $f(e_n)=1$. With respect to the basis $e_1,\ldots, e_{n-1}$ of $V$, elements of $H_V$ are represented by matrices
$$\bp A & 0 & * \\
0 & B & * \\
0 & 0 & C \ep$$
with $A\in{\rm GL}_{\alpha_1}(\bbf),\ B\in{\rm GL}_{\alpha_2}(\bbf)$ and $C\in \mcb_{sp}(\alpha_3,\alpha_4,\alpha_5)$. Hence we have $|H_V\backslash M(V)|<\infty$ by Corollary \ref{cor8.7}.

(I) Case of $f(U_1)=f(U_2)=f(U_6)=\{0\}$ and $f(U_3)=\bbf$. As in (E), we may assume
$$f(e_k)=1\mand f(e_i)=0\mbox{ for }i\ne k$$
with some $k\in\{\alpha_1+\alpha_2+1,\ldots,\alpha_1+\alpha_2+\alpha_3\}$. With respect to the basis $e_1,\ldots, e_{k-1},$ $e_{k+1},\ldots,e_n$ of $V$, elements $H_V$ are represented by matrices
$$\bp A & 0 & * & 0 \\
0 & B & * & * \\
0 & 0 & C & 0 \\
0 & 0 & 0 & \lambda \ep$$
with $A\in{\rm GL}_{\alpha_1}(\bbf),\ B\in{\rm GL}_{\alpha_2}(\bbf),\ C\in \mcb_{sp}(\alpha_3-1,\alpha_4,\alpha_5)$ and $\lambda\in\bbf^\times$. So we have $|H_V\backslash M(V)|<\infty$ by the assumption of induction.

(J) Case of $f(U_1)=f(U_2)=f(U_3)=f(U_6)=\{0\}$ and $f(U_4)=\bbf$. As in (F), we may assume
$$f(e_k)=1\mand f(e_i)=0\mbox{ for }i\ne k$$
with $k=\alpha_1+\alpha_2+\alpha_3+\alpha_4$. With respect to the basis $e_1,\ldots, e_{k-1},$ $e_{k+1},\ldots,e_n$ of $V$, elements of $H_V$ are represented by matrices
$$\bp A & 0 & * & 0 \\
0 & B & * & * \\
0 & 0 & C & 0 \\
0 & 0 & 0 & \lambda \ep$$
with $A\in{\rm GL}_{\alpha_1}(\bbf),\ B\in{\rm GL}_{\alpha_2}(\bbf),\ C\in \mcb_{sp}(\alpha_3+1,\alpha_4-2,\alpha_5)$ and $\lambda\in\bbf^\times$. (Here we note that the restrictions of elements in $Q_{\alpha_4}$ to $\bbf e_1\oplus\cdots\oplus \bbf e_{\alpha_4-1}$ are represented by matrices
$$\bp \mu & * \\ 0 & D \ep$$
with some $\mu\in\bbf^\times$ and $D\in {\rm Sp}'_{\alpha_4-2}(\bbf)$.) So we have $|H_V\backslash M(V)|<\infty$ by the assumption of induction.

(K) Case of $f(U_1)=f(U_2)=f(U_3)=f(U_4)=f(U_6)=\{0\}$ and $f(U_5)=\bbf$. As in (G), we may assume
$$f(e_k)=1\mand f(e_i)=0\mbox{ for }i\ne k$$
with some $k\in\{\alpha_1+\alpha_2+\alpha_3+\alpha_4+1,\ldots,n-1\}$. With respect to the basis $e_1,\ldots, e_{k-1},$ $e_{k+1},\ldots,e_n$ of $V$, elements of $H_V$ are represented by matrices
$$\bp A & 0 & * & 0 \\
0 & B & * & * \\
0 & 0 & C & 0 \\
0 & 0 & 0 & \lambda \ep$$
with $A\in{\rm GL}_{\alpha_1}(\bbf),\ B\in{\rm GL}_{\alpha_2}(\bbf),\ C\in \mcb_{sp}(\alpha_3,\alpha_4,\alpha_5-1)$ and $\lambda\in\bbf^\times$. So we have $|H_V\backslash M(V)|<\infty$ by the assumption of induction.
\end{proof}

Let $\tau$ be an automorphism of $G$ defined by
$\tau(g)=J_n\,{}^tg^{-1}J_n$.
Then $\tau(H)$ consists of matrices
$$\bp \lambda & 0 & * & 0 \\
0 & C & * & * \\
0 & 0 & B & 0 \\
0 & 0 & 0 & A \ep$$
with $A\in{\rm GL}_{\alpha_1}(\bbf),\ B\in{\rm GL}_{\alpha_2}(\bbf),\ C\in \mcb_{sp}(\alpha_5,\alpha_4,\alpha_3)$ and $\lambda\in\bbf^\times$.

\begin{corollary} $|\tau(H)\backslash G/\mcb|<\infty$.
\label{cor10.9}
\end{corollary}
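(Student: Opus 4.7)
The plan is to deduce this corollary directly from Lemma \ref{lem9.8} by transporting the double-coset counting problem through the automorphism $\tau$. The key point is that $\tau$ is a group automorphism of $G = {\rm GL}_n(\bbf)$, so applying $\tau$ to the whole situation sets up a bijection between the double-coset spaces $H\backslash G/\mcb$ and $\tau(H)\backslash G/\tau(\mcb)$.

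First, I would verify that $\tau$ preserves the Borel subgroup $\mcb$ of upper triangular matrices. For an upper triangular $b \in \mcb$, the inverse $b^{-1}$ is again upper triangular, the transpose ${}^tb^{-1}$ is lower triangular, and conjugation by $J_n$ (which reverses the order of the standard basis) then turns it back into an upper triangular matrix. Hence $\tau(\mcb) = \mcb$.

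Second, since the map $g \mapsto \tau(g)$ is a bijection $G \to G$ that is equivariant for the group structure, it sends the double coset $HgB$ bijectively to the double coset $\tau(H)\tau(g)\tau(\mcb) = \tau(H)\tau(g)\mcb$. This gives a bijection
$$H\backslash G/\mcb \;\stackrel{\sim}{\longrightarrow}\; \tau(H)\backslash G/\mcb,$$
so the two cardinalities coincide. By Lemma \ref{lem9.8} the left-hand side is finite, and therefore so is the right-hand side.

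There is no real obstacle here; the only thing to check carefully is the stability $\tau(\mcb) = \mcb$, which is a routine unwinding of the definition of $\tau$.
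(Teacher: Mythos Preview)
Your argument is correct and is exactly the (unstated) reasoning the paper relies on: the corollary is recorded without proof immediately after Lemma \ref{lem9.8}, since applying the automorphism $\tau$ and using $\tau(\mcb)=\mcb$ gives a bijection $H\backslash G/\mcb \cong \tau(H)\backslash G/\mcb$.
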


\noindent {\it Proof of Lemma \ref{lem5.2}.} Consider subgroups
$$H=\left\{\bp \lambda & 0 \\ 0 & A \ep \Bigm| \lambda\in\bbf^\times,\ A\in{\rm GL}_n(\bbf)\right\}\mand H'=\left\{\bp \lambda & 0 \\ 0 & A \ep \Bigm| \lambda\in\bbf^\times,\ A\in K\right\}$$
of ${\rm GL}_{n+1}(\bbf)$ where $K=\mcb_{sp}(\alpha_3,\alpha_4,\alpha_5)$ with $\alpha_3+\alpha_4+\alpha_5=n$. Take a full flag $m: V_1\subset V_2\subset\cdots\subset V_n$ in $\bbf^{n+1}$ defined by
$$V_1=\bbf(e_1+e_{n+1})\mand V_j=\bbf(e_1+e_{n+1})\oplus\bbf e_2\oplus\cdots\oplus \bbf e_j$$
for $j=2,\ldots,n$. Then the isotropy subgroup of $m$ in $H$ is
$$H_0=\left\{\bp \lambda & 0 & 0 \\ 0 & A & 0 \\ 0 & 0 & \lambda \ep \Bigm| \lambda\in\bbf^\times,\ A\in \mcb_{n-1} \right\}.$$
By Lemma \ref{lem9.8} (case of $\alpha_1=\alpha_2=0$), we have
$|H'\backslash M(\bbf^{n+1})|<\infty$.
Hence
$$|H'\backslash H/H_0|<\infty.$$
Consider the projection $\pi: H\to {\rm GL}_n(\bbf)$ given by
$$\pi\left(\bp \lambda & 0 \\ 0 & A \ep\right)=A.$$
Then we have
$H'\backslash H/H_0\cong \pi(H')\backslash \pi(H)/\pi(H_0)$.
Since $\pi(H')=K,\ \pi(H)={\rm GL}_n(\bbf)=G$ and $\pi(H_0)=\mcb'$, we have $|K\backslash G/\mcb'|<\infty$ as desired. \hfill$\square$

\subsection{A lemma}

Suppose $n=\alpha+\beta$ with even $\alpha$. Consider the subgroup
$$H=H_{\alpha,\beta}=\left\{\bp A & X \\ 0 & B \ep \Bigm| A\in Q_\alpha,\ B\in\mcb_\beta,\ X\in \mcm'_{\alpha,\beta}(\bbf)\right\}$$
of ${\rm GL}_n(\bbf)$ where
\begin{align*}
Q_\alpha & =\{g\in {\rm Sp}'_\alpha(\bbf)\mid g\bbf e_1=\bbf e_1\} \\
& =\{g\in {\rm Sp}'_\alpha(\bbf)\mid g(\bbf e_1\oplus\cdots\oplus \bbf e_{\alpha-1})=\bbf e_1\oplus\cdots\oplus \bbf e_{\alpha-1}\}, \\
\mcm_{\alpha,\beta}(\bbf) & = \mbox{  the space of $\alpha\times\beta$ matrices with entries in $\bbf$} \\
\mand \mcm'_{\alpha,\beta}(\bbf) & =\{X=\{x_{i,j}\}\in \mcm_{\alpha,\beta}(\bbf)\mid x_{\alpha,1}=\cdots=x_{\alpha,\beta}=0\}.
\end{align*}

\begin{lemma} $|H_{\alpha,\beta}\backslash M(\bbf^n)|<\infty$.
\label{lem9.12}
\end{lemma}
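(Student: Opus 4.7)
The plan is to proceed by induction on $n = \alpha+\beta$, classifying hyperplanes $V = \ker f\subset \bbf^n$ up to the action of $H := H_{\alpha,\beta}$ in the style of Lemma \ref{lem9.8}. For each orbit representative $V$, I will show that the restriction of the stabilizer $H_V = \{g\in H\mid gV=V\}$ to $V$ either equals, or contains as a finite-index piece, a subgroup of the form $H_{\alpha',\beta'}$ with $\alpha'+\beta' = n-1$; or, after a block decomposition $V = V'\oplus V''$, a direct product of such a group with a group already known to have finitely many flag orbits. The inductive hypothesis together with Corollary \ref{cor8.7} will then yield $|H_V\backslash M(V)|<\infty$, hence $|H\backslash M(\bbf^n)|<\infty$.

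For the base case $\beta = 0$, $H = Q_\alpha$ is a maximal parabolic of ${\rm Sp}'_\alpha(\bbf)$ stabilizing $\bbf e_1$ and $\widetilde U_1 := \bbf e_1\oplus\cdots\oplus\bbf e_{\alpha-1}$. I would derive $|Q_\alpha\backslash M(\bbf^\alpha)|<\infty$ by reducing along the combinatorial invariants $\dim(V_i\cap \bbf e_1)$ and $\dim(V_i\cap \widetilde U_1)$ of a flag $V_\bullet$: once these are fixed, the remaining classification takes place for the Levi ${\rm GL}_1\times {\rm Sp}'_{\alpha-2}(\bbf)$ acting on full flags of its natural representation, which has finitely many orbits by Corollary~1.11 of \cite{M2}.

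For the inductive step, write $f = (f_1,f_2)\in U_1^*\oplus U_2^*$ where $U_1 = \bbf e_1\oplus\cdots\oplus \bbf e_\alpha$ and $U_2 = \bbf e_{\alpha+1}\oplus\cdots\oplus \bbf e_n$. The guiding formula is
$$g\cdot f \;=\; \bigl(f_1\circ A^{-1},\; f_2\circ B^{-1} - f_1\circ A^{-1}\circ X\circ B^{-1}\bigr).$$
I split cases according to whether $f_2 = 0$ and, when $f_2\ne 0$, according to the position of $f_1$ relative to $\bbf e_1$, $\widetilde U_1$ and $U_1$. In each case I first use $\mcb_\beta$ to put $f_2$ in the normal form $e_{\alpha+k}^*$ for some $k$; then use $X\in\mcm'_{\alpha,\beta}(\bbf)$ to cancel the perturbation $f_1\circ X$ at coordinates where the vanishing of the last row of $X$ is irrelevant; and finally use $Q_\alpha$ (which has finitely many orbits on lines in $U_1^*$, by the base case applied to the contragredient representation) to put $f_1$ in one of finitely many normal forms. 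For each normal form the restriction of $H_V$ to $V$ either equals, or contains with finite absorbable quotient, a copy of $H_{\alpha',\beta'}$ with $\alpha'\in\{\alpha,\alpha-2\}$ and $\beta'\in\{\beta-1,\beta\}$, so the induction applies.

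The main obstacle will be the constraint $X_{\alpha,j}=0$. Because the last row of $X$ vanishes, $X$ cannot modify the $e_\alpha^*$-component of $f_1$ (equivalently, it cannot shift the $e_\alpha$-component of vectors in $gU_2$), so $f_1(e_\alpha)$ is a genuine invariant of the $H$-orbit. This produces additional subcases beyond those that occur for the unrestricted analogue $X\in\mcm_{\alpha,\beta}(\bbf)$, and in the stabilizer analysis leaves a residual one-parameter factor tied to the symplectic pairing of $e_1$ and $e_\alpha$ in $Q_\alpha$. Verifying in each such subcase that the induced group on $V$ still matches the inductive pattern $H_{\alpha',\beta'}$, up to a factor that Corollary \ref{cor8.7} absorbs, will be the technical heart of the proof.
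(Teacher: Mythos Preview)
Your hyperplane-classification strategy (patterned on Lemma~\ref{lem9.8}) is viable in principle but far more work than needed, and you leave the key stabilizer checks open. One slip: ``$f_1(e_\alpha)$ is a genuine invariant of the $H$-orbit'' is false, since $A\in Q_\alpha$ does move that component of $f_1$; what is correct is only that $X$ cannot use it to perturb $f_2$. This does not break your plan, but it means the subcase structure is governed by the $Q_\alpha$-orbits on lines in $U_1^*$, not by the single scalar $f_1(e_\alpha)$.

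The paper's proof is much shorter and never classifies hyperplanes. It inducts on $\beta$ alone, the base case $\beta=0$ being $|Q_\alpha\backslash M(\bbf^\alpha)|<\infty$, quoted directly as Theorem~1.14 of \cite{M2}. For $\beta>0$, set $U'=\bbf e_1\oplus\cdots\oplus\bbf e_{n-1}$, find the first index $i$ where the full flag $V_\bullet$ leaves $U'$, write $V_i=V_{i-1}\oplus\bbf v$ with $v=\sum_k\lambda_k e_k$ and $\lambda_n\ne0$, and rescale $v$ so that $\lambda_\alpha\in\{0,1\}$. The single element $g\in H$ with $ge_k=e_k$ for $k<n$ and $ge_n=v-\lambda_\alpha e_\alpha$ (which lies in $H$ precisely because its $X$-block then has vanishing $\alpha$-th row) sends $v\mapsto e_n+\lambda_\alpha e_\alpha$ while fixing $U'$ pointwise, so the $H$-orbit of $V_\bullet$ is determined by $(i,\lambda_\alpha)$ together with the induced full flag $V_1\subset\cdots\subset V_{i-1}\subset V_{i+1}\cap U'\subset\cdots$ in $U'$. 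This yields $|H_{\alpha,\beta}\backslash M(\bbf^n)|\le 2n\,|H_{\alpha,\beta-1}\backslash M(U')|$ with no stabilizer analysis and no appeal to Corollary~\ref{cor8.7}. Your approach buys nothing over this; the paper's ``first exit from $U'$\,'' trick is simply the right tool here.
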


\begin{proof} We will proceed by induction on $\beta$. When $\beta=0$, we have $|H_{\alpha,0}\backslash M(\bbf^\alpha)|=|Q_\alpha \backslash M(\bbf^\alpha)|<\infty$ by Theorem 1.14 of \cite{M2}. Suppose $\beta>0$ and put $U'=\bbf e_1\oplus\cdots\oplus \bbf e_{n-1}$. Then we may assume $|H_{\alpha,\beta-1}\backslash M(U')|<\infty$.

Let $\{0\}=V_0\subset V_1\subset\cdots\subset V_n=\bbf^n$ be a full flag in $\bbf^n$. We can take an $i$ with $1\le i\le n$ such that
$$V_{i-1}\subset U'\quad\mbox{and that}\quad V_i\not\subset U'.$$
Write $V_i=V_{i-1}\oplus \bbf v$ with $v=\lambda_1e_1\oplus\cdots\oplus \lambda_ne_n$. Then we have $\lambda_n\ne 0$. Taking a constant multiple of $v$, we may assume
$\lambda_\alpha=0$ or $1$.
Let $g$ be an element of $H_{\alpha,\beta}$ defined by
$$ge_k=e_k\mbox{ for }k=1,\ldots,n-1\mand ge_n=v-\lambda_\alpha e_\alpha.$$
Then we have
$$g^{-1}v=e_n+\lambda_\alpha e_\alpha.$$
For $j=i,\ldots,n$, we can write
$V_j=(V_j\cap U')\oplus \bbf v$.
Hence
$$g^{-1}V_j=\begin{cases} V_j & \text{for $j=1,\ldots,i-1$,} \\
(V_j\cap U')\oplus \bbf (e_n+\lambda_\alpha e_\alpha) & \text{for $j=i,\ldots,n$.} \end{cases}$$
Since
$V_1\subset\cdots\subset V_{i-1}\subset V_{i+1}\cap U'\subset\cdots\subset V_n\cap U'=U'$
is a full flag in $U'$, we have
$$|H_{\alpha,\beta}\backslash M(\bbf^n)|\le 2n|H_{\alpha,\beta-1}\backslash M(U')|<\infty.$$
\end{proof}

\subsection{Lemmas related with ${\rm SL}_2(\bbf)$}

Let $H_0$ denote the subgroup of $G={\rm GL}_n(\bbf)$ consisting of matrices
$$\bp A & 0 \\ 0 & B \ep$$
with $A\in{\rm SL}_2(\bbf)$ and $B\in{\rm GL}_{n-2}(\bbf)$.

\begin{lemma} $|H_0\backslash M(\bbf^n)|<\infty$ for the full flag variety $M(\bbf^n)$.
\label{lem12.13}
\end{lemma}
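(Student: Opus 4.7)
The plan is to enlarge $H_0$ to $\tilde{H} := \mathrm{GL}_2(\bbf) \times \mathrm{GL}_{n-2}(\bbf)$, realized as block-diagonal matrices in $G$. Since $\tilde{H}/H_0 \cong \bbf^\times$ via the first-block determinant $\det_1$, it suffices to prove (a) $|\tilde{H} \backslash M(\bbf^n)| < \infty$, and (b) for every representative $V_\bullet$ of an $\tilde{H}$-orbit, the map $\det_1 : \mathrm{Stab}_{\tilde{H}}(V_\bullet) \to \bbf^\times$ is surjective. Statement (b) forces $\tilde{H} = H_0 \cdot \mathrm{Stab}_{\tilde{H}}(V_\bullet)$, so each $\tilde{H}$-orbit equals a single $H_0$-orbit, and the lemma then follows from (a).

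For (a), I would proceed by induction on $n$. The base case $n = 2$ is trivial since $\tilde{H} = \mathrm{GL}_2(\bbf)$ acts transitively on $M(\bbf^2) = \mathbb{P}^1(\bbf)$. For the inductive step, normalize the line $V_1$ of the flag: writing $U = \bbf e_1 \oplus \bbf e_2$ and $U' = \bbf e_3 \oplus \cdots \oplus \bbf e_n$, the transitive actions of $\mathrm{GL}_2(\bbf)$ on $U \setminus \{0\}$ and of $\mathrm{GL}_{n-2}(\bbf)$ on $U' \setminus \{0\}$ reduce $V_1$ to one of the three representatives $\bbf e_1$, $\bbf e_3$, or $\bbf(e_1 + e_3)$. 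In each case $\mathrm{Stab}_{\tilde{H}}(V_1)$ descends to a block-triangular subgroup of $\mathrm{GL}(\bbf^n/V_1) \cong \mathrm{GL}_{n-1}(\bbf)$: for $V_1 = \bbf e_3$ this is an $\tilde{H}$-type subgroup in $\mathrm{GL}_{n-1}$, handled by the inductive hypothesis; for $V_1 = \bbf e_1$ it is a Levi of type $(1, n-2)$, whose finiteness on $M(\bbf^{n-1})$ is a classical Bruhat/Levi statement provable by a parallel induction on $n-1$; and for $V_1 = \bbf(e_1 + e_3)$ it is a mixed block-triangular group of type $(1,1,n-3)$ with a linked pair of top scalars, covered by Corollary \ref{cor8.7} after a routine reduction.

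For (b), one directly exhibits, at each normalization, a one-parameter subgroup of $\mathrm{Stab}_{\tilde{H}}(V_1)$ meeting every $\det_1$-fiber: for $V_1 = \bbf e_1$, the pairs $(\mathrm{diag}(\lambda, 1), \mathrm{id})$ for $\lambda \in \bbf^\times$; for $V_1 = \bbf e_3$, the entire $\mathrm{GL}_2(\bbf)$ first factor; and for $V_1 = \bbf(e_1 + e_3)$, the pairs $(\mathrm{diag}(\lambda, 1), \mathrm{diag}(\lambda, 1, \ldots, 1))$, each of which satisfies $\det_1 = \lambda$ ranging over $\bbf^\times$. This $\det_1$-surjectivity is inherited through the inductive descent, because at every subsequent stage the normalized stabilizer still contains a scaling torus acting as the identity on the residual flag. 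The main technical obstacle is verifying this inheritance uniformly across all branches of the induction and confirming, in the case $V_1 = \bbf e_1$, that the Levi $\mathrm{GL}_1 \times \mathrm{GL}_{n-2}$ has finitely many orbits on $M(\bbf^{n-1})$; however, in each explicit Schubert-style normalization the stabilizer is transparent enough that this check reduces to a Bruhat-type bookkeeping.
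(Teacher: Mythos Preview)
Your strategy differs from the paper's. The paper inducts on $n$ directly with $H_0$, normalizing the \emph{top} hyperplane $V_{n-1}$ into one of three positions relative to $U_1=\bbf e_1\oplus\bbf e_2$ and $U_2=\bbf e_3\oplus\cdots\oplus\bbf e_n$, and then identifying the restriction of the stabilizer to $V_{n-1}$ with either the same $H_0$-type group in dimension $n-1$, a Levi $\bbf^\times\times{\rm GL}_{n-2}$, or a group handled by Corollary~\ref{cor8.7}. No enlargement to ${\rm GL}_2$ is needed.

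Your route via $\widetilde H={\rm GL}_2(\bbf)\times{\rm GL}_{n-2}(\bbf)$ and $\det_1$-surjectivity is plausible and part (a) is standard, but your justification of (b) has a real gap. The key assertion --- that at each stage the stabilizer contains a scaling torus \emph{acting as the identity on the residual flag} --- is already false at the first step for $V_1=\bbf(e_1+e_3)$. Your exhibited torus $(\mathrm{diag}(\lambda,1),\mathrm{diag}(\lambda,1,\ldots,1))$ sends $e_3\mapsto\lambda e_3$, hence acts on the quotient $\bbf^n/V_1$ (basis $\bar e_2,\bar e_3,\ldots,\bar e_n$) as $\mathrm{diag}(1,\lambda,1,\ldots,1)$, not as the identity. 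In fact a direct check shows that the \emph{only} element of $\mathrm{Stab}_{\widetilde H}(V_1)$ acting trivially on $\bbf^n/V_1$ is the identity, so no such torus exists in this branch. Consequently your inductive mechanism for propagating $\det_1$-surjectivity does not work as stated: after this normalization the residual group on the quotient is of a different shape (a lower-triangular $2\times2$ block coupled to a ${\rm GL}_{n-3}$ block, with $\det_1=\mu\cdot a_{22}$ in two independent parameters), and you would have to re-run the surjectivity argument for that group rather than invoke the same inductive hypothesis.

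Statement (b) is in fact true (the lemma holds over every field, including those with $|\bbf^\times/(\bbf^\times)^2|=\infty$, so no $\widetilde H$-orbit can split), but proving it requires tracking the stabilizer through all branches of a more heterogeneous induction than you have set up. The paper's hyperplane induction sidesteps this entirely, since passing to $V_{n-1}$ keeps the ${\rm SL}_2$ block intact (case $V_{n-1}\supset U_1$) or replaces it by a single $\bbf^\times$ with no determinant constraint (the other two cases).
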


\begin{proof} We will proceed by induction on $n$. Put $U_1=\bbf e_1\oplus\bbf e_2$ and $U_2=\bbf e_3\oplus\cdots\oplus \bbf e_n$. Let $V$ be an $n-1$ dimensional subspace of $\bbf^n$. We have only to show $|H(V)\backslash M(V)|<\infty$ where $H(V)=\{g\in H_0\mid gV=V\}$.

(A) Case of $V\supset U_1$. By the action of $H_0$, we may assume $V=\bbf e_1\oplus\cdots\oplus\bbf e_{n-1}$. It is clear that $|H(V)\backslash M(V)|<\infty$ by the assumption of induction.

(B) Case of $V\supset U_2$. By the action of $H_0$, we may assume $V=\bbf e_2\oplus\cdots\oplus\bbf e_n$. With respect to the basis $e_2,\ldots,e_n$ of $V$, elements of $H(V)$ are represented by
$$\bp a & 0 \\ 0 & B \ep$$
with $a\in\bbf^\times$ and $B\in{\rm GL}_{n-2}(\bbf)$. Hence $|H(V)\backslash M(V)|<\infty$.

(C) Case of $V\not\supset U_1$ and $V\not\supset U_2$. By the action of $H_0$, we may assume $V=\bbf e_1\oplus\bbf e_3\oplus\cdots\oplus\bbf e_{n-1}\oplus\bbf (e_2+e_n)$. With respect to the basis $e_1,e_3,\ldots,e_{n-1},e_2+e_n$ of $V$, elements of $H(V)$ are represented by
$$\bp a & 0 & * \\ 0 & B & * \\ 0 & 0 & a^{-1} \ep$$
with $a\in\bbf^\times$ and $B\in{\rm GL}_{n-3}(\bbf)$. Since the subgroup of ${\rm GL}_{n-2}(\bbf)\times \bbf^\times$ consisting of elements
$$\left(\bp a & 0 \\ 0 & B \ep,\ a^{-1}\right)$$
has a finite number of orbits on $M(\bbf^{n-2})\times M(\bbf)$, we have $|H(V)\backslash M(V)|<\infty$ by Corollary \ref{cor8.7}.
\end{proof}

Let $H_1$ denote the subgroup of $G={\rm GL}_n(\bbf)$ consisting of matrices
$$\bp a & * & 0 \\ 0 & a^{-1} & 0 \\ 0 & 0 & B \ep$$
with $a\in\bbf^\times$ and $B\in{\rm GL}_{n-2}(\bbf)$.

\begin{lemma} $|H_1\backslash M_k(\bbf^n)|<\infty$ for the Grassmann variety $M_k(\bbf^n)$.
\label{lem11.13}
\end{lemma}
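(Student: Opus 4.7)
Write $\bbf^n = U_1 \oplus U_2$ with $U_1 = \bbf e_1 \oplus \bbf e_2$ and $U_2 = \bbf e_3 \oplus \cdots \oplus \bbf e_n$, so that $H_1$ preserves this decomposition. The plan is to introduce the overgroup
\[
H_1' = \left\{ \bp a & c & 0 \\ 0 & b & 0 \\ 0 & 0 & B \ep \Bigm| a, b \in \bbf^\times,\ c \in \bbf,\ B \in {\rm GL}_{n-2}(\bbf) \right\} \supset H_1,
\]
together with the character $\chi : H_1' \to \bbf^\times$, $(a, c, b, B) \mapsto ab$, whose kernel is exactly $H_1$. Since $H_1'/H_1 \cong \bbf^\times$ is abelian, for any $V \in M_k(\bbf^n)$ the $H_1$-orbits inside the $H_1'$-orbit $H_1'\cdot V$ are in bijection with $\bbf^\times / \chi((H_1')_V)$. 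Hence it suffices to prove (a) $|H_1' \backslash M_k(\bbf^n)| < \infty$, and (b) $\chi((H_1')_V) = \bbf^\times$ for every $V$.

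For (a), I would stratify $V$ by the combinatorial data $V_1 := V \cap U_1$, $V_2 := V \cap U_2$ and $\pi_1(V) \subset U_1$ (with $\pi_1$ the projection onto $U_1$). The Borel subgroup $B_{{\rm GL}_2}$ inside $H_1'$ has four orbits on subspaces of $U_1$ (namely $\{0\}$, $\bbf e_1$, the lines different from $\bbf e_1$, and $U_1$), while ${\rm GL}_{n-2}(\bbf)$ is transitive on subspaces of $U_2$ of each given dimension; hence there are only finitely many $H_1'$-orbit types of $(V_1, \pi_1(V), V_2)$. For each type, the remaining datum is the graph subspace $V/(V_1 \oplus V_2) \hookrightarrow (U_1/V_1) \oplus (U_2/V_2)$, which after a further normalization by the stabilizer of the type assumes a standard form, yielding finitely many $H_1'$-orbits on $M_k(\bbf^n)$.

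For (b), fix a standard representative $V$ of an $H_1'$-orbit and analyse the stabilizer. Picking lifts $e_i + u_i \in V$ of the basis vectors of $\pi_1(V)$ (with $u_i \in U_2$), the stabilizer conditions read $BV_2 = V_2$ together with congruences of the form $Bu_i \equiv (\text{linear in } a,b,c)\pmod{V_2}$. A direct inspection shows that in every case arbitrary $a,b \in \bbf^\times$ can be realized: when $\pi_1(V) = 0$ the parameters $a,b$ are unconstrained; when $\pi_1(V) \subset \bbf e_1$ the parameter $b$ is free and $a$ can be prescribed by the eigenvalue of $B$ on a lift of $e_1$ in $U_2/V_2$ (symmetrically when $\pi_1(V)$ is a line $\ne \bbf e_1$); when $\pi_1(V) = U_1$ one selects $B$ with prescribed eigenvalues $a,b$ on the image of $\bbf\overline{u}_1 \oplus \bbf\overline{u}_2$ in $U_2/V_2$. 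Consequently $\chi((H_1')_V) = \bbf^\times$, so each $H_1'$-orbit forms a single $H_1$-orbit and the lemma follows from (a).

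The main obstacle is the subcase $\pi_1(V) = U_1$ with both $\overline{u}_1, \overline{u}_2 \ne 0$ in $U_2/V_2$, since the constraints $Bu_1 \equiv au_1$ and $Bu_2 \equiv cu_1 + bu_2 \pmod{V_2}$ couple the parameters. One splits according to whether $\overline{u}_1, \overline{u}_2$ are linearly independent or linearly dependent in $U_2/V_2$: in the independent case any $(a,b,c)$ is achievable by a direct construction of $B$ (acting with the prescribed $2\times 2$ block on $\bbf\overline{u}_1 \oplus \bbf\overline{u}_2$ and as the identity on a complement, then lifted), while in the dependent case $\overline{u}_2 = \gamma\overline{u}_1$ forces $c = \gamma(a-b)$ but leaves $a,b$ independent. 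Either way $\chi = ab$ is surjective onto $\bbf^\times$.
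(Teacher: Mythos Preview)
Your argument is correct, but it takes a different and more labor-intensive route than the paper. The paper's proof is a single sentence: the upper-left $2\times 2$ block of $H_1$ is the Borel subgroup of ${\rm SL}_2(\bbf)$, which has exactly two orbits on the full flag variety $M(\bbf^2)=\mathbb{P}^1(\bbf)$; the conclusion then follows immediately from Proposition~6.3 of \cite{M3}, a general result saying that if $H\subset{\rm GL}(U_1)$ has finitely many orbits on $M(U_1)$, then $H\times{\rm GL}(U_2)$ has finitely many orbits on each Grassmannian $M_k(U_1\oplus U_2)$. In other words, the paper applies the cited proposition \emph{directly} to $H_1$, with no need for an overgroup.

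Your approach instead passes through the overgroup $H_1'$ (the ${\rm GL}_2$-Borel in place of the ${\rm SL}_2$-Borel) and then descends via the character $\chi(a,c,b,B)=ab$ by checking that $\chi$ is surjective on every stabilizer $(H_1')_V$. This is a valid and self-contained argument that avoids citing \cite{M3}; your case analysis in part~(b), including the coupled subcase $\pi_1(V)=U_1$ with $\bar u_1,\bar u_2$ dependent, is handled correctly. The trade-off is that your proof is considerably longer, and your part~(a) essentially reproves the special case of Proposition~6.3 of \cite{M3} that the paper simply quotes. The character trick you use is a nice general device, and it yields the slightly sharper statement that the $H_1$- and $H_1'$-orbit decompositions of $M_k(\bbf^n)$ actually coincide, which the paper's one-line proof does not make explicit.
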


\begin{proof} Since the subgroup of ${\rm GL}_2(\bbf)$ consisting of matrices
$$\left\{\bp a & x \\ 0 & a^{-1} \ep \Bigm| a\in\bbf^\times,\ x\in\bbf \right\}$$
of ${\rm GL}_2(\bbf)$ has two orbits on $M(\bbf^2)$, the assertion follows from Proposition 6.3 in \cite{M3}.
\end{proof}

Let $H_2$ denote the subgroup of $G$ consisting of matrices
$$\bp A & 0 & 0 \\ 0 & \lambda & * \\ 0 & 0 & B \ep$$
with $\lambda\in\bbf^\times,\ A\in {\rm SL}_2(\bbf)$ and $B\in{\rm GL}_{n-3}(\bbf)$.

\begin{lemma} $|H_2\backslash M_{k_1,k_2}(\bbf^n)|<\infty$ for the flag variety $M_{k_1,k_2}=\{V_1\subset V_2\subset \bbf^n \mid \dim V_1=k_1,\ \dim V_2=k_1+k_2\}$.
\label{lem11.14}
\end{lemma}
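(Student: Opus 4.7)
The strategy is an extension of the approach used in Lemma \ref{lem11.13}, where Proposition 6.3 of \cite{M3} reduced a product action to the Grassmannian; here we need to handle two-step flags directly. Observe that $H_2$ decomposes block-wise on $\bbf^n = U_0 \oplus W$ as ${\rm SL}_2(\bbf) \times P(\bbf)$, where $U_0 = \bbf e_1 \oplus \bbf e_2$, $W = \bbf e_3 \oplus U_2$ with $U_2 = \bbf e_4 \oplus \cdots \oplus \bbf e_n$, and $P = P_{1,n-3}$ is the parabolic of ${\rm GL}(W)$ preserving the line $\bbf e_3$. The ${\rm SL}_2$-factor acts transitively on lines of $U_0$, while $P$ acts with finitely many orbits on any flag variety of $W$ by the Bruhat decomposition (Lemma \ref{lem5.1}).

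The plan is to proceed in two steps: first reduce $V_2$ under $H_2$ to a finite list of normal forms, then for each representative analyze the stabilizer's action on $M_{k_1}(V_2)$. The first step uses the invariants $d_0 = \dim(V_2 \cap U_0) \in \{0,1,2\}$ together with the Bruhat-type data of $\pi_W(V_2) \subset W$, where $\pi_W$ is the projection killing $U_0$. The ${\rm SL}_2$-action reduces $V_2 \cap U_0$ to one of $\{0\}$, $\bbf e_1$, or $U_0$, with stabilizer trivial, the upper-triangular Borel $\{\bp a & x \\ 0 & a^{-1} \ep\}$, or all of ${\rm SL}_2$ respectively; simultaneously $P$ reduces $\pi_W(V_2)$ to a coordinate subspace respecting $\bbf e_3$. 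The residual ``graph'' datum describing how $V_2$ sits as a section over $\pi_W(V_2)/(\pi_W(V_2) \cap \bbf e_3)$ with values in $U_0/(V_2 \cap U_0)$ can then be normalized using the remaining ${\rm SL}_2$-stabilizer together with the Levi of $P$, yielding finitely many representatives of $V_2$.

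The second step is a case-by-case analysis: for each normal form of $V_2$, the restricted stabilizer $H_2(V_2)|_{V_2}$ turns out to be a matrix subgroup of ${\rm GL}(V_2)$ of one of the shapes already handled in the appendix --- a parabolic controlled by the Bruhat decomposition (Lemma \ref{lem5.1}), a block form covered by Lemma \ref{lem9.8} or Corollary \ref{cor10.9}, or a group involving an ${\rm SL}_2$-block of the kind covered by Lemma \ref{lem12.13} --- so that $|H_2(V_2) \backslash M_{k_1}(V_2)| < \infty$ in each case. The main technical obstacle will be the hybrid case $\dim(V_2 \cap U_0) = 1$ with $\pi_W(V_2) \not\subset \bbf e_3$, where the graph datum genuinely couples the ${\rm SL}_2$-Borel with the parabolic $P$ in a twisted way; I expect to resolve it by using both the unipotent entry $x$ of the ${\rm SL}_2$-Borel and the $*$-block of $H_2$ (which translates along $\bbf e_3$) to clear the off-diagonal coefficients, and then to invoke Corollary \ref{cor8.7} to glue the residual ${\rm SL}_2$-Borel action on the $U_0$-piece with the Bruhat orbits on the $W$-piece.
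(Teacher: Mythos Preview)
Your strategy is correct and parallels the paper's: both first reduce $V_2$ to finitely many normal forms under $H_2$, then show the stabilizer of each normal form acts with finitely many orbits on subspaces $V_1\subset V_2$, invoking Lemma~\ref{lem12.13}, Lemma~\ref{lem11.13}, Lemma~\ref{lem5.1} and Corollary~\ref{cor8.7} case by case. The organization differs in two useful ways. First, the paper proceeds by induction on $n$: if the projection of $V_2$ onto $U_3=\bbf e_4\oplus\cdots\oplus\bbf e_n$ is not surjective, one can shrink the ambient space and apply the inductive hypothesis, so one only needs to analyse $V_2$ with $\pi_3(V_2)=U_3$. Second, once in that situation the paper cases on $W=V_2\cap(\bbf e_1\oplus\bbf e_2\oplus\bbf e_3)$ (its dimension and whether $\pi_1(W)=U_1$) rather than on $d_0=\dim(V_2\cap U_0)$. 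The cleanest shortcut, which bypasses your ``hybrid'' normalization of a rank-two graph datum, occurs when $\dim W\le 1$: instead of normalizing directly, the paper enlarges $V_2$ to any hyperplane $V'\supset V_2+\bbf e_3$, which automatically has $\dim(V'\cap(U_1\oplus U_2))=2$ and so falls into a case already settled, and then descends from $M(V')$ to $M_{k_1,k_2}(V')$. Your direct normalization will also succeed, but the induction on $n$ together with this hyperplane trick lets the paper avoid unrolling the extra subcases.
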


\begin{proof} We will proceed by induction on $n$. Put
$$U_1=\bbf e_1\oplus \bbf e_2,\quad U_2=\bbf e_3\mand U_3=\bbf e_4\oplus\cdots\oplus \bbf e_n.$$
For $i=1,2$ and $3$, let $\pi_i:\bbf^n\to U_i$ denote the canonical projections. Let $V_1\subset V_2$ be a flag in $M_{k_1,k_2}$.

Suppose $\dim \pi_3(V_2)=\ell<n-3$. Then we can take an $h\in H_2$ such that
$$\pi_3(hV_2)=\bbf e_4\oplus\cdots\oplus \bbf e_{\ell+3}.$$
Put $V'=\bbf e_1\oplus\cdots\oplus \bbf e_{\ell+3}$. Then $hV_1\subset hV_2\subset V'$. Elements of the restriction $H'=H'(V')$ of $\{g\in H_2\mid gV'=V'\}$ to $V'$ are represented by
$$\bp A & 0 & 0 \\ 0 & \lambda & * \\ 0 & 0 & B \ep$$
($\lambda\in\bbf^\times,\ A\in{\rm SL}_2(\bbf),\ B\in {\rm GL}_\ell(\bbf)$) with respect to the basis $e_1,\ldots,e_{\ell+3}$ of $V'$. Hence we have $|H'\backslash M_{k_1,k_2}(V')|<\infty$ by the assumption of induction. So we may assume $\dim \pi_3(V_2)=n-3$ in the following. Put $W=V_2\cap(U_1\oplus U_2)$. Since we may assume $V_2\ne \bbf^n$, we have
$$\dim W\le 2.$$

(A) First suppose $\pi_1 (W)=U_1$. Then we can take a complementary subspace $W'$ of $W$ in $V_2$ such that
$$W'\subset U_2\oplus U_3.$$
Since $\dim W'=n-3$ and $\pi_3(W')=U_3$, we can take an $h\in H_2$ such that
$$hV_2=W\oplus hW'=W\oplus U_3.$$

(A.1) Case of $W=U_1$. With respect to the basis $e_1,e_2,e_4,\ldots,e_n$ of $hV_2$, elements of the restriction $H'=H'(hV_2)$ of $\{g\in H_2\mid ghV_2=hV_2\}$ to $hV_2$ are represented by matrices
$$\bp A & 0 \\ 0 & B \ep$$
with $A\in{\rm SL}_2(\bbf)$ and $B\in{\rm GL}_{n-3}(\bbf)$. By Lemma \ref{lem12.13}, we have $|H'\backslash M(hV_2)|<\infty$ and hence $|H'\backslash M_{k_1}(hV_2)|<\infty$.

(A.2) Case of $W\ne U_1$. We can take an $h'\in H_2$ such that
$$h'hV_2=\bbf e_1\oplus \bbf (e_2+e_3)\oplus U_3.$$
With respect to the basis $e_1,e_2+e_3,e_4,\ldots,e_n$ of $h'hV_2$, elements of $H'=H'(h'hV_2)$ are represented by matrices
$$\bp a & * & 0 \\ 0 & a^{-1} & 0 \\ 0 & 0 & B \ep$$
with $a\in\bbf^\times$ and $B\in{\rm GL}_{n-3}(\bbf)$. So we have $|H'\backslash M_{k_1}(h'hV_2)|<\infty$ by Lemma \ref{lem11.13}.

(B) Next suppose $\dim W=2$ and $\dim \pi_1(W)=1$. By the action of $H_2$, we may assume $W=\bbf e_1\oplus \bbf e_3$. Put $W'=V_2\cap U_3$.

(B.1) Case of $W'=U_3$. With respect to the basis $e_1,e_3,\ldots,e_n$ of $V_2$, elements of $H'=H'(V_2)$ are represented by matrices
$$\bp a & 0 & 0 \\ 0 & \lambda & * \\ 0 & 0 & B \ep$$
with $a,\lambda \in\bbf^\times$ and $B\in{\rm GL}_{n-3}(\bbf)$. So we have $|H'\backslash M(V_2)|<\infty$ by Lemma \ref{lem5.1}. Hence $|H'\backslash M_{k_1}(V_2)|<\infty$.

(B.2) Case of $W'\ne U_3$. We can take an $h\in H_2$ such that
$$hV_2=\bbf e_1\oplus\bbf e_3\oplus\cdots\oplus \bbf e_{n-1}\oplus \bbf(e_2+e_n).$$
With respect to the basis $e_1,e_3,\ldots,e_{n-1},e_2+e_n$ of $hV_2$, elements of $H'=H'(hV_2)$ are represented by matrices
$$\bp a & 0 & 0 & * \\ 0 & \lambda & * & * \\ 0 & 0 & B & * \\ 0 & 0 & 0 & a^{-1} \ep$$
with $a,\lambda\in\bbf^\times$ and $B\in{\rm GL}_{n-4}(\bbf)$. Since the subgroup of ${\rm GL}_{n-2}(\bbf)\times \bbf^\times$ consisting of elements
$$\left(\bp a & 0 & 0 \\ 0 & \lambda & * \\ 0 & 0 & B \ep,\ a^{-1}\right)$$
has a finite number of orbits on $M(\bbf^{n-2})\times M(\bbf)$, we have $|H'\backslash M(hV_2)|<\infty$ by Corollary \ref{cor8.7}. So we have $|H'\backslash M_{k_1}(hV_2)|<\infty$.

(C) Finally suppose $\dim W\le 1$. Let $V'$ be an $n-1$ dimensional subspace of $\bbf^n$ containing $V_2+U_2$. Put $W'=V'\cap (U_1\oplus U_2)$. Then we have $\dim W'=2$ and $\dim \pi_1(W')=1$. By (B), we have $|H'(V')\backslash M(V')|<\infty$ and hence

\noindent $|H'(V')\backslash M_{k_1,k_2}(V')|<\infty$.
\end{proof}

Let $\tau$ be an automorphism of $G$ defined by $\tau(g)=J_n{}^tg^{-1}J_n$. Then $\tau(H_2)$ consists of matrices
$$\bp B & * & 0 \\ 0 & \lambda & 0 \\ 0 & 0 & A \ep$$
with $\lambda\in\bbf^\times,\ A\in{\rm SL}_2(\bbf)$ and $B\in{\rm GL}_{n-3}(\bbf)$.

\begin{corollary} $|\tau(H_2)\backslash M_{k_1,k_2}|<\infty$.
\label{cor11.15}
\end{corollary}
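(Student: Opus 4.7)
The plan is to deduce the corollary immediately from Lemma \ref{lem11.14} by transporting via the automorphism $\tau$. Since $\tau$ is an automorphism of $G={\rm GL}_n(\bbf)$, for any subgroup $H\subset G$ and any parabolic $P\subset G$, the map $gP\mapsto \tau(g)\tau(P)$ is a well-defined bijection $G/P\stackrel{\sim}{\to} G/\tau(P)$ that intertwines the $H$-action on the left with the $\tau(H)$-action on the right. Consequently $|H\backslash G/P|=|\tau(H)\backslash G/\tau(P)|$.

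Next I would identify the type of the parabolic $\tau(P_{k_1,k_2})$, where $P_{k_1,k_2}$ denotes the stabilizer of the standard flag $V_1^0\subset V_2^0$ of dimensions $(k_1,k_1+k_2)$. Using the bilinear form $(u,v)={}^tu\,J_n\,v$, one checks that $\tau(g)$ stabilizes a subspace $W$ if and only if $g$ stabilizes its orthogonal $W^\perp$. Hence $\tau(P_{k_1,k_2})$ is the stabilizer of the flag $(V_2^0)^\perp\subset (V_1^0)^\perp$, whose dimensions are $(n-k_1-k_2,\,n-k_1)$. This is precisely the standard parabolic associated with $M_{n-k_1-k_2,\,k_2}$, so the bijection above becomes an equivariant identification
\[
M_{k_1,k_2}\ \longleftrightarrow\ M_{n-k_1-k_2,\,k_2}
\]
intertwining $\tau(H_2)$ with $H_2$.

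Finally, for given $(k_1,k_2)$ with $k_1+k_2\le n$, applying Lemma \ref{lem11.14} with parameters $(n-k_1-k_2,\,k_2)$ yields $|H_2\backslash M_{n-k_1-k_2,\,k_2}|<\infty$, and transporting back gives $|\tau(H_2)\backslash M_{k_1,k_2}|<\infty$ as desired. There is no serious obstacle: the only thing to verify carefully is the parabolic-type computation under $\tau$, which is a routine manipulation with the involution $J_n$.
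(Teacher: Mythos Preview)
Your proof is correct and follows the approach the paper intends: the corollary is stated immediately after Lemma~\ref{lem11.14} with no proof, the only preparation being the definition of $\tau$ and the description of $\tau(H_2)$, so the implied argument is precisely the transport-by-$\tau$ you carry out. Your explicit identification $\tau(P_{k_1,k_2})=P_{n-k_1-k_2,\,k_2}$ is correct (and in fact more than is strictly needed, since Lemma~\ref{lem11.14} is asserted for all $k_1,k_2$, so knowing that $\tau$ sends one two-step parabolic to another already suffices).
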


Let $H_3$ denote the subgroup of $G$ consisting of matrices
$$\bp \lambda & * & 0 \\ 0 & A & 0 \\ 0 & 0 & B \ep$$
with $\lambda\in\bbf^\times,\ A\in{\rm SL}_2(\bbf)$ and $B\in{\rm GL}_{n-3}(\bbf)$.

\begin{lemma} $|H_3\backslash M_{k_1,k_2,k_3}(\bbf^n)|<\infty$ for the flag variety $M_{k_1,k_2,k_3}(\bbf^n)=\{V_1\subset V_2\subset V_3\subset \bbf^n\mid \dim V_i=k_1+\cdots+k_i\}$.
\label{lem11.15}
\end{lemma}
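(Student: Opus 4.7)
The plan is to proceed by induction on $n$, following the strategy used for Lemma \ref{lem11.14} but adapted to triple flags and to the group $H_3$. The base case $n = 3$ is immediate, since then $H_3 \cong \bbf^\times\times {\rm SL}_2(\bbf)$ has only finitely many orbits on any flag variety of $\bbf^3$.

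Set $U_1 = \bbf e_1$, $U_2 = \bbf e_2\oplus \bbf e_3$ and $U_3 = \bbf e_4\oplus\cdots\oplus \bbf e_n$, so that $H_3$ preserves this decomposition and acts by the scalar $\lambda$ on $U_1$, by ${\rm SL}_2(\bbf)$ on $U_2$, by ${\rm GL}_{n-3}(\bbf)$ on $U_3$, together with a unipotent $*$-block between $U_1$ and $U_2$. Let $\pi_i\colon \bbf^n\to U_i$ denote the canonical projections and let $V_1\subset V_2\subset V_3$ be a flag in $M_{k_1,k_2,k_3}(\bbf^n)$, with $k = k_1 + k_2 + k_3$.

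If $m:=\dim\pi_3(V_3)<n-3$, then by the action of ${\rm GL}_{n-3}$ on $U_3$ I may assume $\pi_3(V_3)\subseteq \bbf e_4\oplus\cdots\oplus \bbf e_{m+3}$, so that the whole flag lies inside the $H_3$-invariant subspace $V'=U_1\oplus U_2\oplus(\bbf e_4\oplus\cdots\oplus \bbf e_{m+3})$. The restriction to $V'$ of the stabilizer of $V'$ in $H_3$ is a group of exactly the same form as $H_3$ but with $n$ replaced by $m+3<n$, and the induction hypothesis applies.

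Now assume $\dim\pi_3(V_3)=n-3$, so that $W:=V_3\cap(U_1\oplus U_2)$ has dimension $k-(n-3)\in\{0,1,2,3\}$. Using ${\rm SL}_2$-transitivity on the one-dimensional subspaces of $U_2$, $\lambda$-scaling on $U_1$, and the unipotent $*$-block (which modifies the $e_1$-component of a vector by any $\bbf$-linear combination of its $U_2$-components), I reduce $W$ to one of the six standard subspaces $0$, $U_1$, $\bbf e_2$, $U_1\oplus \bbf e_2$, $U_2$, $U_1\oplus U_2$ of $U_1\oplus U_2$. Once $W$ is fixed, any complement of $W$ in $V_3$ projects isomorphically onto $U_3$ and is the graph of a linear map $U_3\to (U_1\oplus U_2)/W$; using ${\rm GL}_{n-3}$ on $U_3$ together with the Levi stabilizer of $W$ inside $H_3$, this map is brought into finitely many standard forms. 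This produces a finite list of canonical representatives $V_3^{(j)}$.

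For each $V_3^{(j)}$, the restriction $H'_j$ to $V_3^{(j)}$ of the isotropy subgroup $\{g\in H_3\mid gV_3^{(j)}=V_3^{(j)}\}$ has an explicit block form, and the finiteness of $H'_j$-orbits on $M_{k_1,k_2}(V_3^{(j)})$ follows from previously established results selected according to the shape of $H'_j$. When $H'_j$ takes the shape of $H_2$ or $\tau(H_2)$, I invoke Lemma \ref{lem11.14} or Corollary \ref{cor11.15}; when $H'_j$ is block-diagonal of type ${\rm SL}_2\times{\rm GL}$, I use Lemma \ref{lem12.13}; and parabolic-type $H'_j$ are handled by Lemma \ref{lem5.1} combined with Corollary \ref{cor8.7}. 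For instance, the sub-case $W=U_2$ yields a restriction either isomorphic to $H_0$ of Lemma \ref{lem12.13} (when the chosen complement equals $U_3$) or conjugate to $\tau(H_2)$ of Corollary \ref{cor11.15} (when the complement is tilted via a vector of the form $e_1+e_4$).

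I expect the bulk of the difficulty to lie in the orbit classification in the fourth paragraph above, and more specifically in carefully cataloguing the splittings of $V_3\twoheadrightarrow U_3$ up to the $(1,2)$-unipotent twist of $H_3$, while verifying that in each sub-case $H'_j$ matches the hypothesis of one of the prior lemmas. Each individual verification is routine, but the number of sub-cases is sizeable.
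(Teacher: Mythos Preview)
Your proposal is correct and follows the same inductive skeleton as the paper: induct on $n$, set up $U_1=\bbf e_1$, $U_2=\bbf e_2\oplus\bbf e_3$, $U_3=\bbf e_4\oplus\cdots\oplus\bbf e_n$, reduce to $\pi_3(V_3)=U_3$, and then classify $W=V_3\cap(U_1\oplus U_2)$. The paper also matches your invocations of Lemma~\ref{lem12.13} and Corollary~\ref{cor11.15} in the corresponding sub-cases.

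The one substantive difference is in how the cases $\dim W\le 1$ are handled. You propose to treat all six standard forms of $W$ directly, which forces you to normalize the graph map $U_3\to (U_1\oplus U_2)/W$ and then, for each resulting $V_3^{(j)}$, to identify $H'_j$ with one of the earlier lemmas; as you note, this is routine but produces many sub-cases. The paper instead bypasses all of this with a single trick: when $\dim W\le 1$, it enlarges $V_3$ to any hyperplane $V'\supset V_3+U_1$. Then $W'=V'\cap(U_1\oplus U_2)$ is automatically $2$-dimensional and contains $e_1$, so $V'$ falls into the case $W\ni e_1$, where the paper has already established finiteness for the \emph{full} flag variety $M(V')$; this immediately gives finiteness for $M_{k_1,k_2,k_3}(V')\supset M_{k_1,k_2,k_3}$ containing the original flag. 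This buys a considerable economy: only the two $2$-dimensional $W$-types ($W=U_2$ and $W=\bbf e_1\oplus\bbf e_2$) need direct treatment, and for the second of these the paper proves the stronger full-flag statement precisely so that the enlargement trick goes through. Your approach trades this trick for a longer but equally valid case enumeration.
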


\begin{proof} We will proceed by induction on $n$. Put
$$U_1=\bbf e_1,\quad U_2=\bbf e_2\oplus\bbf e_3\mand U_3=\bbf e_4\oplus\cdots\oplus \bbf e_n.$$
For $i=1,2$ and $3$, let $\pi_i: \bbf^n\to U_i$ denote the canonical projections. Suppose $\dim \pi_3(V_3)=\ell<n-3$. Then we can take an $h\in H_3$ such that
$$\pi_3(hV_3)=\bbf e_4\oplus\cdots\oplus \bbf e_{\ell+3}.$$
Put $V'=\bbf e_1\oplus\cdots\oplus \bbf e_{\ell+3}$. Then $hV_1\subset hV_2\subset hV_3\subset V'$. Elements of the restriction $H'=H'(V')$ of $\{g\in H_3\mid gV'=V'\}$ to $V'$ are represented by
$$\bp \lambda & * & 0 \\ 0 & A & 0 \\ 0 & 0 & B \ep$$
($\lambda\in\bbf^\times,\ A\in{\rm SL}_2(\bbf),\ B\in {\rm GL}_\ell(\bbf)$) with respect to the basis $e_1,\ldots,e_{\ell+3}$ of $V'$. Hence we have $|H'\backslash M_{k_1,k_2,k_3}(V')|<\infty$ by the assumption of induction. So we may assume $\dim \pi_3(V_3)=n-3$ in the following. Put $W=V_3\cap(U_1\oplus U_2)$. Since we may assume $V_3\ne \bbf^n$, we have
$$\dim W\le 2.$$

(A) First suppose $\dim W=2$ and $W\not\ni e_1$. Then we can take an $h\in H_3$ such that $hW=U_2$. Put $W'=hV_3\cap U_3$.

(A.1) Case of $W'=U_3$. We have $hV_3=U_2\oplus U_3$. With respect to the basis $e_2,\ldots,e_n$ of $hV_3$, elements of $H'=H'(hV_3)$ are represented by
$$\bp A & 0 \\ 0 & B \ep$$
with $A\in{\rm SL}_2(\bbf)$ and $B\in{\rm GL}_{n-3}(\bbf)$. By Lemma \ref{lem12.13}, we have $|H'\backslash M(hV_3)|<\infty$ and hence $|H'\backslash M_{k_1,k_2}(hV_3)|<\infty$.

(A.2) Case of $W'\ne U_3$. We can take an $h'\in H_3$ such that
$$h'hV_3=\bbf e_2\oplus\cdots\oplus \bbf e_{n-1}\oplus \bbf(e_1+e_n).$$
With respect to the basis $e_2,\ldots,e_{n-1},e_1+e_n$ of $h'hV_3$, elements of $H'=H'(h'hV_3)$ are represented by
$$\bp A & 0 & 0 \\ 0 & B & * \\ 0 & 0 & \lambda \ep$$
with $\lambda\in\bbf^\times,\ A\in{\rm SL}_2(\bbf)$ and $B\in{\rm GL}_{n-4}(\bbf)$. By Corollary \ref{cor11.15}, we have $|H'\backslash M_{k_1,k_2}(h'hV_3)|<\infty$.

(B) Next suppose $\dim W=2$ and $W\ni e_1$. Then we can take an $h\in H_3$ such that $hW=\bbf e_1\oplus \bbf e_2$. Put $W'=hV_3\cap U_3$.

(B.1) Case of $W'=U_3$. We have $hV_3=\bbf e_1\oplus \bbf e_2\oplus \bbf e_4\oplus\cdots\oplus \bbf e_n$. With respect to the basis $e_1,e_2,e_4,\ldots,e_n$ of $hV_3$, elements of $H'=H'(hV_3)$ are represented by matrices
$$\bp \lambda & * & 0 \\ 0 & a & 0 \\ 0 & 0 & B \ep$$
with $\lambda,a\in\bbf^\times$ and $B\in{\rm GL}_{n-3}(\bbf)$. By Lemma \ref{lem9.8}, we have $|H'\backslash M(hV_3)|<\infty$. So we have $|H'\backslash M_{k_1,k_2}(hV_3)|<\infty$.

(B.2) Case of $W'\ne U_3$. We can take an $h'\in H_3$ such that
$$h'hV_3=\bbf e_1\oplus\bbf e_2\oplus\bbf e_4\oplus\cdots\oplus \bbf e_{n-1}\oplus \bbf (e_3+e_n).$$
With respect to the basis $e_1,e_2,e_4,\ldots,e_{n-1},e_3+e_n$ of $hV_3$, elements of $H'=H'(h'hV_3)$ are represented by matrices
$$\bp \lambda & * & 0 & * \\ 0 & a & 0 & * \\ 0 & 0 & B & * \\ 0 & 0 & 0 & a^{-1} \ep$$
with $\lambda,a\in\bbf^\times$ and $B\in{\rm GL}_{n-4}(\bbf)$. By Lemma \ref{lem9.8} and Corollary \ref{cor8.7}, we have $|H'\backslash M(h'hV_3)|<\infty$. So we have $|H'\backslash M_{k_1,k_2}(h'hV_3)|<\infty$.

(C) Finally suppose $\dim W\le 1$. Let $V'$ be an $n-1$ dimensional subspace of $\bbf^n$ containing $V_3+U_1$. Put $W'=V'\cap (U_1\oplus U_2)$. Then we have $\dim W'=2$ and $W'\ni e_1$. By (B), we have $|H'(V')\backslash M(V')|<\infty$ and hence $|H'(V')\backslash M_{k_1,k_2,k_3}(V')|<\infty$.
\end{proof}

\end{document}